\def\bbe{\mathbb E}
\def\bbr{\mathbb R}
\def\bbone{{\mathbbm 1}}
\theoremstyle{plain}
\newtheorem{thm}{Theorem}[section]
\newtheorem{cor}{Corollary}[section]
\newtheorem{prop}{Proposition}[section]
\newtheorem{lem}{Lemma}[section]
\theoremstyle{definition}
\newtheorem{rem}{Remark}[section]
\begin{document}
\title{On Stein's Method for Multivariate Self-Decomposable Laws}

\author{Benjamin Arras\thanks{Universit\'e de Lille, Laboratoire Paul Painlev\'e, CNRS U.M.R. 8524, 
59655 Villeneuve d'Ascq, France; arrasbenjamin@gmail.com.~I thank the Institute for Mathematical Sciences of the National University of Singapore for its hospitality and support and the organizers of the $2019$ Symposium in Memory of Charles Stein $[1920 - 2016]$ for their invitation to attend it.}\; and Christian Houdr\'e\thanks{Georgia Institute of Technology, 
School of Mathematics, Atlanta, GA 30332-0160, USA; houdre@math.gatech.edu. 
Research supported in part by the grant \# 524678 from the Simons Foundation.
\newline\indent Keywords: Infinite Divisibility, Self-decomposability, Stein's Method, Stable Laws, Dirichlet Forms, Smooth Wassertein Distance, Stein's Kernel, Integro-differential Equations, Poincar\'e Inequality.  
\newline\indent MSC 2010: 60E07, 60E10, 60F05.}}

\maketitle

\vspace{\fill}
\begin{abstract}
This work explores and develops elements of Stein's method of approximation, in the infinitely divisible setting, and its connections to functional analysis.  It is mainly concerned with multivariate self-decomposable laws without finite first moment and, in particular, with $\alpha$-stable ones, $\alpha \in (0,1]$.  At first, several characterizations of these laws via covariance identities are presented.  In turn, these characterizations lead to integro-differential equations which are solved with the help of both semigroup and Fourier methodologies. 
Then, Poincar\'e-type inequalities for self-decomposable laws having finite first moment are revisited. 
In this non-local setting, several algebraic quantities (such as the carr\'e du champs and its iterates) originating 
in the theory of Markov diffusion operators are computed.  Finally, rigidity and stability results for 
the Poincar\'e-ratio functional of the rotationally invariant $\alpha$-stable laws, $\alpha\in (1,2)$, are obtained;  
and as such they recover the classical Gaussian setting as $\alpha \to 2$.  
\end{abstract}
\vspace{\fill}

\section{Introduction}
The present notes form a sequel to the works \cite{AH18_1,AH18_2} where Stein's method for general univariate and multivariate infinitely divisible laws with finite first moment has been initiated. Introduced in \cite{Stein1,Stein2}, Stein's method is a collection of methods allowing to control the discrepancy, in a suitable metric, between probability measures and to provide quantitative rates of convergence in weak limit theorems. Originally developed for the Gaussian and the Poisson laws (\cite{C75}), several non-equivalent investigations have focused on extensions and generalizations of Stein's method outside the classical univariate Gaussian and Poisson settings.~In this regard, let us cite \cite{Ba88,BCL92,Luk94,PR14,GPR17,LRS17,McPekSw18,DoPec18,X19} and \cite{Bar90,Go1991,GR96,RR96,Rai04,ChM08,RR09,Mec09,NPR10,R13,GDVM16,AH18_2} for univariate and multivariate extensions and generalizations. Moreover, for good introductions to the method, let us refer the reader to the standard references and surveys \cite{DH04,BC05,R11,CGS,Ch14}. In all the works just cited, the target probability distribution admits, at the very least, a finite first moment. The very recent \cite{CNXYZ}, where the case of the univariate $\alpha$-stable distributions, $\alpha \in (0,1]$, is studied, seems to be the only instance of the method bypassing the finite first moment assumption. Below, we develop a Stein's method framework for non-degenerate multivariate self-decomposable distributions without Gaussian component. 

Let us recall that self-decomposable distributions form a subclass of infinitely divisible distributions. Moreover, they are weak limits of normalized sums of independent summands and, as such, they naturally generalize the Gaussian/stable distributions. Originally introduced by Paul L\'evy in \cite{L54}, self-decomposable distributions and their properties have been studied in depth by many authors (see, e.g., \cite{Loe78,S}).

The methodology developed here for multivariate self-decomposable distributions relies on a specific semigroup of operators already put forward in our previous analyses \cite{AH18_1,AH18_2}. The generator of this semigroup is an integro-differential operator whose non-local part depends in a subtle way on the L\'evy measure of the target self-decomposable distribution (see Lemma \ref{lem:genSDLaws}). Indeed, the non-local part of this operator differs from the one obtained in \cite{AH18_1,AH18_2} since the Fourier symbols of the associated semigroup of operators do not exhibit $\mathcal{C}^1$-smoothness. However, by exploiting the polar decomposition of the L\'evy measure of the target self-decomposable distribution together with the monotonicity of the associated $k$-function, $\mathcal{C}^1$-regularity is reached and as such natural candidates for the corresponding Stein equation and its solution are put forward. Moreover, this equation reflects the L\'evy-Khintchine representation used to express the characteristic function of the target self-decomposable distribution. This naturally induces three types of equations reminiscent of the following classical distinction between stable laws: $\alpha \in (0,1)$, $\alpha =1$ and $\alpha \in (1,2)$.

With these new findings, we revisit Poincar\'e-type inequalities for self-decomposable distributions with finite first moment. Initially obtained in \cite{Chen} (see also \cite{HPAS}), these Poincar\'e-type inequalities reflect the infinite divisibility of the reference measure (without Gaussian component) and as such put into play a non-local Dirichlet form contrasting with the standard local Dirichlet form associated with the Gaussian measures. Our new proof of these Poincar\'e-type inequalities is based on the semigroup of operators already put forward (and used to solve the Stein equation) in \cite{AH18_2} and is in line with the proof of the Gaussian Poincar\'e inequality based on the differentiation of the variance along the Ornstein-Uhlenbeck semigroup (see, e.g., \cite{BGL14}). Moreover, in this non-local setting, we compute several algebraic quantities (such as the carr\'e du champs and its iterates) originating in Markov diffusion operators theory in order to reach rigidity and stability results for the Poincar\'e-ratio ($U$-) functional defined in \eqref{eq:Ufunctional} and associated with the rotationally invariant $\alpha$-stable distributions. Rigidity results for infinitely divisible distributions with finite second moment were obtained in \cite[Theorem $2.1$]{ChLo87} whereas the corresponding stability results were obtained in \cite[Theorem $4.5$]{AH18_2} through Stein's method and variational techniques inspired by \cite{CFP19}. Here, for the rotationally invariant $\alpha$-stable distribution, $\alpha \in (1,2)$, we revisit the method of \cite{CFP19} using the framework of Dirichlet forms. Coupled with a truncation procedure, rigidity and stability of the Poincar\'e $U$-functional are stated in Corollary \ref{cor:rigidity1}, Corollary \ref{cor:rigidity2} and Theorem \ref{thm:stability}. This truncation procedure allows us to build an optimizing sequence for the $U$-functional. This sequence of functions can be spectrally interpreted as a singular sequence verifying a Weyl-type condition associated with the corresponding Poincar\'e constant (see Conditions \eqref{eq:assrigidity2} and \eqref{Weyltype} below).

Let us further describe the content of our notes: The next section introduces notations and definitions used throughout this work and prove a characterization theorem for multivariate infinitely divisible distributions with finite first moment. In Section \ref{sec:charSDLaws}, using the previous characterization and truncation arguments, we obtain several characterization theorems for self-decomposable laws lacking finite first moment. These results highlight the role of the L\'evy-Khintchine representation of the characteristic function of the target self-decomposable distribution and apply, in particular, to multivariate stable laws with stability index in $(0,1]$. In Section \ref{sec:SteinEqSD}, a Stein equation for non-degenerate multivariate self-decomposable distributions without finite first moment is at first put forward. It is then solved, under a low moment condition, via a combination of semigroup techniques and Fourier analysis. In the last section, Poincar\'e-type inequalities for self-decomposable laws with finite first moment are looked a new. Several algebraic quantities originating in Markov diffusion operators theory are computed in this non-local setting. In particular, for the rotationally invariant $\alpha$-stable laws with $\alpha\in (1,2)$, a Bakry-\'Emery criterion is shown to hold, recovering as $\alpha \rightarrow 2$, the classical Gaussian theory involving the carr\'e du champs and its iterates. Finally, rigidity and stability results for the Poincar\'e $U$-functional of the rotationally invariant $\alpha$-stable distributions, $\alpha \in (1,2)$, are obtained using elements of spectral analysis and Dirichlet form theory.~A technical appendix finishes our manuscript.

\section{Notations and Preliminaries}
\label{sec:notprel}
Throughout, let $\|\cdot\|$ and $\langle \cdot;\cdot \rangle$ be respectively the Euclidean norm and inner product on $\mathbb{R}^d$, $d\geq 1$. Let also ${\cal S}(\mathbb{R}^d)$ be the Schwartz space of infinitely differentiable rapidly decreasing real-valued functions defined on $\mathbb R^d$, and finally let $\mathcal{F}$ be the Fourier transform operator given, for $f\in {\cal S}(\mathbb{R}^d)$, by 
\begin{align*}
\mathcal{F}(f)(\xi)=\int_{\mathbb{R}^d}f(x)e^{-i \langle \xi; x \rangle}dx, \quad \xi \in \mathbb{R}^d.
\end{align*}
On ${\cal S}(\mathbb{R}^d)$, the Fourier transform is an isomorphism and the following inversion formula is well known
\begin{align*}
f(x)=\int_{\mathbb{R}^d}\mathcal{F}(f)(\xi)e^{i \langle \xi; x \rangle}\frac{d\xi}{(2\pi)^d}, \quad x\in \mathbb{R}^d.
\end{align*}
Next, ${\cal C}_b(\mathbb{R}^d)$ is the space of bounded continuous functions on $\mathbb{R}^d$ endowed with the uniform norm $\|f\|_\infty=\sup_{x\in \mathbb{R}^d}|f(x)|$, $f\in {\cal C}_b(\mathbb{R}^d)$.~For $\mu$ a probability measure on $\bbr^d$ and for $1\leq p<+\infty$, $L^p(\mu)$ is the Banach space of equivalence classes of functions defined $\mu$-a.e.on $\bbr^d$ such that $\|f\|^p_{L^p(\mu)}=\int_{\bbr^d} |f(x)|^p\mu(dx)<+\infty$, $f\in L^p(\mu)$. Similarly, $L^\infty(\mu)$ is the space of equivalence classes of functions bounded everywhere and $\mu$-measurable. For any bounded linear operator, $T$, from a Banach space $({\cal X}, \|\cdot\|_{{\cal X}})$ to another Banach space $({\cal Y}, \|\cdot\|_{{\cal Y}})$ the operator norm is, as usual,
\begin{align}
\|T\|_{{\cal X}\rightarrow {\cal Y}}=\underset{f\in {\cal X},\, \|f\|_{{\cal X}}\ne 0}{\sup}\dfrac{\|T(f)\|_{{\cal Y}}}{\|f\|_{{\cal X}}}.
\end{align}
More generally, for any $r$-multilinear form $F$ from $(\mathbb{R}^d)^r$, $r\geq 1$, to $\mathbb{R}$, the operator norm of $F$ is
\begin{align}
\|F\|_{op}:=\sup \left\{|F(v_1,...,v_r)|:\, v_j \in \mathbb{R}^d,\, \|v_j\|=1,\, j=1,...,r\right\}.
\end{align}
Through the whole text, a L\'evy measure is a positive Borel measure on $\mathbb{R}^d$ such that $\nu(\{0\})=0$ and $\int_{\mathbb{R}^d} (1\wedge \|u\|^2)\nu(du)<+\infty$. An $\mathbb{R}^d$-valued random vector $X$ is infinitely divisible with triplet $(b,\Sigma,\nu)$ (written $X\sim ID(b, \Sigma,\nu)$), if its characteristic function $\varphi$ writes, for all $\xi\in\mathbb{R}^d$, as
\begin{align}\label{eq:characID}
\varphi(\xi)=\exp\left(i \langle b;\xi \rangle-\frac{1}{2}\langle \xi;\Sigma\xi \rangle +\int_{\mathbb{R}^d}\left(e^{i \langle \xi; u\rangle}-1-i\langle \xi;u \rangle\bbone_{D}(u)\right)\nu(du)
\right),
\end{align}
with $b\in\mathbb{R}^d$, $\Sigma$ a symmetric positive semi-definite $d\times d$ matrix, $\nu$ a L\'evy measure on $\mathbb{R}^d$ and $D$ the closed Euclidean unit ball of 
$\mathbb{R}^d$. 

The representation \eqref{eq:characID} is mainly the one to be used, 
from start to finish, with the (unique) generating triplet
$(b,\Sigma,\nu)$.  However, other types of representations are also possible 
and two of them are presented
next.  First, if $\nu$ is such that $\int_{\|u\|\le 1} \|u\|\nu (du)<+\infty$,
then \eqref{eq:characID} becomes
\begin{equation}\label{eq:characID2}
\varphi(\xi)=\exp\left(i \langle b_0;\xi \rangle-\frac{1}{2}\langle \xi;\Sigma\xi \rangle +\int_{\mathbb{R}^d}\left(e^{i \langle \xi; u\rangle}-1\right)\nu(du)
\right),
\end{equation}
where $b_0 = b-\int_{\|u\|\le 1}u\nu (du)$ is called the {\it drift} of $X$.
This representation is cryptically
expressed as $X\sim ID(b_0,\Sigma,\nu)_0$.  Second, if $\nu$ is
such that $\int_{\|u\|>1} \|u\|\nu(du)<+\infty$, then
\eqref{eq:characID} becomes
\begin{equation}\label{eq:characID3}
\varphi(\xi)=\exp\left(i \langle b_1;\xi \rangle-\frac{1}{2}\langle \xi;\Sigma\xi \rangle +\int_{\mathbb{R}^d}\left(e^{i \langle \xi; u\rangle}-1-i\langle \xi;u \rangle\right)\nu(du)
\right),
\end{equation}
where $b_1=b+\int_{\|u\|>1} u\nu (du)$ is called the {\it center} of $X$. 
In turn,
this last representation is now cryptically written as 
$X\sim ID(b_1,\Sigma,\nu)_1$. In fact, $b_1=\bbe X$  as, for any
$p>0$, $\bbe \|X\|^p<+\infty$ is equivalent to $\int_{\|u\|>1}\|u\|^p\nu (du)
<+\infty$.  Also, for any $r>0$, $\bbe e^{r\|X\|}<+\infty$ is equivalent to
$\int_{\|u\|>1} e^{r\|u\|}\nu (du)<+\infty$.

In the sequel, we are also interested in some distinct classes of infinitely divisible distributions, namely the stable ones and the self-decomposable ones.  
Recall that an ID random vector $X$ is $\alpha$-stable, $0<\alpha < 2$, if $b\in \bbr^d$, if $\Sigma=0$ and if its L\'evy measure $\nu$ admits the following polar decomposition
\begin{align}\label{eq:PolarStable}
\nu(du)=\bbone_{(0,+\infty)}(r)\bbone_{\mathbb{S}^{d-1}}(x)\frac{dr}{r^{\alpha+1}}\sigma(dx),
\end{align}
where $\sigma$ is a finite positive measure on $\mathbb{S}^{d-1}$, the Euclidean unit sphere of $\mathbb{R}^d$. When $\alpha\in (0,1)$, then $\int_{\|u\|\leq 1} |u_j|\nu(du)<+\infty$, for all $1\leq j\leq d$, $\varphi$ and so
\begin{align}\label{eq:characStable}
\varphi\left(\xi\right)=\exp\left(i\langle \xi;b_0 \rangle+\int_{\mathbb{R}^d} \left(e^{i \langle \xi; u \rangle}-1\right)\nu(du)\right)\quad \xi \in \bbr^d,
\end{align}
with, again, $b_0=b-\int_{\|u\|\leq 1} u\nu(du)$.

Now, recall that an ID random vector $X$ is self-decomposable (SD) if $b\in \bbr^d$, 
if $\Sigma=0$, and if its L\'evy measure $\nu$ admits the polar decomposition
\begin{align}\label{eq:PolarSD1}
\nu(du)=\bbone_{(0,+\infty)}(r)\bbone_{\mathbb{S}^{d-1}}(x)\frac{k_x(r)}{r}dr\sigma(dx),
\end{align}
where $\sigma$ is a positive finite measure on $\mathbb{S}^{d-1}$ and where $k_x(r)$ is a function which is nonnegative, decreasing in $r$, ($k_x(r_1)\leq k_x(r_2)$, for $0<r_2\leq r_1$) and measurable in $x$. In the sequel, without loss of generality, $k_x(r)$ is assumed to be right-continuous in $r\in (0,+\infty)$, to admit a left-limit at each $r\in (0,+\infty)$ and $\int_0^{+\infty} (1\wedge r^2) k_x(r)dr/r$ is independent of $x$.

Next, (see, e.g., \cite[Chapter $12$]{PM91}) let us denote by $V^b_a(g)$ the variation of a function $g$ over the interval $[a,b]\subsetneq (0,+\infty)$,
\begin{align}
V^b_a(g)=\underset{\mathcal{P}}{\sup}\sum_{i=1}^{n} \left|g(x_{i})-g(x_{i-1})\right|,
\end{align}
where the supremum is taken over all subdivisions $\mathcal{P}=\{a=x_0<x_1<\cdots< x_n=b\}$ of $[a,b]$.

Since $k_x(r)$ is of bounded variation in $r$ on any $(a,b)\subsetneq (0,+\infty)$, $a>0,b>0$ and $a\leq b$, and right-continuous in $r\in (0,+\infty)$, the following integration by parts formula holds true
\begin{align}\label{eq:IPPradial}
\int_a^b k_x(r)f'(r)dr=-\int_a^b f(r) dk_x(r), \quad x\in \mathbb{S}^{d-1},
\end{align}
for all $f$ continuously differentiable on $(a,b)$ such that $\underset{r\rightarrow a^+}{\lim} f(r)k_x(r)=0$ and $\underset{r\rightarrow b^-}{\lim} f(r)k_x(r)=0$, $x\in \mathbb{S}^{d-1}$. 

Let us now introduce some natural distances between probability measures on $\mathbb{R}^d$. Let $\mathbb{N}^d$ be the space of multi-indices of dimension $d$.  For any $\alpha\in \mathbb{N}^d$, $|\alpha|=\sum_{i=1}^d |\alpha_i|$ and $D^{\alpha}$ denote the partial derivatives operators defined on smooth enough functions $f$, by $D^{\alpha}(f)(x_1,...,x_d)=\partial^{\alpha_1}_{x_1}...\partial^{\alpha_d}_{x_d}(f)(x_1,...,x_d)$, for all $(x_1,...,x_d)\in \mathbb{R}^d$. Moreover, for any $r$-times continuously differentiable function, $h$, on $\mathbb{R}^d$, viewing its $\ell$th-derivative $\mathbf{D}^{\ell}(h)$ as a $\ell$-multilinear form, for $1\leq \ell\leq r$, let
\begin{align}
M_{\ell}(h):=\underset{x\in \mathbb{R}^d}{\sup} \|\mathbf{D}^{\ell}(h)(x)\|_{op}=\underset{x\ne y}{\sup}\dfrac{\|\mathbf{D}^{\ell-1}(h)(x)-\mathbf{D}^{\ell-1}(h)(y)\|_{op}}{\|x-y\|}.
\end{align}
For $r\geq 0$, $\mathcal{H}_r$ is the space of bounded continuous functions defined on $\mathbb{R}^d$ which are continuously differentiable up to (and including) the order $r$ and such that, for any such function $f$,
\begin{align}
\max_{0\leq \ell \leq r}M_\ell(f)\leq 1
\end{align}
with $M_0(f):=\sup_{x\in\mathbb{R}^d}|f(x)|$. Then, the smooth Wasserstein distance of order $r$, between two random vectors $X$ and $Y$ having respective laws $\mu_X$ and $\mu_Y$, is defined by
\begin{align}
d_{W_r}(\mu_X,\mu_Y)=\underset{h\in \mathcal{H}_r}{\sup} \left|\bbe h(X)-\bbe h(Y)\right|.
\end{align}
Moreover, for $r\geq 1$, $d_{W_r}$ admits the following representation (see \cite[Lemma A.2.]{AH18_2})
\begin{align}
d_{W_r}(\mu_X,\mu_Y)=\underset{h\in \mathcal{H}_r\cap \mathcal{C}^{\infty}_c(\mathbb{R}^d)}{\sup} \left|\bbe h(X)-\bbe h(Y)\right|,
\end{align}
where $\mathcal{C}^{\infty}_c(\mathbb{R}^d)$ is the space of infinitely differentiable compactly supported functions on $\mathbb{R}^d$. In particular, for $r\geq 1$,
\begin{align}\label{ineq:wasser}
d_{W_r}(\mu_X,\mu_Y) \leq d_{W_1}(\mu_X,\mu_Y).
\end{align}
As usual, for two probability measures, $\mu_1$ and $\mu_2$, on $\mathbb{R}^d$, $\mu_1$ is said to be absolutely continuous with respect to $\mu_2$, denoted by $\mu_1<<\mu_2$, if for any Borel set, $B$,  such that $\mu_2(B)=0$, it follows that $\mu_1(B)=0$.

To end this section, let us state the following characterization result of ID random vectors with finite first moment, valid, for example, for stable random vector with stability index $\alpha \in (1,2)$ has its origin in the univariate result \cite[Theorem 3.1]{AH18_1}.

\begin{thm}\label{thm:multid}
Let $X$ be a random vector such that $\bbe |X_i|<+\infty$, for all $i\in \{1,\dots, d\}$. Let $\nu$ be a L\'evy measure on $\bbr^d$ such that $\int_{\|u\|\geq 1}\|u\|\nu(du)<+\infty$. Then,
\begin{align*}
\bbe Xf(X)=\bbe X \bbe f(X)+\bbe \int_{\bbr^d} \left(f(X+u)-f(X)\right) u\nu(du),
\end{align*}
for all $f$ bounded Lipschitz function on $\bbr^d$, if and only if $X$ is an ID random vector with L\'evy measure $\nu$ (and $b = \bbe X -\int_{\|u\|>1}u\nu(du)$).
\end{thm}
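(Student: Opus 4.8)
The plan is to reduce both implications to a single Fourier computation. For each $\xi\in\bbr^d$ the functions $x\mapsto\cos\langle\xi;x\rangle$ and $x\mapsto\sin\langle\xi;x\rangle$ are bounded and Lipschitz, so by $\bbr$-linearity the asserted covariance identity may be applied to the complex exponential $f_\xi(x)=e^{i\langle\xi;x\rangle}$. Since $\bbe|X_i|<+\infty$ for all $i$, the characteristic function $\varphi(\xi)=\bbe e^{i\langle\xi;X\rangle}$ is $\mathcal{C}^1$ with $\partial_{\xi_k}\varphi(\xi)=i\,\bbe[X_k e^{i\langle\xi;X\rangle}]$; moreover $f_\xi(X+u)-f_\xi(X)=e^{i\langle\xi;X\rangle}(e^{i\langle\xi;u\rangle}-1)$, and $|e^{i\langle\xi;u\rangle}-1|\,|u_k|\le(\|\xi\|\,\|u\|^2)\wedge(2\|u\|)$ is $\nu$-integrable by the L\'evy property and the hypothesis $\int_{\|u\|\ge1}\|u\|\nu(du)<+\infty$, so Fubini applies to the jump term. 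Plugging $f_\xi$ into the identity and simplifying, it becomes
\begin{align*}
\partial_{\xi_k}\varphi(\xi)=\varphi(\xi)\,\partial_{\xi_k}\psi_0(\xi),\qquad k=1,\dots,d,\\
\psi_0(\xi):=i\langle\bbe X;\xi\rangle+\int_{\bbr^d}\big(e^{i\langle\xi;u\rangle}-1-i\langle\xi;u\rangle\big)\nu(du),
\end{align*}
where $\psi_0$ is finite and $\mathcal{C}^1$ (again using $\int_{\|u\|\ge1}\|u\|\nu(du)<+\infty$), with $\partial_{\xi_k}\psi_0(\xi)=i\,\bbe X_k+i\int_{\bbr^d}u_k(e^{i\langle\xi;u\rangle}-1)\nu(du)$.

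For the ``only if'' part (the covariance identity implies $X$ is infinitely divisible), I would start from the identity and deduce the first-order relation above for every $\xi\in\bbr^d$. Since $e^{-\psi_0}$ is $\mathcal{C}^1$ and nowhere vanishing, $h(\xi):=\varphi(\xi)e^{-\psi_0(\xi)}$ is $\mathcal{C}^1$ with $\nabla h\equiv0$, hence constant on the connected set $\bbr^d$; as $h(0)=\varphi(0)e^{-\psi_0(0)}=1$, we get $\varphi(\xi)=e^{\psi_0(\xi)}$. Transferring the large-jump part of the compensator to the drift,
\begin{align*}
\psi_0(\xi)=i\Big\langle \bbe X-\int_{\|u\|>1}u\,\nu(du);\xi\Big\rangle+\int_{\bbr^d}\big(e^{i\langle\xi;u\rangle}-1-i\langle\xi;u\rangle\bbone_D(u)\big)\nu(du),
\end{align*}
which is exactly \eqref{eq:characID} with generating triplet $(b,0,\nu)$ and $b=\bbe X-\int_{\|u\|>1}u\,\nu(du)$. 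Since $\nu$ is a L\'evy measure, this shows $X$ is infinitely divisible with the claimed triplet (in particular with no Gaussian part).

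For the ``if'' part, if $X\sim ID(b,0,\nu)$ with $b=\bbe X-\int_{\|u\|>1}u\,\nu(du)$, then $\varphi=e^{\psi_0}$, so the first-order relation holds and, running the computation of the first paragraph backwards, the covariance identity holds for every $f_\xi$, hence for every trigonometric polynomial. One then upgrades to $f\in\mathcal{S}(\bbr^d)$ by writing $f(x)=\int_{\bbr^d}\mathcal{F}(f)(\xi)e^{i\langle\xi;x\rangle}d\xi/(2\pi)^d$ and integrating the identity against $\mathcal{F}(f)(\xi)\,d\xi/(2\pi)^d$; the interchange is legitimate because $|f_\xi(X+u)-f_\xi(X)|\,|u_k|\le(2\wedge\|\xi\|\|u\|)|u_k|$ has $\nu$-integral of order $1+\|\xi\|$, against which the Schwartz function $|\mathcal{F}(f)|$ is integrable. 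Finally an arbitrary bounded Lipschitz $f$ is approximated by functions $f_n\in\mathcal{C}^\infty_c(\bbr^d)$ — truncate by a smooth cutoff with controlled Lipschitz seminorm, then mollify — chosen so that $f_n\to f$ pointwise with $\sup_n\|f_n\|_\infty<+\infty$ and $\sup_n\mathrm{Lip}(f_n)<+\infty$; dominated convergence (with dominating functions $|X_k|$, a constant, and a constant multiple of $\|u\|^2\wedge\|u\|$, each integrable against the relevant measure) transfers the identity from $f_n$ to $f$.

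I expect the only real work to be this integrability bookkeeping rather than any conceptual obstacle: the hypothesis $\int_{\|u\|\ge1}\|u\|\nu(du)<+\infty$ must be invoked at essentially every step — to make $\psi_0$, $\partial_{\xi_k}\psi_0$, and the jump term of the identity finite, to justify Fubini and Fourier inversion, and to dominate in the approximation — while $\bbe|X_i|<+\infty$ is precisely what gives $\varphi\in\mathcal{C}^1$ and meaning to the left-hand side. The one mildly delicate point is the last density step, where a bounded Lipschitz function must be approximated by smooth compactly supported ones without the Lipschitz constants blowing up, which the cutoff-then-mollify construction handles.
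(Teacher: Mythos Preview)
Your proof is correct. The converse implication is essentially the paper's argument with a cosmetic change of parametrisation: the paper tests with $e^{it\langle\xi;x\rangle}$, obtains an ODE in the auxiliary variable $t$ (with $\xi$ fixed) for $\Phi_t(\xi)=\varphi(t\xi)$, and integrates it; you test with $e^{i\langle\xi;x\rangle}$ directly, obtain $\nabla\varphi=\varphi\,\nabla\psi_0$, and conclude that $\varphi e^{-\psi_0}$ is constant. Both use exactly the same inputs ($\varphi\in\mathcal C^1$ from the first-moment hypothesis, integrability from $\int_{\|u\|\ge1}\|u\|\nu(du)<\infty$) and land on the same $\psi_0$.

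The direct implication is where the two routes genuinely diverge. The paper simply quotes the infinitely-divisible covariance representation of Houdr\'e--P\'erez-Abreu--Surgailis and specialises the second test function to $g(x)=\langle t;x\rangle$, which delivers the identity for all bounded Lipschitz $f$ in one stroke. Your argument is self-contained: you verify the identity on exponentials (equivalently, differentiate $\varphi=e^{\psi_0}$), pass to $\mathcal S(\bbr^d)$ by Fourier inversion, and then to bounded Lipschitz $f$ by a cutoff--mollify approximation with uniformly controlled $\|\cdot\|_\infty$ and Lipschitz seminorms. The trade-off is the expected one: the paper's proof is shorter but imports a nontrivial external result, while yours is longer but needs nothing beyond the L\'evy--Khintchine formula and dominated convergence. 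Your bookkeeping (the $(\|\xi\|\|u\|^2)\wedge(2\|u\|)$ bound, integrability of $\|u\|^2\wedge\|u\|$ against $\nu$, and the cutoff construction keeping $\mathrm{Lip}(f_n)$ bounded) is correct.
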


\begin{proof}
Let us assume that $X$ is an ID random vector with finite first moment and with L\'evy measure $\nu$. Then, from 
\cite[Proposition $2$]{HPAS}, for all $f$ and $g$ bounded Lipschitz functions on $\bbr^d$,
\begin{align*}
\operatorname{Cov}\left(f(X),g(X)\right)=\int_0^1 \bbe \left(\int_{\bbr^d} \left(f(X_z+u)-f(X_z)\right) \left(g(Y_z+u)-g(Y_z)\right) \nu(du)\right)\!dz,
\end{align*}
where $(X_z,Y_z)$ is an ID random vector in $\bbr^{2d}$ defined through an interpolation scheme as in \cite[Equation $(2.7)$]{HPAS}. Now, since $X$ has finite first moment, one can take for $g$ the function $g_t(x)=\langle t;x \rangle$, for all $x\in \bbr^d$ and for some $t\in \bbr^d$. Then, by linearity
\begin{align*}
\langle t; \bbe X f(X) \rangle=\langle t; \bbe X \bbe f(X) + \bbe \int_{\bbr^d} \left(f(X+u)-f(X)\right) u\nu(du)\rangle,
\end{align*}
since $X_z=_dX$, where $=_d$ stands for equality in distribution. This concludes the direct part of the proof. Conversely, let us assume that 
\begin{align*}
\bbe Xf(X)=\bbe X \bbe f(X)+\bbe \int_{\bbr^d} \left(f(X+u)-f(X)\right) u\nu(du),
\end{align*}
holds true in $\bbr^d$ for all $f$ bounded Lipschitz function on $\bbr^d$. Consider the function $\varphi_t$ defined, for all $(x,\xi)\in \bbr^d\times \bbr^d$ and for all $t\in \bbr$, by
\begin{align*}
\varphi_t(x,\xi)=e^{it \langle x;\xi \rangle}.
\end{align*}
Then, for all $\xi \in \bbr^d$ and all $t\in \bbr$,
\begin{align*}
\bbe X \varphi_t(X,\xi)=\bbe X \bbe \varphi_t(X,\xi)+\bbe \int_{\bbr^d} \left(\varphi_t(X+u,\xi)-\varphi_t(X,\xi)\right) u\nu(du),
\end{align*}
where the equality is understood to be in $\bbr^d$. In particular, one has
\begin{align*}
\bbe \langle \xi; X\rangle \varphi_t(X,\xi)=\langle \xi;\bbe X\rangle \bbe \varphi_t(X,\xi)+\langle \xi ;\bbe \int_{\bbr^d} \left(\varphi_t(X+u,\xi)-\varphi_t(X,\xi)\right) u\nu(du)\rangle.
\end{align*}
Denoting by $\Phi_t$ the function defined by $\Phi_t(\xi)=\bbe \left(e^{i t \langle X;\xi \rangle}\right)$, for all $\xi \in \bbr^d$, the previous equality boils down to
\begin{align*}
\dfrac{d}{dt} \left(\Phi_t(\xi)\right)=i \Phi_t(\xi)\left(\langle \xi; \bbe X\rangle+ \langle \xi; \int_{\bbr^d} \left(e^{i t \langle \xi; u\rangle}-1\right) u\nu(du) \rangle\right).
\end{align*}
Moreover, one notes that $\Phi_0(\xi)=1$. Then, for all $\xi \in \bbr^d$ and all $t\in \bbr$,
\begin{align*}
\Phi_t(\xi)=\exp\left(i t \langle \xi; \bbe X\rangle+\int_{\bbr^d} \left(e^{it \langle u;\xi \rangle}-1-i t\langle \xi; u \rangle\right)\nu(du)\right).
\end{align*}
Taking $t=1$, the characteristic function of $X$ is then given, for all $\xi \in \bbr^d$, by
\begin{align*}
\varphi(\xi)=\exp\left(i \langle \xi; \bbe X\rangle+\int_{\bbr^d} \left(e^{i \langle u;\xi \rangle}-1-i \langle \xi; u \rangle\right)\nu(du)\right),
\end{align*}
namely, $X$ is ID with Levy measure $\nu$ (and $b = \bbe X -\int_{\|u\|>1}u\nu(du)$).
\end{proof}

\begin{rem}\label{rem:sec2}
(i) Let $\varepsilon \in (0, 1]$ and let $X_\varepsilon\sim ID(b_\varepsilon,0,\nu_\varepsilon)$,  
with L\'evy measure $\nu_{\varepsilon}$ such 
that $\int_{\bbr^d} \|u\|^2 \nu_\varepsilon(du)<+\infty$ and with 
$b_\varepsilon=-\int_{\|u\|> 1}u \nu_\varepsilon(du)$.  Then, $X_\varepsilon$ is a centered 
random vector of $\bbr^d$ with covariance matrix $\Sigma_\varepsilon=\int_{\bbr^d}uu^{t} \nu_\varepsilon(du)$, where $u^t$ is the transpose of $u\in \bbr^d$.   
Assume further that  $\Sigma_\varepsilon$, $\varepsilon > 0$, is non-singular, equivalently (see, e.g., \cite[Lemma $2.1$]{CR07}) that $\nu_{\varepsilon}$ is not concentrated on any proper linear subspace of $\bbr^d$, 
equivalently that the law of $X_\varepsilon$ is not concentrated on any proper linear hyperplane 
of $\bbr^d$.   

Then, by \cite[Theorem $2.2$]{CR07},  
whenever $\Sigma_\varepsilon$, is non-singular for every $\varepsilon \in (0, 1]$, the following two conditions are equivalent:  

(a) As $\varepsilon \to 0^+$, 
$\tilde{X}_\varepsilon=\Sigma_\varepsilon^{-1/2}X_\varepsilon$ converges in distribution to a centered multivariate Gaussian random vector with identity covariance matrix.

(b) For every $\kappa>0$
\begin{align}\label{CNS}
\int_{\langle \Sigma_\varepsilon^{-1} u;u \rangle >\kappa}\langle \Sigma_\varepsilon^{-1} u;u \rangle \nu_\varepsilon(du) \underset{\varepsilon \rightarrow 0^+}{\longrightarrow} 0.
\end{align}
(We refer the reader to \cite{CR07} for the above requirements as well as for 
sufficient conditions, on $\nu_\varepsilon$, ensuring their non-vacuity.)

 Moreover, by Theorem~\ref{thm:multid}, for all $f\in\mathcal{S}(\bbr^d)$ and all $\varepsilon>0$,
\begin{align}\label{eq:Steinlemma}
\bbe \tilde{X}_\varepsilon f(\tilde{X}_\varepsilon) = \bbe \int_{\bbr^d} (f(\tilde{X}_\varepsilon+u)-f(\tilde{X}_\varepsilon))u\tilde{\nu}_\varepsilon(du),
\end{align}
with $\tilde{\nu}_\varepsilon$ the pushforward of $\nu_\varepsilon$ by $\Sigma_\varepsilon^{-1/2}$. Note that $\int_{\bbr^d}uu^{t} \tilde{\nu}_\varepsilon(du)=I_d$, where $I_d$ is the $d\times d$ identity matrix, and that, in view of \eqref{CNS},
\begin{align}\label{ConsCNS}
\tilde{\nu}_\varepsilon\left(\|u\|\geq \kappa\right)\underset{\varepsilon\rightarrow 0^+}{\longrightarrow} 0,\quad \left\|I_d
-\int_{\|u\|\leq \kappa}uu^t \tilde{\nu}_\varepsilon(du)\right\|_{op}\underset{\varepsilon\rightarrow 0^+}{\longrightarrow} 0, \quad \kappa >0.
\end{align}
Now, set $F_\varepsilon(z)=\int_{\bbr^d} (f(z+u)-f(z))u\tilde{\nu}_\varepsilon(du)$ and $F(z)=\nabla(f)(z)$, for all $z\in \bbr^d$ and all $\varepsilon\in (0,1)$. Observe that, for all $z\in \bbr^d$, all $\varepsilon>0$ and all $\kappa>0$,
\begin{align*}
\|F_\varepsilon(z)-F(z)\|&\leq\left\|\int_{\|u\|\leq \kappa } \left(f(z+u)-f(z)-\langle\nabla(f)(z); u\rangle\right)u\tilde{\nu}_{\varepsilon}(du)\right\|,\\
&\quad\quad +\left\|\int_{\|u\|\geq \kappa } \left(f(z+u)-f(z)-\langle\nabla(f)(z); u\rangle\right)u\tilde{\nu}_{\varepsilon}(du)\right\|\\
&\leq \frac{\kappa}{2}M_2(f) \int_{\|u\|\leq \kappa} \|u\|^2\tilde{\nu}_{\varepsilon}(du)+ 2M_1(f) \int_{\|u\|\geq \kappa} \|u\|^2\tilde{\nu}_{\varepsilon}(du).
\end{align*}
Letting first $\varepsilon\rightarrow 0^+$ and then $\kappa \rightarrow 0^+$, one sees that 
$F_\varepsilon$ converges, uniformly to $F$, as $\varepsilon \rightarrow 0^+$. 
Then, since $\tilde{X}_\varepsilon$ converges in distribution to $Z\sim \mathcal{N}\left(0,I_d\right)$, (see also, \cite[Remark $4$]{AV} for a univariate result) the identity \eqref{eq:Steinlemma} is preserved as $\varepsilon\rightarrow 0^+$, giving, 
for all $f\in\mathcal{S}(\bbr^d)$, the classical Gaussian characterizing identity,
\begin{align*}
\bbe Z f(Z)=\bbe \nabla(f)(Z).
\end{align*}
(ii) Let $\nu_\alpha$ be the L\'evy measure given, for $\alpha\in (1,2)$, by
\begin{align}\label{eq:nustable}
\nu_\alpha(du)=\bbone_{(0,+\infty)}(r)\bbone_{\mathbb{S}^{d-1}}(x)\dfrac{c_{\alpha,d}}{r^{\alpha+1}}dr\sigma(dx),
\end{align}
with $c_{\alpha,d}>0$, a normalizing constant specified later, and with $\sigma$ the uniform measure on ${\mathbb S}^{d-1}$.  Then, $X_\alpha \sim ID(b_\alpha, 0, \nu_\alpha)$, 
with $b_\alpha=-\int_{\|u\|\geq 1}u \nu_\alpha(du)$, is a rotationally invariant $\alpha$-stable random vector  
with corresponding characteristic function $\varphi_\alpha$
\begin{align*}
\varphi_\alpha(\xi)=\exp\left(-\|\xi\|^{\alpha}/2\right),\quad \xi\in \bbr^d,
\end{align*}
for $c_{\alpha,d}$ given by 
\begin{align}\label{eq:cstenorm}
c_{\alpha,d}= \dfrac{-\alpha (\alpha-1)\Gamma((\alpha+d)/2)}{4\cos(\alpha\pi/2)\Gamma((\alpha+1)/2)\pi^{(d-1)/2}\Gamma(2-\alpha)}.
\end{align}
Clearly, as $\alpha\rightarrow 2^-$, $X_\alpha$ converges in distribution 
to a centered Gaussian random vector $Z$ with identity covariance matrix.  
Next, by Theorem~\ref{thm:multid}, for all $f$ bounded Lipschitz function on $\bbr^d$,
\begin{align}\label{eq:SteinLemmaStable}
\bbe X_\alpha f(X_\alpha)=\bbe \int_{\bbr^d} \left(f(X_\alpha+u)-f(X_\alpha)\right) u\nu_\alpha(du), 
\end{align}
and observe, at first, that for all $f\in \mathcal{S}(\bbr^d)$,
\begin{align*}
\underset{\alpha \rightarrow 2^{-}}{\lim}\bbe X_\alpha f(X_\alpha) = \bbe Z f(Z).
\end{align*}
Next, let $D_\alpha(f)(z)=\int_{\bbr^d} (f(z+u)-f(z)) u\nu_\alpha(du)$, 
and observe now that the Fourier symbol, $\sigma_\alpha$, of this operator satisfies, 
for all $\xi \in \bbr^d$,
\begin{align*}
\langle \sigma_\alpha(\xi); i\xi\rangle&=ic_{\alpha,d}\int_{\bbr^d}\left(e^{i \langle u;\xi \rangle}-1\right)\dfrac{\langle u;\xi\rangle du}{\|u\|^{d+\alpha}}\\
&=-\dfrac{\alpha}{2} \|\xi\|^{\alpha}.
\end{align*}
Finally, for all $f\in \mathcal{S}(\bbr^d)$
\begin{align*}
\bbe D_\alpha(f)(X_\alpha)=\int_{\bbr^d} \mathcal{F}(f)(\xi) \sigma_\alpha(\xi) \exp\left(-\|\xi\|^{\alpha}/2\right) \frac{d\xi}{(2\pi)^d}\underset{\alpha \rightarrow 2^-}{\longrightarrow} \bbe \nabla(f)(Z),
\end{align*}
 so that the characterizing identity \eqref{eq:SteinLemmaStable} is preserved when 
 passing to the limit, converging, again, for all $f\in \mathcal{S}(\bbr^d)$, to 
 \begin{align*}
 \bbe Z f(Z)=\bbe \nabla(f)(Z).
 \end{align*}
\end{rem}


\section{Characterizations of Self-Decomposable Laws}
\label{sec:charSDLaws}
\noindent
In this section, we provide various characterization results, for stable distributions and some 
self-decomposable ones, not covered by 
Theorem~\ref{thm:multid}. However, the direct parts of these results are 
simple consequences of Theorem~\ref{thm:multid} together 
with truncation and discretization arguments. The stable results recover, in particular, the one-dimensional results independently obtained in \cite{CNXYZ}. Below, and throughout, 
we will make use of the transformation $T_c$ applied to positive (L\'evy) 
measures and defined for all $c>0$ and all Borel sets, $B$, of $\bbr^d$ by
$$T_c(\nu)(B)=\nu(B/c).$$

\begin{thm}\label{thm:caracSteinStable2}
Let $X$ be a random vector in $\bbr^d$. Let $b\in \bbr^d$, $\alpha \in (0,1)$ and let $\nu$ 
be a L\'evy measure such that, for all $c>0$, 
\begin{align}\label{transform}
\nu(du)=c^{-\alpha} T_c(\nu)(du).  
\end{align}
Then,
\begin{align*}
\bbe \langle X; \nabla(f)(X)\rangle=\bbe \langle b_0; \nabla(f)(X)\rangle+\alpha \int_{\bbr^d} \left(f(X+u)-f(X)\right)\nu(du),
\end{align*}
where $b_0 = b - \int_{\|u\|\le 1}u\nu(du)$, for all $f\in\mathcal{S}(\bbr^d)$  
if and only if $X$ is a stable random vector with parameter $b$, 
stability index $\alpha$ and L\'evy measure $\nu$.
\end{thm}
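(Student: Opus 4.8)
The plan is to prove the two implications separately, deriving both from the characterization in Theorem~\ref{thm:multid} applied to suitably truncated Lévy measures, and to exploit the scaling relation \eqref{transform}, which encodes $\alpha$-stability. First note that \eqref{transform} is the statement that $\nu$ has the polar decomposition \eqref{eq:PolarStable} with some finite $\sigma$ on $\mathbb{S}^{d-1}$; in particular $\int_{\|u\|\le 1}\|u\|\nu(du)<+\infty$ since $\alpha\in(0,1)$, so the drift $b_0=b-\int_{\|u\|\le 1}u\nu(du)$ is well-defined and the representation \eqref{eq:characStable} is the relevant one.

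For the \emph{direct implication}, suppose $X$ is $\alpha$-stable with triplet parameter $b$ and Lévy measure $\nu$. The key identity to establish is that for all $f\in\mathcal{S}(\bbr^d)$ and all $\xi\in\bbr^d$,
\begin{align*}
\bbe\langle X;\nabla(f)(X)\rangle = \bbe\langle b_0;\nabla(f)(X)\rangle + \alpha\int_{\bbr^d}\bbe\big(f(X+u)-f(X)\big)\nu(du).
\end{align*}
I would verify this first on exponentials $f(x)=e^{i\langle\xi;x\rangle}$: the left-hand side becomes $\nabla_\xi$ of the characteristic function paired against $\xi$, i.e. $\langle\xi;\nabla_\xi\log\varphi\rangle\,\varphi(\xi)$, while using \eqref{eq:characStable} one computes $\log\varphi(\xi)=i\langle\xi;b_0\rangle+\int(e^{i\langle\xi;u\rangle}-1)\nu(du)$, and the homogeneity \eqref{transform} gives the Euler-type identity $\langle\xi;\nabla_\xi\big(\int(e^{i\langle\xi;u\rangle}-1)\nu(du)\big)\rangle = \alpha\int(e^{i\langle\xi;u\rangle}-1)\nu(du)$ (differentiate $\int(e^{i\langle c\xi;u\rangle}-1)\nu(du)=c^\alpha\int(e^{i\langle\xi;u\rangle}-1)\nu(du)$ in $c$ at $c=1$). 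This yields the identity for exponentials, hence for all $f\in\mathcal{S}(\bbr^d)$ by Fourier inversion and dominated convergence, the integrability of the $u$-integral being controlled by splitting at $\|u\|=1$ and using $\|f(X+u)-f(X)\|\le \min(2M_0(f), M_1(f)\|u\|)$ together with $\int(1\wedge\|u\|)\nu(du)<+\infty$.

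For the \emph{converse}, assume the displayed identity holds for all $f\in\mathcal{S}(\bbr^d)$. The natural route is to plug in $f=\varphi_t$ with $\varphi_t(x)=e^{it\langle x;\xi\rangle}$ as in the proof of Theorem~\ref{thm:multid}, obtaining an ODE in $t$ for $\Phi_t(\xi)=\bbe e^{it\langle X;\xi\rangle}$: the identity becomes
\begin{align*}
\frac{d}{dt}\Phi_t(\xi) = i\Phi_t(\xi)\Big(\langle\xi;b_0\rangle + \frac{\alpha}{t}\int_{\bbr^d}\big(e^{it\langle\xi;u\rangle}-1\big)\nu(du)\Big),
\end{align*}
where the factor $1/t$ comes from $\langle\xi;\bbe X\,e^{it\langle X;\xi\rangle}\rangle = \frac{1}{it}\frac{d}{dt}\Phi_t(\xi)$ being matched against $\alpha\bbe(e^{it\langle\xi;u\rangle}-1)$ integrated (one must be a little careful: with $b_0$ one should rather use $b$ and the truncated representation, or absorb the $\bbone_D$ term — I would route through \eqref{eq:characStable} directly since $\alpha<1$). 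Using $\nu(du)=c^{-\alpha}T_c(\nu)(du)$ one checks $\int(e^{it\langle\xi;u\rangle}-1)\nu(du)=t^\alpha\int(e^{i\langle\xi;u\rangle}-1)\nu(du)$, so the bracket is $\langle\xi;b_0\rangle + \alpha t^{\alpha-1}\psi(\xi)$ with $\psi(\xi)=\int(e^{i\langle\xi;u\rangle}-1)\nu(du)$; integrating from $0$ with $\Phi_0(\xi)=1$ gives $\Phi_t(\xi)=\exp(it\langle\xi;b_0\rangle + t^\alpha\psi(\xi))$, and $t=1$ yields $\varphi(\xi)=\exp(i\langle\xi;b_0\rangle+\psi(\xi))$, i.e. \eqref{eq:characStable}, so $X$ is $\alpha$-stable with the asserted parameters.

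The main obstacle is the converse's integrability/regularity bookkeeping near $t=0$: the term $t^{\alpha-1}$ blows up as $t\to0^+$, yet $t^\alpha\psi(\xi)\to0$, so one must justify the ODE derivation and the integration constant carefully — in particular that $t\mapsto\Phi_t(\xi)$ is differentiable, that differentiation under the expectation is licit (here the finiteness of $\bbe|X_i|$ is \emph{not} assumed, so unlike Theorem~\ref{thm:multid} one cannot directly split off $\bbe X$; instead one should work with the identity tested against $\varphi_t$ and extract the needed finiteness of first moments a posteriori, or — cleaner — first prove $X$ is ID with Lévy measure $\nu$ via a truncation argument reducing to Theorem~\ref{thm:multid} applied to $\nu\bbone_{\{\|u\|>\delta\}}$ plus a compactly supported correction, then let $\delta\to0$). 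I would adopt the latter: establish $X\sim ID(\,\cdot\,,0,\nu)$ first by the Theorem~\ref{thm:multid}-plus-truncation scheme advertised in the section preamble, which sidesteps the moment issue, and only then read off stability and the exact value of the shift parameter from \eqref{transform} and the relation between $b$, $b_0$ and $\bbe X$ when the latter exists (it does not for $\alpha\in(0,1)$, so the parameter is identified through $b_0$ in \eqref{eq:characStable}).
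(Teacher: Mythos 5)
Your direct implication takes a genuinely different route from the paper's and, once the ``verify on exponentials'' step is read as shorthand, it is correct. The paper truncates $\nu$ at radius $R$, applies Theorem~\ref{thm:multid} to the resulting $X_R$ (which has finite first moment since $\nu_R$ has bounded support), performs an integration by parts in the radial variable $r$ against $dr/r^\alpha$ to trade $\langle\nabla(f)(x+u);u\rangle$ for $\alpha(f(x+u)-f(x))$ plus a boundary term at $r=R$, and then passes to the limit $R\to\infty$. You instead compute both sides in Fourier variables via $\bbe g(X)=(2\pi)^{-d}\int\mathcal{F}(g)(\xi)\varphi(\xi)d\xi$, convert $\mathcal{F}(\langle x;\nabla(f)\rangle)$ into a divergence in $\xi$, integrate by parts once in $\xi$, and finish with the Euler identity $\langle\xi;\nabla\psi(\xi)\rangle=\alpha\psi(\xi)$ that \eqref{transform} encodes, where $\psi(\xi)=\int(e^{i\langle\xi;u\rangle}-1)\nu(du)$. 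Your route is cleaner for the exactly $\alpha$-homogeneous case, but the paper's truncation-plus-IBP skeleton is deliberately structured to generalize to Theorems~\ref{thm:caracSD1} and~\ref{thm:caracSD2}, where no such homogeneity is available and the radial IBP must be carried out against $k_x(r)dr/r$ with nontrivial boundary contributions. Do also note that a literal substitution $f(x)=e^{i\langle\xi;x\rangle}$ is not legitimate: for $\alpha<1$ the expectation $\bbe\,\langle X;\xi\rangle e^{i\langle\xi;X\rangle}$ does not converge absolutely, so your argument must be carried out entirely in the Fourier computation with Schwartz $f$, where every term is well defined.

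The converse, however, has a genuine gap. You correctly identify the obstruction --- without $\bbe\|X\|<\infty$ you cannot differentiate $\Phi_t(\xi)$ under the expectation and split off $\bbe X$ as in Theorem~\ref{thm:multid}, and the exponential $e^{it\langle x;\xi\rangle}$ is not in $\mathcal{S}(\bbr^d)$ --- but the workaround you then adopt does not close it. One cannot ``first prove $X$ is ID with L\'evy measure $\nu$ via a truncation argument reducing to Theorem~\ref{thm:multid} applied to $\nu\bbone_{\{\|u\|>\delta\}}$'': in the converse, $X$ is a black box about which you know only the displayed identity for Schwartz $f$; there is no L\'evy measure of $X$ to truncate, and the analogous identity with $\nu_\delta$ in place of $\nu$ is simply not available to you. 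The truncation-plus-Theorem-\ref{thm:multid} scheme advertised in the section preamble applies to the \emph{direct} parts, where $X$ is a known stable vector whose L\'evy measure one is free to chop. The correct path is the one you sketched and then set aside: rewrite the identity for Schwartz $f$ in Fourier variables, interpret $\langle\xi;\nabla(\varphi_X)(\xi)\rangle$ as a tempered distribution paired against $\mathcal{F}(f)$, deduce the pointwise equation for $\xi\ne 0$ from continuity of $\varphi_X$, and solve the resulting radial ODE in hyperspherical coordinates --- which is precisely your ODE in $t$, with $t$ the radial coordinate, once legitimately derived. Your observation that the $t^{\alpha-1}$ coefficient is integrable near $0$ and therefore compatible with $\varphi_X(0)=1$ is right; that is where the subtlety ends, not where it begins.
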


\begin{proof}
Let us first assume that $X$ is a stable random vector in $\bbr^d$ with parameters
 $b\in \bbr^d$, stability index $\alpha\in (0,1)$ and L\'evy measure $\nu$. 
 Then, \cite[Theorem $14.3$, (ii)]{S}, $\nu$ is given by
\begin{align*}
\nu(du)=\bbone_{(0,+\infty)}(r) \bbone_{\mathbb{S}^{d-1}}(x) \frac{dr}{r^{1+\alpha}} \sigma(dx),
\end{align*}
where $\sigma$ is a finite positive measure on the Euclidean unit sphere of $\bbr^d$, and let $b_0 = b - \int_{\|u\|\le 1}u\nu(du)$.
Next, let $R>1$,
\begin{align*}
\nu_R(du):=\bbone_{(0,R)}(r)\bbone_{\mathbb{S}^{d-1}}(x) \dfrac{dr}{r^{\alpha+1}}\sigma(dx).
\end{align*}
and, let $X_R$ be the ID random vector defined through its characteristic function by
\begin{align*}
\varphi_R(\xi):=\exp\left(i\langle \xi; b_0 \rangle+\int_{\bbr^d}\left(e^{i \langle \xi; u \rangle}-1\right)\nu_R(du)\right), \quad \xi\in \bbr^d.
\end{align*}
Note, in particular, that $X_R$ is such that $\bbe \|X_R\|<+\infty$. Then, by Theorem \ref{thm:multid}, for all $g\in\mathcal{S}(\bbr^d)$,
\begin{align*}
\bbe X_R g(X_R)=b_0 \bbe g(X_R)+\bbe\int_{\bbr^d} g(X_R+u)u \nu_R(du).
\end{align*}
Now, choosing $g=\partial_i(f)$ for some $f\in \mathcal{S}(\bbr^d)$ and for $i\in\{1,\dots,d\}$, it follows that
\begin{align}\label{eq:3.5}
\bbe X_R \partial_i(f)(X_R)=b_0 \bbe \partial_i(f)(X_R)+\bbe\int_{\bbr^d} \partial_i(f)(X_R+u)u \nu_R(du).
\end{align}
To continue, project the vectorial equality \eqref{eq:3.5} onto the 
direction $e_i=(0,\dots,0,1,0,\dots,0)$, to get, for all $i\in \{1,\dots, d\}$,
\begin{align}\label{eq:3.6}
\bbe X_{R,i} \partial_i(f)(X_R)=b_{0,i} \bbe \partial_i(f)(X_R)+\bbe\int_{\bbr^d} \partial_i(f)(X_R+u)u_i \nu_R(du), 
\end{align}
where $X_{R,i}$ and $b_{0,i}$ are the i-th coordinates of $X_R$ and of $b_0$ respectively.  
Adding-up these last identities, for $i \in \{1,\dots,d\}$, leads to 
\begin{align*}
\bbe \langle X_R; \nabla(f)(X_R) \rangle=\langle b_0 ; \bbe \nabla (f)(X_R)\rangle+\bbe \int_{\bbr^d} \langle \nabla(f)(X_R+u); u \rangle \nu_R(du).
\end{align*}
Now, observe that $X_R$ converges in distribution towards $X$ since by 
the Lebesgue dominated convergence theorem, $\varphi_R(\xi) \underset{R\rightarrow+\infty}{\longrightarrow} \varphi(\xi)$ , for all $\xi \in \bbr^d$.  Hence,
\begin{align*}
\underset{R\rightarrow +\infty}{\lim} \bbe \langle X_R; \nabla(f)(X_R) \rangle =  \bbe \langle X; \nabla(f)(X) \rangle,\quad \underset{R\rightarrow +\infty}{\lim} \langle b_0 ; \bbe \nabla (f)(X_R)\rangle = \langle b_0 ; \bbe \nabla (f)(X)\rangle.
\end{align*}
Moreover, from the polar decomposition of the L\'evy measure $\nu_R$,
\begin{align*}
\bbe \int_{\bbr^d} \langle \nabla(f)(X_R+u); u \rangle \nu_R(du)=\bbe \int_{(0,R)\times \mathbb{S}^{d-1}} \langle \nabla(f)(X_R+rx); x \rangle \dfrac{dr}{r^\alpha}\sigma(dx).
\end{align*}
Next, for all $z\in \bbr^d$
\begin{align*}
\int_{(0,R)\times \mathbb{S}^{d-1}} \langle \nabla(f)(z+rx); x \rangle \dfrac{dr}{r^\alpha}\sigma(dx)=\int_0^R \left(\int_{\mathbb{S}^{d-1}} \langle \nabla(f)(z+rx); x \rangle \sigma(dx)\right) \dfrac{dr}{r^\alpha}.
\end{align*}
Set $H_z(r)= \int_{\mathbb{S}^{d-1}} f(z+rx)\sigma(dx)$, for all $r>0$ and all $z\in\bbr^d$. Moreover, for all $r>0$ 
\begin{align*}
\frac{d}{dr} \left(H_z(r)\right)=\int_{\mathbb{S}^{d-1}} \langle \nabla(f)(z+rx); x \rangle \sigma(dx).
\end{align*}
Thus,
\begin{align*}
\int_{(0,R)\times \mathbb{S}^{d-1}} \langle \nabla(f)(z+rx); x \rangle \dfrac{dr}{r^\alpha}\sigma(dx)=\int_{(0,R)} \frac{d}{dr} \left(H_z(r)-H_z(0)\right) \dfrac{dr}{r^\alpha}.
\end{align*}
A standard integration by parts argument, combined with $\alpha\in (0,1)$, implies that
\begin{align*}
\int_{(0,R)\times \mathbb{S}^{d-1}} \langle \nabla(f)(z+rx); x \rangle \dfrac{dr}{r^\alpha}\sigma(dx)&=\dfrac{(H_z(R)-H_z(0))}{R^\alpha}+\alpha \int_{0}^R \left(H_z(r)-H_z(0)\right) \dfrac{dr}{r^{\alpha+1}}\\
&=\dfrac{(H_z(R)-H_z(0))}{R^\alpha}+\alpha \int_{\bbr^d} \left(f(z+u)-f(z)\right) \nu_R(du).
\end{align*}
Next, integrating with respect to the law of $X_R$, one gets that
\begin{align*}
\bbe \int_{\bbr^d} \langle \nabla(f)(X_R+u); u \rangle \nu_R(du)&= \frac{1}{R^\alpha}\bbe \int_{\mathbb{S}^{d-1}} \left(f(X_R+Rx)-f(X_R)\right)\sigma(dx)\\
&\quad\quad+\alpha \bbe\int_{\bbr^d}  \left(f(X_R+u)-f(X_R)\right) \nu_R(du).
\end{align*}
Again, since $\alpha\in (0,1)$, $f\in \mathcal{S}(\bbr^d)$ and $\sigma\left(\mathbb{S}^{d-1}\right)<+\infty$,
\begin{align*}
\underset{R\rightarrow +\infty}{\lim} \frac{1}{R^\alpha}\bbe \int_{\mathbb{S}^{d-1}} \left(f(X_R+Rx)-f(X_R)\right)\sigma(dx)=0.
\end{align*}
Finally, to conclude the direct implication, one needs to prove that
\begin{align*}
\underset{R\rightarrow +\infty}{\lim} \bbe\int_{\bbr^d}  \left(f(X_R+u)-f(X_R)\right) \nu_R(du)=\bbe\int_{\bbr^d}  \left(f(X+u)-f(X)\right) \nu(du).
\end{align*}
To this end, for all $R>1$ and all $z\in \bbr^d$, set $F_R(z)=\int_{\bbr^d} \left(f(z+u)-f(z)\right) \nu_R(du)$ and $F(z)=\int_{\bbr^d} \left(f(z+u)-f(z)\right) \nu(du)$. Since $\alpha\in (0,1)$ and $f\in \mathcal{S}(\bbr^d)$, it is clear that both functions are well-defined, bounded and continuous on $\bbr^d$. Moreover, for all $R>1$ and all $z\in \bbr^d$
\begin{align*}
\left|F_R(z)-F(z) \right|&=\left| \int_{\bbr^d} \left(f(z+u)-f(z)\right) \bbone_{\{\|u\|\geq R\}}\nu(du)\right|\leq 2 \|f \|_\infty \int_{\|u\|\geq R}\nu(du).
\end{align*}
Thus, $F_R$ converges uniformly on $\bbr^d$ towards $F$. Finally, since $X_R$ converges in distribution to $X$, 
\begin{align*}
\underset{R\rightarrow+\infty}{\lim} \bbe \int_{\bbr^d} \left(f(X_R+u)-f(X_R)\right) \nu_R(du)=\bbe \int_{\bbr^d} \left(f(X+u)-f(X)\right) \nu(du),
\end{align*}
which concludes the first part of the proof.
To prove the converse implication, let us assume that, for all $f\in\mathcal{S}(\bbr^d)$,
\begin{align}\label{eq:SteinStableM}
\bbe \langle X; \nabla(f)(X)\rangle=\bbe \langle b_0; \nabla(f)(X)\rangle+\alpha \int_{\bbr^d} \left(f(X+u)-f(X)\right)\nu(du).
\end{align}
Denoting $\varphi_X$ the characteristic function of $X$, the equality \eqref{eq:SteinStableM} can be rewritten as 
\begin{align*}
\int_{\bbr^d} \mathcal{F}(\langle x;\nabla(f)\rangle)(\xi) \varphi_X(\xi) d\xi&= \int_{\bbr^d} \mathcal{F}(\langle b_0;\nabla(f)\rangle)(\xi) \varphi_X(\xi) d\xi\\
&\quad+\alpha \int_{\bbr^d} \mathcal{F}(f)(\xi)\left(\int_{\bbr^d} \left(e^{i \langle u; \xi \rangle}-1\right) \nu(du)\right)\varphi_X(\xi) d\xi.
\end{align*}
Using standard Fourier arguments and the fact that $f\in \mathcal{S}(\bbr^d)$,
\begin{align*}
\int_{\bbr^d} \mathcal{F}(f)(\xi)\langle \xi ; \nabla(\varphi_X)(\xi)\rangle d\xi=  \int_{\bbr^d} \mathcal{F}(f)(\xi) \left(i\langle b_0;\xi \rangle+\alpha\int_{\bbr^d} \left(e^{i \langle u;\xi\rangle }-1\right) \nu(du)\right)\varphi_X(\xi) d\xi,
\end{align*}
where the left-hand side has to be understood as a duality bracket between the Schwartz function $\mathcal{F}(f)$ and the tempered distribution $\langle \xi ; \nabla(\varphi_X)\rangle$. 
Since $\varphi_X$ is continuous on $\bbr^d$, for all $\xi \in \bbr^d$ with $\xi\ne 0$
\begin{align*}
\langle \xi ; \nabla(\varphi_X)(\xi)\rangle 
=\left(i\langle b_0;\xi \rangle+\alpha\int_{\bbr^d} \left(e^{i \langle u;\xi\rangle }-1\right) \nu(du)\right) \varphi_X(\xi).
\end{align*}
Moreover, $\varphi_X(0)=1$. Now, in order to solve the previous linear partial differential equation of order one, let us change the coordinates system $(\xi_1,\dots,\xi_d)$ into the hyper-spherical one $(r,\theta_1,\dots, \theta_{d-1})$ where $r>0$, $\theta_i\in [0, \pi]$, for all $i \in \{1,\dots,d-2\}$ and $\theta_{d-1}\in [0, 2\pi)$. Noting that
\begin{align*}
\sum_{i=1}^d \xi_i \dfrac{\partial \theta_j}{\partial \xi_i}=0, \quad j\in \{1,\dots,d-1\},
\end{align*}
and using the scaling property of the L\'evy measure $\nu$, i.e., \eqref{transform}, one gets
\begin{align*}
r \partial_r \left(\varphi_X\right)(r x) =\left(r i\langle b_0;x \rangle+\alpha r^\alpha \int_{\bbr^d} \left(e^{i \langle u; x\rangle }-1\right) \nu(du)\right) \varphi_X(r x),\quad r>0, x\in \mathbb{S}^{d-1}.
\end{align*}
For any fixed $x\in \mathbb{S}^{d-1}$, this linear differential equation admits a unique solution which is given by
\begin{align*}
\varphi_X(rx)= \exp\left( (i\langle b_0, rx\rangle+ \int_{\bbr^d} \left(e^{i \langle u; rx \rangle}-1\right)\nu(du)\right),\quad r>0,
\end{align*}
since $\varphi_X(0)=1$. Then, $X$ is a stable random vector in $\bbr^d$ with parameter $b$, stability index $\alpha$ and L\'evy measure $\nu$.
\end{proof}
\noindent
This ensuing result deals with the Cauchy case.

\begin{thm}\label{thm:caracSteinStable3}
Let $X$ be a random vector in $\bbr^d$. Let $b\in \bbr^d$ and let $\nu$ be a L\'evy measure on $\bbr^d$ such that, for all $c>0$
\begin{align*}
\nu(du)=c^{-1} T_c(\nu)(du).
\end{align*}
Moreover, let $\sigma$, the spherical part of $\nu$, be such that
\begin{align*}
\int_{\mathbb{S}^{d-1}} x \sigma(dx)=0.
\end{align*}
Then,
\begin{align}\label{eq:caraccauchy}
\bbe \langle X; \nabla(f)(X)\rangle&=\bbe \langle b; \nabla(f)(X)\rangle
+ \int_{\bbr^d} \left(f(X+u)-f(X)-\langle \nabla(f)(X);u\rangle\bbone_{\|u\|\leq 1} \right)\nu(du),
\end{align}
for all $f\in\mathcal{S}(\bbr^d)$ if and only if $X$ is a stable random vector in $\bbr^d$ with parameter $b$, stability index $\alpha=1$ and L\'evy measure $\nu$.
\end{thm}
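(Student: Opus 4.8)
The plan is to mirror the proof of Theorem~\ref{thm:caracSteinStable2}, with two adjustments forced by the value $\alpha=1$: the truncated laws must be handled through the \emph{compensated} representation~\eqref{eq:characID} rather than through a drift representation (since $\int_{\|u\|\le1}\|u\|\,\nu(du)=+\infty$ here), and the hypothesis $\int_{\mathbb{S}^{d-1}}x\,\sigma(dx)=0$ will be used repeatedly to cancel otherwise divergent first-order contributions. First I would record that, by \eqref{transform} with exponent $1$ and \cite[Theorem~$14.3$]{S}, such a L\'evy measure $\nu$ has the polar decomposition $\nu(du)=\bbone_{(0,+\infty)}(r)\bbone_{\mathbb{S}^{d-1}}(x)\,dr\,\sigma(dx)/r^2$.

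For the direct implication, assume $X$ is $1$-stable with parameter $b$ and L\'evy measure $\nu$. I would set, for $R>1$, $\nu_R:=\nu|_{\{\|u\|<R\}}$ and let $X_R\sim ID(b,0,\nu_R)$ in the sense of \eqref{eq:characID}. Then $\bbe\|X_R\|<+\infty$, dominated convergence gives $\varphi_{X_R}\to\varphi_X$ pointwise, hence $X_R\to X$ in distribution, and --- the key point --- $\bbe X_R=b+\int_{\|u\|>1}u\,\nu_R(du)=b$ for every $R$, because $\int_{1<\|u\|<R}u\,\nu(du)=\big(\int_1^R dr/r\big)\int_{\mathbb{S}^{d-1}}x\,\sigma(dx)=0$. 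Applying Theorem~\ref{thm:multid} to $X_R$ with $g=\partial_i(f)$, $f\in\mathcal{S}(\bbr^d)$, and summing over $i$, I obtain
\begin{align*}
\bbe\langle X_R;\nabla(f)(X_R)\rangle=\langle b;\bbe\nabla(f)(X_R)\rangle+\bbe\int_{\bbr^d}\langle\nabla(f)(X_R+u)-\nabla(f)(X_R);u\rangle\nu_R(du).
\end{align*}
Next I would fix $z$, set $H_z(r)=\int_{\mathbb{S}^{d-1}}f(z+rx)\sigma(dx)$, and note that $\int_{\mathbb{S}^{d-1}}\langle\nabla(f)(z);x\rangle\sigma(dx)=\langle\nabla(f)(z);\int_{\mathbb{S}^{d-1}}x\,\sigma(dx)\rangle=0$, so that the inner spherical average of $\langle\nabla(f)(z+u)-\nabla(f)(z);u\rangle$ is $H_z'(r)$ with $H_z'(0)=0$, whence $H_z(r)-H_z(0)=O(r^2)$ near $0$ and $\int_0^R H_z'(r)\,dr/r$ is absolutely convergent; an integration by parts (boundary term at $0$ vanishing) then gives
\begin{align*}
\int_{\bbr^d}\langle\nabla(f)(z+u)-\nabla(f)(z);u\rangle\nu_R(du)=\frac{H_z(R)-H_z(0)}{R}+\int_{\|u\|<R}\big(f(z+u)-f(z)-\langle\nabla(f)(z);u\rangle\bbone_{\|u\|\le1}\big)\nu(du),
\end{align*}
the insertion of the compensator being legitimate ($\int_{\|u\|\le1}u\,\nu(du)=0$) and making the last integral absolutely convergent. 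Integrating against the law of $X_R$, the boundary term is $\le 2\|f\|_\infty\sigma(\mathbb{S}^{d-1})/R\to0$; writing $F_R,F$ for the compensated integrals against $\nu_R,\nu$, one has $\|F_R-F\|_\infty\le2\|f\|_\infty\int_{\|u\|\ge R}\nu(du)\to0$, $F$ is bounded and continuous, and $z\mapsto\langle z;\nabla(f)(z)\rangle$ and $\nabla(f)$ are bounded and continuous; with $X_R\to X$ in distribution this lets me pass to the limit $R\to+\infty$ and conclude \eqref{eq:caraccauchy}.

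For the converse, assuming \eqref{eq:caraccauchy} for all $f\in\mathcal{S}(\bbr^d)$, I would take Fourier transforms as in the proof of Theorem~\ref{thm:caracSteinStable2} --- the term $\int\mathcal{F}(f)(\xi)\langle\xi;\nabla(\varphi_X)(\xi)\rangle\,d\xi$ being read as a duality pairing between $\mathcal{F}(f)\in\mathcal{S}(\bbr^d)$ and the tempered distribution $\langle\xi;\nabla(\varphi_X)\rangle$ --- and, using the continuity of $\varphi_X$, deduce that for all $\xi\ne0$,
\begin{align*}
\langle\xi;\nabla(\varphi_X)(\xi)\rangle=\big(i\langle b;\xi\rangle+\psi(\xi)\big)\varphi_X(\xi),\quad \varphi_X(0)=1,\quad \psi(\xi):=\int_{\bbr^d}\big(e^{i\langle u;\xi\rangle}-1-i\langle u;\xi\rangle\bbone_{\|u\|\le1}\big)\nu(du).
\end{align*}
A radial change of variables together with \eqref{transform} and $\int_{\mathbb{S}^{d-1}}x\,\sigma(dx)=0$ (which renders the truncation level in the compensator immaterial) yields the homogeneity $\psi(\lambda\xi)=\lambda\psi(\xi)$, $\lambda>0$. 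Passing to hyperspherical coordinates $(r,\theta_1,\dots,\theta_{d-1})$ and using $\sum_i\xi_i\,\partial\theta_j/\partial\xi_i=0$, the left side becomes $r\,\partial_r(\varphi_X)(rx)$, $x\in\mathbb{S}^{d-1}$, so the equation reduces to the first-order linear ODE $\partial_r(\varphi_X)(rx)=(i\langle b;x\rangle+\psi(x))\varphi_X(rx)$, whose unique solution with $\varphi_X(0)=1$ is $\varphi_X(rx)=\exp(i\langle b;rx\rangle+\psi(rx))$; hence $X$ is $1$-stable with parameter $b$ and L\'evy measure $\nu$.

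The main obstacle, the genuinely new point compared with $\alpha\in(0,1)$, is that for $\alpha=1$ none of the radial integrals in play converges absolutely on its own near the origin: this concerns $\int\langle\nabla(f)(z+u);u\rangle\nu_R(du)$, $\int(f(z+u)-f(z))\nu(du)$, the compensator $\int_{\|u\|\le1}\langle\nabla(f)(z);u\rangle\nu(du)$, and the centering $\int_{\|u\|\le1}u\,\nu(du)$, each of which is finite only because $\int_{\mathbb{S}^{d-1}}x\,\sigma(dx)=0$. The delicate part is therefore the careful bookkeeping of exactly where this symmetry enters --- to force $\bbe X_R=b$, to obtain $H_z'(0)=0$ and hence a vanishing boundary term in the integration by parts, and to obtain the homogeneity of $\psi$; once these points are handled, the argument runs exactly as for Theorem~\ref{thm:caracSteinStable2}.
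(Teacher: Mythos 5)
Your proof is correct and follows the same overall plan as the paper's (truncate, apply Theorem~\ref{thm:multid}, do a radial integration by parts, pass to the limit, then a Fourier argument with a radial ODE for the converse), but with two variations worth noting. First, you truncate only the large jumps, $\nu_R=\nu|_{\{\|u\|<R\}}$, whereas the paper truncates on $(1/R,R)$ precisely so that $\int_{\|u\|\le1}\|u\|\,\nu_R(du)<+\infty$. Your version works because the boundary term at $r=0$ in the radial integration by parts vanishes \emph{exactly}, since $H_z'(0)=\langle \nabla(f)(z);\int_{\mathbb{S}^{d-1}}x\,\sigma(dx)\rangle=0$; the paper instead shows the corresponding term $R\,(H_z(1/R)-H_z(0))$ tends to $0$ as $R\to+\infty$. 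The price you pay is that $\int_{\|u\|<R}(f(z+u)-f(z))\,\nu(du)$ and $\int_{\|u\|\le1}u\,\nu(du)$ are not absolutely convergent and must be read as iterated integrals in polar coordinates (where the spherical average kills the divergence); you flag this and use it correctly, but it is worth making explicit that all identities hold in this iterated sense. Second, for the converse you observe that the scaling \eqref{transform} combined with $\int_{\mathbb{S}^{d-1}}x\,\sigma(dx)=0$ makes the L\'evy exponent $\psi$ positively $1$-homogeneous, so the radial ODE has \emph{constant} coefficients and is solved in one line; the paper instead integrates via the decomposition $\int_0^r (G(R,x)+J(R,x))\,dR$ and a Fubini argument. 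Your homogeneity observation is cleaner and makes transparent why the truncation level in the compensator is immaterial. Both routes are valid; yours is slightly more economical, while the paper's double truncation keeps every integral in play absolutely convergent at the expense of an additional boundary term.
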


\begin{proof}
The proof is similar to the one of Theorem~\ref{thm:caracSteinStable2}. The direct part goes with a double truncation procedure together with an integration by parts and, then, passing to the limit. Let us first assume that $X$ is stable with parameter $b$, stability index $\alpha=1$, L\'evy measure $\nu$ and $\sigma$ the spherical part. Then, \cite[Theorem $14.3$, (ii)]{S},
\begin{align*}
\nu(du)=\bbone_{(0,+\infty)}(r) \bbone_{\mathbb{S}^{d-1}}(x) \frac{dr}{r^{2}} \sigma(dx).
\end{align*}
Let $R>1$ be a truncation parameter, let
\begin{align*}
\nu_R(du):=\bbone_{(\frac{1}{R},R)}(r)\bbone_{\mathbb{S}^{d-1}}(x) \dfrac{dr}{r^{2}}\sigma(dx),
\end{align*}
and, let $X_R$ be the ID random vector defined through its characteristic function by
\begin{align*}
\varphi_R(\xi):=\exp\left(i\langle \xi; b \rangle+\int_{\bbr^d}\left(e^{i \langle \xi; u \rangle}-1-i\langle \xi; u\rangle\bbone_{\|u\|\leq 1}\right)\nu_R(du)\right), \quad \xi\in \bbr^d.
\end{align*}
Note, in particular, that $X_R$ is such that $\bbe \|X_R\|<+\infty$. Then, by Theorem \ref{thm:multid}, for all $g\in\mathcal{S}(\bbr^d)$,
\begin{align*}
\bbe X_R g(X_R)=b \bbe g(X_R)+\bbe\int_{\bbr^d} \left(g(X_R+u)-g(X_R)\bbone_{\|u\|\leq 1}\right)u \nu_R(du).
\end{align*}
Performing computations similar to those in the proof of Theorem~\ref{thm:caracSteinStable2}, for all $f\in\mathcal{S}\left(\bbr^d\right)$,
\begin{align}\label{eq:direct}
\bbe \langle X_R; \nabla(f)(X_R) \rangle=\langle b ; \bbe \nabla (f)(X_R)\rangle+\bbe \int_{\bbr^d} \langle \nabla(f)(X_R+u)-\nabla(f)(X_R)\bbone_{\|u\|\leq 1}; u \rangle \nu_R(du).
\end{align}
Now, since $X_R$ converges in distribution towards $X$, as $R$ tends to $+\infty$,
\begin{align}\label{lim:1}
\underset{R\rightarrow +\infty}{\lim} \bbe \langle X_R; \nabla(f)(X_R) \rangle =  \bbe \langle X; \nabla(f)(X) \rangle,\quad \underset{R\rightarrow +\infty}{\lim} \langle b ; \bbe \nabla (f)(X_R)\rangle = \langle b ; \bbe \nabla (f)(X)\rangle.
\end{align}
Next, let us study the second term on the right-hand side of \eqref{eq:direct}. First, since $R>1$, 
\begin{align*}
\bbe \int_{\bbr^d} &\langle \nabla(f)(X_R+u)-\nabla(f)(X_R)\bbone_{\|u\|\leq 1}; u \rangle \nu_R(du)\\
&=\bbe \int_{\|u\|\leq 1} \langle \nabla(f)(X_R+u)-\nabla(f)(X_R); u \rangle \nu_R(du)+\int_{\|u\|\geq 1}\langle \nabla(f)(X_R+u); u \rangle \nu_R(du)\\
&=-\bbe \int_{\|u\|\leq 1} \langle \nabla(f)(X_R); u \rangle \nu_R(du)+\int_{\bbr^d}\langle \nabla(f)(X_R+u); u \rangle \nu_R(du).
\end{align*}
From the polar decomposition of the L\'evy measure $\nu_R$,
\begin{align*}
\bbe \int_{\bbr^d} \langle \nabla(f)(X_R+u); u \rangle \nu_R(du)=\bbe \int_{(\frac{1}{R},R)\times \mathbb{S}^{d-1}} \langle \nabla(f)(X_R+rx); x \rangle \dfrac{dr}{r}\sigma(dx).
\end{align*}
Then, for all $z\in \bbr^d$,
\begin{align*}
\int_{(\frac{1}{R},R)\times \mathbb{S}^{d-1}} \langle \nabla(f)(z+rx); x \rangle \dfrac{dr}{r}\sigma(dx)=\int_{\frac{1}{R}}^R \left(\int_{\mathbb{S}^{d-1}} \langle \nabla(f)(z+rx); x \rangle \sigma(dx)\right) \dfrac{dr}{r}.
\end{align*}
Setting $H_z(r)= \int_{\mathbb{S}^{d-1}} f(z+rx)\sigma(dx)$, for all $r>0$ and all $z\in \bbr^d$, it follows that 
\begin{align*}
\frac{d}{dr} \left(H_z(r)\right)=\int_{\mathbb{S}^{d-1}} \langle \nabla(f)(z+rx); x \rangle \sigma(dx).
\end{align*}
Thus,
\begin{align*}
\int_{(\frac{1}{R},R)\times \mathbb{S}^{d-1}} \langle \nabla(f)(z+rx); x \rangle \dfrac{dr}{r}\sigma(dx)=\int_{(\frac{1}{R},R)} \frac{d}{dr} \left(H_z(r)-H_z\left(0\right)\right) \dfrac{dr}{r}.
\end{align*}
A standard integration by parts argument implies that
\begin{align*}
\int_{(\frac{1}{R},R)\times \mathbb{S}^{d-1}} \langle \nabla(f)(z+rx); x \rangle \dfrac{dr}{r}\sigma(dx)&=\dfrac{\left(H_z(R)-H_z\left(0\right)\right)}{R}-R\left(H_z\left(\frac{1}{R}\right)-H_z\left(0\right)\right)\\
&\quad\quad+\int_{\frac{1}{R}}^R \left(H_z(r)-H_z\left(0\right)\right) \dfrac{dr}{r^{2}}\\
&=\dfrac{\left(H_z(R)-H_z\left(0\right)\right)}{R}-R\left(H_z\left(\frac{1}{R}\right)-H_z\left(0\right)\right)\\
&\quad\quad+\int_{(\frac{1}{R},R)\times \mathbb{S}^{d-1}} \left(f(z+rx)-f\left(z\right)\right)\frac{dr}{r^2}\sigma(dx).
\end{align*}
Integrating with respect to the law of $X_R$, one gets
\begin{align*}
\bbe \int_{\bbr^d} \langle \nabla(f)(X_R+u); u \rangle \nu_R(du)&= \frac{1}{R}\bbe \int_{\mathbb{S}^{d-1}} \left(f(X_R+Rx)-f\left(X_R\right)\right)\sigma(dx)\\
&\quad\quad- R\,\bbe \int_{\mathbb{S}^{d-1}} \left(f\left(X_R+\frac{x}{R}\right)-f\left(X_R\right)\right)\sigma(dx)\\
&\quad\quad+\bbe\int_{(\frac{1}{R},R)\times \mathbb{S}^{d-1}} \left(f(X_R+rx)-f\left(X_R\right)\right) \frac{dr}{r^2}\sigma(dx).
\end{align*}
Then, since $f\in \mathcal{S}(\bbr^d)$ and $\sigma\left(\mathbb{S}^{d-1}\right)<+\infty$,
\begin{align*}
\underset{R\rightarrow +\infty}{\lim} \frac{1}{R}\bbe \int_{\mathbb{S}^{d-1}} \left(f(X_R+Rx)-f(X_R)\right)\sigma(dx)=0.
\end{align*}
Moreover, 
\begin{align*}
\underset{R\rightarrow +\infty}{\lim} R\bbe \int_{\mathbb{S}^{d-1}} \left(f\left(X_R+\frac{x}{R}\right)-f\left(X_R\right)\right)\sigma(dx)&=\bbe \int_{\mathbb{S}^{d-1}} \langle \nabla(f)(X);x\rangle \sigma(dx)=0.
\end{align*}
Let us now study the convergence, as $R\rightarrow +\infty$, of
\begin{align*}
\bbe\int_{(\frac{1}{R},R)\times \mathbb{S}^{d-1}} \left(f(X_R+rx)-f\left(X_R\right)- \langle \nabla(f)(X_R); rx \rangle\bbone_{r\leq 1} \right) \frac{dr}{r^2}\sigma(dx).
\end{align*}
To this end, let $F_R$ and $F$ be the bounded and continuous functions on $\mathbb{R}^d$ respectively defined, by
\begin{align*}
F_R(z)=\int_{(\frac{1}{R},R)\times \mathbb{S}^{d-1}} \left(f(z+rx)-f\left(z\right)- \langle \nabla(f)(z); rx \rangle\bbone_{r\leq 1} \right) \frac{dr}{r^2}\sigma(dx),\quad z\in\bbr^d,
\end{align*}
and by
\begin{align*}
F(z)=\int_{\bbr^d} \left(f(z+u)-f\left(z\right)- \langle \nabla(f)(z); u \rangle\bbone_{\|u\|\leq 1} \right) \nu(du), \quad z\in\bbr^d.
\end{align*}
Now, note that, for all $z\in \bbr^d$ and all $R>1$,
\begin{align*}
F(z)-F_R(z)= I+II,
\end{align*}
where,
\begin{align*}
I&:=\int_{\bbr^d} \left(f(z+u)-f\left(z\right)- \langle \nabla(f)(z); u \rangle\bbone_{\|u\|\leq 1} \right)\bbone_{0<\|u\|\leq \frac{1}{R}} \nu(du),\\
II&:= \int_{\bbr^d} \left(f(z+u)-f\left(z\right) \right)\bbone_{\|u\|\geq R} \nu(du).
\end{align*}
Then, by standard inequalities, since $f\in\mathcal{S}(\bbr^d)$ and $\sigma(\mathbb{S}^{d-1})<+\infty$, 
\begin{align*}
\left|I\right|\leq \dfrac{\sigma(\mathbb{S}^{d-1})}{2R}M_2(f),\quad
\left|II\right|\leq \frac{2}{R} \|f\|_{\infty} \sigma(\mathbb{S}^{d-1}),
\end{align*}
which implies that $F_R$ converges uniformly to $F$, as $R\rightarrow +\infty$. Thus,
\begin{align*}
\underset{R\rightarrow +\infty}{\lim}\bbe F_R(X_R)=\bbe F(X),
\end{align*}
and also
\begin{align}\label{lim:3}
\underset{R\rightarrow +\infty}{\lim}&\,\bbe \int_{\bbr^d} \langle \nabla(f)(X_R+u)-\nabla(f)(X_R)\bbone_{\|u\|\leq 1}; u \rangle \nu_R(du)=\nonumber\\
&\bbe\, \int_{\bbr^d} \bigg(f(X+u)-f\left(X\right)- \langle \nabla(f)(X); u \rangle\bbone_{\|u\|\leq 1} \bigg) \nu(du).
\end{align}
Combining \eqref{eq:direct}, \eqref{lim:1} and \eqref{lim:3}, one obtains
\begin{align*}
 \bbe \langle X; \nabla(f)(X) \rangle&= \langle b ; \bbe \nabla (f)(X)\rangle+\bbe\, \int_{\bbr^d} \bigg(f(X+u)-f\left(X\right)- \langle \nabla(f)(X); u \rangle\bbone_{\|u\|\leq 1} \bigg) \nu(du),
\end{align*}
which is the direct part of the theorem.
To prove the converse, assume that, for all $f\in\mathcal{S}(\bbr^d)$,\small{
\begin{align}\label{eq:SteinCauchy}
\bbe \langle X; \nabla(f)(X) \rangle&= \langle b ; \bbe \nabla (f)(X)\rangle+\bbe\, \int_{\bbr^d} \bigg(f(X+u)-f\left(X\right)- \langle \nabla(f)(X); u \rangle\bbone_{\|u\|\leq 1} \bigg) \nu(du).
\end{align}
}
Denoting by  $\varphi_X$ the characteristic function of $X$, the identity \eqref{eq:SteinCauchy} 
can then be rewritten as 
\begin{align*}
\int_{\bbr^d} \mathcal{F}(\langle x;\nabla(f)\rangle)(\xi) \varphi_X(\xi) d\xi&= \int_{\bbr^d} \mathcal{F}(\langle b;\nabla(f)\rangle)(\xi) \varphi_X(\xi) d\xi\\
&\quad+ \int_{\bbr^d} \mathcal{F}(f)(\xi)\left(\int_{\bbr^d} \left(e^{i \langle u; \xi \rangle}-1-i\langle\xi;u\rangle\bbone_{\|u\|\leq 1}\right) \nu(du)\right)\varphi_X(\xi) d\xi.
\end{align*}
Reasoning as in the proof of Theorem~\ref{thm:caracSteinStable2} gives, for all $r>0$ and 
all $x\in \mathbb{S}^{d-1}$, 
\begin{align*}
r\partial_{r}\left(\varphi_X\right)(rx)=\left(i\langle b;r x \rangle+\int_{\bbr^d} \left(e^{i \langle u; rx \rangle}-1-i\langle u; rx\rangle\bbone_{\|u\|\leq 1}\right) \nu(du)\right) \varphi_X(r x).
\end{align*}
To conclude, note that the previous equality can be interpreted as an ordinary differential equation in the radial variable. Its solution is given, for all $r\geq 0$ and all $x\in \mathbb{S}^{d-1}$, by 
\begin{align*}
\varphi_X(rx)= \exp\left(i \langle b; rx\rangle + \int_0^r G(R,x) dR+\int_{0}^r J(R,x) dR \right),
\end{align*}
where $G$ and $J$ are defined, for all $R>0$ and all $x\in\mathbb{S}^{d-1}$, by 
\begin{align*}
&G(R,x)= \int_{(0, R)\times \mathbb{S}^{d-1}} \left(e^{i\langle \rho y; x\rangle}-1-i\langle \rho y; x\rangle\right)\dfrac{d\rho}{\rho^2}\sigma(dy)\\
&J(R,x)=\int_{(R,+\infty)\times \mathbb{S}^{d-1}} \left(e^{i\langle \rho y; x\rangle}-1\right)\dfrac{d\rho}{\rho^2}\sigma(dy).
\end{align*}
Straightforward computations, and the fact that $\int_{\mathbb{S}^{d-1}}x \sigma(dx)=0$, 
finally imply that
\begin{align*}
\varphi_X(\xi)=\exp\left(i\langle b;\xi \rangle+\int_{\bbr^d} \left(e^{i \langle u; \xi \rangle}-1-i\langle u; \xi\rangle\bbone_{\|u\|\leq 1}\right) \nu(du)\right), \quad \xi\in \bbr^d,
\end{align*}
which concludes the proof.
\end{proof}
\noindent
\begin{rem}
The quantity $\int_{\mathbb{S}^{d-1}}x\sigma(dx)$ reflects the asymmetry of the L\'evy measure $\nu$. In case $\int_{\mathbb{S}^{d-1}}x\sigma(dx)\ne 0$, a careful inspection of the proof of Theorem \ref{thm:caracSteinStable3} reveals that the identity \eqref{eq:caraccauchy} becomes, for all $f\in \mathcal{S}(\bbr^d)$, 
\begin{align*}
\bbe \langle X; \nabla(f)(X)\rangle&=\bbe \langle b; \nabla(f)(X)\rangle
-\bbe \int_{\mathbb{S}^{d-1}} \langle \nabla(f)(X);x\rangle \sigma(dx)\\
&\quad\quad+ \int_{\bbr^d} \left(f(X+u)-f(X)-\langle \nabla(f)(X);u\rangle\bbone_{\|u\|\leq 1} \right)\nu(du).
\end{align*}
\end{rem}
\noindent
The next results provide extensions of both Theorem \ref{thm:caracSteinStable2} and Theorem \ref{thm:caracSteinStable3} to subclasses of self-decomposable distributions with regular radial part, on $(0,+\infty)$, and some specific asymptotic behaviors at the edges of $(0,+\infty)$ in any directions of $\mathbb{S}^{d-1}$.

\begin{thm}\label{thm:caracSD1}
Let $X$ be a random vector in $\bbr^d$. Let $b\in \bbr^d$, let $\nu$ be a L\'evy measure with $\int_{\|u\|\leq 1}\|u\| \nu(du)<+\infty$, and with polar decomposition
\begin{align}\label{eq:polardecSD1}
\nu(du)=\bbone_{(0,+\infty)}(r) \bbone_{\mathbb{S}^{d-1}}(x)  \frac{k_x(r)}{r}dr\sigma(dx),
\end{align}
where $\sigma$ is a finite positive measure on $\mathbb{S}^{d-1}$ and where $k_x(r)$ is a nonnegative continuous function decreasing in $r\in(0,+\infty)$, continuous in $x\in \mathbb{S}^{d-1}$ and such that 
\begin{align*}
&\underset{\varepsilon \rightarrow 0^+}{\lim} \varepsilon k_x(\varepsilon)=0,\quad \underset{R \rightarrow +\infty}{\lim} k_x(R)=0,\quad x\in \mathbb{S}^{d-1},\\
&\quad\quad \int_{0}^{+\infty} (1\wedge r) \underset{x\in \mathbb{S}^{d-1}}{\max} (k_x(r))\frac{dr}{r}<+\infty.
\end{align*}
Moreover, assume that, for all $(x_n)_{n\geq 1}\in (\mathbb{S}^{d-1})^{\mathbb{N}}$ converging to $x\in\mathbb{S}^{d-1}$,
\begin{align}\label{eq:totvar}
\underset{n\rightarrow+\infty}{\lim} \underset{R\rightarrow+\infty}{\lim} V_a^R (k_{x_n}-k_x)=0,\quad a>0.
\end{align}
Let $\tilde{\nu}$ be the positive measure on $\bbr^d$ defined by 
\begin{align*}
\tilde{\nu}(du)=\bbone_{(0,+\infty)}(r) \bbone_{\mathbb{S}^{d-1}}(x)(-dk_x(r))\sigma(dx),
\end{align*}
with,
\begin{align}\label{eq:condtilde1}
\int_{\bbr^d}(1 \wedge \|u\|) \tilde{\nu}(du)<+\infty.
\end{align}
Then,
\begin{align*}
\bbe \langle X; \nabla(f)(X)\rangle=\bbe \langle b_0; \nabla(f)(X)\rangle+ \int_{\bbr^d} \left(f(X+u)-f(X)\right)\tilde{\nu}(du),
\end{align*}
where $b_0=b-\int_{\|u\|\leq 1}u \nu(du)$, for all $f\in\mathcal{S}(\bbr^d)$ if and only if $X$ is self-decomposable with parameter $b$, $\Sigma=0$ and L\'evy measure $\nu$. 
\end{thm}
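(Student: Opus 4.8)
The strategy mirrors that of Theorem~\ref{thm:caracSteinStable2} and Theorem~\ref{thm:caracSteinStable3}: the direct implication is obtained by truncating the L\'evy measure, applying Theorem~\ref{thm:multid} to the truncated object, performing a radial integration by parts, and passing to the limit; the converse is obtained by Fourier-transforming the identity, reducing to a first-order ODE in the radial variable, and integrating it.

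For the direct part, assume $X$ is self-decomposable with parameters $(b,0,\nu)$ and L\'evy measure as in \eqref{eq:polardecSD1}. For $R>1$, set
\begin{align*}
\nu_R(du):=\bbone_{(1/R,R)}(r)\bbone_{\mathbb{S}^{d-1}}(x)\frac{k_x(r)}{r}dr\,\sigma(dx),
\end{align*}
and let $X_R\sim ID(b_0,0,\nu_R)_0$, which has a finite first moment since $\nu_R$ is compactly supported away from the origin. Theorem~\ref{thm:multid} applied with test function $g=\partial_i(f)$, followed by projecting onto $e_i$ and summing over $i$, yields
\begin{align*}
\bbe\langle X_R;\nabla(f)(X_R)\rangle=\langle b_0;\bbe\nabla(f)(X_R)\rangle+\bbe\int_{\bbr^d}\langle\nabla(f)(X_R+u);u\rangle\nu_R(du).
\end{align*}
Writing the last integral in polar coordinates and setting $H_z(r)=\int_{\mathbb{S}^{d-1}}f(z+rx)\sigma(dx)$, so that $H_z'(r)=\int_{\mathbb{S}^{d-1}}\langle\nabla(f)(z+rx);x\rangle\sigma(dx)$, the inner radial integral becomes $\int_{1/R}^R H_z'(r)k_x(r)\,dr$ — here I use that $\sigma(\mathbb{S}^{d-1})<\infty$ and the $x$-independence normalization only to keep $H_z$ well-defined; more carefully one keeps the $\sigma(dx)$ outside and uses \eqref{eq:IPPradial} direction by direction. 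Applying the integration by parts formula \eqref{eq:IPPradial} (legitimate since $k_x$ is of bounded variation and right-continuous on $[1/R,R]$), the boundary terms are $H_z(R)k_x(R)/R$-type and $H_z(1/R)k_x(1/R)\cdot R^{-1}$-type contributions, and the bulk term is $-\int_{1/R}^R H_z(r)\,dk_x(r)=\int_{\bbr^d}(f(z+u)-f(z))\,\bbone_{(1/R,R)}\tilde\nu(du)$ after replacing $H_z(r)$ by $H_z(r)-H_z(0)$ (permissible since $dk_x$ has no atom at the endpoints after right-continuity is used, and $\int d(-k_x)$ of a constant vanishes). Since $X_R\Rightarrow X$ (dominated convergence on characteristic functions, using \eqref{eq:condtilde1} and the integrability hypothesis on $(1\wedge r)\max_x k_x(r)$), and since the truncated integral operators $F_R(z)=\int (f(z+u)-f(z))\bbone_{(1/R,R)}\tilde\nu(du)$ converge uniformly in $z$ to $F(z)=\int(f(z+u)-f(z))\tilde\nu(du)$ (by $|F-F_R|\le \tfrac12 M_2(f)\int_{\|u\|\le 1/R}\|u\|\tilde\nu(du)+2\|f\|_\infty\tilde\nu(\|u\|\ge R)$), all terms pass to the limit, and the boundary terms vanish: the outer one because $f$ is bounded, $k_x(R)\to0$ and $\sigma$ is finite, the inner one because $\varepsilon k_x(\varepsilon)\to0$ and $f$ is Lipschitz so $|H_z(1/R)-H_z(0)|=O(1/R)$ while it is multiplied by $R\cdot k_x(1/R)\cdot(1/R)\to 0$ — wait, more precisely the inner boundary term is $\sim R\,(H_z(1/R)-H_z(0))k_x(1/R)$ whose size is controlled by $R\cdot\frac{1}{R}\cdot\varepsilon k_x(\varepsilon)|_{\varepsilon=1/R}\to 0$. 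This gives the stated identity.

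For the converse, assume the covariance-type identity holds for all $f\in\mathcal{S}(\bbr^d)$. Taking Fourier transforms exactly as in Theorem~\ref{thm:caracSteinStable2}, and using that $\mathcal{F}(\langle x;\nabla f\rangle)$ pairs against $\varphi_X$ to produce $-\langle\xi;\nabla\varphi_X(\xi)\rangle$ in the distributional sense, one obtains
\begin{align*}
\langle\xi;\nabla\varphi_X(\xi)\rangle=\left(i\langle b_0;\xi\rangle+\int_{\bbr^d}\left(e^{i\langle u;\xi\rangle}-1\right)\tilde\nu(du)\right)\varphi_X(\xi),\qquad \xi\neq0,
\end{align*}
with $\varphi_X(0)=1$. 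Passing to hyperspherical coordinates $\xi=rx$ and using $\sum_i\xi_i\,\partial\theta_j/\partial\xi_i=0$, this becomes a linear first-order ODE in $r$ for $r\mapsto\varphi_X(rx)$ for each fixed $x\in\mathbb{S}^{d-1}$, whose unique solution with value $1$ at $r=0$ integrates to $\varphi_X(rx)=\exp\big(i\langle b_0;rx\rangle+\int_{\bbr^d}(e^{i\langle u;rx\rangle}-1)\tilde\nu(du)\big)$. Reconstituting $\tilde\nu$ from its radial primitive, $\int_0^\infty(e^{i\langle\rho y;rx\rangle}-1)(-dk_y(\rho))$, and integrating by parts back (now with the measure $\tfrac{k_y(\rho)}{\rho}d\rho$) recovers the L\'evy--Khintchine exponent of $\nu$ in the $ID(b_0,0,\nu)_0$ form, so $X$ is self-decomposable with parameters $b$, $\Sigma=0$, $\nu$.

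\textbf{Main obstacle.} The delicate point is the treatment of the boundary terms in the radial integration by parts and the justification that they vanish in the limit; this is precisely where the three hypotheses $\varepsilon k_x(\varepsilon)\to0$, $k_x(R)\to 0$, and $\int_0^\infty(1\wedge r)\max_x k_x(r)\,dr/r<\infty$, together with \eqref{eq:condtilde1} and the uniform-on-$x$ total-variation continuity \eqref{eq:totvar}, enter — the last is needed to ensure the map $z\mapsto\int(f(z+u)-f(z))\tilde\nu(du)$ is genuinely continuous (continuity in $x$ of $k_x$ alone does not control the measure $-dk_x$, one needs variation-norm continuity), so that $\bbe F(X_R)\to\bbe F(X)$ survives weak convergence. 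Getting the integration-by-parts bookkeeping right with $\sigma$ kept outside the radial integral, rather than folding it into $H_z$, is the one place where care beyond the $\alpha$-stable proofs is required.
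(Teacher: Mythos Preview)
Your overall strategy is correct and matches the paper's: truncate, apply Theorem~\ref{thm:multid}, integrate by parts in the radial variable, pass to the limit; for the converse, derive and solve the radial ODE for the characteristic function. However, there is one structural difference and one misidentification worth flagging.

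The paper's direct proof uses a \emph{double} approximation: it truncates the radial part to $(0,R)$ \emph{and} simultaneously replaces the spherical measure $\sigma$ by a sequence $(\sigma_n)_{n\ge 1}$ of finite linear combinations of Dirac masses converging weakly to $\sigma$. Only after letting $R\to+\infty$ (obtaining the identity for $X_n$ with spherical part $\sigma_n$) does it let $n\to+\infty$. The total-variation hypothesis \eqref{eq:totvar} enters precisely in this second limit: it is used to show that the map
\[
x\longmapsto\int_0^{+\infty}\bigl(e^{i\langle rx;\xi\rangle}-1\bigr)\,dk_x(r)
\]
is continuous on $\mathbb{S}^{d-1}$, so that integrating it against $\sigma_n$ converges to integrating it against $\sigma$.

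Your proof skips the $\sigma_n$ discretization entirely, doing the radial integration by parts direction-by-direction under the $\sigma(dx)$ integral. This is a legitimate simplification --- in fact, along your route the condition \eqref{eq:totvar} is never invoked: the uniform convergence $F_R\to F$ follows from \eqref{eq:condtilde1} alone, and continuity of $z\mapsto F(z)$ is immediate from dominated convergence (again using only \eqref{eq:condtilde1}), contrary to what you write in your ``Main obstacle'' paragraph. So your explanation of where \eqref{eq:totvar} is needed is incorrect: it is not needed for $z$-continuity of $F$, and in your argument it is not needed at all. What your route buys is a shorter proof; what the paper's route buys is an explicit demonstration of where each hypothesis of the theorem, as stated, is actually consumed.

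A minor point: your boundary-term bookkeeping at $r=1/R$ is garbled --- there is no factor of $R$ multiplying $(H_z(1/R)-H_z(0))k_x(1/R)$. The integration by parts on $[1/R,R]$ with weight $k_x(r)$ (as opposed to $1/r$ in the Cauchy case of Theorem~\ref{thm:caracSteinStable3}) produces the inner boundary term $k_x(1/R)\bigl(H_z(1/R)-H_z(0)\bigr)$, whose size is $O\bigl(\varepsilon k_x(\varepsilon)\bigr)\big|_{\varepsilon=1/R}\to 0$. The conclusion is unaffected.
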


\begin{proof}
Let us start with the direct part. Let $X$ be a SD random vector of $\bbr^d$ with parameter $b$ and L\'evy measure $\nu$ such that $\int_{\|u\|\leq 1}\|u\| \nu(du)<+\infty$ and whose polar decomposition is given by \eqref{eq:polardecSD1}. Let $R>1$ and let $(\sigma_n)_{n\geq 1}$ be a sequence of positive linear combinations of Dirac measures which converges weakly to $\sigma$, the spherical component of $\nu$. Then, for all $R>1$ and all $n\geq 1$, let
\begin{align*}
\nu_{R,n}(du)=\bbone_{(0,R)}(r) \bbone_{\mathbb{S}^{d-1}}(x) \dfrac{k_x(r)}{r}dr \sigma_n(dx),
\end{align*}
and denote by $X_{R,n}$ the SD random vector with parameter $b$ and L\'evy measure $\nu_{R,n}$. Similarly, let, for all $n\geq 1$, 
\begin{align*}
\nu_{n}(du):=\bbone_{(0,+\infty)}(r) \bbone_{\mathbb{S}^{d-1}}(x) \dfrac{k_x(r)}{r}dr \sigma_n(dx),
\end{align*} 
and denote by $X_{n}$ the SD random vector with parameter $b$ and L\'evy measure $\nu_{n}$. Performing computations similar to those in the proof of Theorem \ref{thm:caracSteinStable2}, for all $f\in \mathcal{S}(\bbr^d)$, all $R>1$ and all $n\geq 1$
\begin{align*}
\bbe \langle X_{R,n}; \nabla(f)(X_{R,n}) \rangle=\langle b_0 ; \bbe \nabla (f)(X_{R,n})\rangle+\bbe \int_{\bbr^d} \langle \nabla(f)(X_{R,n}+u); u \rangle \nu_{R,n}(du).
\end{align*}
Now, since, as $R\rightarrow+\infty$, $X_{R,n}$ converges in distribution to $X_n$, for all $n\geq 1$,
\begin{align*}
\underset{R\rightarrow +\infty}{\lim} \bbe \langle X_R; \nabla(f)(X_R) \rangle =  \bbe \langle X_n; \nabla(f)(X_n) \rangle,\quad \underset{R\rightarrow +\infty}{\lim} \langle b_0 ; \bbe \nabla (f)(X_R)\rangle = \langle b_0 ; \bbe \nabla (f)(X_n)\rangle.
\end{align*}
Moreover, from the polar decomposition of the L\'evy measure $\nu_{R,n}$, \textit{mutatis mutandis},
\begin{align*}
\bbe \int_{\bbr^d} \langle \nabla(f)(X_{R,n}+u); u \rangle \nu_{R,n}(du)&= \bbe \int_{\mathbb{S}^{d-1}} \left(f(X_{R,n}+Rx)-f(X_{R,n})\right)k_x(R)\sigma_n(dx)\\
&\quad\quad+ \bbe\int_{\bbr^d}  \left(f(X_{R,n}+u)-f(X_{R,n})\right) \tilde{\nu}_{R,n}(du),
\end{align*}
where, for all $R>1$ and all $n\geq 1$,
\begin{align*}
\tilde{\nu}_{R,n}(du):=\bbone_{(0,R)}(r) \bbone_{\mathbb{S}^{d-1}}(x)\left(-dk_x(r)\right)\sigma_n(dx).
\end{align*}
Then, since $\underset{R \rightarrow +\infty}{\lim} k_x(R)=0$, $x\in \mathbb{S}^{d-1}$, $f\in \mathcal{S}(\bbr^d)$ and $\sigma_n\left(\mathbb{S}^{d-1}\right)<+\infty$, $n\geq 1$,
\begin{align*}
\underset{R\rightarrow +\infty}{\lim} \bbe \int_{\mathbb{S}^{d-1}} \left(f(X_{R,n}+Rx)-f(X_{R,n})\right)k_x(R)\sigma_n(dx)=0.
\end{align*}
Next, one needs to prove that
\begin{align*}
\underset{R\rightarrow +\infty}{\lim} \bbe\int_{\bbr^d}  \left(f(X_{R,n}+u)-f(X_{R,n})\right)  \tilde{\nu}_{R,n}(du)=\bbe\int_{\bbr^d}  \left(f(X_n+u)-f(X_n)\right) \tilde{\nu}_n(du),
\end{align*}
where $\tilde{\nu}_{n}$ is given, for all $R>1$ and all $n\geq 1$, by
\begin{align*}
\tilde{\nu}_{n}(du)=\bbone_{(0,+\infty)}(r) \bbone_{\mathbb{S}^{d-1}}(x)(-dk_x(r))\sigma_n(dx).
\end{align*}
To this end, for all $R>1$, all $n\geq 1$ and all $z\in \bbr^d$, set $F_{R,n}(z)=\int_{\bbr^d} \left(f(z+u)-f(z)\right) \tilde{\nu}_{R,n}(du)$ and $F_n(z)=\int_{\bbr^d} \left(f(z+u)-f(z)\right)\tilde{\nu}_n(du)$. From \eqref{eq:condtilde1}, and since $f\in \mathcal{S}(\bbr^d)$, it is clear that both functions are well-defined, bounded and continuous on $\bbr^d$. Moreover, 
\begin{align*}
\left|F_{R,n}(z)-F_n(z) \right|&=\left| \int_{\bbr^d} \left(f(z+u)-f(z)\right) \bbone_{\{\|u\|\geq R\}}\tilde{\nu}_n(du)\right|\leq 2 \|f \|_\infty \int_{\|u\|\geq R}\tilde{\nu}_n(du).
\end{align*}
Thus, as $R$ tends to $+\infty$, $F_{R,n}$ converges to $F_n$ uniformly on $\bbr^d$, for all $n\geq 1$. Finally, since $X_{R,n}$ converges in distribution to $X_n$, for all $n\geq 1$,
\begin{align*}
\underset{R\rightarrow+\infty}{\lim} \bbe \int_{\bbr^d} \left(f(X_{R,n}+u)-f(X_{R,n})\right) \tilde{\nu}_{R,n}(du)=\bbe \int_{\bbr^d} \left(f(X_n+u)-f(X_n)\right) \tilde{\nu}_n(du).
\end{align*}
Then, for all $n\geq 1$
\begin{align*}
\bbe \langle X_n; \nabla(f)(X_n)\rangle=\bbe \langle b_0; \nabla(f)(X_n)\rangle+\bbe \int_{\bbr^d} \left(f(X_n+u)-f(X_n)\right)\tilde{\nu}_n(du).
\end{align*}
Now, observe that, $(X_n)_{n\geq 1}$ converges in distribution to $X$ since $(\sigma_n)_{n\geq 1}$ converges weakly to $\sigma$ and since $ \int_{0}^{+\infty} (1\wedge r) \underset{x\in \mathbb{S}^{d-1}}{\max} k_x(r)dr/r<+\infty$. Hence,
\begin{align*}
\underset{n\rightarrow +\infty}{\lim}\bbe \langle X_n; \nabla(f)(X_n)\rangle=\bbe \langle X; \nabla(f)(X)\rangle,\quad \underset{n\rightarrow +\infty}{\lim}\bbe \langle b_0; \nabla(f)(X_n)\rangle=\bbe \langle b_0; \nabla(f)(X)\rangle.
\end{align*}
To conclude the proof of the direct part of the theorem, let us study the convergence of:
\begin{align*}
\bbe \int_{\bbr^d} \left(f(X_n+u)-f(X_n)\right) \tilde{\nu}_n(du).
\end{align*}
Since $f\in \mathcal{S}(\bbr^d)$, for all $n\geq 1$,
\begin{align*}
\bbe \int_{\bbr^d} \left(f(X_n+u)-f(X_n)\right) \tilde{\nu}_n(du)=\frac{1}{(2\pi)^d}\int_{\bbr^d} \mathcal{F}(f)(\xi) \varphi_n(\xi) \left(\int_{\bbr^d} (e^{i \langle u;\xi \rangle}-1)\tilde{\nu}_n(du)\right)d\xi.
\end{align*}
Now, since $(X_n)_{n\geq 1}$ converges in distribution to $X$, then $\underset{n\rightarrow+\infty}{\lim} \varphi_n(\xi)=\varphi(\xi)$, for all $\xi \in \bbr^d$. In turn, let us prove the following:
\begin{align}\label{eq:weak}
\underset{n\rightarrow+\infty}{\lim}\int_{\bbr^d} (e^{i \langle u;\xi \rangle}-1)\tilde{\nu}_n(du)=\int_{\bbr^d} (e^{i \langle u;\xi \rangle}-1)\tilde{\nu}(du).
\end{align}
Observe that, for all $\xi \in \bbr^d$ and all $n\geq 1$
\begin{align*}
\int_{\bbr^d} (e^{i \langle u;\xi \rangle}-1)\tilde{\nu}_n(du)=\int_{\mathbb{S}^{d-1}} \sigma_n(dx) \left(\int_0^{+\infty}\left(e^{i \langle rx;\xi \rangle}-1\right)(-dk_x(r))\right).
\end{align*}
Since $(\sigma_n)_{n\geq 1}$ converges weakly to $\sigma$, let us prove that the function $H(x,\xi)=\int_0^{+\infty}\left(e^{i \langle rx;\xi \rangle}-1\right)dk_x(r)$ is continuous in $x\in \mathbb{S}^{d-1}$, for all $\xi \in \bbr^d$. Let $(x_n)_{n\geq 1}$ be a sequence of elements of $\mathbb{S}^{d-1}$ converging to $x\in \mathbb{S}^{d-1}$. Then, consider, for all $n\geq 1$ and all $\xi \in \bbr^d$
\begin{align}\label{eq:cont1}
H(x,\xi)-H(x_n,\xi)&=\int_0^{+\infty} \left(e^{i \langle rx;\xi \rangle}-1\right)dk_x(r)-\int_0^{+\infty} \left(e^{i\langle rx_n;\xi \rangle}-1\right)dk_{x_n}(r),\nonumber\\
&=\int_0^{+\infty} \left(e^{i \langle rx_n;\xi \rangle}-1\right)d\left(k_x-k_{x_n}\right)(r)+\int_0^{+\infty} \left(e^{i \langle rx;\xi \rangle}-e^{i \langle rx_n;\xi \rangle}\right) dk_x(r).
\end{align}
The second term on the right-hand side of \eqref{eq:cont1} converges to $0$ as $n$ tends to $+\infty$, by the Lebesgue dominated convergence theorem since $\int_{0}^{+\infty} (1\wedge r) dk_x(r)<+\infty$. For the first term of \eqref{eq:cont1}, observe that
\begin{align}\label{eq:cont2}
\int_0^{+\infty} \left(e^{i \langle rx_n;\xi \rangle}-1\right)d\left(k_x-k_{x_n}\right)(r)&=\int_0^{1} \left(e^{i \langle rx_n;\xi \rangle}-1\right)d\left(k_x-k_{x_n}\right)(r)\nonumber\\
&\quad\quad +\int_1^{+\infty} \left(e^{i \langle rx_n;\xi \rangle}-1\right)d\left(k_x-k_{x_n}\right)(r).
\end{align}
For the second term on the right-hand side of \eqref{eq:cont2}, for all $n\geq 1$,
\begin{align*}
\left|\int_1^{+\infty} \left(e^{i \langle rx_n;\xi \rangle}-1\right)d\left(k_x-k_{x_n}\right)(r)\right| \leq 2 \underset{R\rightarrow+\infty}{\lim} V_1^R(k_{x}-k_{x_n}),
\end{align*}
so that by \eqref{eq:totvar}, this term converges to $0$. Finally, integrating by parts, for all $n\geq 1$,
\begin{align}\label{eq:intbp}
\int_0^{1} \left(e^{i \langle rx_n;\xi \rangle}-1\right)d\left(k_x-k_{x_n}\right)(r)&=-i\langle  x_n;\xi\rangle\int_0^{1} e^{i \langle rx_n;\xi \rangle} (k_x(r)-k_{x_n}(r))dr\nonumber\\
&\quad\quad+\left(e^{i \langle x_n;\xi \rangle}-1\right)(k_x(1)-k_{x_n}(1)).
\end{align}
Now, the second term on the right-hand side of \eqref{eq:intbp} converges to $0$, as $n$ tends to $+\infty$ and, by the Lebesgue dominated convergence theorem, the first term does converges to $0$, as $n$ tends to $+\infty$. This proves that $\underset{n\rightarrow +\infty}{\lim}H(x_n,\xi)=H(x,\xi)$, for all $\xi \in \bbr^d$, so that \eqref{eq:weak} is indeed verified.
To prove the converse part, assume that, for all $f\in\mathcal{S}(\bbr^d)$,
\begin{align}\label{eq:SteinSd}
\bbe \langle X; \nabla(f)(X)\rangle=\bbe \langle b_0; \nabla(f)(X)\rangle+ \int_{\bbr^d} \left(f(X+u)-f(X)\right)\tilde{\nu}(du).
\end{align}
Now, reasoning as in the second part of the proof of Theorem \ref{thm:caracSteinStable2}, 
\begin{align}\label{eq:radeq}
r \partial_r \left(\varphi_X\right)(r x) =\left(r i\langle b_0;x \rangle+\int_{\bbr^d} \left(e^{i \langle u; rx\rangle }-1\right) \tilde{\nu}(du)\right) \varphi_X(r x),\quad r>0, x\in \mathbb{S}^{d-1}.
\end{align}
Let us develop the second term inside the above parenthesis a bit more. First,
\begin{align*}
\int_{\bbr^d} \left(e^{i \langle u; rx\rangle }-1\right) \tilde{\nu}(du)&=\int_{(0,+\infty)\times \mathbb{S}^{d-1}}\left(e^{i \langle \rho y; rx\rangle }-1\right)(-dk_y(\rho))\sigma(dy),\\
&=\int_{(0,+\infty)\times \mathbb{S}^{d-1}}\left(e^{i \langle \rho y; x\rangle }-1\right)\left(-dk_y\left(\frac{\rho}{r}\right)\right)\sigma(dy).
\end{align*}
The radial equation \eqref{eq:radeq} then becomes, for all $r>0$ and all $x\in \mathbb{S}^{d-1}$,
\begin{align*}
 \partial_r \left(\varphi_X\right)(r x) =\left( i\langle b_0;x \rangle+\int_{(0,+\infty)\times \mathbb{S}^{d-1}}\left(e^{i \langle \rho y; x\rangle }-1\right) \frac{1}{r}\left(-dk_y\left(\frac{\rho}{r}\right)\right)\sigma(dy)\right) \varphi_X(r x).
\end{align*}
For any fixed $x\in \mathbb{S}^{d-1}$, this linear differential equation admits a unique solution which is given by
\begin{align*}
\varphi_X(rx)= \exp\left( i\langle b_0, rx\rangle+ \int_{\bbr^d} \left(e^{i \langle u; rx \rangle}-1\right)\nu(du)\right),\quad r>0,
\end{align*}
since $\varphi_X(0)=1$. Then, $X$ is a SD random vector with parameter $b$ and L\'evy measure $\nu$.
\end{proof}
\noindent
The next result is the SD pendant of the Cauchy characterization obtained in Theorem \ref{thm:caracSteinStable3}.

\begin{thm}\label{thm:caracSD2}
Let $X$ be a random vector in $\bbr^d$. Let $b\in \bbr^d$ and let $\nu$ be a L\'evy measure on $\bbr^d$ with polar decomposition
\begin{align*}
\nu(du)=\bbone_{(0,+\infty)}(r) \bbone_{\mathbb{S}^{d-1}}(x)  \frac{k_x(r)}{r}dr\sigma(dx),
\end{align*}
where $\sigma$ is a finite positive measure on $\mathbb{S}^{d-1}$ and where $k_x(r)$ is a nonnegative continuous function decreasing in $r\in(0,+\infty)$, continuous in $x\in \mathbb{S}^{d-1}$, and such that 
\begin{align*}
&\underset{\varepsilon \rightarrow 0^+}{\lim} \varepsilon k_x(\varepsilon)=k_x(1),\quad \underset{R \rightarrow +\infty}{\lim} k_x(R)=0, \quad x\in \mathbb{S}^{d-1},\\
&\quad\quad \int_{0}^{+\infty} (1\wedge r^2) \underset{x\in \mathbb{S}^{d-1}}{\max} (k_x(r))\frac{dr}{r}<+\infty.
\end{align*}
Moreover, assume that, for all $(x_n)_{n\geq 1}\in (\mathbb{S}^{d-1})^{\mathbb{N}}$ converging to $x\in\mathbb{S}^{d-1}$,
\begin{align}\label{eq:totvar2}
\underset{n\rightarrow+\infty}{\lim} \underset{R\rightarrow+\infty}{\lim} V_a^R (k_{x_n}-k_x)=0,\quad a>0.
\end{align}
Let $\tilde{\nu}$ be the positive measure on $\bbr^d$ defined by 
\begin{align*}
\tilde{\nu}(du)=\bbone_{(0,+\infty)}(r) \bbone_{\mathbb{S}^{d-1}}(x)(-dk_x(r))\sigma(dx),
\end{align*}
with,
\begin{align}\label{eq:condtilde2}
\int_{\bbr^d}(1 \wedge \|u\|^2) \tilde{\nu}(du)<+\infty.
\end{align}
Then,
\begin{align}\label{eq:caracSD2}
\bbe \langle X; \nabla(f)(X)\rangle&=\bbe \langle b; \nabla(f)(X)\rangle- \bbe \int_{\mathbb{S}^{d-1}} \langle \nabla(f)(X); x \rangle k_x(1) \sigma(dx)\nonumber\\
&\quad +\bbe \int_{\bbr^d} \left(f(X+u)-f(X)-\langle \nabla (f)(X); u \rangle\bbone_{\|u\|\leq 1}\right)\tilde{\nu}(du),
\end{align}
for all $f\in\mathcal{S}(\bbr^d)$ if and only if $X$ is self-decomposable with parameter $b$, $\Sigma=0$ and L\'evy measure $\nu$. 
\end{thm}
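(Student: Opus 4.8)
The plan is to follow the scheme of the proof of Theorem~\ref{thm:caracSD1}, but combined with the double truncation and origin-boundary analysis of the proof of Theorem~\ref{thm:caracSteinStable3}; the genuinely new feature is that $\lim_{\varepsilon\to0^+}\varepsilon k_x(\varepsilon)=k_x(1)\neq0$, which is what will produce the extra drift term $-\bbe\int_{\mathbb{S}^{d-1}}\langle\nabla(f)(X);x\rangle k_x(1)\sigma(dx)$ in \eqref{eq:caracSD2}, in the same way that the $\alpha=1$ stable case produces its compensator.

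For the direct implication, let $X$ be self-decomposable with parameters $(b,0,\nu)$ and $\nu$ as stated. Since $\int_{\|u\|\le1}\|u\|\nu(du)$ need not be finite, I would introduce a double truncation together with a discretization of the spherical part: take positive linear combinations of Dirac masses $\sigma_n$ converging weakly to $\sigma$, and for $R>1$ set $\nu_{R,n}(du)=\bbone_{(1/R,R)}(r)\bbone_{\mathbb{S}^{d-1}}(x)\tfrac{k_x(r)}{r}\,dr\,\sigma_n(dx)$ and $\nu_n$ the same without radial truncation, with $X_{R,n}\sim ID(b,0,\nu_{R,n})$ (of finite first moment) and $X_n\sim ID(b,0,\nu_n)$. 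Applying Theorem~\ref{thm:multid} to $X_{R,n}$ with test functions $\partial_i f$ and summing over $i$ gives the analogue of \eqref{eq:direct}; writing the resulting integral in polar coordinates and integrating by parts in $r\in(1/R,R)$ via \eqref{eq:IPPradial} (legitimate as $k_x$ is continuous of bounded variation), one obtains, for fixed $x$ and $z$, a boundary term at $r=R$ (which vanishes in the limit since $k_x(R)\to0$ and $f$ is bounded), a boundary term $-\big(f(z+x/R)-f(z)\big)k_x(1/R)$ at $r=1/R$, and a Stieltjes integral against $-dk_x$. Combining this $1/R$-boundary term with the compensator contribution $-\int_{1/R}^{1}\langle\nabla(f)(z);x\rangle k_x(r)\,dr$ — after the further identity $\int_{1/R}^{1}r\,dk_x(r)=k_x(1)-R^{-1}k_x(1/R)-\int_{1/R}^{1}k_x(r)\,dr$ — and using $\lim_{\varepsilon\to0^+}\varepsilon k_x(\varepsilon)=k_x(1)$, the two $1/R$-dependent pieces cancel in the limit, leaving exactly $-\langle\nabla(f)(z);x\rangle k_x(1)$, while the Stieltjes integral becomes $\int_0^{+\infty}\big(f(z+rx)-f(z)-\langle\nabla(f)(z);rx\rangle\bbone_{r\le1}\big)(-dk_x(r))$. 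Integrating in $x$ against $\sigma_n$ and in $z$ against the law of $X_{R,n}$, then letting $R\to+\infty$ (using $X_{R,n}\to X_n$ in distribution and the uniform-in-$z$ estimates of the proof of Theorem~\ref{thm:caracSteinStable3}) and $n\to+\infty$ (using $X_n\to X$ in distribution, which follows from the weak convergence $\sigma_n\to\sigma$ and $\int_0^{+\infty}(1\wedge r^2)\max_x k_x(r)\,dr/r<+\infty$), yields \eqref{eq:caracSD2}. As in the proof of Theorem~\ref{thm:caracSD1}, the only delicate passage in the last limit is handled on the Fourier side, where it reduces to the continuity in $x\in\mathbb{S}^{d-1}$ of $H(x,\xi)=\int_0^{+\infty}\big(e^{ir\langle x;\xi\rangle}-1-ir\langle x;\xi\rangle\bbone_{r\le1}\big)(-dk_x(r))$, proved exactly as in \eqref{eq:cont1}--\eqref{eq:intbp} by splitting over $(0,1)$ and $(1,+\infty)$, using dominated convergence, one integration by parts, and the total-variation hypothesis \eqref{eq:totvar2}.

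For the converse, assuming \eqref{eq:caracSD2} for all $f\in\mathcal{S}(\bbr^d)$, I would take Fourier transforms exactly as in the proofs of Theorems~\ref{thm:caracSteinStable2} and \ref{thm:caracSteinStable3} to get, for $\varphi_X$ the characteristic function of $X$,
\begin{align*}
\langle\xi;\nabla(\varphi_X)(\xi)\rangle=\Big(i\langle b;\xi\rangle-i\int_{\mathbb{S}^{d-1}}\langle\xi;x\rangle k_x(1)\sigma(dx)+\int_{\bbr^d}\big(e^{i\langle u;\xi\rangle}-1-i\langle u;\xi\rangle\bbone_{\|u\|\le1}\big)\tilde\nu(du)\Big)\varphi_X(\xi),
\end{align*}
first as tempered distributions, then pointwise for $\xi\neq0$ by continuity of $\varphi_X$, with $\varphi_X(0)=1$. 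Passing to hyperspherical coordinates turns this into a radial linear ODE $r\partial_r\varphi_X(rx)=\Psi(rx)\varphi_X(rx)$; using the scaling of $\tilde\nu$ (change of variable $\rho\mapsto\rho/r$ in $-dk_y(\cdot)$) and the uniqueness of its solution with $\varphi_X(0)=1$, it suffices to verify that
\begin{align*}
\varphi_X(rx)=\exp\Big(i\langle b;rx\rangle+\int_{\bbr^d}\big(e^{i\langle u;rx\rangle}-1-i\langle u;rx\rangle\bbone_{\|u\|\le1}\big)\nu(du)\Big)
\end{align*}
solves it; differentiating and running the direct-part integration by parts backwards, the boundary contribution at the origin generated by $\lim_{\varepsilon\to0^+}\varepsilon k_x(\varepsilon)=k_x(1)$ is precisely cancelled by the drift $-i\int_{\mathbb{S}^{d-1}}\langle\xi;x\rangle k_x(1)\sigma(dx)$, so that $X$ is self-decomposable with parameters $(b,0,\nu)$.

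The main obstacle is the bookkeeping at the lower truncation level $r=1/R$: one must carefully track the two boundary terms it generates, show that their $R$-dependent parts cancel, and identify the surviving contribution as exactly $-\langle\nabla(f)(z);x\rangle k_x(1)$ — this is precisely what distinguishes Theorem~\ref{thm:caracSD2} from Theorem~\ref{thm:caracSD1} and mirrors the $\alpha=1$ stable case of Theorem~\ref{thm:caracSteinStable3}. A secondary technical point, handled as in Theorem~\ref{thm:caracSD1}, is the continuity in $x$ of $H(x,\xi)$ under the total-variation hypothesis \eqref{eq:totvar2}, needed for the weak passage $\sigma_n\to\sigma$.
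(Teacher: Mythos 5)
Your proposal is correct and follows essentially the same route as the paper's proof: double truncation (spherical discretization $\sigma_n$ plus radial truncation to $(1/R,R)$), application of Theorem~\ref{thm:multid} to the truncated vector $X_{R,n}$, the radial integration by parts \eqref{eq:IPPradial}, the limit $R\to+\infty$ and then $n\to+\infty$ via weak convergence of $\sigma_n$ and the continuity of the Fourier symbol $H(x,\xi)$ under \eqref{eq:totvar2}, and the converse by Fourier transformation leading to a radial ODE solved under $\varphi_X(0)=1$. The one genuine (but cosmetic) difference is the bookkeeping of the $-k_x(1)$ drift: you fold the $r=1/R$ boundary term of the integration by parts together with the $R^{-1}k_x(1/R)$ piece produced by the compensator's integration by parts and show that these two $R$-dependent pieces cancel in the limit, leaving the constant $-k_x(1)$ from the compensator; the paper instead lets the $r=1/R$ boundary term converge on its own to $-\langle\nabla(f)(z);x\rangle k_x(1)$ (using $\varepsilon k_x(\varepsilon)\to k_x(1)$ and a first-order Taylor expansion of $f$) and separately shows that the compensator's $k_x(1)-R^{-1}k_x(1/R)$ contribution vanishes. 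Both groupings reduce to the same elementary limit $R^{-1}k_x(1/R)\to k_x(1)$ and give identical outcomes.
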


\begin{proof}
The proof is a direct extension of the proof of Theorem \ref{thm:caracSteinStable3} so that it is only outlined by highlighting the main differences. Let us start with the direct part. Let $X$ be a SD random vector with parameter $b$ and L\'evy measure $\nu$. Let $R>1$ and let $(\sigma_n)_{n\geq 1}$ be a sequence of positive linear combinations of Dirac measures converging weakly to $\sigma$, the spherical component of $\nu$. Then, for all $R>1$ and all $n\geq 1$, let
\begin{align*}
\nu_{R,n}(du):=\bbone_{(1/R,R)}(r) \bbone_{\mathbb{S}^{d-1}}(x) \dfrac{k_x(r)}{r}dr \sigma_n(dx),
\end{align*}
and denote by $X_{R,n}$ the SD random vector with parameter $b$ and L\'evy measure $\nu_{R,n}$. Similarly, for all $n\geq 1$, let
\begin{align*}
\nu_{n}(du):=\bbone_{(0,+\infty)}(r) \bbone_{\mathbb{S}^{d-1}}(x) \dfrac{k_x(r)}{r}dr \sigma_n(dx),
\end{align*} 
and denote by $X_{n}$ the SD random vector with parameter $b$ and L\'evy measure $\nu_{n}$. As in the proof of Theorem \ref{thm:caracSteinStable3}, for all $f\in\mathcal{S}\left(\bbr^d\right)$ and all $R>1$,
\begin{align*}
\bbe \langle X_{R,n}; \nabla(f)(X_{R,n}) \rangle=\langle b ; \bbe \nabla (f)(X_{R,n})\rangle+\bbe \int_{\bbr^d} \langle \nabla(f)(X_{R,n}+u)-\nabla(f)(X_{R,n})\bbone_{\|u\|\leq 1}; u \rangle \nu_{R,n}(du).
\end{align*}
Now, since, as $R\rightarrow+\infty$, $X_{R,n}$ converges in distribution to $X_n$, for all $n\geq 1$
\begin{align*}
\underset{R\rightarrow +\infty}{\lim} \bbe \langle X_{R,n}; \nabla(f)(X_{R,n}) \rangle= \bbe \langle X_n; \nabla(f)(X_n) \rangle,\quad \underset{R\rightarrow +\infty}{\lim} \bbe \langle b; \nabla(f)(X_{R,n}) \rangle= \bbe \langle b; \nabla(f)(X_n) \rangle.
\end{align*}
Moreover, for all $R>1$ and all $n\geq 1$,
\begin{align*}
\bbe \int_{\bbr^d} \langle \nabla(f)(X_{R,n}+u); u \rangle \nu_{R,n}(du)&= \bbe \int_{\mathbb{S}^{d-1}} \left(f(X_R+Rx)-f\left(X_R\right)\right)k_x(R)\sigma_n(dx)\\
&\quad\quad-\bbe \int_{\mathbb{S}^{d-1}} \left(f\left(X_{R,n}+\frac{x}{R}\right)-f\left(X_{R,n}\right)\right)k_x\left(\frac{1}{R}\right)\sigma_n(dx)\\
&\quad\quad+\bbe\int_{(\frac{1}{R},R)\times \mathbb{S}^{d-1}} \left(f(X_{R,n}+rx)-f\left(X_{R,n}\right)\right) (-dk_x(r))\sigma_n(dx).
\end{align*}
From the limiting behavior of $k_x$ at $+\infty$ and at $0^+$, for all $n\geq 1$,
\begin{align*}
\underset{R\rightarrow+\infty}{\lim}\bbe \int_{\mathbb{S}^{d-1}} k_x(R)\left(f(X_{R,n}+Rx)-f\left(X_{R,n}\right)\right)\sigma_n(dx)=0,
\end{align*}
and,
\begin{align*}
\underset{R\rightarrow+\infty}{\lim}\bbe \int_{\mathbb{S}^{d-1}} \left(f\left(X_{R,n}+\frac{x}{R}\right)-f\left(X_{R,n}\right)\right)k_x\left(\frac{1}{R}\right)\sigma_n(dx)= \bbe \int_{\mathbb{S}^{d-1}} \langle \nabla(f)(X_n); x \rangle k_x(1)\sigma_n(dx).
\end{align*}
Next, consider the term defined, for all $z\in \bbr^d$ and all $n\geq 1$, by
\begin{align*}
\int_{\bbr^d}\langle \nabla(f)(z)\bbone_{\|u\|\leq 1}; u \rangle \nu_{R,n}(du)=\langle \nabla(f)(z);\int_{\mathbb{S}^{d-1}} x \left(\int_{1/R}^{1}k_x(r)dr\right) \sigma_n(dx)\rangle.
\end{align*}
By a standard integration by parts, for all $n\geq 1$,
\begin{align*}
\int_{\bbr^d}\langle \nabla(f)(z)\bbone_{\|u\|\leq 1}; u \rangle \nu_{R,n}(du)&=\langle \nabla(f)(z);\int_{\mathbb{S}^{d-1}} x\left(k_x(1)-k_x(1/R)/R\right) \sigma_n(dx)\rangle \\
&\quad\quad+\langle \nabla(f)(z);\int_{\mathbb{S}^{d-1}} x \left( \int_{1/R}^{1}r(-dk_x(r))\right)\sigma_n(dx)\rangle.
\end{align*}
Then, observe that, for all $x\in \mathbb{S}^{d-1}$, $\underset{\varepsilon\rightarrow 0^+}{\lim}\varepsilon k_x(\varepsilon)=k_x(1)$, and, for all $n\geq 1$,
\begin{align*}
\underset{R\rightarrow +\infty}{\lim} \bbe \langle \nabla(f)(X_{R,n});\int_{\mathbb{S}^{d-1}} x\left(k_x(1)-k_x(1/R)/R\right) \sigma_n(dx)\rangle =0.
\end{align*}
Finally, for all $n\geq 1$,
\begin{align*}
\underset{R\rightarrow +\infty}{\lim}\,&\bbe \int_{\bbr^d} \langle \nabla(f)(X_{R,n}+u)-\nabla(f)(X_{R,n})\bbone_{\|u\|\leq 1}; u \rangle \nu_{R,n}(du)=\\
&\bbe\, \int_{\bbr^d} \bigg(f(X_n+u)-f\left(X_n\right)- \langle \nabla(f)(X_n); u \rangle\bbone_{\|u\|\leq 1} \bigg) \tilde{\nu}_n(du)- \bbe \int_{\mathbb{S}^{d-1}} \langle \nabla(f)(X_n); x \rangle k_x(1)\sigma_n(dx),
\end{align*}
so that,
\begin{align*}
\bbe \langle X_n; \nabla(f)(X_n) \rangle&=\bbe \langle b; \nabla(f)(X_n) \rangle- \bbe \int_{\mathbb{S}^{d-1}} \langle \nabla(f)(X_n); x \rangle k_x(1)\sigma_n(dx)+\\
&\bbe\, \int_{\bbr^d} \bigg(f(X_n+u)-f\left(X_n\right)- \langle \nabla(f)(X_n); u \rangle\bbone_{\|u\|\leq 1} \bigg) \tilde{\nu}_n(du).
\end{align*}
Now, since $(\sigma_n)_{n\geq 1}$ converges weakly to $\sigma$ and since $\int_{0}^{+\infty} (1\wedge r^2) \underset{x\in \mathbb{S}^{d-1}}{\max} (k_x(r))dr/r<+\infty$, $(X_n)_{n\geq 1}$ converges in distribution to $X$. Hence, 
\begin{align*}
\underset{n\rightarrow +\infty}{\lim}\bbe \langle X_n; \nabla(f)(X_n) \rangle=\bbe \langle X; \nabla(f)(X) \rangle,\quad
\underset{n\rightarrow +\infty}{\lim}\bbe \langle b; \nabla(f)(X_n) \rangle=\bbe \langle b; \nabla(f)(X) \rangle.
\end{align*}
To conclude the direct part of the proof, let us consider the following terms:
\begin{align*}
\bbe \int_{\mathbb{S}^{d-1}} \langle \nabla(f)(X_n); x \rangle k_x(1)\sigma_n(dx), \quad \bbe\, \int_{\bbr^d} \bigg(f(X_n+u)-f\left(X_n\right)-\langle \nabla(f)(X_n); u \rangle\bbone_{\|u\|\leq 1} \bigg) \tilde{\nu}_n(du).
\end{align*}
First, for all $n\geq 1$
\begin{align*}
\bbe \int_{\mathbb{S}^{d-1}} \langle \nabla(f)(X_n); x \rangle k_x(1)\sigma_n(dx)=\int_{\bbr^d} \mathcal{F}(f)(\xi) \varphi_n(\xi) \langle i\xi ; \int_{\mathbb{S}^{d-1}}x k_x(1)\sigma_n(dx) \rangle \frac{d\xi}{(2\pi)^d}. 
\end{align*}
Since $(X_n)_{n\geq 1}$ converges in distribution to $X$, as $n$ tends to $+\infty$, $(\varphi_n(\xi))_{n\geq 1}$ converges to $\varphi(\xi)$, for all $\xi \in \bbr^d$. Moreover,
\begin{align*}
\underset{n\rightarrow+\infty}{\lim} \int_{\mathbb{S}^{d-1}} x k_x(1)\sigma_n(dx)= \int_{\mathbb{S}^{d-1}} x k_x(1)\sigma(dx).
\end{align*}
Then, by the Lebesgue dominated convergence theorem, 
\begin{align*}
\underset{n\rightarrow+\infty}{\lim}\bbe \int_{\mathbb{S}^{d-1}} \langle \nabla(f)(X_n); x \rangle k_x(1)\sigma_n(dx)=\bbe \int_{\mathbb{S}^{d-1}} \langle \nabla(f)(X); x \rangle k_x(1)\sigma(dx).
\end{align*}
Similarly, for all $n\geq 1$,
\begin{align*}
&\bbe\, \int_{\bbr^d} \bigg(f(X_n+u)-f\left(X_n\right)-\langle \nabla(f)(X_n); u \rangle\bbone_{\|u\|\leq 1} \bigg) \tilde{\nu}_n(du)\\
&\quad=\int_{\bbr^d} \mathcal{F}(f)(\xi)\varphi_n(\xi) \left(\int_{\bbr^d} (e^{i \langle u;\xi \rangle}-1-i\langle u;\xi \rangle\bbone_{\|u\|\leq 1})\tilde{\nu}_n(du)\right) \frac{d\xi}{(2\pi)^d},
\end{align*}
and proceeding as in the proof of Theorem \ref{thm:caracSD1},
\begin{align*}
&\underset{n\rightarrow+\infty}{\lim}\bbe\, \int_{\bbr^d} \bigg(f(X_n+u)-f\left(X_n\right)-\langle \nabla(f)(X_n); u \rangle\bbone_{\|u\|\leq 1} \bigg) \tilde{\nu}_n(du)\\
&\quad\quad=\bbe\, \int_{\bbr^d} \bigg(f(X+u)-f\left(X\right)-\langle \nabla(f)(X); u \rangle\bbone_{\|u\|\leq 1} \bigg) \tilde{\nu}(du).
\end{align*}
The direct part of the theorem is proved. For the converse part, \textit{mutatis mutandis}, based on \eqref{eq:caracSD2}, for all $r>0$ and all $x\in \mathbb{S}^{d-1}$
\begin{align*}
\varphi_X(rx)= \exp\left(i \langle b-\int_{\mathbb{S}^{d-1}}zk_z(1)\sigma(dz); rx\rangle + \int_0^r \tilde{G}(R,x) dR+\int_{0}^r \tilde{J}(R,x) dR \right),
\end{align*}
where $\tilde{G}$ and $\tilde{J}$ are respectively defined, for all $R>0$ and all $x\in\mathbb{S}^{d-1}$, by
\begin{align*}
&\tilde{G}(R,x)= \int_{(0, R)\times \mathbb{S}^{d-1}} \left(e^{i\langle \rho y; x\rangle}-1-i\langle \rho y; x\rangle\right) \frac{1}{R}\left(-dk_y\left(\frac{\rho}{R}\right)\right)\sigma(dy)\\
&\tilde{J}(R,x)=\int_{(R,+\infty)\times \mathbb{S}^{d-1}} \left(e^{i\langle \rho y; x\rangle}-1\right) \frac{1}{R}\left(-dk_y\left(\frac{\rho}{R}\right)\right)\sigma(dy).
\end{align*}
Finally, straightforward computations together with Fubini's Theorem and the fact that $\underset{\varepsilon \rightarrow 0^+}{\lim} \varepsilon k_x(\varepsilon)=k_x(1)$, for all $x\in \mathbb{S}^{d-1}$, concludes the proof. 
\end{proof}
\noindent
\begin{rem}\label{rem:1D}
(i) Let us recast the previous results in dimension one, i.e., for $d=1$. In this case, the L\'evy measure of a SD law is absolutely continuous with respect to the Lebesgue measure and is given by
\begin{align*}
\nu(du)=\frac{k(u)}{|u|}du,
\end{align*} 
where $k$ is a nonnegative function increasing on $(-\infty,0)$ and decreasing on $(0,+\infty)$. Now, assume, for simplicity only, that $k$ is continuously differentiable on $(-\infty,0)$ and on $(0,+\infty)$ and that 
\begin{align}\label{eq:limit1Dreg}
&\underset{\varepsilon \rightarrow 0^+}{\lim} \varepsilon k(\varepsilon)=k(1),\quad \underset{\varepsilon \rightarrow 0^{-}}{\lim} \varepsilon k(\varepsilon)=-k(-1),\quad \underset{|x| \rightarrow +\infty}{\lim} k(x)=0.
\end{align}
Theorem \ref{thm:caracSD2} gives the following characterizing identity when $X$ is a SD random variable with parameter $b\in \bbr$ and L\'evy measure $\nu$:
\begin{align*}
\bbe Xf'(X)= b\bbe f'(X)+(k(-1)-k(1))\bbe f'(X)+\bbe \int_{\bbr} \left(f(X+u)-f(X)-f'(X)u\bbone_{|u|\leq 1}\right) \tilde{\nu}(du),
\end{align*}
for all $f\in \mathcal{S}(\bbr)$, where $\tilde{\nu}(du)=(-k'(u))\bbone_{(0,+\infty)}(u)du+k'(u)\bbone_{(-\infty,0)}(u)du$. In a similar fashion, it is possible to provides a characterization result for SD random variables with L\'evy measure $\nu$ such that $\int_{|u|\leq 1}|u|\nu(du)<+\infty$ and such that $k$ is continuously differentiable on $(-\infty,0)$ and on $(0,+\infty)$ with 
\begin{align}\label{eq:limit1Dnoreg}
&\underset{\varepsilon \rightarrow 0^+}{\lim} \varepsilon k(\varepsilon)=0,\quad \underset{\varepsilon \rightarrow 0^{-}}{\lim} \varepsilon k(\varepsilon)=0,\quad \underset{|x| \rightarrow +\infty}{\lim} k(x)=0,
\end{align}
via Theorem \ref{thm:caracSD1}.\\
(ii) From \cite[Theorem $28.4$]{S}, under the assumptions that the function $k$ is continuously differentiable on $(-\infty,0)$ and on $(0,+\infty)$ and satisfies \eqref{eq:limit1Dreg} with $k(1)>0, k(-1)>0$, the quantity $c:=k(0^+)+k(0^-)$ is infinite. Then, the associated SD distribution admits a Lebesgue  density infinitely differentiable on $\bbr$. If the function $k$ is continuously differentiable on $(-\infty,0)$ and on $(0,+\infty)$ and satisfies \eqref{eq:limit1Dnoreg}, then $c$ can be either finite or infinite, implying different types of regularity for the Lebesgue density of the associated SD distribution.\\
(iii) Let $X$ be a SD random vector with L\'evy measure $\nu$ as in Theorem \ref{thm:caracSD2} and such that $\int_{\|u\|\geq 1}\|u\|\nu(du)<+\infty$. Then, integrating by parts, for all $f\in\mathcal{S}(\bbr^d)$,
\begin{align*}
\bbe \langle X; \nabla(f)(X)\rangle&=\bbe \langle b; \nabla(f)(X)\rangle- \bbe \int_{\mathbb{S}^{d-1}} \langle \nabla(f)(X); x \rangle k_x(1) \sigma(dx)\\
&\quad +\bbe \int_{\bbr^d} \left(f(X+u)-f(X)-\langle \nabla (f)(X); u \rangle\bbone_{\|u\|\leq 1}\right)\tilde{\nu}(du),\\
&=\bbe \langle \bbe X; \nabla(f)(X)\rangle- \bbe \int_{\mathbb{S}^{d-1}} \langle \nabla(f)(X); x \rangle k_x(1) \sigma(dx)\\
&\quad +\bbe \int_{\bbr^d} \left(f(X+u)-f(X)-\langle \nabla (f)(X); u \rangle\right)\tilde{\nu}(du),\\
&=\bbe \langle \bbe X; \nabla(f)(X)\rangle+\bbe \int_{\bbr^d} \langle \nabla (f)(X+u) - \nabla (f)(X); u \rangle\nu(du).
\end{align*}
\end{rem}
\noindent
Let us now present a simple example for which Theorem \ref{thm:caracSD1} and Theorem \ref{thm:caracSD2} apply and which is not covered in the relevant existing literature. Rotationally invariant self-decomposable distributions are covered by Theorem \ref{thm:caracSD1} and Theorem \ref{thm:caracSD2}. Indeed, let $\lambda$ be the uniform measure on $\mathbb{S}^{d-1}$ and let
\begin{align*}
\nu(du):=\bbone_{(0,+\infty)}(r) \bbone_{\mathbb{S}^{d-1}}(x)  \frac{k(r)}{r}dr\lambda(dx),
\end{align*}
with $\int_{\|u\|\leq 1}\|u\| \nu(du)<+\infty$ and with $k$ satisfying the assumptions of Theorem \ref{thm:caracSD1}. Then, the corresponding self-decomposable distribution is rotationally invariant. 

\section{The Stein Equation for Self-Decomposable Laws}
\label{sec:SteinEqSD}
Throughout this subsection, $X$ is a non-degenerate self-decomposable random vector in $\bbr^d$, without Gaussian component, with law $\mu_X$, characteristic function $\varphi$ given by \eqref{eq:characID} with parameter $b\in \bbr^d$ and L\'evy measure $\nu$ given by
\begin{align}\label{eq:PolarSD2}
\nu(du)=\bbone_{(0,+\infty)}(r)\bbone_{\mathbb{S}^{d-1}}(x)\frac{k_x(r)}{r}dr\sigma(dx),
\end{align}
where $k_x(r)$ is a nonnegative function decreasing in $r\in (0,+\infty)$ and where $\sigma$ is a finite positive measure on $\mathbb{S}^{d-1}$. \textit{The following assumptions are assumed to hold true throughout this subsection: $k_x(r)$ is continuous in $r\in (0,+\infty)$, continuous in $x\in \mathbb{S}^{d-1}$, with}
\begin{align}\label{eq:condlimk}
\underset{r \rightarrow 0^+}{\lim} r^2 k_x(r)=0,\quad \underset{r \rightarrow +\infty}{\lim} k_x(r)=0,\quad x\in \mathbb{S}^{d-1}.
\end{align}
These assumptions insure that the positive measure $\tilde{\nu}$ given by
\begin{align}\label{eq:tilde}
\tilde{\nu}(du)=\bbone_{(0,+\infty)}(r)\bbone_{\mathbb{S}^{d-1}}(x)\left(-dk_x(r)\right)\sigma(dx),
\end{align}
is a well defined L\'evy measure on $\bbr^d$. Let us introduce next a collection of ID random vectors, $X_t$, $t \geq 0$, defined through their characteristic function, for all $t\geq 0$ and all $\xi\in \bbr^d$, by
\begin{align}\label{eq:characXt}
\varphi_t(\xi)=&\exp\bigg(i\langle b;\xi \rangle(1-e^{-t})+\int_{(0,+\infty)\times \mathbb{S}^{d-1}} \left(e^{i r\langle x;\xi \rangle}-1-i\langle rx;\xi \rangle\bbone_{r\leq 1}\right) \dfrac{k_x(r)-k_x(e^t r)}{r}dr \sigma(dx)\nonumber\\
&\quad\quad\quad -\int_{(0,+\infty)\times \mathbb{S}^{d-1}}i \langle rx;\xi \rangle\bbone_{e^{-t}<r\leq 1}\dfrac{k_x(e^t r)}{r}dr \sigma(dx)\bigg).
\end{align}
By changing variables, this function is, for all $\xi\in \bbr^d$ and all $t\geq 0$, equal to
\begin{align*}
\varphi_t(\xi)=\frac{\varphi(\xi)}{\varphi(e^{-t}\xi)},
\end{align*}
which is a well-defined characteristic function since $X$ is SD. Denoting by $\mu_t$ the law of $X_t$, let us introduce the following continuous family of operators $(P^\nu_t)_{t\geq 0}$, defined, for all $t\geq 0$, all $h\in \mathcal{C}_b(\mathbb{R}^d)$ and all $x\in \bbr^d$, by
\begin{align}\label{eq:semSD}
P^\nu_t(h)(x)= \int_{\mathbb{R}^d} h(xe^{- t}+y)\mu_t(dy).
\end{align}
For $t=0$, set $\mu_0=\delta_{0}$, with $\delta_{0}$ the Dirac measure at $0$, so that $P^\nu_0$ is the identity operator. Based on the computations of \cite[Lemma $3.1$]{AH18_2}, observe that the continuous family of operators $(P^\nu_t)_{t\geq 0}$ is a semigroup of operators on $\mathcal{C}_b(\mathbb{R}^d)$ with, for all $h\in \mathcal{C}_b(\mathbb{R}^d)$ and all $x\in \bbr^d$,
\begin{align*}
\int_{\bbr^d} P^\nu_t(h)(x)\mu_X(dx)=\int_{\bbr^d} h(x)\mu_X(dx),\, \underset{t\rightarrow+\infty}{\lim}P^\nu_t(h)(x)=\int_{\bbr^d} h(x)\mu_X(dx),\, \underset{t\rightarrow 0^+}{\lim}P^\nu_t(h)(x)=h(x).
\end{align*}
The next lemma identifies the generator of $(P^\nu_t)_{t\geq 0}$ on $\mathcal{S}\left(\bbr^d\right)$.

\begin{lem}\label{lem:genSDLaws}
Let $(P^\nu_t)_{t\geq 0}$ be the semigroup of operators defined by \eqref{eq:semSD}. Let $\tilde{\nu}$ be the L\'evy measure on $\bbr^d$ given by \eqref{eq:tilde}. 
The generator of $(P^\nu_t)_{t\geq 0}$ is given, for all $f\in\mathcal{S}(\bbr^d)$ and all $x\in \bbr^d$, by
\begin{align*}
\mathcal{A}(f)(x)=\langle b-\int_{\mathbb{S}^{d-1}} k_y(1)y\sigma(dy)-x; \nabla(f)(x)\rangle&+\int_{\bbr^d} \bigg(f(x+u)-f(x)\\
&\quad-\langle \nabla (f)(x); u \rangle\bbone_{\|u\|\leq 1}\bigg)\tilde{\nu}(du).
\end{align*}
\end{lem}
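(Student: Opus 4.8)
The plan is to compute the generator $\mathcal{A}(f) = \lim_{t\to 0^+} (P^\nu_t(f) - f)/t$ directly for $f \in \mathcal{S}(\bbr^d)$, using the explicit form \eqref{eq:semSD} and the characteristic-function identity $\varphi_t(\xi) = \varphi(\xi)/\varphi(e^{-t}\xi)$. The cleanest route goes through Fourier analysis: for $f \in \mathcal{S}(\bbr^d)$, write
\begin{align*}
P^\nu_t(f)(x) = \int_{\bbr^d} \mathcal{F}(f)(\xi)\, e^{i e^{-t}\langle \xi; x\rangle}\, \varphi_t(\xi)\, \frac{d\xi}{(2\pi)^d},
\end{align*}
so that $(P^\nu_t(f)(x) - f(x))/t$ becomes an integral against $\mathcal{F}(f)(\xi)$ of $(e^{i e^{-t}\langle \xi;x\rangle}\varphi_t(\xi) - e^{i\langle \xi;x\rangle})/t$. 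Since $\mathcal{F}(f)$ is Schwartz, one may differentiate under the integral sign after justifying a dominated-convergence bound uniform in $t$ near $0$; the pointwise $t$-derivative at $t = 0$ is $e^{i\langle \xi;x\rangle}\big( -i\langle \xi; x\rangle + \partial_t|_{t=0}\log\varphi_t(\xi)\big)$.

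The next step is to identify $\partial_t|_{t=0}\log\varphi_t(\xi)$ from \eqref{eq:characXt}, differentiating the three terms in the exponent in $t$ at $t = 0$. The first term contributes $i\langle b;\xi\rangle$. For the non-local terms, differentiating $k_x(r) - k_x(e^t r)$ in $t$ at $t=0$ produces (formally) $-r\,\partial_r k_x(r)$, i.e. after re-expressing radially it produces integration against $-dk_x(r) = \tilde\nu$ in polar form; this is where the passage from the $k_x$-weighted L\'evy measure $\nu$ to $\tilde\nu$ occurs, and it is the heart of the computation. One then checks that the boundary/truncation term $-\int i\langle rx;\xi\rangle \bbone_{e^{-t}<r\le 1}\,k_x(e^t r)\,dr/r\,\sigma(dx)$ contributes, upon differentiating the indicator's endpoint $e^{-t}$ at $t=0$, the correction $-\langle \int_{\mathbb{S}^{d-1}} k_y(1) y\,\sigma(dy);\, i\xi\rangle$, matching the drift adjustment in the statement. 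Collecting: $\partial_t|_{t=0}\log\varphi_t(\xi) = i\langle b - \int_{\mathbb{S}^{d-1}}k_y(1)y\,\sigma(dy);\xi\rangle + \int_{\bbr^d}(e^{i\langle u;\xi\rangle} - 1 - i\langle u;\xi\rangle\bbone_{\|u\|\le 1})\tilde\nu(du)$, and combining with the $-i\langle\xi;x\rangle$ term and inverting the Fourier transform recovers exactly the claimed expression for $\mathcal{A}(f)(x)$, noting that $-i\langle\xi;x\rangle\,\mathcal{F}(f)(\xi)$ inverts to $-\langle x;\nabla f(x)\rangle$ and the L\'evy-Khintchine symbol in $\xi$ inverts to the integro-differential part.

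The main obstacle is the rigorous justification of differentiation under the integral sign and of the term-by-term $t$-differentiation of $\log\varphi_t$, given that $k_x$ is only assumed continuous and monotone (not differentiable), so $-dk_x$ must be handled as a Stieltjes measure rather than a density. Here I would avoid differentiating $k_x$ pointwise and instead work directly with the measure $\mu_t$: use that $X_t \to \delta_0$ and $(X_t$ suitably rescaled$)$ has the L\'evy-It\^o structure encoded by \eqref{eq:characXt}, then apply a second-order Taylor expansion of $f$ together with the bounds $M_1(f), M_2(f) < \infty$ and the integrability $\int (1\wedge\|u\|^2)\tilde\nu(du) < \infty$ from \eqref{eq:condtilde2}/\eqref{eq:condlimk} to control $\frac{1}{t}\bbe[f(xe^{-t} + X_t) - f(x)]$ uniformly. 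Equivalently, one can cite the analogous computation in \cite[Lemma 3.1]{AH18_2} and only track the modifications caused by the loss of $\mathcal{C}^1$-smoothness of the Fourier symbol, which is precisely what forces the appearance of $\tilde\nu$ and the extra drift term $-\int_{\mathbb{S}^{d-1}}k_y(1)y\,\sigma(dy)$; the splitting $\int_{\|u\|\le 1} + \int_{\|u\|>1}$ and the finiteness of $\sigma$ and of $\int(1\wedge\|u\|^2)\tilde\nu(du)$ make all the error terms vanish as $t \to 0^+$, uniformly in $x$ on compacts, which suffices to identify the generator on $\mathcal{S}(\bbr^d)$.
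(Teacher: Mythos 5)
Your formal calculation matches the paper's: both write $\frac{1}{t}(P_t^\nu(f)(x)-f(x))$ as an integral against $\mathcal{F}(f)(\xi)$ of $\frac{1}{t}\bigl(e^{i\langle x;\xi(e^{-t}-1)\rangle}\varphi(\xi)/\varphi(e^{-t}\xi)-1\bigr)$, differentiate the L\'evy--Khintchine exponent in $t$ at $0^+$ to produce the $\tilde\nu$-integral, and extract the drift correction $-\int_{\mathbb{S}^{d-1}}k_y(1)y\,\sigma(dy)$ from the moving endpoint $e^{-t}$ in the indicator $\bbone_{e^{-t}<r\le 1}$; and you correctly identify that the real difficulty is that $k_x$ is only monotone, so $-dk_x$ is a Stieltjes measure. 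Where you diverge from the paper is in the rigorous treatment of that difficulty. You propose to abandon the Fourier route at that point and instead Taylor-expand $f(xe^{-t}+X_t)-f(x)$ against $\mu_t$ directly, or to cite the analogue in \cite[Lemma~3.1]{AH18_2}. The paper stays in Fourier variables and uses a more explicit device (Appendix Lemma~\ref{lem:diffallpoints}): it rewrites $k_x(r)-k_x(e^tr)=\int_r^{re^t}(-dk_x(s))$, swaps the order of integration by Fubini so that the $t$-dependence sits only in the endpoints $se^{-t}$ and $e^{-t}$, applies Leibniz's integral rule in $t$, and then obtains the uniform-in-$t$ domination (Appendix Lemma~\ref{lem:GenCau}\,(ii)) from the fact that $\psi_t=\exp(\eta_t)$ is a characteristic function, so that $|\psi_t(\xi)-1|\le|\eta_t(\xi)|$ by \cite[Lemma~7.9]{S}, followed by a second integration by parts. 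This Fubini+Leibniz mechanism is what makes the non-smooth $k_x$ tractable; your alternative via a second-order Taylor expansion against $\mu_t$ would ultimately need the same kind of Stieltjes manipulations to produce the precise constant $\int_{\mathbb{S}^{d-1}}k_y(1)y\,\sigma(dy)$ and to verify the integrability hypotheses, so the paper's route is more economical. Also note one small imprecision: the condition $\int(1\wedge\|u\|^2)\tilde\nu(du)<\infty$ you invoke is ensured here by \eqref{eq:condlimk} (the standing hypotheses of Section~\ref{sec:SteinEqSD}) rather than by \eqref{eq:condtilde2}, which belongs to the hypotheses of Theorem~\ref{thm:caracSD2}.
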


\begin{proof}
Let $f\in\mathcal{S}(\bbr^d)$. By Fourier inversion, for all $x\in \bbr^d$ and all $t\in (0,1)$,
\begin{align*}
\frac{1}{t}\left(P^\nu_t(f)(x)-f(x)\right)=\frac{1}{(2\pi)^d}\int_{\bbr^d} \mathcal{F}(f)(\xi)e^{i\langle x; \xi \rangle}\left(e^{i\langle x;\xi (e^{-t}-1) \rangle}\dfrac{\varphi(\xi)}{\varphi(e^{-t}\xi)}-1\right)\dfrac{d\xi}{t}.
\end{align*}
By Lemma \ref{lem:GenCau} (i), for all $\xi \in \bbr^d$ and all $x\in \bbr^d$,
\begin{align*}
\underset{t\rightarrow 0^+}{\lim}\frac{1}{t}\left(e^{i\langle x;\xi (e^{-t}-1) \rangle}\dfrac{\varphi(\xi)}{\varphi(e^{-t}\xi)}-1\right)&=i\langle b-\int_{\mathbb{S}^{d-1}} k_y(1)y\sigma(dy)-x;\xi\rangle\\
&\quad\quad +\int_{\bbr^d} \left(e^{i\langle u;\xi \rangle}-1-i\langle u;\xi \rangle\bbone_{\|u\|\leq 1}\right) \tilde{\nu}(du).
\end{align*}
Moreover, by Lemma \ref{lem:GenCau} (ii), for all $\xi \in \bbr^d$ and all $x\in \bbr^d$,
\begin{align*}
\underset{t\in (0,1)}{\sup}\frac{1}{t}\left|e^{i\langle x;\xi (e^{-t}-1) \rangle}\dfrac{\varphi(\xi)}{\varphi(e^{-t}\xi)}-1\right|&\leq \left(\|x\|+\|b\|\right)\|\xi\|+\|\xi\|^2 \int_{\mathbb{S}^{d-1}} \sigma(dy) \int_0^1s^2 (-dk_y(s)) +4  \int_{\mathbb{S}^{d-1}}k_y(1)\sigma(dy)\\
&\quad\quad+2 \|\xi\|^2  \int_{\mathbb{S}^{d-1}}k_y(1)\sigma(dy)+2\int_{\mathbb{S}^{d-1}} \sigma(dy) \int_{1}^{+\infty} (-dk_y(s))\\
&\quad\quad  +\|\xi\| \int_{\mathbb{S}^{d-1}} k_y(1)\sigma(dy).
\end{align*}
Then, a direct application of the Lebesgue dominated convergence theorem together with Fourier duality imply that
\begin{align*}
\underset{t\rightarrow 0^+}{\lim}\frac{1}{t}\left(P^\nu_t(f)(x)-f(x)\right)&=\langle b-\int_{\mathbb{S}^{d-1}} yk_y(1)\sigma(dy)-x; \nabla(f)(x)\rangle\\
&\quad+\int_{\bbr^d} \bigg(f(x+u)-f(x)-\langle \nabla (f)(x); u \rangle\bbone_{\|u\|\leq 1}\bigg)\tilde{\nu}(du),
\end{align*}
which concludes the proof of the lemma.
\end{proof}


Based on the previous lemma, it is natural to consider the following Stein equation for self-decomposable distributions with polar decomposition given by \eqref{eq:PolarSD2} (under appropriate assumptions on the function $k_x(r)$) : for all $h\in \mathcal{H}_2\cap \mathcal{C}_c^{\infty}\left(\bbr^d\right)$ and all $x\in \bbr^d$,
\begin{align}\label{eq:SteinEquSD}
\langle \tilde{b}-x; \nabla(f_h)(x)\rangle+\int_{\bbr^d} \bigg(f_h(x+u)-f_h(x)-\langle \nabla (f_h)(x); u \rangle\bbone_{\|u\|\leq 1}\bigg)\tilde{\nu}(du)=h(x)-\bbe h(X),
\end{align}
where $\tilde{b}=b-\int_{\mathbb{S}^{d-1}}k_y(1) y\sigma(dy)$. By semigroup theory, a candidate solution to \eqref{eq:SteinEquSD} is given by,
\begin{align}\label{eq:SolSD}
f_h(x)=-\int_{0}^{+\infty} \left(P^\nu_t(h)(x)-\bbe h(X)\right)dt,\quad x\in\bbr^d.
\end{align}
The next proposition proves the existence of the function $f_h$ given by \eqref{eq:SolSD}, studies its regularity and proves that this function is a strong solution of \eqref{eq:SteinEquSD} on $\bbr^d$. 

\begin{thm}\label{thm:Stein}
Let $X$ be a non-degenerate SD random vector without Gaussian component, with law $\mu_X$, characteristic function $\varphi$, L\'evy measure $\nu$ having polar decomposition given by \eqref{eq:PolarSD2} where the function $k_x(r)$ is continuous in $r\in(0,+\infty)$, continuous in $x\in \mathbb{S}^{d-1}$ and satisfies \eqref{eq:condlimk}. Assume that there exists $\varepsilon\in (0,1)$ such that $\bbe \|X\|^\varepsilon <+\infty$ and that there exist $\beta_1>0,\beta_2>0$ and $\beta_3\in (0,1)$ such that the function $k_x(r)$ in \eqref{eq:PolarSD2} satisfies
\begin{align*}
&\gamma_1=\underset{t\geq 0}{\sup}\, \left(e^{\beta_1 t}\int_{(1,+\infty)\times\mathbb{S}^{d-1}}\frac{k_x(e^t r)}{r}dr\sigma(dx)\right)<+\infty,\nonumber\\
&\gamma_2=\underset{t\geq 0}{\sup}\, \left(e^{\beta_2 t}\int_{(0,1)\times\mathbb{S}^{d-1}} rk_x(e^tr)dr\sigma(dx)\right)<+\infty,
\end{align*}
and that,
\begin{align*}
\gamma_3=\underset{t\geq 0}{\sup} \left(e^{-(1-\beta_3)t} \left\|\int_{\mathbb{S}^{d-1}}x\left(\int_{1}^{e^t} k_x(r)dr\right)\sigma(dx)\right\|\right)<+\infty.
\end{align*}
Let $X_t$, $t\geq 0$, be the random vector defined through the characteristic function $\varphi_t$ given by \eqref{eq:characXt} and assume that,
\begin{align}\label{eq:unifbound}
\underset{t \geq 0}{\sup}\, \bbe\, \|X_t\|^\varepsilon <+\infty.
\end{align}
Let $(P^\nu_t)_{t\geq 0}$ be the semigroup of operators defined by \eqref{eq:semSD}. Then, for any $h\in \mathcal{H}_1$, the function $f_h$, given, for all $x\in \bbr^d$, by 
\begin{align*}
f_h(x)=-\int_0^{+\infty} \left(P_t^\nu(h)(x)-\bbe h(X)\right)dt,
\end{align*}
is well defined and continuously differentiable on $\bbr^d$ with $M_1(f_h)\leq 1$.~Moreover, for any $h\in \mathcal{H}_2\cap \mathcal{C}_c^{\infty}\left(\bbr^d\right)$,  $f_h$ is twice continuously differentiable on $\bbr^d$, $M_2(f_h)\leq 1/2$ and
\begin{align*}
\langle \tilde{b}-x; \nabla(f_h)(x)\rangle&+\int_{\bbr^d} \bigg(f_h(x+u)-f_h(x)-\langle \nabla (f_h)(x); u \rangle\bbone_{\|u\|\leq 1}\bigg)\tilde{\nu}(du),\quad x\in \bbr^d,
\end{align*}
where $\tilde{\nu}$ is given by \eqref{eq:tilde} and where $\tilde{b}=b-\int_{\mathbb{S}^{d-1}}k_y(1) y\sigma(dy)$.
\end{thm}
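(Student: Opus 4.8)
The plan is to derive all three assertions by differentiating the semigroup representation \eqref{eq:SolSD} under the integral in $t$, using that $P^\nu_t$ intertwines with differentiation up to a factor $e^{-t}$ (and $e^{-2t}$ on second derivatives), and controlling integrability in $t$ by a low-moment interpolation estimate. For well-definedness, exploit the $\mu_X$-invariance of $P^\nu_t$: writing $P^\nu_t(h)(x)-\bbe h(X)=\int_{\bbr^d}(P^\nu_t(h)(x)-P^\nu_t(h)(z))\mu_X(dz)$ and combining $|h(a)-h(b)|\le 2^{1-\varepsilon}M_0(h)^{1-\varepsilon}M_1(h)^{\varepsilon}\|a-b\|^{\varepsilon}$ (valid for $h\in\mathcal{H}_1$) with $P^\nu_t(h)(x)-P^\nu_t(h)(z)=\int(h(xe^{-t}+y)-h(ze^{-t}+y))\mu_t(dy)$, one gets $|P^\nu_t(h)(x)-\bbe h(X)|\le 2^{1-\varepsilon}e^{-\varepsilon t}(\|x\|^{\varepsilon}+\bbe\|X\|^{\varepsilon})$, integrable in $t$ since $\bbe\|X\|^{\varepsilon}<+\infty$; hence $f_h$ is well defined with $|f_h(x)|\lesssim 1+\|x\|^{\varepsilon}$.

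For the regularity bounds, differentiate $x\mapsto P^\nu_t(h)(x)$ under $\mu_t$: $\nabla_x P^\nu_t(h)=e^{-t}P^\nu_t(\nabla h)$, which is continuous in $x$ and bounded in norm by $e^{-t}M_1(h)\le e^{-t}$, so \eqref{eq:SolSD} may be differentiated under $\int_0^{+\infty}dt$, yielding $f_h\in\mathcal{C}^1$, $\nabla f_h=-\int_0^{+\infty}e^{-t}P^\nu_t(\nabla h)\,dt$ and $M_1(f_h)\le\int_0^{+\infty}e^{-t}dt=1$. When moreover $h\in\mathcal{H}_2\cap\mathcal{C}^{\infty}_c(\bbr^d)$, iterating gives $\mathbf{D}^2_x P^\nu_t(h)=e^{-2t}P^\nu_t(\mathbf{D}^2 h)$ with $\|P^\nu_t(\mathbf{D}^2 h)(x)\|_{op}\le M_2(h)$, so $f_h\in\mathcal{C}^2$ and $M_2(f_h)\le M_2(h)\int_0^{+\infty}e^{-2t}dt\le 1/2$. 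The hypotheses $\sup_{t\ge0}\bbe\|X_t\|^{\varepsilon}<+\infty$ and $\gamma_1,\gamma_2,\gamma_3<+\infty$ are what make the dominated-convergence justifications of these differentiations, and of the manipulations of $P^\nu_t(f_h)(x)=\bbe f_h(xe^{-t}+X_t)$ in the next step, uniform in $t$.

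For the Stein equation, use the semigroup property $P^\nu_sP^\nu_t=P^\nu_{s+t}$ to obtain $P^\nu_s(f_h)(x)-f_h(x)=-\int_0^{s}(P^\nu_t(h)(x)-\bbe h(X))\,dt$; dividing by $s$, letting $s\to0^+$, and using $\lim_{t\to0^+}P^\nu_t(h)(x)=h(x)$, one gets $\lim_{s\to0^+}\tfrac1s(P^\nu_s(f_h)(x)-f_h(x))=h(x)-\bbe h(X)$. It then remains to identify this difference-quotient limit with $\mathcal{A}(f_h)(x)$, the integro-differential operator of Lemma \ref{lem:genSDLaws}. First, $\mathcal{A}(f_h)(x)$ is finite: the small-jump part is handled by $M_2(f_h)\le 1/2$ and $\int_{\|u\|\le1}\|u\|^2\tilde\nu(du)<+\infty$, the large-jump part by the bound $|f_h(x+u)-f_h(x)|\le C_{\varepsilon}(1+\|x\|^{\varepsilon})\|u\|^{\varepsilon}$ for $\|u\|\ge1$ together with $\int_{\|u\|>1}\|u\|^{\varepsilon}\tilde\nu(du)<+\infty$, the latter being a restatement of $\bbe\|X\|^{\varepsilon}<+\infty$ after an integration by parts in $r$ using $r^{\varepsilon}k_x(r)\to0$. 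Then, writing each increment of $f_h$ as $-\int_0^{+\infty}$ of the corresponding increment of $P^\nu_t(h)$ and using $\nabla f_h=-\int_0^{+\infty}e^{-t}P^\nu_t(\nabla h)\,dt$, a Fubini interchange --- licensed by $\int_0^{+\infty}\!\!\int_{\|u\|\le1}e^{-2t}M_2(h)\|u\|^2\,\tilde\nu(du)dt<+\infty$ and $\int_0^{+\infty}\!\!\int_{\|u\|>1}2^{1-\varepsilon}e^{-\varepsilon t}\|u\|^{\varepsilon}\,\tilde\nu(du)dt<+\infty$ --- yields $\mathcal{A}(f_h)(x)=-\int_0^{+\infty}\mathcal{A}(P^\nu_t(h))(x)\,dt$. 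Since each $P^\nu_t(h)$ is bounded, infinitely differentiable with bounded derivatives and has $\mathcal{F}(P^\nu_t(h))\in L^1(\bbr^d)$, Lemma \ref{lem:genSDLaws} (through its Fourier-inversion proof, which extends verbatim to this class) gives $\mathcal{A}(P^\nu_t(h))=\partial_t P^\nu_t(h)$, whence $\mathcal{A}(f_h)(x)=-\int_0^{+\infty}\partial_t P^\nu_t(h)(x)\,dt=h(x)-\bbe h(X)$ by $P^\nu_0(h)=h$ and $\lim_{t\to+\infty}P^\nu_t(h)(x)=\bbe h(X)$, which is \eqref{eq:SteinEquSD}.

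I expect the main obstacle to be this last identification step. Lemma \ref{lem:genSDLaws} is proved by Fourier inversion and so applies literally only to Schwartz functions, whereas $f_h$ merely has two bounded derivatives and sublinear growth; one must either re-run the difference-quotient (Taylor expansion plus dominated convergence) computation directly on $f_h$ --- carefully tracking the drift contributions appearing in \eqref{eq:characXt} as $t\to0^+$ and the $\{\|u\|>1\}$ tail of $\tilde\nu$ --- or justify the Fubini commutation of $\mathcal{A}$ with the $t$-integral as above. The bookkeeping for the large-jump part of $\tilde\nu$, where no first-moment integrability of $\tilde\nu$ is available, is precisely where the hypotheses $\gamma_1,\gamma_2,\gamma_3<+\infty$ and $\sup_{t\ge0}\bbe\|X_t\|^{\varepsilon}<+\infty$ enter in an essential way.
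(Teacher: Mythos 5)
Your proposal is correct in its essentials, and it takes a genuinely different route in two places that make it worth comparing to the paper's proof.

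For \emph{well-definedness and growth of $f_h$}, the paper invokes Proposition~\ref{prop:ExpConv} (exponential convergence $d_{W_1}(\mu_t,\mu_X)\leq Ce^{-ct}$), which is where the hypotheses $\gamma_1,\gamma_2,\gamma_3<+\infty$ and $\sup_{t\geq0}\bbe\|X_t\|^\varepsilon<\infty$ enter the published argument. You bypass that appendix result entirely via the interpolation $|h(a)-h(b)|\leq\min(2,\|a-b\|)\leq 2^{1-\varepsilon}\|a-b\|^{\varepsilon}$ combined with $\mu_X$-invariance, obtaining directly $|P^\nu_t(h)(x)-\bbe h(X)|\leq 2^{1-\varepsilon}e^{-\varepsilon t}(\|x\|^{\varepsilon}+\bbe\|X\|^{\varepsilon})$ and hence $|f_h(x)|\lesssim 1+\|x\|^{\varepsilon}$. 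This is both more self-contained and gives a sharper (sublinear rather than linear) growth control. For the \emph{Fubini estimate} needed to commute $\mathcal{A}$ with $\int_0^{+\infty}dt$, the paper again integrates by parts in the radial variable and deploys $\gamma_1,\gamma_2,\gamma_3$ explicitly (bounds \eqref{ineq=boundI2}--\eqref{ineq:boundI12}), splitting at the moving threshold $\|u\|\leq e^t$. You instead use the same H\"older-$\varepsilon$ device for the large-jump part: $|P^\nu_t(h)(x+u)-P^\nu_t(h)(x)|\leq 2^{1-\varepsilon}e^{-\varepsilon t}\|u\|^{\varepsilon}$, which together with $\int_{\|u\|>1}\|u\|^\varepsilon\tilde\nu(du)<\infty$ (your observation that this is equivalent to $\bbe\|X\|^\varepsilon<\infty$ via an integration by parts with $r^\varepsilon k_x(r)\to0$ is correct — decreasingness of $k_x$ makes the tail limit vanish once $\int_1^\infty r^{\varepsilon-1}k_x(r)dr<\infty$) and the trivial small-jump bound $\frac12 e^{-2t}\|u\|^2$, makes $\int_0^\infty(I)\,dt<\infty$ directly. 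Worth noting: your estimates do not in fact consume the $\gamma_i$ hypotheses at all, which suggests the statement could be somewhat weakened under your approach — they appear to be inherited from Proposition~\ref{prop:ExpConv}, which you never use.

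Two minor points. First, a sign: with $f_h=-\int_0^{+\infty}(P^\nu_t(h)-\bbe h(X))\,dt$ and $P^\nu_sP^\nu_t=P^\nu_{s+t}$ you should get $P^\nu_s(f_h)(x)-f_h(x)=+\int_0^s(P^\nu_t(h)(x)-\bbe h(X))\,dt$, not $-\int_0^s$; your stated limit $h(x)-\bbe h(X)$ is nonetheless correct, so the error is self-correcting but should be fixed. Second, the step $\mathcal{A}(P^\nu_t(h))=\partial_t P^\nu_t(h)$: you flag correctly that Lemma~\ref{lem:genSDLaws} as stated applies to $\mathcal{S}(\bbr^d)$. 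The paper sidesteps this by never applying the lemma to $P^\nu_t(h)$; it instead differentiates the explicit Fourier representation $u(t,x)=\frac{1}{(2\pi)^d}\int\mathcal{F}(h)(\xi)e^{i\langle x;\xi e^{-t}\rangle}\varphi(\xi)/\varphi(e^{-t}\xi)\,d\xi$ under the integral in $t$ (justified by Lemma~\ref{lem:diffallpoints}) and reads off the symbol of $\mathcal{A}$. Your assertion that the Fourier-inversion argument extends because $\mathcal{F}(P^\nu_t(h))\in L^1$ is substantively the same computation and is correct, but to make the identification airtight you should phrase it the way the paper does, i.e., keep $\mathcal{F}(h)$ (which is Schwartz) as the integrand rather than re-casting it as $\mathcal{F}(P^\nu_t(h))$.

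Finally, note that your interpolation bound already yields $P^\nu_T(h)(x)\to\bbe h(X)$ as $T\to\infty$, so the ergodicity used in passing to the limit is also self-contained in your argument.
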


\begin{proof}
To start with, let us prove that, for any $h\in \mathcal{H}_1$, the function $f_h$ defined by \eqref{eq:SolSD} does exist. For all $x\in \bbr^d$ and all $t>0$,
\begin{align*}
\left| P^\nu_t(h)(x)-\bbe h(X)\right|&=\left|\bbe h\left(xe^{-t}+X_t\right)-\bbe h(X)\right|\\
&\leq e^{-t} \|x\|+\left|\bbe h(X_t)-\bbe h(X) \right|\\
&\leq e^{-t} \|x\|+d_{W_1}(\mu_t,\mu_X)\\
&\leq e^{-t} \|x\|+Ce^{-c t},
\end{align*}
where we have used Proposition \ref{prop:ExpConv} in the last line. Then, the function $f_h$ is well defined on $\bbr^d$. Now, since $h\in \mathcal{H}_1$, $f_h$ is continuously differentiable on $\bbr^d$ with, for all $x\in \bbr^d$,
\begin{align*}
\nabla\left(f_h\right)(x)=-\int_{0}^{+\infty} e^{-t}P^\nu_t(\nabla (h))(x)dt.
\end{align*}
Moreover, reasoning as in \cite[Proposition $3.4$]{AH18_2}, one gets that $M_1(f_h)\leq 1$. Now, if $h\in \mathcal{H}_2\cap \mathcal{C}_c^\infty\left(\bbr^d\right)$, then, $f_h$ is twice continuously differentiable on $\bbr^d$ with, for all $x\in \bbr^d$ and all $i,j\in \{1,\dots,d\}$,
\begin{align*}
\partial^2_{i,j}(f_h)(x)=-\int_{0}^{+\infty} e^{-2 t}P^\nu_t(\partial^2_{i,j}(h))(x)dt,
\end{align*}
and with $M_2(f_h)\leq 1/2$. To conclude let us prove that $f_h$ is a strong solution of \eqref{eq:SteinEquSD} on $\bbr^d$. Set $u(t,x)=P^\nu_t(h)(x)$, for $t\geq 0$ and $x\in \bbr^d$. First, let us prove that, for all $t \geq 0$ and all $x\in \bbr^d$,
\begin{align}\label{eq:HeatEq}
\partial_t(u)(t,x)= \mathcal{A}(u)(t,x).
\end{align}
Since $h\in \mathcal{C}_c^\infty\left(\bbr^d\right)$, by Fourier inversion, for all $t \geq 0$ and all $x\in \bbr^d$,
\begin{align*}
u(t,x)=\frac{1}{(2\pi)^d}\int_{\bbr^d} \mathcal{F}(h)(\xi) e^{i \langle x;\xi e^{-t} \rangle} \frac{\varphi(\xi)}{\varphi(e^{-t}\xi)} d\xi.
\end{align*}
Now, by Lemma \ref{lem:diffallpoints}, for all $x\in \bbr^d$, all $\xi \in \bbr^d$ and all $t\geq 0$,
\begin{align*}
\dfrac{d}{dt}\left(e^{i \langle x;\xi e^{-t} \rangle} \frac{\varphi(\xi)}{\varphi(e^{-t}\xi)}\right)&=-i e^{-t} \langle x; \xi \rangle e^{i e^{-t} \langle x;\xi \rangle} \dfrac{\varphi(\xi)}{\varphi(e^{-t}\xi)}+i e^{-t} \langle b; \xi \rangle e^{i e^{-t} \langle x; \xi\rangle}\dfrac{\varphi(\xi)}{\varphi(e^{-t}\xi)}\\
&\quad -i e^{-t} \langle \int_{\mathbb{S}^{d-1}}k_y(1) y\sigma(dy);\xi \rangle e^{i \langle x;\xi e^{-t} \rangle} \frac{\varphi(\xi)}{\varphi(e^{-t}\xi)}\\
&\quad+ \int_{\mathbb{R}^d} \left(e^{i \langle u;\xi e^{-t}\rangle}-1-i \langle u;\xi e^{-t} \rangle\bbone_{\|u\|\leq 1}\right)\tilde{\nu}(du) e^{i \langle x;\xi e^{-t} \rangle} \frac{\varphi(\xi)}{\varphi(e^{-t}\xi)}
\end{align*}
Moreover, for all $x\in \bbr^d$, all $\xi \in \bbr^d$ and all $t\geq 0$,
\begin{align*}
\left| \dfrac{d}{dt}\left(e^{i \langle x;\xi e^{-t} \rangle} \frac{\varphi(\xi)}{\varphi(e^{-t}\xi)}\right)\right| &\leq e^{-t} \left(\|x\|+\|b\|+\left\|\int_{\mathbb{S}^{d-1}}yk_y(1)\sigma(dy)\right\|\right)\|\xi\|+ e^{-2t} \|\xi\|^2 \int_{\|u\|\leq 1} \|u\|^2 \tilde{\nu}(du)\\
&\quad+2\int_{\|u\|\geq 1} \tilde{\nu}(du)\\
&\leq \left(\|x\|+\|b\|+\left\|\int_{\mathbb{S}^{d-1}}yk_y(1)\sigma(dy)\right\|\right)\|\xi\|+ \|\xi\|^2 \int_{\|u\|\leq 1} \|u\|^2 \tilde{\nu}(du)\\
&\quad+2\int_{\|u\|\geq 1} \tilde{\nu}(du).
\end{align*}
Then, for all $t\geq 0$ and all $x\in \bbr^d$,
\begin{align*}
\partial_t(u)(t,x)&=\frac{1}{(2\pi)^d} \int_{\bbr^d} \mathcal{F}(h)(\xi)\dfrac{d}{dt}\left(e^{i \langle x;\xi e^{-t} \rangle} \frac{\varphi(\xi)}{\varphi(e^{-t}\xi)}\right)d\xi\\
&=\mathcal{A}(u)(t,x),
\end{align*}
where the Fourier symbol of $\mathcal{A}$ and the Fourier representation of $u(t,x)$ have been used in the last equality. To pursue, let $0<T <+\infty$ and let us integrate out the equation \eqref{eq:HeatEq} between $0$ and $T$. Then, for all $x\in \bbr^d$,
\begin{align*}
P^\nu_T(h)(x)-h(x)=\int_0^T \mathcal{A}\left(P^\nu_t(h)\right)(x)dt,
\end{align*}
then, letting $T\rightarrow +\infty$ and the ergodicity of the semigroup $(P_t^\nu)_{t\geq 0}$ give:
\begin{align*}
\underset{T\rightarrow +\infty}{\lim} (P^\nu_T(h)(x)-h(x))=\bbe h(X)-h(x),\quad x\in \bbr^d.
\end{align*}
Next, let us prove that $\int_0^{+\infty} \left| \mathcal{A}(P^\nu_t(h))(x)\right|dt<+\infty$, for all $x\in \bbr^d$. To do so, one needs to estimate $\|\nabla (P^\nu_t(h))(x)\|$ and
\begin{align*}
(I):=\left|\int_{\bbr^d} \left(P^\nu_t(h)(x+u)-P^\nu_t(h)(x)-\langle \nabla(P^\nu_t(h))(x);u\rangle \bbone_{\|u\|\leq 1}\right)\tilde{\nu}(du)\right|.
\end{align*} 
From the commutation relation and the fact that $h\in \mathcal{H}_2$,
\begin{align*}
\|\nabla (P^\nu_t(h))(x)\| \leq e^{-t}.
\end{align*}
Now, let us bound (I). For all $x\in \bbr^d$ and all $t>0$
\begin{align}\label{ineq:boundI}
(I)& \leq \left|\int_{\bbr^d} \left(P^\nu_t(h)(x+u)-P^\nu_t(h)(x)-\langle \nabla(P^\nu_t(h))(x);u\rangle \bbone_{\|u\|\leq e^{t}}\right)\tilde{\nu}(du)\right|\nonumber\\
&\quad\quad+\left|\int_{\bbr^d} \langle \nabla (P_t(h))(x);u\rangle \bbone_{1\leq \|u\|\leq e^t} \tilde{\nu}(du)\right|.
\end{align}
Let us start with the second term on the right-hand side of \eqref{ineq:boundI}. Again, via the commutation relation and an integration by parts
\begin{align}\label{ineq=boundI2}
\left|\int_{\bbr^d} \langle \nabla (P_t(h))(x);u\rangle \bbone_{1\leq \|u\|\leq e^t} \tilde{\nu}(du)\right|&\leq e^{-t} \left|\langle P^\nu(\nabla(h))(x);\int_{\bbr^d} u \bbone_{1<\|u\|\leq e^t} \tilde{\nu}(du)\rangle\right|\nonumber\\
&\leq e^{-t} \left\|\int_{\mathbb{S}^{d-1}}y\left(\int_{1}^{e^t} r (-dk_y(r))\right)\sigma(dy)\right\|\nonumber\\
&\leq e^{-t} \left\|\int_{\mathbb{S}^{d-1}}y \left(\int_1^{e^t} k_y(r)dr +k_y(1)-e^t k_y(e^t)\right)\sigma(dy)\right\|\nonumber\\
&\leq \bigg(\gamma_3 e^{-\beta_3 t} +e^{-t}\left\|\int_{\mathbb{S}^{d-1}}yk_y(1)\sigma(dy)\right\|+ \left\|\int_{\mathbb{S}^{d-1}}yk_y(e^t)\sigma(dy)\right\| \bigg).
\end{align}
Note also that $\int_0^{+\infty} k_y(e^t)dt=\int_1^{+\infty} k_y(r)dr/r<+\infty$, for $y\in \mathbb{S}^{d-1}$. This concludes the bounding of the second term on the right-hand side of \eqref{ineq:boundI}. For the first term, for all $x\in \bbr^d$ and all $t\geq 0$
\begin{align*}
\left|\int_{\bbr^d} \left(P^\nu_t(h)(x+u)-P^\nu_t(h)(x)-\langle \nabla(P^\nu_t(h))(x);u\rangle \bbone_{\|u\|\leq e^{t}}\right)\tilde{\nu}(du)\right| \leq (I_1)+(I_2),
\end{align*}
where,
\begin{align*}
&(I_1):=\left|\int_{\|u\|\leq e^t} \left(P^\nu_t(h)(x+u)-P^\nu_t(h)(x)-\langle \nabla(P^\nu_t(h))(x);u\rangle\right)\tilde{\nu}(du)\right|\\
&(I_2):=\left|\int_{\|u\|>e^t} \left(P^\nu_t(h)(x+u)-P^\nu_t(h)(x)\right)\tilde{\nu}(du)\right|.
\end{align*}
Then, by commutation and a change of variables
\begin{align*}
(I_1)\leq \left|\int_{(0,1)\times\mathbb{S}^{d-1}} \left(P_t^\nu(h)(x+e^t ry)-P^\nu_t(h)(x)-\langle P^\nu_t(\nabla(h))(x);r y \rangle \right) (-dk_y(e^t r)) \sigma(dy)\right|.
\end{align*}
Now, note that, for all $x\in \bbr^d$, all $u=ry\in(0,1)\times\mathbb{S}^{d-1}$ and all $t>0$,
\begin{align*}
\left| P_t^\nu(h)(x+e^t ry)-P^\nu_t(h)(x)-\langle P^\nu_t(\nabla(h))(x);r y \rangle \right| \leq r^2
\end{align*}
so that, integratng by parts, for all $t>0$,
\begin{align}\label{ineq:boundI11}
(I_1)&\leq \int_{\mathbb{S}^{d-1}}\left(\int_0^1 r^2 (-dk_y(e^t r))\right)\sigma(dy)\nonumber\\
&\leq \int_{\mathbb{S}^{d-1}} \left(2\int_0^1 rk_y(e^t r)dr-k_y(e^t)\right)\sigma(dy)\nonumber\\
&\leq \left(2 \gamma_2 e^{-\beta_2 t} -\int_{\mathbb{S}^{d-1}}k_y(e^t)\sigma(dy)\right).
\end{align}
Similarly for $(I_2)$, for all $x\in \bbr^d$, all $u=ry\in(0,1)\times\mathbb{S}^{d-1}$ and all $t>0$
\begin{align}\label{ineq:boundI12}
(I_2)&= \left|\int_{(1,+\infty)\times \mathbb{S}^{d-1}} \left(P^\nu_t(h)(x+e^t r y)-P^\nu_t(h)(x)\right)(-dk_y(e^t r)) \sigma(dy)\right|\nonumber\\
&\leq 2 \int_{\mathbb{S}^{d-1}}\left(\int_{1}^{+\infty} (-dk_y(e^t r))\right)\sigma(dy) \nonumber\\
&\leq 2  \int_{\mathbb{S}^{d-1}}k_y(e^t)\sigma(dy).
\end{align}
Combining \eqref{ineq:boundI} together with \eqref{ineq=boundI2}--\eqref{ineq:boundI12},
\begin{align*}
(I)&\leq \bigg(\gamma_3 e^{-\beta_3 t} +e^{-t}\left\|\int_{\mathbb{S}^{d-1}}yk_y(1)\sigma(dy)\right\|+ \left\|\int_{\mathbb{S}^{d-1}}yk_y(e^t)\sigma(dy)\right\| \bigg)\\
&\quad\quad+\left(2 \gamma_2 e^{-\beta_2 t} -\int_{\mathbb{S}^{d-1}}k_y(e^t)\sigma(dy)\right)+2  \int_{\mathbb{S}^{d-1}}k_y(e^t)\sigma(dy).
\end{align*} 
Then, for all $x\in \bbr^d$
\begin{align*}
\bbe h(X)-h(x)=\int_0^{+\infty} \mathcal{A}(P^\nu_t(h))(x)dt.
\end{align*}
Noting that $\int_0^{+\infty} \mathcal{A}(P^\nu_t(h))(x)dt=-\mathcal{A}(f_h)(x)$, for all $x\in \bbr^d$, concludes the proof of the theorem.
\end{proof}

\begin{rem} (i) Let us discuss the assumptions of Theorem \ref{thm:Stein}. If $X$ is $\alpha$-stable with L\'evy measure given by \eqref{eq:PolarStable} and with $\alpha\in (0,1]$, then for any $\varepsilon\in (0,\alpha)$, $\beta_1\in (0,\alpha)$, $\beta_2\in (0,\alpha)$ and $\beta_3\in (0,\alpha)$ while
\begin{align*}
\bbe \|X\|^\varepsilon <+\infty,\quad \gamma_1<+\infty,\quad \gamma_2<+\infty,\quad \gamma_3<+\infty.
\end{align*}
Next, let us discuss the condition \eqref{eq:unifbound} in the particular case $\alpha \in (0,1)$. (A similar discussion can be performed in the case $\alpha=1$ but requires different estimates. )
Since $\alpha \in (0,1)$, the random vector $X_t$, $t\geq 0$, defined through \eqref{eq:characXt} has the characteristic function given, for all $\xi \in \bbr^d$ and all $t \geq 0$, by
\begin{align*}
\varphi_t(\xi)=\exp\left(i\langle b_0;\xi \rangle(1-e^{-t})+(1-e^{-\alpha t})\int_{\bbr^d} \left(e^{i \langle u; \xi \rangle}-1\right)\nu(du)\right),
\end{align*}
with $\nu$ as in \eqref{eq:PolarStable}. Then, $X_t=_d (1-e^{-t}) b_0+(1-e^{-\alpha t})^{\frac{1}{\alpha}} \tilde{X}$ where $\tilde{X}$ is $\alpha$-stable with $b_0=0$ and $\alpha\in (0,1)$. It is then straightforward to check that $\bbe \|X_t\|^\varepsilon$ is uniformly bounded in $t$ for any $\varepsilon\in (0,\alpha)$.\\
(ii) Now, let $X$ be a non-degenerate SD random vector as in Theorem \ref{thm:Stein} such that
\begin{align*}
\int_{\|u\|\leq 1}\|u\|\nu(du)<+\infty,\quad \underset{\varepsilon\rightarrow 0^+}{\lim}\varepsilon k_y(\varepsilon)=0, \quad y\in \mathbb{S}^{d-1}.
\end{align*}
Let $f_h$ be the solution to the Stein equation \eqref{eq:SteinEquSD} defined by \eqref{eq:SolSD}, for $h\in \mathcal{H}_2\cap \mathcal{C}_c^\infty(\bbr^d)$. Then, by an integration by parts, for all $x\in \bbr^d$,
\begin{align*}
&\langle \tilde{b}+\int_{\mathbb{S}^{d-1}}y\left(\int_0^1 rdk_y(r)\right)\sigma(dy)-x; \nabla(f_h)(x)\rangle+\int_{\bbr^d} \bigg(f_h(x+u)-f_h(x)\bigg)\tilde{\nu}(du)=\\
&\langle b-\int_{\mathbb{S}^{d-1}}y\left(\int_0^1 k_y(r)dr\right)\sigma(dy)-x; \nabla(f_h)(x)\rangle+\int_{\bbr^d} \bigg(f_h(x+u)-f_h(x)\bigg)\tilde{\nu}(du),
\end{align*}
so that, in this case, $f_h$ is a solution to the following Stein equation
\begin{align}\label{eq:SteinEq2}
\langle b_0-x; \nabla(f_h)(x)\rangle+\int_{\bbr^d} \bigg(f_h(x+u)-f_h(x)\bigg)\tilde{\nu}(du)=h(x)-\bbe h(X),\quad x\in \bbr^d.
\end{align}
In particular, if $X$ is $\alpha$-stable with $\alpha\in (0,1)$, then the equation \eqref{eq:SteinEq2} boils down to
\begin{align*}
\langle b_0-x; \nabla(f_h)(x)\rangle+\alpha \int_{\bbr^d} \bigg(f_h(x+u)-f_h(x)\bigg)\nu(du)=h(x)-\bbe h(X),\quad x\in \bbr^d.
\end{align*}
(iii) Next, let $X$ be a non-degenerate SD random vector as in Theorem \ref{thm:Stein} and such that
\begin{align*}
\int_{\|u\|\geq 1}\|u\|\nu(du)<+\infty,\quad \underset{R\rightarrow +\infty}{\lim} R k_y(R)=0, \quad \underset{\varepsilon\rightarrow 0^+}{\lim} \varepsilon^2 k_y(\varepsilon)=0,\quad y\in\mathbb{S}^{d-1},
\end{align*}
Let $f_h$ be the solution to the Stein equation \eqref{eq:SteinEquSD} defined by \eqref{eq:SolSD}, for $h\in \mathcal{H}_2\cap \mathcal{C}_c^\infty(\bbr^d)$. Then, integrating by parts twice, for all $x\in \bbr^d$,
\begin{align*}
&\langle \tilde{b}+\int_{\mathbb{S}^{d-1}}y\left(\int_1^{+\infty} r(-dk_y(r))\right)\sigma(dy)-x; \nabla(f_h)(x)\rangle+\int_{\bbr^d} \bigg(f_h(x+u)-f_h(x)-\langle \nabla (f_h)(x);u \rangle\bigg)\tilde{\nu}(du)=\\
&\langle b+\int_{\mathbb{S}^{d-1}}y\left(\int_1^{+\infty} k_y(r)dr\right)\sigma(dy)-x; \nabla(f_h)(x)\rangle+\int_{\bbr^d} \bigg(f_h(x+u)-f_h(x)-\langle \nabla (f_h)(x);u \rangle\bigg)\tilde{\nu}(du)=\\
&\quad\langle \bbe X-x;\nabla(f_h)(x)\rangle+\int_{\bbr^d} \langle \nabla (f_h)(x+u)-\nabla(f_h)(x);u\rangle \nu(du),
\end{align*} 
so that $f_h$ is a solution to the following Stein equation
\begin{align}\label{eq:SteinEq3}
\langle \bbe X-x;\nabla(f_h)(x)\rangle+\int_{\bbr^d} \langle \nabla (f_h)(x+u)-\nabla(f_h)(x);u\rangle \nu(du)=h(x)-\bbe h(X),\quad x\in \bbr^d.
\end{align}
\end{rem}

\section{Applications to Functional Inequalities for SD Random Vectors}
\label{sec:FASD}
This section discusses Poincar\'e-type inequalities for self-decomposable random vectors, providing in particular new proofs based on the semigroup of operators $\left(P^\nu_t\right)_{t\geq 0}$ defined in \eqref{eq:semSD}. This proof is in line with the standard proof of the Gaussian Poincar\'e inequality based on the differentiation of the variance along the Ornstein-Uhlenbeck semigroup. In the literature, standard references regarding Poincar\'e-type inequalities for infinitely divisible random vectors are \cite{Chen,HPAS}. In \cite{Chen}, the proof is based on stochastic calculus for L\'evy processes and the L\'evy-It\^o decomposition whereas in \cite{HPAS}, the proof is based on a covariance representation for infinitely divisible random vectors. Let us also mention that Poincar\'e-type inequalities for stable random vectors have been obtained in \cite{RW03,WW15}.

\begin{prop}\label{prop:poincSD}
Let $X$ be a centered SD random vector with L\'evy measure $\nu$ such that
\begin{align*}
\int_{\|u\|\geq 1}\|u\|\nu(du)<+\infty,\quad \nu(du)=\bbone_{(0,+\infty)}(r) \bbone_{\mathbb{S}^{d-1}}(x)\frac{k_x(r)}{r}dr\sigma(dx),
\end{align*}
where $\sigma$ is a finite positive measure on $\mathbb{S}^{d-1}$ and where $k_x(r)$ is a nonnegative continuous function decreasing in $r\in(0,+\infty)$, continuous in $x\in \mathbb{S}^{d-1}$ with
\begin{align*}
\underset{r\rightarrow +\infty}{\lim}k_x(r)=0,\quad \underset{r\rightarrow 0^+}{\lim}r^2k_x(r)=0, \quad x\in\mathbb{S}^{d-1}.
\end{align*}
Then, for all $f\in \mathcal{C}_{c}^{\infty}\left(\bbr^d\right)$ with $\bbe f(X)=0$
\begin{align*}
\bbe f(X)^2 \leq \bbe \int_{\bbr^d} \left(f(X+u)-f(X)\right)^2 \nu(du).
\end{align*}
\end{prop}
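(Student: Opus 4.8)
The plan is to follow the classical semigroup proof of the Gaussian Poincar\'e inequality, with the Ornstein--Uhlenbeck semigroup replaced by $(P^\nu_t)_{t\geq 0}$ from \eqref{eq:semSD}, whose invariant measure is $\mu_X$ and whose generator $\mathcal{A}$ is given by Lemma~\ref{lem:genSDLaws}. Since $f\in\mathcal{C}_c^\infty(\bbr^d)$, the function $g_t:=P^\nu_t(f)$ is bounded and smooth with all derivatives bounded (by the commutation relation $\nabla P^\nu_t=e^{-t}P^\nu_t\nabla$), and $t\mapsto P^\nu_t(f)$ solves the heat equation $\partial_t P^\nu_t(f)=\mathcal{A}(P^\nu_t(f))$ exactly as in the proof of Theorem~\ref{thm:Stein}. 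Using $\bbe f(X)=0$, the ergodicity $P^\nu_t(f)(x)\to\bbe f(X)$ and $|P^\nu_t(f)|\leq\|f\|_\infty$, dominated convergence gives $\bbe(P^\nu_t(f)(X))^2\to 0$; hence $t\mapsto\bbe(P^\nu_t(f)(X))^2$ is $C^1$, nonincreasing, and
\begin{align*}
\bbe f(X)^2=-\int_0^{+\infty}\frac{d}{dt}\bbe\big(P^\nu_t(f)(X)\big)^2\,dt=-2\int_0^{+\infty}\bbe\Big(P^\nu_t(f)(X)\,\mathcal{A}\big(P^\nu_t(f)\big)(X)\Big)\,dt.
\end{align*}

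The next step is an integration by parts in the right-hand integrand. Fix $t$ and put $g=P^\nu_t(f)$. The invariance of $\mu_X$ under $(P^\nu_t)_{t\geq 0}$ gives, after differentiating $\int P^\nu_s(g^2)\,d\mu_X=\int g^2\,d\mu_X$ at $s=0$, that $\int_{\bbr^d}\mathcal{A}(g^2)\,d\mu_X=0$. On the other hand, the first-order part of $\mathcal{A}$ is a derivation, so it contributes nothing to the carr\'e du champ, and a direct expansion of the non-local part of $\mathcal{A}$ (the truncations $\bbone_{\|u\|\leq 1}$ cancelling since $\nabla(g^2)=2g\nabla g$) yields
\begin{align*}
\mathcal{A}(g^2)(x)-2g(x)\mathcal{A}(g)(x)=\int_{\bbr^d}\big(g(x+u)-g(x)\big)^2\,\tilde{\nu}(du),\quad x\in\bbr^d.
\end{align*}
Integrating against $\mu_X$ therefore gives $2\bbe(g(X)\mathcal{A}(g)(X))=-\bbe\int_{\bbr^d}(g(X+u)-g(X))^2\tilde{\nu}(du)$, and plugging this back,
\begin{align*}
\bbe f(X)^2=\int_0^{+\infty}\bbe\int_{\bbr^d}\big(P^\nu_t(f)(X+u)-P^\nu_t(f)(X)\big)^2\,\tilde{\nu}(du)\,dt.
\end{align*}

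Finally I would contract and change variables to recover $\nu$. Setting $\psi_v(\cdot)=f(\cdot+v)-f(\cdot)$, formula \eqref{eq:semSD} gives $P^\nu_t(f)(y+u)-P^\nu_t(f)(y)=P^\nu_t(\psi_{e^{-t}u})(y)$, so Jensen's inequality and the invariance of $\mu_X$ yield
\begin{align*}
\bbe\big(P^\nu_t(f)(X+u)-P^\nu_t(f)(X)\big)^2\leq\bbe\,\psi_{e^{-t}u}(X)^2=\bbe\big(f(X+e^{-t}u)-f(X)\big)^2.
\end{align*}
Substituting this, writing $u=rx$ so that $\tilde{\nu}(du)=(-dk_x(r))\sigma(dx)$, and applying Tonelli (all integrands nonnegative) with the substitution $\rho=e^{-t}r$ in the inner $t$-integral, one obtains $\bbe f(X)^2\leq\bbe\int_{\mathbb{S}^{d-1}}\int_{(0,+\infty)}\big(\int_0^r(f(X+\rho x)-f(X))^2\,d\rho/\rho\big)(-dk_x(r))\,\sigma(dx)$; a last Tonelli on the set $\{0<\rho<r\}$ together with $\int_{(\rho,+\infty)}(-dk_x(r))=k_x(\rho)$ (since $k_x$ is continuous, nonincreasing, with $k_x(+\infty)=0$) collapses this to $\bbe\int_{\bbr^d}(f(X+u)-f(X))^2\nu(du)$, which is the assertion. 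I expect the main obstacle to be the analytic justifications of the first paragraph --- differentiating $t\mapsto\bbe(P^\nu_t(f)(X))^2$ under the expectation with no surviving boundary term, and checking that $g^2$ lies in the domain so that $\int\mathcal{A}(g^2)\,d\mu_X=0$; both follow from $f\in\mathcal{C}_c^\infty(\bbr^d)$, the commutation relation and the uniform estimates underlying Lemma~\ref{lem:genSDLaws} and Theorem~\ref{thm:Stein}, whereas the Fubini--Tonelli steps are elementary.
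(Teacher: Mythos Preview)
Your proposal is correct and follows essentially the same semigroup strategy as the paper: differentiate $t\mapsto\bbe(P^\nu_t f(X))^2$, use the integration-by-parts identity to obtain the carr\'e du champ $\int(g(x+u)-g(x))^2\tilde\nu(du)$, apply Jensen plus invariance, and then convert $\tilde\nu$ back to $\nu$. The only cosmetic differences are that the paper invokes Theorem~\ref{thm:multid} (rather than invariance) to get $\bbe\,\mathcal{A}(g^2)(X)=0$, computes the carr\'e du champ via Fourier symbols (rather than your direct pointwise expansion), and performs the last $\tilde\nu\to\nu$ passage by an integration by parts in $r$ recognizing a $t$-derivative (rather than your cleaner Tonelli with the substitution $\rho=e^{-t}r$).
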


\begin{proof}
Let $X$ be a SD random vector with characteristic function $\varphi$ and L\'evy measure $\nu$ satisfying the hypotheses of the proposition. Let $(P^\nu_t)_{t\geq 0}$ be the semigroup of operators given by \eqref{eq:semSD}. In particular, on $\mathcal{C}_{c}^{\infty}\left(\bbr^d\right)$, for all $t\geq 0$ and all $x\in \bbr^d$,
\begin{align*}
P^\nu_t(f)(x)=\frac{1}{(2\pi)^d} \int_{\bbr^d} \mathcal{F}(f)(\xi) e^{i \langle x;\xi e^{-t}\rangle}\dfrac{\varphi(\xi)}{\varphi(e^{-t}\xi)} d\xi.
\end{align*}
Now, for all $f\in \mathcal{C}_{c}^{\infty}\left(\bbr^d\right)$ and all $x\in \bbr^d$, let
\begin{align*}
\Delta_\nu (f)(x):=\int_{\bbr^d} \langle\nabla(f)(x+u)-\nabla(f)(x);u\rangle \nu(du).
\end{align*}
This operator admits the Fourier representation, i.e., for all $x\in \bbr^d$,
\begin{align*}
\Delta_\nu (f)(x)=\int_{\bbr^d}\mathcal{F}(f)(\xi) \sigma_{\nu}(\xi)e^{i\langle x; \xi \rangle}\frac{d\xi}{(2\pi)^d},
\end{align*}
with, for all $\xi \in \bbr^d$
\begin{align*}
 \sigma_{\nu}(\xi)=\int_{\bbr^d} i \langle u;\xi \rangle\left(e^{i \langle u;\xi \rangle}-1\right)\nu(du).
\end{align*}
Next, let $f\in \mathcal{C}_{c}^{\infty}\left(\bbr^d\right)$ be such that $\bbe f(X)=0$. Then, for all $t\geq0$,
\begin{align*}
\dfrac{d}{dt}\left(\bbe P^\nu_t(f)(X)^2\right)&=2 \bbe P^\nu_t(f)(X) \frac{d}{dt}\left(P^\nu_t(f)\right)(X)\\
&=2 \bbe P_t^\nu(f)(X) \mathcal{A}\left(P_t^\nu(f)\right)(X),
\end{align*}
where $\mathcal{A}$ is defined, for all $f\in \mathcal{C}_{c}^{\infty}\left(\bbr^d\right)$ and all $x\in \bbr^d$, by
\begin{align*}
\mathcal{A}(f)(x)=-\langle x; \nabla(f)(x)\rangle+\Delta_\nu (f)(x).
\end{align*}
Thus, for all $t\geq 0$
\begin{align*}
\dfrac{d}{dt}\left(\bbe P^\nu_t(f)(X)^2\right)&=-2 \bbe P^\nu_t(f)(X) \langle X;\nabla (P^\nu_t(f))(X)\rangle+2\bbe P^\nu_t(f)(X) \Delta_\nu\left(P^\nu_t(f)\right)(X).
\end{align*}
Next, from Theorem \ref{thm:multid}, observe that, for all $f\in\mathcal{C}_{c}^{\infty}\left(\bbr^d\right)$ and all $t\geq 0$,
\begin{align*}
2\bbe P^\nu_t(f)(X) \langle X;\nabla (P^\nu_t(f))(X)\rangle&=\bbe \langle X ; \nabla \left(P^\nu_t(f)^2\right)(X)\rangle\\
&=\bbe \Delta_\nu \left(P^\nu_t(f)^2\right)(X),
\end{align*}
and so, for all $t\geq 0$,
\begin{align}
\dfrac{d}{dt}\left(\bbe P^\nu_t(f)(X)^2\right)&=-\bbe \bigg( \Delta_\nu \left(P^\nu_t(f)^2\right)(X)-2 P^\nu_t(f)(X) \Delta_\nu\left(P^\nu_t(f)\right)(X)\bigg).
\end{align}
Next, using Fourier arguments as in the proof of \cite[Proposition $4.1$]{LR04}, for all $f\in \mathcal{C}_{c}^{\infty}\left(\bbr^d\right)$ and all $x\in \bbr^d$,
\begin{align*}
\Delta_{\nu} \left(f^2\right)(x)-2f(x)\Delta_{\nu}\left(f\right)(x)&=\int_{\bbr^d} \int_{\bbr^d} \mathcal{F}(f)(\xi)\mathcal{F}(f)(\zeta) e^{i \langle x ;\zeta +\xi\rangle }\sigma_{\nu}(\xi+\zeta) \frac{d\xi d\zeta}{(2\pi)^{2d}} \\
&\quad\quad-2\int_{\bbr^d} \int_{\bbr^d} \mathcal{F}(f)(\xi)\mathcal{F}(f)(\zeta) e^{i \langle x ;\zeta +\xi\rangle }\sigma_{\nu}(\xi) \frac{d\xi d\zeta}{(2\pi)^{2d}} \\
&=\int_{\bbr^d} \int_{\bbr^d} \mathcal{F}(f)(\xi)\mathcal{F}(f)(\zeta) e^{i \langle x ;\zeta +\xi\rangle }\sigma_{\nu}(\zeta+\xi) \frac{d\xi d\zeta}{(2\pi)^{2d}} \\
&\quad\quad-\int_{\bbr^d} \int_{\bbr^d} \mathcal{F}(f)(\xi)\mathcal{F}(f)(\zeta) e^{i \langle x ;\zeta +\xi\rangle }\sigma_{\nu}(\xi) \frac{d\xi d\zeta}{(2\pi)^{2d}}\\
&\quad\quad-\int_{\bbr^d} \int_{\bbr^d} \mathcal{F}(f)(\xi)\mathcal{F}(f)(\zeta) e^{i \langle x ;\zeta +\xi\rangle }\sigma_{\nu}(\zeta)\frac{d\xi d\zeta}{(2\pi)^{2d}}.
\end{align*}
Moreover, an integration by parts in the radial coordinate gives, for all $\xi \in \bbr^d$
\begin{align*}
\sigma_{\nu}(\xi)=\int_{\bbr^d} \left(e^{i \langle u; \xi\rangle}-1-i \langle u ;\xi\rangle \right) \tilde{\nu}(du),
\end{align*}
where $\tilde{\nu}(du)=\bbone_{(0,+\infty)}(r)\bbone_{\mathbb{S}^{d-1}}(x) \left(-dk_x(r)\right)\sigma(dx)$. Then, for all $\xi, \zeta\in \bbr^d$
\begin{align*}
\sigma_{\nu}(\xi+\zeta)-\sigma_{\nu}(\xi)-\sigma_{\nu}(\zeta)=\int_{\bbr^d} \left(e^{i \langle u; \xi\rangle}-1\right)\left(e^{i \langle u; \zeta\rangle}-1\right) \tilde{\nu}(du),
\end{align*}
and thus, for all $x\in \bbr^d$,
\begin{align}\label{eq:laplacienfrac}
\Delta_{\nu} \left(f^2\right)(x)-2f(x)\Delta_{\nu}\left(f\right)(x)&= \int_{\bbr^d} \int_{\bbr^d} \mathcal{F}(f)(\xi)\mathcal{F}(f)(\zeta) e^{i x (\zeta +\xi)}\bigg(\int_{\bbr^d} \left(e^{i \langle u; \xi\rangle}-1\right)\nonumber\\
&\quad\quad\times \left(e^{i \langle u; \zeta \rangle}-1\right) \tilde{\nu}(du)\bigg) \frac{d\xi d\zeta}{(2\pi)^{2d}}\nonumber\\
&=\int_{\bbr^d} \left(f(x+u)-f(x)\right)^2 \tilde{\nu}(du).
\end{align}
Then, for all $t\geq 0$
\begin{align*}
\dfrac{d}{dt}\left(\bbe P^\nu_t(f)(X)^2\right)&=-\bbe \int_{\bbr^d} \left(P^\nu_t(f)(X+u)-P^\nu_t(f)(X)\right)^2 \tilde{\nu}(du).
\end{align*}
But, from a change of variables, Jensen inequality and invariance,
\begin{align*}
\bbe \int_{\bbr^d} \left(P^\nu_t(f)(X+u)-P^\nu_t(f)(X)\right)^2 \tilde{\nu}(du)&=\bbe \int_{\bbr^d}\left(\int_{\bbr^d} \left(f\left(Xe^{-t}+ue^{-t}+y\right)-f\left(Xe^{-t}+y\right)\right)\mu_t(dy)\right)^2\tilde{\nu}(du)\\
&=\bbe \int_{(0,+\infty)}\int_{\mathbb{S}^{d-1}} \left(\int_{\bbr^d} \left(f\left(Xe^{-t}+r x+y\right)-f\left(Xe^{-t}+y\right)\right)\mu_t(dy)\right)^2\\
&\quad\quad\times  \left(-dk_x(e^t r)\right) \sigma(dx)\\
&\leq \bbe \int_{(0,+\infty)\times \mathbb{S}^{d-1}} \left(f\left(X+r x\right)-f\left(X\right)\right)^2 \left(-dk_x(e^t r)\right) \sigma(dx).
 \end{align*}
Thus,
\begin{align*}
-\dfrac{d}{dt}\left(\bbe P^\nu_t(f)(X)^2\right)\leq \bbe \int_{(0,+\infty)\times \mathbb{S}^{d-1}} \left(f\left(X+r x\right)-f\left(X\right)\right)^2 \left(-dk_x(e^t r)\right) \sigma(dx).
\end{align*}
With an integration by parts, observe that, for $x\in \mathbb{S}^{d-1}$,
\begin{align*}
\int_0^{+\infty} \bbe\left(f\left(X+e^{-t}r x\right)-f\left(X\right)\right)^2 \left(-dk_x(r)\right)&=\int_0^{+\infty} k_x(r)2 \bbe\langle \nabla(f)(X+e^{-t}rx);xe^{-t}\rangle\\
&\quad\quad\times \left(f\left(X+e^{-t}r x\right)-f\left(X\right)\right)dr\\
&=\int_0^{+\infty} \frac{k_x(r)}{r}2 \bbe\langle \nabla(f)(X+e^{-t}rx);rxe^{-t}\rangle\\
&\quad\quad\times \left(f\left(X+e^{-t}r x\right)-f\left(X\right)\right)dr\\
&=\int_0^{+\infty} \frac{k_x(r)}{r} \left(-\dfrac{d}{dt}\bbe\left(f\left(X+e^{-t}r x\right)-f\left(X\right)\right)^2\right)dr.
\end{align*}
Finally, integrating with respect to $t$ (between $0$ and $+\infty$) leads to
\begin{align}\label{eq:PoincStable}
\bbe f(X)^2\leq \bbe \int_{\bbr^d} \left(f\left(X+u\right)-f\left(X\right)\right)^2 \nu(du).
\end{align}
\end{proof}
\noindent

\begin{rem}\label{rem:PoincStable}
(i) Let $X$ be a rotationally invariant $\alpha$-stable random vector, $\alpha \in (1,2)$, with characteristic function $\varphi$ given by
\begin{align*}
\varphi(\xi)=\exp\left(-\|\xi\|^\alpha/2\right),\quad \xi \in \bbr^d.
\end{align*}
Then, by Proposition \ref{prop:poincSD},  for all $f\in \mathcal{S}(\bbr^d)$ with $\bbe f(X)=0$
\begin{align*}
\bbe f(X)^2 \leq c_{\alpha, d}\bbe \int_{\bbr^d} (f(X+u)-f(X))^2  \frac{du}{\|u\|^{\alpha+d}},
\end{align*}
where $c_{\alpha, d}=\dfrac{-\alpha (\alpha-1)\Gamma((\alpha+d)/2)}{4\cos(\alpha\pi/2)\Gamma((\alpha+1)/2)\pi^{(d-1)/2}\Gamma(2-\alpha)}$.\\
(ii) Throughout the proof of Proposition \ref{prop:poincSD}, the following integration by parts formula has been obtained and used, for all $f\in \mathcal{C}_c^{\infty}(\bbr^d)$,
\begin{align*}
\int_{\bbr^d} f(x) \left(-\mathcal{A}(f)(x)\right) \mu_X(dx)= \int_{\bbr^d} \Gamma(f,f)(x) \mu_X(dx),
\end{align*}
where $\mu_X$ is the law of $X$ and $\Gamma$ is a bilinear symmetric application defined, for all $f,g\in \mathcal{C}_c^{\infty}(\bbr^d)$ and all $x\in \bbr^d$, by
\begin{align*}
\Gamma(f,g)(x)&=\frac{1}{2}\int_{\bbr^d} \left(f(x+u)-f(x)\right)\left(g(x+u)-g(x)\right)\tilde{\nu}(du),\\
&=\frac{1}{2} \int_{\bbr^d} \int_{\bbr^d} \mathcal{F}(f)(\xi)\mathcal{F}(g)(\zeta) e^{i \langle x; (\zeta +\xi)\rangle }\sigma_{\tilde{\nu}}(\xi,\zeta) \frac{d\xi d\zeta}{(2\pi)^{2d}},
\end{align*}
with $\sigma_{\tilde{\nu}}(\xi,\zeta)= \int_{\bbr^d} \left(e^{i \langle u;\xi \rangle}-1\right)\left(e^{i \langle u;\zeta \rangle}-1\right) \tilde{\nu}(du)$, for $\xi, \zeta \in \bbr^d$. A straightforward computation in the Fourier domain shows that this bilinear symmetric application is the "carr\'e du champs" operator associated with the generator $\mathcal{A}$ of the semigroup $(P^\nu_t)_{t\geq 0}$ (see, e.g., \cite{BGL14} for a thorough exposition of these topics in the setting of Markov diffusion operators). Namely, for all $f,g\in \mathcal{C}_c^{\infty}(\bbr^d)$ and all $x\in \bbr^d$,
\begin{align*}
\Gamma(f,g)(x)= \frac{1}{2}\left(\mathcal{A}(fg)(x)-f(x)\mathcal{A}(g)(x)-g(x)\mathcal{A}(f)(x)\right).
\end{align*}
\end{rem}
\noindent
Standard objects of interest in the setting of Markov diffusion operators are iterated "carr\'e du champs" of any orders $n\geq 1$ defined through the following recursive formula, for all $f,g\in \mathcal{C}_c^{\infty}(\bbr^d)$ and all $x\in \bbr^d$
\begin{align}\label{eq:GammaIte}
\Gamma_n(f,g)(x)=\frac{1}{2}\bigg(\mathcal{A}(\Gamma_{n-1}(f,g))(x)-\Gamma_{n-1}(\mathcal{A}(f),g)(x)-\Gamma_{n-1}(\mathcal{A}(g),f)(x) \bigg),
\end{align}
with the convention that $\Gamma_0(f,g)(x)=f(x)g(x)$ and $\Gamma_1(f,g)(x)=\Gamma(f,g)(x)$, for $f,g\in \mathcal{C}_c^{\infty}(\bbr^d)$, and $x\in \bbr^d$. The forthcoming simple lemma provides a representation of the $\Gamma_2$ as a pseudo-differential operator whose symbol is completely explicit. 

\begin{lem}\label{lem:GammaIte}
Let $\nu$ be a L\'evy measure on $\bbr^d$ such that
\begin{align*}
\int_{\|u\|\geq 1}\|u\|\nu(du)<+\infty,\quad \nu(du)=\bbone_{(0,+\infty)}(r) \bbone_{\mathbb{S}^{d-1}}(x)\frac{k_x(r)}{r}dr\sigma(dx),
\end{align*}
where $\sigma$ is a finite positive measure on $\mathbb{S}^{d-1}$ and where $k_x(r)$ is a nonnegative continuous function decreasing in $r\in(0,+\infty)$, continuous in $x\in \mathbb{S}^{d-1}$ with
\begin{align*}
\underset{r\rightarrow +\infty}{\lim}k_x(r)=0,\quad \underset{r\rightarrow 0^+}{\lim}r^2k_x(r)=0, \quad x\in\mathbb{S}^{d-1}.
\end{align*}
Let $\mathcal{A}$ be the operator defined, for all $f\in \mathcal{S}(\bbr^d)$ and all $x\in \bbr^d$, by 
\begin{align*}
\mathcal{A}(f)(x)= -\langle x;\nabla(f)(x) \rangle + \int_{\bbr^d} \langle \nabla (f)(x+u)- \nabla (f)(x);u\rangle \nu(du) 
\end{align*}
Then, for all $f,g\in \mathcal{S}(\bbr^d)$ and all $x\in \bbr^d$
\begin{align}\label{eq:GammaIten}
\Gamma_2(f,g)(x)=\int_{\bbr^d}\int_{\bbr^d} \mathcal{F}(f)(\xi)\mathcal{F}(g)(\zeta) e^{i \langle x; \xi +\zeta \rangle}\bigg( \dfrac{\sigma_{\tilde{\nu}}(\xi ,\zeta)^2}{4}+ \dfrac{\rho_{\tilde{\nu}}(\xi,\zeta)}{4} \bigg)\dfrac{d\xi d\zeta}{(2\pi)^{2d}},
\end{align}
where $\sigma_{\tilde{\nu}}(\xi ,\zeta)$ and $\rho_{\tilde{\nu}}(\xi,\zeta)$ are given, for all $\xi,\zeta \in \bbr^d$, by
\begin{align*}
&\sigma_{\tilde{\nu}}(\xi ,\zeta) = \int_{\bbr^d} \left(e^{i \langle u; \xi \rangle}-1\right)\left(e^{i \langle u; \zeta \rangle}-1\right) \tilde{\nu}(du),\\
&\rho_{\tilde{\nu}}(\xi,\zeta)=\bigg(\int_{\bbr^d} i\langle u;\zeta \rangle e^{i \langle u; \zeta \rangle} \left(e^{i \langle u;\xi \rangle} -1 \right) \tilde{\nu}(du)\bigg)+\bigg(\int_{\bbr^d} i\langle u;\xi \rangle e^{i \langle u; \xi \rangle} \left(e^{i \langle u;\zeta \rangle} -1 \right) \tilde{\nu}(du)\bigg),
\end{align*}
with $\tilde{\nu}(du)= \bbone_{(0,+\infty)}(r) \bbone_{\mathbb{S}^{d-1}}(x) (-dk_x(r))\sigma(dx)$.
\end{lem}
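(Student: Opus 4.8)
The plan is to use the recursion \eqref{eq:GammaIte}, $\Gamma_2(f,g)=\tfrac12\big(\mathcal{A}(\Gamma_1(f,g))-\Gamma_1(\mathcal{A}(f),g)-\Gamma_1(\mathcal{A}(g),f)\big)$, together with the splitting $\mathcal{A}=B+\Delta_\nu$, where $B(f)(x)=-\langle x;\nabla(f)(x)\rangle$ is the Ornstein--Uhlenbeck drift and $\Delta_\nu(f)(x)=\int_{\bbr^d}\langle\nabla(f)(x+u)-\nabla(f)(x);u\rangle\nu(du)$; the latter is a genuine Fourier multiplier with symbol $\sigma_\nu(\xi)=\int_{\bbr^d}(e^{i\langle u;\xi\rangle}-1-i\langle u;\xi\rangle)\tilde{\nu}(du)$, as already identified in the proof of Proposition~\ref{prop:poincSD}. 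Since $\Gamma_1$ is bilinear and $\mathcal{A}$ is linear, the bracket in \eqref{eq:GammaIte} splits additively into a drift part $B(\Gamma_1(f,g))-\Gamma_1(B(f),g)-\Gamma_1(B(g),f)$ and a jump part $\Delta_\nu(\Gamma_1(f,g))-\Gamma_1(\Delta_\nu(f),g)-\Gamma_1(\Delta_\nu(g),f)$, which I would treat separately and recombine at the end. Throughout I would use the two representations of $\Gamma_1$ from Remark~\ref{rem:PoincStable}, namely $\Gamma_1(f,g)(x)=\tfrac12\int_{\bbr^d}(f(x+u)-f(x))(g(x+u)-g(x))\tilde{\nu}(du)=\tfrac12\int_{\bbr^d}\int_{\bbr^d}\mathcal{F}(f)(\xi)\mathcal{F}(g)(\zeta)e^{i\langle x;\xi+\zeta\rangle}\sigma_{\tilde{\nu}}(\xi,\zeta)\tfrac{d\xi\,d\zeta}{(2\pi)^{2d}}$; for $f,g\in\mathcal{S}(\bbr^d)$ the kernels $\sigma_{\tilde{\nu}}$ and $\rho_{\tilde{\nu}}$ grow at most polynomially in $(\xi,\zeta)$, so Fubini's theorem and differentiation under the integral sign are legitimate.

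For the jump part, applying the multiplier $\sigma_\nu$ in the total frequency variable $\xi+\zeta$ to the Fourier form of $\Gamma_1(f,g)$ gives $\Delta_\nu(\Gamma_1(f,g))(x)=\tfrac12\int_{\bbr^d}\int_{\bbr^d}\mathcal{F}(f)(\xi)\mathcal{F}(g)(\zeta)e^{i\langle x;\xi+\zeta\rangle}\sigma_{\tilde{\nu}}(\xi,\zeta)\sigma_\nu(\xi+\zeta)\tfrac{d\xi\,d\zeta}{(2\pi)^{2d}}$, whereas $\Gamma_1(\Delta_\nu(f),g)$ and $\Gamma_1(\Delta_\nu(g),f)$ produce the same expression with $\sigma_\nu(\xi+\zeta)$ replaced by $\sigma_\nu(\xi)$ and by $\sigma_\nu(\zeta)$ respectively. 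Subtracting and invoking the identity $\sigma_\nu(\xi+\zeta)-\sigma_\nu(\xi)-\sigma_\nu(\zeta)=\int_{\bbr^d}(e^{i\langle u;\xi\rangle}-1)(e^{i\langle u;\zeta\rangle}-1)\tilde{\nu}(du)=\sigma_{\tilde{\nu}}(\xi,\zeta)$, already displayed in the proof of Proposition~\ref{prop:poincSD}, the jump part collapses to $\tfrac12\int_{\bbr^d}\int_{\bbr^d}\mathcal{F}(f)(\xi)\mathcal{F}(g)(\zeta)e^{i\langle x;\xi+\zeta\rangle}\sigma_{\tilde{\nu}}(\xi,\zeta)^2\tfrac{d\xi\,d\zeta}{(2\pi)^{2d}}$, which after the overall factor $\tfrac12$ contributes the $\sigma_{\tilde{\nu}}^2/4$ term in \eqref{eq:GammaIten}.

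For the drift part, $B$ is not translation invariant, so I would argue by direct computation on the kernel form of $\Gamma_1$. Using $(B(f))(x+u)-(B(f))(x)=-\langle x;\nabla(f)(x+u)-\nabla(f)(x)\rangle-\langle u;\nabla(f)(x+u)\rangle$, the contributions involving $\langle x;\cdot\rangle$ cancel exactly, leaving $B(\Gamma_1(f,g))-\Gamma_1(B(f),g)-\Gamma_1(B(g),f)=\tfrac12\int_{\bbr^d}\big(\langle u;\nabla(f)(x+u)\rangle(g(x+u)-g(x))+\langle u;\nabla(g)(x+u)\rangle(f(x+u)-f(x))\big)\tilde{\nu}(du)$. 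Inserting the Fourier representations (with $\nabla(f)(x+u)$ contributing $i\xi e^{i\langle u;\xi\rangle}$, $g(x+u)-g(x)$ contributing $e^{i\langle u;\zeta\rangle}-1$, and symmetrically after swapping $(f,\xi)\leftrightarrow(g,\zeta)$) identifies the right-hand side as $\tfrac12\int_{\bbr^d}\int_{\bbr^d}\mathcal{F}(f)(\xi)\mathcal{F}(g)(\zeta)e^{i\langle x;\xi+\zeta\rangle}\rho_{\tilde{\nu}}(\xi,\zeta)\tfrac{d\xi\,d\zeta}{(2\pi)^{2d}}$, which after the factor $\tfrac12$ contributes the $\rho_{\tilde{\nu}}/4$ term. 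Summing the jump and drift contributions yields \eqref{eq:GammaIten}.

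The step I expect to require the most care is the drift computation: the exact cancellation of the $\langle x;\cdot\rangle$ terms is what makes $\Gamma_2(f,g)$ a well-defined, translation-covariant object even though $B$ contributes a genuinely non-multiplier piece to $\mathcal{A}$, and one must also check that $\Gamma_1(f,g)$ is regular and integrable enough for $\mathcal{A}$ to act on it and for all the Fourier interchanges to be valid — this follows from $f,g\in\mathcal{S}(\bbr^d)$ together with $|e^{i\langle u;\xi\rangle}-1|\leq 2\wedge(\|u\|\,\|\xi\|)$, the L\'evy property of $\tilde{\nu}$ and the assumption $\int_{\|u\|\geq 1}\|u\|\nu(du)<+\infty$ (which guarantees $\int_{\|u\|\geq1}\|u\|\tilde{\nu}(du)<+\infty$). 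Beyond the splitting $\mathcal{A}=B+\Delta_\nu$ and the quadratic identity for $\sigma_\nu$, no genuinely new ingredient is needed.
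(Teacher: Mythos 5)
Your proof is correct and follows essentially the same route as the paper: split $\mathcal{A}=B+\Delta_\nu$, use the addition law $\sigma_\nu(\xi+\zeta)-\sigma_\nu(\xi)-\sigma_\nu(\zeta)=\sigma_{\tilde\nu}(\xi,\zeta)$ for the jump part, and observe the exact cancellation of the $\langle x;\cdot\rangle$ terms (via $\langle x;\nabla f(x+u)\rangle=\langle x+u;\nabla f(x+u)\rangle-\langle u;\nabla f(x+u)\rangle$) in the drift part, leaving the cross terms $\langle u;\nabla f(x+u)\rangle(g(x+u)-g(x))$ that produce $\rho_{\tilde\nu}$. The paper organizes the bookkeeping slightly differently, expanding $\Gamma_1(\mathcal{A}f,g)$ and $\mathcal{A}(\Gamma_1(f,g))$ term by term rather than grouping by drift versus jump first, but the key identities and the resulting computation are the same.
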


\begin{proof}
First, by definition, for all $f,g \in \mathcal{S}(\bbr^d)$ and all $x\in \bbr^d$,
\begin{align*}
\Gamma_2(f,g)(x)=\frac{1}{2}\bigg(\mathcal{A}(\Gamma_{1}(f,g))(x)-\Gamma_{1}(\mathcal{A}(f),g)(x)-\Gamma_{1}(\mathcal{A}(g),f)(x) \bigg).
\end{align*} 
Let us compute $\Gamma_{1}(\mathcal{A}(f),g)(x)$. Using the Fourier representation, for all $x\in\bbr^d$,
\begin{align*}
2\Gamma_{1}(\mathcal{A}(f),g)(x)=\int_{\bbr^d}\int_{\bbr^d}\mathcal{F}(g)(\zeta) \mathcal{F}(\mathcal{A}(f))(\xi)\sigma_{\tilde{\nu}}(\xi,\zeta) e^{i \langle x; \xi +\zeta \rangle} \dfrac{d\xi d\zeta}{(2\pi)^{2d}}.
\end{align*}
Now, recall that, for all $x\in \bbr^d$,
\begin{align*}
\mathcal{A}(f)(x)=-\langle x;\nabla(f)(x)\rangle+ \Delta_\nu(f)(x),
\end{align*}
so that, for all $\xi\in \bbr^d$,
\begin{align*}
 \mathcal{F}(\mathcal{A}(f))(\xi)= \mathcal{F}\left(-\langle x;\nabla(f)\rangle\right)(\xi)+\mathcal{F}\left(f\right)(\xi) \sigma_\nu(\xi).
\end{align*}
Thus, for all $x\in \bbr^d$,
\begin{align*}
2\Gamma_{1}(\mathcal{A}(f),g)(x)&=\int_{\bbr^d}\int_{\bbr^d}\mathcal{F}(g)(\zeta)\mathcal{F}\left(f\right)(\xi) \sigma_\nu(\xi)\sigma_{\tilde{\nu}}(\xi,\zeta) e^{i \langle x; \xi +\zeta \rangle} \dfrac{d\xi d\zeta}{(2\pi)^{2d}}\\
&\quad\quad+ \int_{\bbr^d}\int_{\bbr^d}\mathcal{F}(g)(\zeta)\mathcal{F}\left(-\langle x;\nabla(f)\rangle\right)(\xi)\sigma_{\tilde{\nu}}(\xi,\zeta) e^{i \langle x; \xi +\zeta \rangle} \dfrac{d\xi d\zeta}{(2\pi)^{2d}}.
\end{align*}
Similarly, for all $x\in \bbr^d$,
\begin{align*}
2\Gamma_{1}(\mathcal{A}(g),f)(x)&=\int_{\bbr^d}\int_{\bbr^d}\mathcal{F}(g)(\zeta)\mathcal{F}\left(f\right)(\xi) \sigma_\nu(\zeta)\sigma_{\tilde{\nu}}(\xi,\zeta) e^{i \langle x; \xi +\zeta \rangle} \dfrac{d\xi d\zeta}{(2\pi)^{2d}}\\
&\quad\quad+ \int_{\bbr^d}\int_{\bbr^d}\mathcal{F}(f)(\zeta)\mathcal{F}\left(-\langle x;\nabla(g)\rangle\right)(\xi)\sigma_{\tilde{\nu}}(\xi,\zeta) e^{i \langle x; \xi +\zeta \rangle} \dfrac{d\xi d\zeta}{(2\pi)^{2d}}.
\end{align*}
Next, for all $x\in \bbr^d$
\begin{align*}
\mathcal{A}(\Gamma_{1}(f,g))(x)=-\langle x; \nabla\left(\Gamma_{1}(f,g)\right)(x) \rangle+ \Delta_\nu(\Gamma_{1}(f,g))(x).
\end{align*}
At first, observe that,
\begin{align*}
\Delta_\nu(\Gamma_{1}(f,g))(x)&= \int_{\bbr^d} \mathcal{F}(\Gamma_{1}(f,g)) (\xi) e^{i \langle x;\xi \rangle} \sigma_{\nu}(\xi) \frac{d\xi}{(2\pi)^d}\\
&= \frac{1}{2} \int_{\bbr^d}  \int_{\bbr^d} \mathcal{F}(f)(\xi) \mathcal{F}(g)(\zeta) e^{i \langle x;\xi +\zeta \rangle} \sigma_{\tilde{\nu}}(\xi,\zeta)\sigma_{\nu}(\xi +\zeta) \frac{d\xi d\zeta}{(2\pi)^{2d}}.
\end{align*}
Next, by straightforward computations, 
\begin{align*}
-\langle x; \nabla\left(\Gamma_{1}(f,g)\right)(x) \rangle &= \frac{1}{2} \bigg( \int_{\bbr^d} \tilde{\nu}(du) \left(-\langle x; \nabla(f)(x+u)\rangle+\langle x; \nabla(f)(x)\rangle\right)\left( g(x+u)-g(x)\right) \\
&\quad\quad +\int_{\bbr^d} \tilde{\nu}(du) \left(-\langle x; \nabla(g)(x+u)\rangle+\langle x; \nabla(g)(x)\rangle\right)\left( f(x+u)-f(x)\right)\bigg)\\
&=\frac{1}{2} \bigg( \int_{\bbr^d} \tilde{\nu}(du) \left(-\langle x+u; \nabla(f)(x+u)\rangle+\langle x; \nabla(f)(x)\rangle\right)\left( g(x+u)-g(x)\right) \\
&\quad\quad +\int_{\bbr^d} \tilde{\nu}(du) \left(-\langle x+u; \nabla(g)(x+u)\rangle+\langle x; \nabla(g)(x)\rangle\right)\left( f(x+u)-f(x)\right)\\
&\quad \quad+ \int_{\bbr^d} \tilde{\nu}(du) \langle u; \nabla(g)(x+u)\rangle\left( f(x+u)-f(x)\right)\\
&\quad\quad+ \int_{\bbr^d} \tilde{\nu}(du) \langle u; \nabla(f)(x+u)\rangle\left( g(x+u)-g(x)\right) \bigg)\\
&=\frac{1}{2} \bigg(  \int_{\bbr^d} \int_{\bbr^d} \mathcal{F}\left(-\langle x;\nabla(f)\rangle\right)(\xi) \mathcal{F}(g)(\zeta) e^{i \langle x;\xi +\zeta \rangle} \sigma_{\tilde{\nu}}(\xi, \zeta) \dfrac{d\xi d\zeta}{(2\pi)^{2d}}\\
&\quad\quad + \int_{\bbr^d} \int_{\bbr^d} \mathcal{F}\left(-\langle x;\nabla(g)\rangle\right)(\xi) \mathcal{F}(f)(\zeta) e^{i \langle x;\xi +\zeta \rangle} \sigma_{\tilde{\nu}}(\xi, \zeta) \dfrac{d\xi d\zeta}{(2\pi)^{2d}}\\
&\quad\quad  + \int_{\bbr^d} \int_{\bbr^d} \mathcal{F}\left(g\right)(\xi) \mathcal{F}(f)(\zeta) e^{i \langle x; \xi+\zeta \rangle} \bigg(\int_{\bbr^d} i\langle u;\xi \rangle e^{i \langle u; \xi \rangle} \left(e^{i \langle u;\zeta \rangle} -1 \right) \tilde{\nu}(du)\bigg) \dfrac{d\xi d\zeta}{ (2\pi)^{2d}}\\
&\quad\quad+ \int_{\bbr^d} \int_{\bbr^d} \mathcal{F}\left(g\right)(\xi) \mathcal{F}(f)(\zeta) e^{i \langle x; \xi+\zeta \rangle} \bigg(\int_{\bbr^d} i\langle u;\zeta \rangle e^{i \langle u; \zeta \rangle} \left(e^{i \langle u;\xi \rangle} -1 \right) \tilde{\nu}(du)\bigg) \dfrac{d\xi d\zeta}{ (2\pi)^{2d}}\bigg).
\end{align*}
Then, for all $x\in \bbr^d $,
\begin{align*}
\mathcal{A}(\Gamma_{1}(f,g))(x)-\Gamma_{1}(\mathcal{A}(f),g)(x)-\Gamma_{1}(\mathcal{A}(g),f)(x)&= \frac{1}{2} \int_{\bbr^d}  \int_{\bbr^d} \mathcal{F}(f)(\xi) \mathcal{F}(g)(\zeta) e^{i \langle x;\xi +\zeta \rangle} \sigma_{\tilde{\nu}}(\xi,\zeta)^2 \frac{d\xi d\zeta}{(2\pi)^{2d}}\\
&\quad\quad + \frac{1}{2} \int_{\bbr^d}  \int_{\bbr^d} \mathcal{F}(f)(\xi) \mathcal{F}(g)(\zeta) e^{i \langle x;\xi +\zeta \rangle} \rho_{\tilde{\nu}}(\xi,\zeta) \frac{d\xi d\zeta}{(2\pi)^{2d}},
\end{align*}
where, for all $\xi ,\zeta\in \bbr^d$
\begin{align*}
\rho_{\tilde{\nu}}(\xi,\zeta)=\bigg(\int_{\bbr^d} i\langle u;\zeta \rangle e^{i \langle u; \zeta \rangle} \left(e^{i \langle u;\xi \rangle} -1 \right) \tilde{\nu}(du)\bigg)+\bigg(\int_{\bbr^d} i\langle u;\xi \rangle e^{i \langle u; \xi \rangle} \left(e^{i \langle u;\zeta \rangle} -1 \right) \tilde{\nu}(du)\bigg).
\end{align*}
This concludes the proof of the lemma.
\end{proof}

\begin{rem}\label{rem:gamma2stable}
Using elements of the integral calculus on the sphere (see, e.g., \cite[Appendix $D.3$]{G08}), it is possible to compute the symbol of the $\Gamma_2$ operator when $\nu$ is the L\'evy measure of a rotationally invariant $\alpha$-stable random vector with $\alpha \in (1,2)$. Indeed, Let $\alpha \in (1,2)$ and let 
\begin{align*}
\nu_\alpha(du):= \bbone_{(0,+\infty)}(r) \bbone_{\mathbb{S}^{d-1}}(x) \dfrac{c_{\alpha,d}}{r^{1+\alpha}} dr\lambda(dx)
\end{align*}
where $\lambda$ is the uniform measure on $\mathbb{S}^{d-1}$ and where $c_{\alpha,d}$ is given by
\begin{align*}
c_{\alpha,d}= \dfrac{-\alpha (\alpha-1) \Gamma((\alpha+d)/2)}{4\cos (\alpha\pi/2)\Gamma((\alpha+1)/2)\pi^{(d-1)/2}\Gamma(2-\alpha)},
\end{align*}
so, that $\tilde{\nu}_\alpha$ is given by
\begin{align*}
\tilde{\nu}_\alpha(du)=\bbone_{(0,+\infty)}(r) \bbone_{\mathbb{S}^{d-1}}(x)\dfrac{\alpha c_{\alpha,d}}{r^{1+\alpha}}dr\lambda(dx).
\end{align*}
Then, for all $\xi, \zeta \in \bbr^d$
\begin{align*}
\sigma_{\tilde{\nu}_\alpha}(\xi,\zeta)= \dfrac{\alpha}{2} \left(\|\xi\|^{\alpha}+\|\zeta\|^{\alpha}-\|\xi+\zeta\|^{\alpha}\right),\quad \rho_{\tilde{\nu}_\alpha}(\xi,\zeta)=  \dfrac{\alpha^2}{2} \left(\|\xi\|^{\alpha}+\|\zeta\|^{\alpha}-\|\xi+\zeta\|^{\alpha}\right),
\end{align*}
implying that the symbol of the $\Gamma_2$ operator denoted by $\gamma_2$ is given, for all $\xi, \zeta \in \bbr^d$, by
\begin{align*}
\gamma_2(\xi, \zeta)= \dfrac{\alpha^2}{16} \left( \|\xi\|^{\alpha}+\|\zeta\|^{\alpha}-\|\xi+\zeta\|^{\alpha}\right)^2+\dfrac{\alpha^2}{8}  \left( \|\xi\|^{\alpha}+\|\zeta\|^{\alpha}-\|\xi+\zeta\|^{\alpha}\right).
\end{align*}
As $\alpha \longrightarrow 2^-$, $\gamma_2(\xi, \zeta)\rightarrow \langle i \xi;i\zeta \rangle+ \left(\langle i \xi;i\zeta \rangle\right)^2 $, for all $\xi, \zeta \in \bbr^d$, and as such one retrieves the symbol of the iterated "carr\'e du champs" operator of order $2$ associated with the Ornstein-Uhlenbeck (OU) semigroup. Moreover, the Ornstein-Uhlenbeck $\Gamma_2$ operator clearly dominates, as well known, the associated $\Gamma$ operator which is a typical instance of the Bakry-Emery criterion, implying hypercontractivity of the OU semigroup (see, e.g., \cite{BE85,BGL14}).
\end{rem}
\noindent
The next proposition asserts that the Bakry-Emery criterion still holds for the rotationally invariant $\alpha$-stable distribution with $\alpha \in (1,2)$.

\begin{prop}\label{prop:BEstable}
Let $\alpha \in (1,2)$ and let $X_\alpha$ be a rotationally invariant $\alpha$-stable random vector of $\bbr^d$ with law $\mu_\alpha$ and with L\'evy measure given by
\begin{align*}
\nu_\alpha(du)= \bbone_{(0,+\infty)}(r) \bbone_{\mathbb{S}^{d-1}}(x) \dfrac{c_{\alpha,d}}{r^{1+\alpha}} dr\lambda(dx),
\end{align*}
where $\lambda$ is the uniform measure on $\mathbb{S}^{d-1}$ and where
\begin{align*}
c_{\alpha,d}= \dfrac{-\alpha (\alpha-1) \Gamma((\alpha+d)/2)}{4\cos (\alpha\pi/2)\Gamma((\alpha+1)/2)\pi^{(d-1)/2}\Gamma(2-\alpha)}.
\end{align*}
Then, for all $f \in \mathcal{C}_c^{\infty}\left(\bbr^d\right)$
\begin{align*}
\Gamma_2(f,f) \geq \frac{\alpha}{2}\, \Gamma(f,f),
\end{align*}
where $\Gamma$ and $\Gamma_2$ are respectively the "carr\'e du champs" operator and the iterated "carr\'e du champs" operator of order $2$ associated with $\nu_\alpha$. 
\end{prop}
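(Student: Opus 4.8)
The plan is to reduce everything to the Fourier side, using the explicit symbol of $\Gamma_2$ furnished by Lemma~\ref{lem:GammaIte} together with the $\alpha$-stable computation of Remark~\ref{rem:gamma2stable}, and then to reinterpret the leftover symbol as an honest average of squares of second-order increments of $f$.

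First I would record the two Fourier representations at hand. For $f\in\mathcal{C}_c^\infty(\bbr^d)\subset\mathcal{S}(\bbr^d)$ and $x\in\bbr^d$, Remark~\ref{rem:PoincStable}~(ii) gives
\begin{align*}
\Gamma(f,f)(x)=\frac{1}{2}\int_{\bbr^d}\int_{\bbr^d}\mathcal{F}(f)(\xi)\mathcal{F}(f)(\zeta)e^{i\langle x;\xi+\zeta\rangle}\,\sigma_{\tilde{\nu}_\alpha}(\xi,\zeta)\,\frac{d\xi\,d\zeta}{(2\pi)^{2d}},
\end{align*}
while Lemma~\ref{lem:GammaIte} gives
\begin{align*}
\Gamma_2(f,f)(x)=\int_{\bbr^d}\int_{\bbr^d}\mathcal{F}(f)(\xi)\mathcal{F}(f)(\zeta)e^{i\langle x;\xi+\zeta\rangle}\Bigl(\tfrac{1}{4}\sigma_{\tilde{\nu}_\alpha}(\xi,\zeta)^2+\tfrac{1}{4}\rho_{\tilde{\nu}_\alpha}(\xi,\zeta)\Bigr)\frac{d\xi\,d\zeta}{(2\pi)^{2d}}.
\end{align*}
By Remark~\ref{rem:gamma2stable}, for the rotationally invariant $\alpha$-stable L\'evy measure one has $\rho_{\tilde{\nu}_\alpha}(\xi,\zeta)=\alpha\,\sigma_{\tilde{\nu}_\alpha}(\xi,\zeta)$, both being the same constant multiple of $\|\xi\|^\alpha+\|\zeta\|^\alpha-\|\xi+\zeta\|^\alpha$. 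Hence the symbols of $\Gamma_2(f,f)$ and of $\tfrac{\alpha}{2}\Gamma(f,f)$ differ exactly by $\tfrac14\sigma_{\tilde{\nu}_\alpha}(\xi,\zeta)^2$, i.e.
\begin{align*}
\Gamma_2(f,f)(x)-\frac{\alpha}{2}\Gamma(f,f)(x)=\frac{1}{4}\int_{\bbr^d}\int_{\bbr^d}\mathcal{F}(f)(\xi)\mathcal{F}(f)(\zeta)e^{i\langle x;\xi+\zeta\rangle}\,\sigma_{\tilde{\nu}_\alpha}(\xi,\zeta)^2\,\frac{d\xi\,d\zeta}{(2\pi)^{2d}}.
\end{align*}

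Next I would recognize the right-hand side as a nonnegative quantity. Since $\sigma_{\tilde{\nu}_\alpha}(\xi,\zeta)=\int_{\bbr^d}(e^{i\langle u;\xi\rangle}-1)(e^{i\langle u;\zeta\rangle}-1)\tilde{\nu}_\alpha(du)$, one has
\begin{align*}
\sigma_{\tilde{\nu}_\alpha}(\xi,\zeta)^2=\int_{\bbr^d}\int_{\bbr^d}(e^{i\langle u;\xi\rangle}-1)(e^{i\langle v;\xi\rangle}-1)(e^{i\langle u;\zeta\rangle}-1)(e^{i\langle v;\zeta\rangle}-1)\,\tilde{\nu}_\alpha(du)\,\tilde{\nu}_\alpha(dv).
\end{align*}
Plugging this in, using Fubini to pull the $u,v$-integrations outside, and performing the $\xi$- and $\zeta$-inversions separately via
\begin{align*}
\int_{\bbr^d}\mathcal{F}(f)(\xi)e^{i\langle x;\xi\rangle}(e^{i\langle u;\xi\rangle}-1)(e^{i\langle v;\xi\rangle}-1)\frac{d\xi}{(2\pi)^d}=f(x+u+v)-f(x+u)-f(x+v)+f(x),
\end{align*}
I would obtain the pointwise identity
\begin{align*}
\Gamma_2(f,f)(x)-\frac{\alpha}{2}\Gamma(f,f)(x)=\frac{1}{4}\int_{\bbr^d}\int_{\bbr^d}\bigl(f(x+u+v)-f(x+u)-f(x+v)+f(x)\bigr)^2\,\tilde{\nu}_\alpha(du)\,\tilde{\nu}_\alpha(dv)\ \geq\ 0,
\end{align*}
which is exactly the asserted Bakry--\'Emery inequality.

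The only point requiring care is the legitimacy of the Fubini interchange and of the two successive Fourier inversions above; the main obstacle is thus a routine integrability bookkeeping rather than a structural difficulty. Here $\mathcal{F}(f)\in\mathcal{S}(\bbr^d)$, the measure $\tilde{\nu}_\alpha(du)=\bbone_{(0,+\infty)}(r)\bbone_{\mathbb{S}^{d-1}}(x)\,\alpha c_{\alpha,d}\,r^{-1-\alpha}dr\,\lambda(dx)$ satisfies $\int_{\bbr^d}(1\wedge\|u\|^2)\tilde{\nu}_\alpha(du)<+\infty$ because $\alpha\in(1,2)$, and, by Taylor's formula applied to $f\in\mathcal{C}_c^\infty(\bbr^d)$, the second-order increment $f(x+u+v)-f(x+u)-f(x+v)+f(x)$ is $O(\|u\|\,\|v\|)$ for $u,v$ near the origin while being uniformly bounded by $4\|f\|_\infty$; these two facts make the final double integral absolutely convergent and justify all exchanges of order of integration. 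Passing from equality of symbols to equality of the operators themselves follows, as in Lemma~\ref{lem:GammaIte} and Remark~\ref{rem:PoincStable}~(ii), from the uniqueness of such Fourier representations, and letting $\alpha\to 2^-$ recovers the Gaussian Bakry--\'Emery bound $\Gamma_2(f,f)\geq\Gamma(f,f)$ for the Ornstein--Uhlenbeck semigroup.
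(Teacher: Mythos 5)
Your proof is correct and follows essentially the same route as the paper's: both use $\rho_{\tilde{\nu}_\alpha}=\alpha\,\sigma_{\tilde{\nu}_\alpha}$ from Remark~\ref{rem:gamma2stable}, invert the symbol $\tfrac14\sigma_{\tilde{\nu}_\alpha}^2+\tfrac{\alpha}{4}\sigma_{\tilde{\nu}_\alpha}$ from Lemma~\ref{lem:GammaIte} back to physical space to expose the nonnegative second-order-increment squared term, and then compare with $\Gamma(f,f)=\tfrac12\int(f(x+u)-f(x))^2\tilde{\nu}_\alpha(du)$. The only cosmetic difference is that you compute the difference $\Gamma_2-\tfrac{\alpha}{2}\Gamma$ directly, while the paper writes out the full real-space representation of $\Gamma_2$ and compares termwise.
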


\begin{proof}
By Remark \ref{rem:gamma2stable}, observe that, for all $\xi,\zeta \in \bbr^d$,
\begin{align*}
\rho_{\tilde{\nu}_{\alpha}}(\xi, \zeta) =\alpha \sigma_{\tilde{\nu}_{\alpha}}(\xi, \zeta),
\end{align*}
where,
\begin{align*}
\sigma_{\tilde{\nu}_{\alpha}}(\xi, \zeta) = \int_{\bbr^d} \left(e^{i \langle u;\xi \rangle}-1 \right)\left( e^{i \langle u;\zeta \rangle}-1\right) \tilde{\nu}_{\alpha}(du).
\end{align*}
Then, by Lemma \ref{lem:GammaIte} and Fourier inversion, for all $f\in \mathcal{C}_c^{\infty}\left(\bbr^d\right)$, and all $x\in \bbr^d$,
\begin{align*}
\Gamma_2(f,f)(x)&=\frac{1}{4} \int_{\bbr^d} \int_{\bbr^d} \left(f(x+u+v)-f(x+u)-f(x+v)+f(x)\right)^2\tilde{\nu}_{\alpha}(du)\tilde{\nu}_{\alpha}(du)\\
&\quad\quad+\frac{\alpha}{4} \int_{\bbr^d} (f(x+u)-f(x))^2 \tilde{\nu}_\alpha(du),
\end{align*}
with $\tilde{\nu}_\alpha(du)= \bbone_{(0,+\infty)}(r) \bbone_{\mathbb{S}^{d-1}}(x) \dfrac{\alpha c_{\alpha,d}}{r^{1+\alpha}} dr\lambda(dx)$. Similarly, for all $f\in \mathcal{C}_c^{\infty}\left(\bbr^d\right)$ and all $x\in \bbr^d$,
\begin{align*}
\Gamma(f,f)(x)=\frac{1}{2} \int_{\bbr^d} (f(x+u)-f(x))^2 \tilde{\nu}_\alpha(du).
\end{align*}
Thus, for all $f\in \mathcal{C}_c^{\infty}\left(\bbr^d\right)$ and all $x\in \bbr^d$,
\begin{align*}
\Gamma_2(f,f)(x) \geq \frac{\alpha}{2} \Gamma(f,f)(x).
\end{align*}
\end{proof}
\noindent
Let us study rigidity and stability phenomena for the rotationally invariant $\alpha$-stable distributions with $\alpha \in (1,2)$ based on the Poincar\'e-type inequality of Proposition \ref{prop:poincSD}. To reach such results let us adopt a spectral point of view.~This is a natural strategy to obtain sharp forms of geometric and functional inequalities as done, e.g., in \cite{BrPr12,PhiFig17,BrPh17}. First, observe that, since $\alpha \in (1,2)$ and since $X_\alpha$ considered in Proposition \ref{prop:BEstable} is centered, the function $g(x)=x$, $x\in \bbr^d$, is an eigenfunction of the semigroup of operators $(P^\nu_t)_{t\geq 0}$ given in \eqref{eq:semSD} with $\nu=\nu_\alpha$ as in \eqref{eq:nustable}. Indeed, for all $x\in \bbr^d$ and all $t\geq 0$
\begin{align*}
P^\nu_t(g)(x)=e^{-t} x.
\end{align*}
Then, by its very definition, $\mathcal{A}(g)(x)=-g(x)$, for $x\in \bbr^d$, so that $g$ is an eigenfunction of $\mathcal{A}$ with associated eigenvalue $-1$. However, since $\alpha \in (1,2)$, $g$ does not belong to $L^2(\mu_\alpha)$, with $\mu_\alpha$ being the law of $X_\alpha$. To circumvent this fact, let us build an optimizing sequence by a smooth truncation procedure. For all $j \in \{1,\dots, d\}$ and all $R\geq 1$, let $g_{R,j}$ be defined, for all $x\in \bbr^d$, by
\begin{align}\label{eq:gRj}
g_{R,j}(x)=x_j \psi\left(\frac{x}{R}\right),
\end{align}
with $\psi\in\mathcal{S}(\bbr^d)$, $\psi(0)=1$ and $0\leq \psi(x)\leq 1$, for $x\in \bbr^d$. Take, for instance, $\psi(x)=\exp(-\|x\|^2)$, for $x\in \bbr^d$. Now, let us state some straightforward facts about the functions $g_{R,j}$: for all $j \in \{1,\dots, d\}$, $\bbe g_{R,j}(X_\alpha)= 0$ and, as $R\rightarrow +\infty$,
\begin{align*}
\bbe g_{R,j}^2(X_\alpha)\longrightarrow +\infty,\quad  \bbe \int_{\bbr^d} |g_{R,j}(X_\alpha+u)-g_{R,j}(X_\alpha)|^2 \tilde{\nu}_\alpha(du) \longrightarrow +\infty.
\end{align*}
Next, by studying precisely the rate at which both the last two terms diverge, we intend to prove that, for all $j \in \{1, \dots, d\}$,
\begin{align*}
\dfrac{\operatorname{Var}(g_{R,j}(X_\alpha))}{\bbe \int_{\bbr^d} |g_{R,j}(X_\alpha+u)-g_{R,j}(X_\alpha)|^2  \tilde{\nu}_\alpha(du)} \underset{R\rightarrow +\infty}{\longrightarrow} \frac{1}{\alpha}.
\end{align*}
The first technical lemma investigate the rate at which $\bbe g_{R,j}(X_\alpha)^2$ diverges as $R$ tends to $+\infty$.

\begin{lem}\label{lem:rate1}
Let $\psi(x)= \exp(-\|x\|^2)$, for $x\in \bbr^d$. Let $\alpha \in (1,2)$ and $X_\alpha$ be a rotationally invariant $\alpha$-stable random vector of $\bbr^d$ with characteristic function $\varphi$ given, for all $\xi \in \bbr^d$, by
\begin{align*}
\varphi(\xi)=\exp\left(-\dfrac{\|\xi\|^\alpha}{2}\right).
\end{align*}
Then, for all $j\in \{1, \dots, d\}$, as $R$ tends to $+\infty$,
\begin{align}\label{eq:lemrate1}
\dfrac{\bbe g_{R,j}^2(X_\alpha)}{R^{2-\alpha}}\longrightarrow \dfrac{\alpha}{2}\int_{\bbr^d}\mathcal{F}(\psi^2)(\xi)\bigg(\|\xi\|^{\alpha-2}+\xi_j^2 (\alpha-2)\|\xi\|^{\alpha-4}\bigg)\frac{d\xi }{(2\pi)^d},
\end{align}
where $g_{R,j}(x)=x_j \psi(x/R)$, for $x\in \bbr^d$.
\end{lem}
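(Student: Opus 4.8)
The plan is to pass to the Fourier side and use the exact characteristic function $\varphi(\xi)=\exp(-\|\xi\|^\alpha/2)$ together with the asymptotic behaviour of its derivatives near the origin, which is precisely where the divergence $R^{2-\alpha}$ originates. First I would write $\bbe g_{R,j}^2(X_\alpha)=\bbe\big(X_{\alpha,j}^2\,\psi^2(X_\alpha/R)\big)$ and express this via Fourier inversion. Since $\psi^2(\cdot/R)$ has Fourier transform $R^d\,\mathcal{F}(\psi^2)(R\xi)$, and multiplication by $x_j^2$ corresponds to $-\partial_{\xi_j}^2$ acting on $\varphi$, one gets after an integration by parts (justified because $\psi^2\in\mathcal{S}(\bbr^d)$ and $\varphi$ is Schwartz away from $0$, with the near-origin singularity in the second derivative being integrable for $\alpha\in(1,2)$)
\begin{align*}
\bbe g_{R,j}^2(X_\alpha)=-\int_{\bbr^d} R^d\,\mathcal{F}(\psi^2)(R\xi)\,\partial_{\xi_j}^2\big(\varphi\big)(\xi)\,\frac{d\xi}{(2\pi)^d}
=-\int_{\bbr^d}\mathcal{F}(\psi^2)(\eta)\,\big(\partial_{\xi_j}^2\varphi\big)(\eta/R)\,\frac{d\eta}{(2\pi)^d},
\end{align*}
after the substitution $\eta=R\xi$.

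Next I would compute $\partial_{\xi_j}^2\varphi$ explicitly. Writing $\varphi(\xi)=e^{-\|\xi\|^\alpha/2}$ and differentiating twice in $\xi_j$, one finds, for $\xi\ne 0$,
\begin{align*}
\partial_{\xi_j}^2\varphi(\xi)=\bigg(-\frac{\alpha}{2}\|\xi\|^{\alpha-2}-\frac{\alpha(\alpha-2)}{2}\xi_j^2\|\xi\|^{\alpha-4}+\frac{\alpha^2}{4}\xi_j^2\|\xi\|^{2\alpha-4}\bigg)e^{-\|\xi\|^\alpha/2}.
\end{align*}
Evaluating at $\eta/R$: the factor $e^{-\|\eta/R\|^\alpha/2}\to 1$ as $R\to+\infty$; the term $\tfrac{\alpha^2}{4}\xi_j^2\|\xi\|^{2\alpha-4}$ at $\eta/R$ is of order $R^{-(2\alpha-2)}$ and, after multiplying by $R^{2-\alpha}$, vanishes like $R^{-\alpha}$ (using $\alpha>1$); the two surviving terms $-\tfrac{\alpha}{2}\|\eta/R\|^{\alpha-2}$ and $-\tfrac{\alpha(\alpha-2)}{2}(\eta_j/R)^2\|\eta/R\|^{\alpha-4}$ are each homogeneous of degree $\alpha-2$ in $1/R$, hence equal to $R^{2-\alpha}$ times the corresponding function of $\eta$. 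Multiplying the whole identity by $R^{\alpha-2}$ and letting $R\to+\infty$, the claimed limit \eqref{eq:lemrate1} follows, with the sign absorbed into the leading $-\alpha/2$ coefficient so that the right-hand side reads $\tfrac{\alpha}{2}\int \mathcal{F}(\psi^2)(\xi)\big(\|\xi\|^{\alpha-2}+\xi_j^2(\alpha-2)\|\xi\|^{\alpha-4}\big)d\xi/(2\pi)^d$.

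The main obstacle is the rigorous justification of the limit exchange: the integrand $\mathcal{F}(\psi^2)(\eta)\,R^{\alpha-2}\big(\partial_{\xi_j}^2\varphi\big)(\eta/R)$ has a singularity of order $\|\eta\|^{\alpha-2}$ at $\eta=0$ (and $\|\eta\|^{\alpha-4}\eta_j^2$, which is also of order $\|\eta\|^{\alpha-2}$), which is locally integrable since $\alpha-2>-2$, i.e. $\alpha>0$; for large $\eta$ one uses the rapid decay of $\mathcal{F}(\psi^2)$ together with the crude bound $|R^{\alpha-2}\partial_{\xi_j}^2\varphi(\eta/R)|\lesssim \|\eta\|^{\alpha-2}+\|\eta\|^{2\alpha-4}R^{-\alpha}\le \|\eta\|^{\alpha-2}+\|\eta\|^{2\alpha-4}$ uniformly in $R\ge 1$. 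Hence a dominating function independent of $R$ exists, and the dominated convergence theorem applies. One should also check that $g_{R,j}\in\mathcal{C}_c^\infty$ is not literally required here — only that $g_{R,j}\in\mathcal{S}(\bbr^d)$, which holds since $\psi\in\mathcal{S}(\bbr^d)$ — so that all the Fourier manipulations above are legitimate. A minor additional point is to record that the limiting constant is finite and (generically) nonzero, which is immediate from the local integrability just discussed and positivity of $\|\xi\|^{\alpha-2}+\xi_j^2(\alpha-2)\|\xi\|^{\alpha-4}=\|\xi\|^{\alpha-4}(\|\xi\|^2+(\alpha-2)\xi_j^2)\ge(\alpha-1)\|\xi\|^{\alpha-2}>0$ together with $\mathcal{F}(\psi^2)\ge 0$ (as $\psi^2$ is a Gaussian, up to normalisation $\mathcal{F}(\psi^2)$ is again a positive Gaussian).
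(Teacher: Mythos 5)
Your argument is correct and follows essentially the same route as the paper: express $\bbe g_{R,j}^2(X_\alpha)$ as a Fourier integral against $\varphi$, transfer the $x_j^2$ multiplication to $-\partial_{\xi_j}^2$ acting on $\varphi$ by two integrations by parts, compute $\partial_{\xi_j}^2\varphi$ explicitly, rescale $\eta=R\xi$, and pass to the limit.

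One small flaw to repair in the dominated-convergence step: when you bound the third term, you should keep the factor $\eta_j^2$. Since $\eta_j^2\|\eta\|^{2\alpha-4}\le\|\eta\|^{2\alpha-2}$ and $2\alpha-2>0$, this term has no singularity at the origin, whereas your stated majorant $\|\eta\|^{2\alpha-4}$ fails to be locally integrable when $d=1$ and $\alpha\le 3/2$ (recall the exponent condition for local integrability of $\|\eta\|^{\gamma}$ near $0$ is $\gamma>-d$, not $\gamma>-2$ as you wrote; for $d=1$ this is why the hypothesis $\alpha>1$ matters for the $\|\eta\|^{\alpha-2}$ term as well). The paper's own list of three finite integrals keeps the factor $|\xi_j|^2$ precisely to make the last one manifestly convergent. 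This is cosmetic; the rest of your argument is sound.
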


\begin{proof}
First, for all $R\geq 1$, set $\psi_R(x)=\psi(x/R)$, for $x\in \bbr^d$, and, for all $j\in \{1, \dots, d\}$,
\begin{align*}
\bbe\, g_{R,j}^2(X_\alpha)=\bbe X_{\alpha, j}^2 \psi^2_R\left(X_\alpha\right),
\end{align*}
where $X_\alpha$ is a rotationally invariant $\alpha$-stable random vector with $\alpha \in (1,2)$ and $X_{\alpha, j}$ is its $j$-th coordinate. By Fubini's theorem, standard Fourier analysis, two integrations by parts and a change of variables, it follows that
\begin{align}\label{eq:rate1}
\bbe\, g_{R,j}^2(X_\alpha) &= \frac{1}{(2\pi)^d} \int_{\bbr^d} \mathcal{F} \left(x_j^2 \psi^2_R\right)(\xi) e^{-\frac{\|\xi\|^{\alpha}}{2}}d\xi\nonumber\\
&=\frac{1}{(2\pi)^d} \int_{\bbr^d} -\partial^2_{\xi_j} \left(\mathcal{F}(\psi^2_R)\right) (\xi)e^{-\frac{\|\xi\|^{\alpha}}{2}}d\xi\nonumber\\
&=-\frac{1}{(2\pi)^d} \int_{\bbr^d}\mathcal{F}(\psi^2_R)(\xi)\left(-\dfrac{\alpha}{2} \|\xi\|^{\alpha-2}-\frac{\alpha}{2}\xi_j^2 (\alpha-2)\|\xi\|^{\alpha-4}+\frac{\alpha^2}{4}\xi_j^2 \|\xi\|^{2(\alpha-2)}\right) e^{-\frac{\|\xi\|^{\alpha}}{2}}d\xi\nonumber\\
&=-\frac{R^d}{(2\pi)^d} \int_{\bbr^d}\mathcal{F}(\psi^2)(R\xi)\left(-\dfrac{\alpha}{2} \|\xi\|^{\alpha-2}-\frac{\alpha}{2}\xi_j^2 (\alpha-2)\|\xi\|^{\alpha-4}+\frac{\alpha^2}{4}\xi_j^2 \|\xi\|^{2(\alpha-2)}\right) e^{-\frac{\|\xi\|^{\alpha}}{2}}d\xi\nonumber\\
&=-\frac{1}{(2\pi)^d} \int_{\bbr^d}\mathcal{F}(\psi^2)(\xi)\bigg(-\dfrac{\alpha}{2}R^{2-\alpha} \|\xi\|^{\alpha-2}-\frac{\alpha}{2}R^{2-\alpha}\xi_j^2 (\alpha-2)\|\xi\|^{\alpha-4}\nonumber\\
&\quad\quad+\frac{\alpha^2}{4}R^{2-2\alpha}\xi_j^2 \|\xi\|^{2(\alpha-2)}\bigg) e^{-\frac{\|\xi\|^{\alpha}}{(2R^\alpha)}}d\xi\nonumber\\
&=\frac{R^{2-\alpha}}{(2\pi)^d}\int_{\bbr^d}\mathcal{F}(\psi^2)(\xi)\bigg(\dfrac{\alpha}{2}\|\xi\|^{\alpha-2}+\frac{\alpha}{2}\xi_j^2 (\alpha-2)\|\xi\|^{\alpha-4}-\frac{\alpha^2}{4}R^{-\alpha}\xi_j^2 \|\xi\|^{2(\alpha-2)}\bigg) e^{-\frac{\|\xi\|^{\alpha}}{(2R^\alpha)}}d\xi,
\end{align}
where $\partial^2_{\xi_j}$ is the partial derivative of order $2$ in the $\xi_j$ coordinate. Moreover, since $\alpha \in (1,2)$ and since $\psi^2 \in \mathcal{S}(\bbr^d)$, all the following integrals converge
\begin{align*}
\int_{\bbr^d}\mathcal{F}(\psi^2)(\xi) \|\xi\|^{\alpha-2} d\xi <+\infty, \quad \int_{\bbr^d}\mathcal{F}(\psi^2)(\xi)|\xi_j|^2\|\xi\|^{\alpha-4}d\xi <+\infty, \quad \int_{\bbr^d}\mathcal{F}(\psi^2)(\xi) |\xi_j|^2 \|\xi\|^{2(\alpha-2)} d\xi <+\infty.
\end{align*}
Hence, as $R\longrightarrow+\infty$,
\begin{align*}
\dfrac{\bbe g_{R,j}^2(X_\alpha)}{R^{2-\alpha}}\longrightarrow \frac{1}{(2\pi)^d}\int_{\bbr^d}\mathcal{F}(\psi^2)(\xi)\bigg(\dfrac{\alpha}{2}\|\xi\|^{\alpha-2}+\frac{\alpha}{2}\xi_j^2 (\alpha-2)\|\xi\|^{\alpha-4}\bigg)d\xi,
\end{align*}
which concludes the proof of the lemma.
\end{proof}
\noindent
This second technical lemma provides the rate of divergence, as $R$ tends to $+\infty$, of
\begin{align*}
\bbe \int_{\bbr^d} |g_{R,j}(X_\alpha+u)-g_{R,j}(X_\alpha)|^2\tilde{\nu}_\alpha(du)=2 \bbe\, \Gamma\left(g_{R,j},g_{R,j}\right)(X_\alpha),
\end{align*}
for all $j \in \{1,\dots,d\}$.

\begin{lem}\label{lem:rate2}
Let $\psi(x)= \exp(-\|x\|^2)$, for $x\in \bbr^d$. Let $\alpha \in (1,2)$ and $X_\alpha$ be a rotationally invariant $\alpha$-stable random vector of $\bbr^d$ with characteristic function $\varphi$ given, for all $\xi \in \bbr^d$, by
\begin{align*}
\varphi(\xi)=\exp\left(-\dfrac{\|\xi\|^\alpha}{2}\right).
\end{align*}
Then, for all $j\in \{1, \dots, d\}$, as $R$ tends to $+\infty$,
\begin{align}\label{eq:lemrate2}
\dfrac{\bbe\, \Gamma(g_{R,j},g_{R,j})(X_\alpha)}{R^{2-\alpha}}\longrightarrow \dfrac{\alpha^2}{4} \int_{\bbr^d}\int_{\bbr^d} \mathcal{F}(\psi)(\xi) \mathcal{F}(\psi)(\zeta)\bigg(\|\xi +\zeta\|^{\alpha-2}+(\xi_j+\zeta_j)^2 (\alpha-2) \|\xi +\zeta\|^{\alpha-4}\bigg) \dfrac{d\xi d\zeta}{(2\pi)^{2d}},
\end{align}
where $g_{R,j}(x)=x_j \psi(x/R)$, for $x\in \bbr^d$, and where $\Gamma$ is the "carr\'e du champs" operator associated with $X_\alpha$.
\end{lem}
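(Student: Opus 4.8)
The plan is to mirror the Fourier-analytic computation of the proof of Lemma~\ref{lem:rate1}, now applied to the bilinear form $\Gamma$ rather than to a second moment. First I would invoke the Fourier representation of the carr\'e du champ recorded in Remark~\ref{rem:PoincStable}(ii) and Lemma~\ref{lem:GammaIte}, namely $\Gamma(g_{R,j},g_{R,j})(x)=\tfrac12\int_{\bbr^d}\int_{\bbr^d}\mathcal{F}(g_{R,j})(\xi)\mathcal{F}(g_{R,j})(\zeta)e^{i\langle x;\xi+\zeta\rangle}\sigma_{\tilde{\nu}_\alpha}(\xi,\zeta)\,d\xi\,d\zeta/(2\pi)^{2d}$, together with the explicit symbol $\sigma_{\tilde{\nu}_\alpha}(\xi,\zeta)=\tfrac{\alpha}{2}\big(\|\xi\|^{\alpha}+\|\zeta\|^{\alpha}-\|\xi+\zeta\|^{\alpha}\big)$ from Remark~\ref{rem:gamma2stable}. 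Since $g_{R,j}\in\mathcal{S}(\bbr^d)$, one has $\mathcal{F}(g_{R,j})\in\mathcal{S}(\bbr^d)$, and the crude bound $|\sigma_{\tilde{\nu}_\alpha}(\xi,\zeta)|\le C_\alpha(\|\xi\|^{\alpha}+\|\zeta\|^{\alpha})$ makes the double integral absolutely convergent, so Fubini's theorem legitimizes integrating in $x$ against $\mu_\alpha$; using $\int_{\bbr^d}e^{i\langle x;\xi+\zeta\rangle}\mu_\alpha(dx)=\varphi(\xi+\zeta)=e^{-\|\xi+\zeta\|^{\alpha}/2}$ this gives $2\bbe\,\Gamma(g_{R,j},g_{R,j})(X_\alpha)=\int\int\mathcal{F}(g_{R,j})(\xi)\mathcal{F}(g_{R,j})(\zeta)e^{-\|\xi+\zeta\|^{\alpha}/2}\sigma_{\tilde{\nu}_\alpha}(\xi,\zeta)\,d\xi\,d\zeta/(2\pi)^{2d}$.

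Next I would insert $\mathcal{F}(g_{R,j})(\xi)=i\,\partial_{\xi_j}\big(R^{d}\mathcal{F}(\psi)(R\xi)\big)=iR^{d+1}(\partial_j\mathcal{F}(\psi))(R\xi)$ and perform the change of variables $\xi\mapsto\xi/R$, $\zeta\mapsto\zeta/R$. The Jacobian contributes $R^{-2d}$, the two Fourier factors contribute $R^{2d+2}$ (and the numerical factor $i^{2}=-1$), and the homogeneity of $\sigma_{\tilde{\nu}_\alpha}$ contributes $R^{-\alpha}$, so that altogether $\tfrac{2\bbe\,\Gamma(g_{R,j},g_{R,j})(X_\alpha)}{R^{2-\alpha}}=-\tfrac{\alpha}{2}\int\int(\partial_j\mathcal{F}(\psi))(\xi)(\partial_j\mathcal{F}(\psi))(\zeta)e^{-\|\xi+\zeta\|^{\alpha}/(2R^{\alpha})}\big(\|\xi\|^{\alpha}+\|\zeta\|^{\alpha}-\|\xi+\zeta\|^{\alpha}\big)\,d\xi\,d\zeta/(2\pi)^{2d}$. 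Because $\partial_j\mathcal{F}(\psi)\in\mathcal{S}(\bbr^d)$ and $e^{-\|\xi+\zeta\|^{\alpha}/(2R^{\alpha})}\le1$, the integrand is dominated, uniformly in $R$, by $|(\partial_j\mathcal{F}(\psi))(\xi)|\,|(\partial_j\mathcal{F}(\psi))(\zeta)|(\|\xi\|^{\alpha}+\|\zeta\|^{\alpha}+\|\xi+\zeta\|^{\alpha})$, which is integrable; dominated convergence then lets $R\to+\infty$, replacing the exponential by $1$. In the resulting limit the two terms carrying $\|\xi\|^{\alpha}$ and $\|\zeta\|^{\alpha}$ vanish, since $\int_{\bbr^d}(\partial_j\mathcal{F}(\psi))(\eta)\,d\eta=0$, leaving $\tfrac{\alpha}{2}\int\int(\partial_j\mathcal{F}(\psi))(\xi)(\partial_j\mathcal{F}(\psi))(\zeta)\|\xi+\zeta\|^{\alpha}\,d\xi\,d\zeta/(2\pi)^{2d}$.

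Finally I would integrate by parts twice, once in $\xi_j$ and then in $\zeta_j$, transferring the two derivatives off the Gaussian factors onto $\|\xi+\zeta\|^{\alpha}$. Using $\partial_{\xi_j}\|\xi+\zeta\|^{\alpha}=\alpha(\xi_j+\zeta_j)\|\xi+\zeta\|^{\alpha-2}$ (a continuous function, so the first integration by parts is routine) and then $\partial_{\zeta_j}\big(\alpha(\xi_j+\zeta_j)\|\xi+\zeta\|^{\alpha-2}\big)=\alpha\big(\|\xi+\zeta\|^{\alpha-2}+(\alpha-2)(\xi_j+\zeta_j)^{2}\|\xi+\zeta\|^{\alpha-4}\big)$, one reaches $\tfrac{\alpha^{2}}{2}\int\int\mathcal{F}(\psi)(\xi)\mathcal{F}(\psi)(\zeta)\big(\|\xi+\zeta\|^{\alpha-2}+(\xi_j+\zeta_j)^{2}(\alpha-2)\|\xi+\zeta\|^{\alpha-4}\big)\,d\xi\,d\zeta/(2\pi)^{2d}$ for the limit of $2\bbe\,\Gamma(g_{R,j},g_{R,j})(X_\alpha)/R^{2-\alpha}$, which is the claimed expression after dividing by $2$. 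The main technical obstacle is the legitimacy of the \emph{second} integration by parts, since $\partial_{\zeta_j}\big((\xi_j+\zeta_j)\|\xi+\zeta\|^{\alpha-2}\big)$ carries the singular factor $\|\xi+\zeta\|^{\alpha-4}$; I would handle this by excising an $\varepsilon$-neighborhood of the diagonal $\{\xi+\zeta=0\}$, integrating by parts on its complement, and checking that the boundary integral over $\{\|\xi+\zeta\|=\varepsilon\}$ is $O(\varepsilon^{d+\alpha-2})\to0$, while, since $\alpha>1$ (hence $\alpha-2>-d$ for every $d\ge1$), the weights $\|\xi+\zeta\|^{\alpha-2}$ and $(\xi_j+\zeta_j)^{2}\|\xi+\zeta\|^{\alpha-4}\le\|\xi+\zeta\|^{\alpha-2}$ are locally integrable, so the excised integrals converge to the full ones as $\varepsilon\to0$.
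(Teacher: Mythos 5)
Your proof is correct, and the numerical constants check out (your limit for $2\,\bbe\,\Gamma(g_{R,j},g_{R,j})(X_\alpha)/R^{2-\alpha}$ is exactly twice the right-hand side of \eqref{eq:lemrate2}). However, you take a genuinely different route from the paper: you rescale $\xi\mapsto\xi/R$, $\zeta\mapsto\zeta/R$ and send $R\to+\infty$ \emph{before} doing any integration by parts, exploiting the $\alpha$-homogeneity of $\sigma_{\tilde\nu_\alpha}$ and the observation that $\int(\partial_j\mathcal{F}(\psi))(\eta)\,d\eta=0$ to kill the $\|\xi\|^\alpha$ and $\|\zeta\|^\alpha$ terms; only then do you transfer the two $j$-derivatives onto the clean kernel $\|\xi+\zeta\|^\alpha$. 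The paper instead integrates by parts first, differentiating the full product $e^{-\|\xi+\zeta\|^\alpha/2}\big(\|\xi\|^\alpha+\|\zeta\|^\alpha-\|\xi+\zeta\|^\alpha\big)$ in $\xi_j$ and $\zeta_j$ to get the long expression $m_\alpha(\xi,\zeta)$, and only afterwards rescales and identifies, by inspecting the various $R$-homogeneities, the two terms that survive at order $R^{2-\alpha}$. Your ordering avoids the explicit $m_\alpha$ computation entirely and makes the selection of the surviving terms transparent (the lower-order terms vanish for algebraic reasons rather than by a scaling comparison); the cost is that the final integration by parts acts on a kernel with no Gaussian damping in $\xi+\zeta$, whose second mixed derivative carries the singular factor $\|\xi+\zeta\|^{\alpha-4}$ — which you correctly address via the $\varepsilon$-excision around the diagonal, the local integrability of $\|\xi+\zeta\|^{\alpha-2}$ for $\alpha>1$, and the $O(\varepsilon^{d+\alpha-2})$ boundary estimate. (The paper's proof in fact faces the same local-integrability issue and passes over it silently, so on this technical point your write-up is the more careful one.)
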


\begin{proof}
First, by Remark \ref{rem:gamma2stable}, for all $x\in \bbr^d$, all $R\geq 1$ and all $j\in \{1,\dots, d\}$,
\begin{align*}
\Gamma(g_{R,j},g_{R,j})(x)= \dfrac{1}{2} \int_{\bbr^d}\int_{\bbr^d} \mathcal{F}(g_{R,j})(\xi)\mathcal{F}(g_{R,j})(\zeta) e^{i \langle x; \xi +\zeta \rangle} \dfrac{\alpha}{2} \left(\|\xi\|^{\alpha}+\|\zeta\|^{\alpha}-\|\xi+\zeta\|^{\alpha}\right)\dfrac{d\xi d\zeta}{(2\pi)^{2d}}.
\end{align*}
Then, for all $R \geq 1$ and all $j\in \{1,\dots, d\}$,
\begin{align*}
\frac{1}{2}\bbe \int_{\bbr^d} |g_{R,j}(X_\alpha+u)-g_{R,j}(X_\alpha)|^2 \tilde{\nu}_\alpha(du)&= \bbe\, \Gamma(g_{R,j},g_{R,j})(X_\alpha)\\
&= \dfrac{\alpha}{4} \int_{\bbr^d}\int_{\bbr^d} \mathcal{F}(g_{R,j})(\xi)\mathcal{F}(g_{R,j})(\zeta) e^{-\frac{\|\xi +\zeta\|^\alpha}{2}}\\
&\quad\quad\quad\times \left(\|\xi\|^{\alpha}+\|\zeta\|^{\alpha}-\|\xi+\zeta\|^{\alpha}\right)\dfrac{d\xi d\zeta}{(2\pi)^{2d}}\\
&= -\dfrac{\alpha}{4} \int_{\bbr^d}\int_{\bbr^d} \partial_{\xi_j}(\mathcal{F}(\psi_R))(\xi) \partial_{\zeta_j}(\mathcal{F}(\psi_R))(\zeta) e^{-\frac{\|\xi +\zeta\|^\alpha}{2}}\\
&\quad\quad\quad\times \left(\|\xi\|^{\alpha}+\|\zeta\|^{\alpha}-\|\xi+\zeta\|^{\alpha}\right)\dfrac{d\xi d\zeta}{(2\pi)^{2d}}\\
&= -\dfrac{\alpha}{4} \int_{\bbr^d}\int_{\bbr^d} \mathcal{F}(\psi_R)(\xi) \mathcal{F}(\psi_R)(\zeta) \partial^2_{\xi_j,\zeta_j}\bigg(e^{-\frac{\|\xi +\zeta\|^\alpha}{2}}\\
&\quad\quad \times\bigg(\|\xi\|^{\alpha}+\|\zeta\|^{\alpha}-\|\xi+\zeta\|^{\alpha}\bigg)\bigg)\dfrac{d\xi d\zeta}{(2\pi)^{2d}},
\end{align*}
where $\partial^2_{\xi_j,\zeta_j}$ is the second order partial derivative in the coordinates $\xi_j$ and $\zeta_j$. Now, for all $\xi, \zeta \in \bbr^d \setminus \{0\}$ and all $j\in \{1,\dots, d\}$,
\begin{align*}
\dfrac{d}{d\xi_j}\bigg(e^{-\frac{\|\xi +\zeta\|^\alpha}{2}}\bigg(\|\xi\|^{\alpha}+\|\zeta\|^{\alpha}-\|\xi+\zeta\|^{\alpha}\bigg)\bigg)&=e^{-\frac{\|\xi +\zeta\|^\alpha}{2}} \bigg(\dfrac{-\alpha}{2}\left(\xi_j+\zeta_j\right) \|\xi +\zeta\|^{\alpha-2}\bigg(\|\xi\|^{\alpha}+\|\zeta\|^{\alpha}-\|\xi+\zeta\|^{\alpha}\bigg)\\
&\quad\quad +\alpha \bigg(\xi_j \|\xi\|^{\alpha-2}-(\xi_j+\zeta_j)\|\xi +\zeta\|^{\alpha-2}\bigg)\bigg).
\end{align*}
Then, for all $\xi, \zeta \in \bbr^d \setminus \{0\}$ and all $j\in \{1,\dots, d\}$,
\begin{align*}
\dfrac{d^2}{d\xi_j\zeta_j}\bigg(e^{-\frac{\|\xi +\zeta\|^\alpha}{2}}\bigg(\|\xi\|^{\alpha}+\|\zeta\|^{\alpha}-\|\xi+\zeta\|^{\alpha}\bigg)\bigg)&=e^{-\frac{\|\xi +\zeta\|^\alpha}{2}} \bigg(-\dfrac{\alpha}{2}\left(\xi_j+\zeta_j\right) \|\xi +\zeta\|^{\alpha-2}\bigg(\|\xi\|^{\alpha}+\|\zeta\|^{\alpha}-\|\xi+\zeta\|^{\alpha}\bigg)\\
&\quad\quad +\alpha \bigg(\xi_j \|\xi\|^{\alpha-2}-(\xi_j+\zeta_j)\|\xi +\zeta\|^{\alpha-2}\bigg)\bigg)\\
&\quad\quad \times \bigg(\dfrac{-\alpha}{2}\left(\xi_j+\zeta_j\right) \|\xi +\zeta\|^{\alpha-2}\bigg)+e^{-\frac{\|\xi +\zeta\|^\alpha}{2}} \bigg(-\frac{\alpha}{2} \|\xi +\zeta\|^{\alpha-2} \\
&\quad\quad \times\bigg(\|\xi\|^{\alpha}+\|\zeta\|^{\alpha}-\|\xi+\zeta\|^{\alpha}\bigg)-\frac{\alpha}{2} \|\xi +\zeta\|^{\alpha-4} (\xi_j+\zeta_j)^2 \\
&\quad\quad \times\bigg(\|\xi\|^{\alpha}+\|\zeta\|^{\alpha}-\|\xi+\zeta\|^{\alpha}\bigg)(\alpha-2)-\alpha \|\xi +\zeta\|^{\alpha-2}\\
&\quad\quad\quad -\alpha (\xi_j+\zeta_j)^2 (\alpha-2) \|\xi +\zeta\|^{\alpha-4}\bigg).
\end{align*}
For all $\xi,\zeta\in \bbr^d \setminus \{0\}$ and all $j\in\{1,\dots, d\}$, set
\begin{align*}
m_\alpha(\xi,\zeta)&:=\bigg(-\dfrac{\alpha}{2}\left(\xi_j+\zeta_j\right) \|\xi +\zeta\|^{\alpha-2}\bigg(\|\xi\|^{\alpha}+\|\zeta\|^{\alpha}-\|\xi+\zeta\|^{\alpha}\bigg)+\alpha \bigg(\xi_j \|\xi\|^{\alpha-2}-(\xi_j+\zeta_j)\|\xi +\zeta\|^{\alpha-2}\bigg)\bigg) \\
&\quad\quad \times \bigg(-\dfrac{\alpha}{2}\left(\xi_j+\zeta_j\right) \|\xi +\zeta\|^{\alpha-2}\bigg)+\bigg(-\frac{\alpha}{2} \|\xi +\zeta\|^{\alpha-2}\bigg(\|\xi\|^{\alpha}+\|\zeta\|^{\alpha}-\|\xi+\zeta\|^{\alpha}\bigg)\\
&\quad\quad -\frac{\alpha}{2} \|\xi +\zeta\|^{\alpha-4} (\xi_j+\zeta_j)^2\bigg(\|\xi\|^{\alpha}+\|\zeta\|^{\alpha}-\|\xi+\zeta\|^{\alpha}\bigg)(\alpha-2)-\alpha \|\xi +\zeta\|^{\alpha-2}\\
&\quad\quad\quad -\alpha (\xi_j+\zeta_j)^2 (\alpha-2) \|\xi +\zeta\|^{\alpha-4}\bigg).
\end{align*}
Thus, by a change of variables, for all $R\geq 1$ and all $j\in \{1,\dots, d\}$
\begin{align}\label{eq:rate2}
\bbe\, \Gamma(g_{R,j},g_{R,j})(X_\alpha)&= -\dfrac{\alpha}{4} \int_{\bbr^d}\int_{\bbr^d} \mathcal{F}(\psi_R)(\xi) \mathcal{F}(\psi_R)(\zeta) e^{-\frac{\|\xi +\zeta\|^\alpha}{2}}m_\alpha(\xi, \zeta)\dfrac{d\xi d\zeta}{(2\pi)^{2d}}\nonumber\\
&= -\dfrac{\alpha R^{2d}}{4} \int_{\bbr^d}\int_{\bbr^d} \mathcal{F}(\psi)(R\, \xi) \mathcal{F}(\psi)(R\, \zeta) e^{-\frac{\|\xi +\zeta\|^\alpha}{2}}m_\alpha(\xi, \zeta)\dfrac{d\xi d\zeta}{(2\pi)^{2d}}\nonumber\\
&=-\dfrac{\alpha}{4} \int_{\bbr^d}\int_{\bbr^d} \mathcal{F}(\psi)(\xi) \mathcal{F}(\psi)(\zeta) e^{-\frac{\|\xi +\zeta\|^\alpha}{2R^\alpha}}m_\alpha\left(\frac{\xi}{R}, \frac{\zeta}{R}\right)\dfrac{d\xi d\zeta}{(2\pi)^{2d}}.
\end{align}
Finally, normalizing the left-hand side of \eqref{eq:rate2} by $R^{2-\alpha}$ implies, as $R\longrightarrow+\infty$,
\begin{align*}
\dfrac{\bbe\, \Gamma(g_{R,j},g_{R,j})(X_\alpha)}{R^{2-\alpha}}\longrightarrow \dfrac{\alpha^2}{4} \int_{\bbr^d}\int_{\bbr^d} \mathcal{F}(\psi)(\xi) \mathcal{F}(\psi)(\zeta)\bigg(\|\xi +\zeta\|^{\alpha-2}+(\xi_j+\zeta_j)^2 (\alpha-2) \|\xi +\zeta\|^{\alpha-4}\bigg) \dfrac{d\xi d\zeta}{(2\pi)^{2d}},
\end{align*}
which concludes the proof of the lemma.
\end{proof}
\noindent
From the above lemma, and from a spectral point of view, the correct functional to observe rigidity phenomenon for the rotationally invariant $\alpha$-stable distribution, $\alpha \in (1,2)$, is the functional defined, for all $\mu \in M_1(\bbr^d)$ ($M_1(\bbr^d)$ is the set of probability measures on $\bbr^d$), by
\begin{align}\label{eq:Ufunctional}
U_\alpha(\mu):= \underset{f\in \mathcal{H}_\alpha}{\sup} \dfrac{\operatorname{Var}(f(X))}{\bbe \int_{\bbr^d} |f(X+u)-f(X)|^2  \nu_\alpha(du)},
\end{align}
where $X\sim \mu$ and where $\mathcal{H}_\alpha$ is the set of functions $f$ from $\bbr^d$ to $\bbr$ such that $\operatorname{Var}(f(X))<+\infty$ and $0<\bbe \int_{\bbr^d} |f(X+u)-f(X)|^2  \nu_\alpha(du)<+\infty$. Therefore, the next result is a direct consequence of the Poincar\'e-type inequality for the rotationally invariant $\alpha$-stable distribution, $\alpha \in (1,2)$, and of the existence of an optimizing sequence as built above.

\begin{cor}\label{cor:rigidity1} 
Let $X_\alpha$ be a rotationally invariant $\alpha$-stable random vector, $\alpha \in (1,2)$, with law $\mu_\alpha$ and with characteristic function $\varphi$ given by
\begin{align*}
\varphi(\xi)=\exp\left(-\frac{\|\xi\|^\alpha}{2}\right), \quad \xi \in \bbr^d.
\end{align*}
Let $U_\alpha$ be the functional, on $M_1(\bbr^d)$, defined in \eqref{eq:Ufunctional}. Then, 
\begin{align*}
U_\alpha(\mu_\alpha)=1.
\end{align*}
\end{cor}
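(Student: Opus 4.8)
The plan is to prove the two inequalities $U_\alpha(\mu_\alpha)\le 1$ and $U_\alpha(\mu_\alpha)\ge 1$. For the upper bound, note that Proposition~\ref{prop:poincSD} (in the stable form recorded in Remark~\ref{rem:PoincStable}(i)) gives, for every $f\in\mathcal{S}(\bbr^d)$,
\[
\operatorname{Var}(f(X_\alpha))\le \bbe\int_{\bbr^d}|f(X_\alpha+u)-f(X_\alpha)|^2\,\nu_\alpha(du),
\]
i.e. the ratio in \eqref{eq:Ufunctional} is at most $1$ on $\mathcal{S}(\bbr^d)$. To reach the full class $\mathcal{H}_\alpha$ I would run a density argument: given $f\in\mathcal{H}_\alpha$, produce $f_n\in\mathcal{C}^\infty_c(\bbr^d)$ by mollification followed by a smooth cut-off $x\mapsto\chi(x/n)$, observe that $f_n\to f$ in $L^2(\mu_\alpha)$ so that $\operatorname{Var}(f_n(X_\alpha))\to\operatorname{Var}(f(X_\alpha))$, and show $\limsup_n\bbe\int|f_n(X_\alpha+u)-f_n(X_\alpha)|^2\nu_\alpha(du)\le \bbe\int|f(X_\alpha+u)-f(X_\alpha)|^2\nu_\alpha(du)$ using Jensen's inequality for the mollification together with the local boundedness of $p_\alpha(\cdot+y)/p_\alpha(\cdot)$ (where $p_\alpha$ is the smooth, everywhere positive density of $\mu_\alpha$) and dominated convergence for the cut-off; equivalently one invokes closability of the non-local Dirichlet form $(\mathcal{E},\mathcal{C}^\infty_c)$ in $L^2(\mu_\alpha)$ exhibited in Remark~\ref{rem:PoincStable}(ii). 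Passing to the limit in the displayed inequality then yields $U_\alpha(\mu_\alpha)\le 1$.

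For the lower bound I would test $U_\alpha(\mu_\alpha)$ against the truncated functions $g_{R,j}$ of \eqref{eq:gRj}, $j\in\{1,\dots,d\}$, $R\ge 1$. Each $g_{R,j}$ belongs to $\mathcal{S}(\bbr^d)\subset\mathcal{H}_\alpha$ (it is bounded and Lipschitz, hence of finite, and clearly strictly positive, Dirichlet energy against the L\'evy measure $\nu_\alpha$) and satisfies $\bbe g_{R,j}(X_\alpha)=0$, so $\operatorname{Var}(g_{R,j}(X_\alpha))=\bbe g_{R,j}^2(X_\alpha)$. Since $\tilde\nu_\alpha=\alpha\,\nu_\alpha$ (compare the densities in Remark~\ref{rem:gamma2stable}), the identity $\bbe\int|g_{R,j}(X_\alpha+u)-g_{R,j}(X_\alpha)|^2\tilde\nu_\alpha(du)=2\,\bbe\,\Gamma(g_{R,j},g_{R,j})(X_\alpha)$ stated just before Lemma~\ref{lem:rate2} gives
\[
\bbe\int_{\bbr^d}|g_{R,j}(X_\alpha+u)-g_{R,j}(X_\alpha)|^2\,\nu_\alpha(du)=\frac{2}{\alpha}\,\bbe\,\Gamma(g_{R,j},g_{R,j})(X_\alpha),
\]
so that the ratio in \eqref{eq:Ufunctional} evaluated at $g_{R,j}$ equals $\frac{\alpha}{2}\,\bbe g_{R,j}^2(X_\alpha)\,/\,\bbe\,\Gamma(g_{R,j},g_{R,j})(X_\alpha)$. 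By Lemmas~\ref{lem:rate1} and \ref{lem:rate2} both numerator and denominator are asymptotic to a nonzero constant times $R^{2-\alpha}$ as $R\to+\infty$, hence this ratio converges to $\tfrac{\alpha}{2}\,A_j/B_j$, where $A_j$ and $B_j$ denote the limiting constants on the right-hand sides of \eqref{eq:lemrate1} and \eqref{eq:lemrate2}.

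It remains to verify that $\tfrac{\alpha}{2}\,A_j/B_j=1$, i.e. $A_j=\tfrac{2}{\alpha}B_j$. Setting $G(\eta)=\|\eta\|^{\alpha-2}+(\alpha-2)\eta_j^2\|\eta\|^{\alpha-4}$ (which is strictly positive away from $0$ precisely because $\alpha\in(1,2)$, so the integrals converge and are positive), one has $A_j=\tfrac{\alpha}{2}\int_{\bbr^d}\mathcal{F}(\psi^2)(\eta)\,G(\eta)\,\tfrac{d\eta}{(2\pi)^d}$ and $B_j=\tfrac{\alpha^2}{4}\int_{\bbr^d}\int_{\bbr^d}\mathcal{F}(\psi)(\xi)\mathcal{F}(\psi)(\zeta)\,G(\xi+\zeta)\,\tfrac{d\xi d\zeta}{(2\pi)^{2d}}$; since $\mathcal{F}(\psi^2)=(2\pi)^{-d}\,\mathcal{F}(\psi)\ast\mathcal{F}(\psi)$, the substitution $\eta=\xi+\zeta$ shows these two integrals coincide, whence $A_j=\tfrac{2}{\alpha}B_j$ and the ratio at $g_{R,j}$ tends to $1$. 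Thus $U_\alpha(\mu_\alpha)\ge 1$, and combined with the upper bound this gives $U_\alpha(\mu_\alpha)=1$. I expect the density/closability step in the upper bound to be the main obstacle — promoting the Poincar\'e inequality from $\mathcal{S}(\bbr^d)$ to the whole of $\mathcal{H}_\alpha$ — whereas the lower bound, given Lemmas~\ref{lem:rate1}–\ref{lem:rate2}, is essentially mechanical, its only delicate point being the convolution identity that forces the two limit constants into the exact ratio $2/\alpha$.
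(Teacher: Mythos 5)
Your proof follows the paper's argument exactly: the upper bound $U_\alpha(\mu_\alpha)\le 1$ from Proposition~\ref{prop:poincSD}, and the lower bound by testing the ratio against the truncated coordinates $g_{R,j}$ and invoking Lemmas~\ref{lem:rate1}--\ref{lem:rate2}. The one place you add genuine value is the last step: the paper simply asserts that the ratio tends to $1$, whereas you make explicit the needed identity $A_j=\tfrac{2}{\alpha}B_j$ via $\mathcal{F}(\psi^2)=(2\pi)^{-d}\mathcal{F}(\psi)\ast\mathcal{F}(\psi)$ and the substitution $\eta=\xi+\zeta$ (together with the observation that $G>0$ away from the origin because $\alpha\in(1,2)$, which keeps $A_j,B_j$ nonzero) — this is precisely the computation the paper leaves implicit, and it is correct. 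Your remark that the density/closability step for the upper bound is the delicate part is also fair: the paper treats it as routine (leaning on the closability discussion of Remark~\ref{rem:SteinKernel}(iii)), and your Jensen-plus-cutoff sketch is the right way to spell it out.
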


\begin{proof}
First, by the Poincar\'e-type inequality of Proposition \ref{prop:poincSD}, $U_\alpha(\mu_\alpha)\leq 1$. Moreover, for all $j \in \{1,\dots, d\}$ and for $g_{R,j}$ as in \eqref{eq:gRj} (which clearly belongs to $\mathcal{H}_\alpha$),
\begin{align*}
U_\alpha(\mu_\alpha)\geq \dfrac{\operatorname{Var}(g_{R,j}(X_\alpha))}{\bbe \int_{\bbr^d} |g_{R,j}(X_\alpha+u)-g_{R,j}(X_\alpha)|^2  \nu_\alpha(du)} \underset{R\rightarrow +\infty}{\longrightarrow}1.
\end{align*}
This concludes the proof of the corollary.
\end{proof}
\noindent
To continue, let us state and prove a converse to the above corollary. In particular, note that, for all $j \in \{1,\dots, d\}$,
\begin{align*}
\underset{R\longrightarrow+\infty}{\lim} \left(\operatorname{Var}(g_{R,j}(X_\alpha))-\bbe \int_{\bbr^d} |g_{R,j}(X_\alpha+u)-g_{R,j}(X_\alpha)|^2  \nu_\alpha(du)\right)=0.
\end{align*}
Indeed, this is a direct consequence of \eqref{eq:rate1} and \eqref{eq:rate2} since the divergent terms cancel out and the remaining terms converge to $0$ as $R \rightarrow +\infty$. 

\begin{cor}\label{cor:rigidity2}
Let $\psi(x)= \exp(-\|x\|^2)$, for $x\in \bbr^d$, and let $g_{R,j}$ be given by \eqref{eq:gRj}, for all $R\geq 1$ and all $j\in \{1, \dots, d\}$. Let $X$ be a centered random vector of $\bbr^d$ with finite first moment, with law $\mu$ and such that, for all $j \in \{1,\dots, d\}$,
\begin{align}\label{eq:assrigidity2}
\underset{R\longrightarrow+\infty}{\lim} \left(\operatorname{Var}(g_{R,j}(X))-\bbe \int_{\bbr^d} |g_{R,j}(X+u)-g_{R,j}(X)|^2  \nu_\alpha(du)\right)=0.
\end{align}
If $U_\alpha(\mu)=1$, then $\mu=\mu_\alpha$, where $\mu_\alpha$ is the law of a rotationally invariant $\alpha$-stable random vector with $\alpha \in (1,2)$.
\end{cor}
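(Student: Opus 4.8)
The plan is to convert the optimality $U_\alpha(\mu)=1$ into a first-order variational identity along the near-optimal sequence $(g_{R,j})_{R\ge1}$ and, letting $R\to+\infty$, to recover for $\mu$ the characterizing covariance identity of Theorem~\ref{thm:multid} attached to $\nu_\alpha$, which forces $\mu=\mu_\alpha$. Set, for bounded $h$ of finite Dirichlet energy, $\mathcal{E}_\alpha(f,g):=\bbe\int_{\bbr^d}(f(X+u)-f(X))(g(X+u)-g(X))\nu_\alpha(du)$ and $Q(h):=\mathcal{E}_\alpha(h,h)-\operatorname{Var}(h(X))$, a quadratic form whose polar bilinear form is $B(f,g)=\mathcal{E}_\alpha(f,g)-\operatorname{Cov}(f(X),g(X))$. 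Since $U_\alpha(\mu)=1$, $Q\ge 0$ on $\mathcal{H}_\alpha$; applying this to $h=g_{R,j}+tf$ with $f\in\mathcal{C}_c^\infty(\bbr^d)$ (which lies in $\mathcal{H}_\alpha$ for every $t$ outside at most one exceptional value, hence, $t\mapsto Q(g_{R,j}+tf)$ being a polynomial, $Q(g_{R,j}+tf)\ge 0$ for all $t\in\bbr$) and expanding gives $Q(g_{R,j})+2t\,B(g_{R,j},f)+t^2Q(f)\ge 0$ for every $t\in\bbr$. By \eqref{eq:assrigidity2}, $Q(g_{R,j})\to 0$ as $R\to+\infty$ (it is minus the quantity in \eqref{eq:assrigidity2}), while $Q(f)\ge 0$ is fixed; the discriminant inequality then yields $B(g_{R,j},f)^2\le Q(g_{R,j})\,Q(f)\to 0$, i.e.
\begin{align*}
\lim_{R\to+\infty}\Big(\mathcal{E}_\alpha(g_{R,j},f)-\operatorname{Cov}(g_{R,j}(X),f(X))\Big)=0,\qquad j\in\{1,\dots,d\},\ f\in\mathcal{C}_c^\infty(\bbr^d).
\end{align*}

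Next I would identify this limit. Since $g_{R,j}(x)=x_j\psi(x/R)$ with $\psi\in\mathcal{S}(\bbr^d)$, $\psi(0)=1$, $0\le\psi\le1$, one has $g_{R,j}(X+u)-g_{R,j}(X)\to u_j$ pointwise and, for $R\ge1$, the uniform bound $|g_{R,j}(X+u)-g_{R,j}(X)|\le|u_j|+M_1(\psi)\,|X_j|\,\|u\|$; multiplying by $|f(X+u)-f(X)|\le\min(2M_0(f),M_1(f)\|u\|)$ produces a dominating function whose integral against $\nu_\alpha(du)$ is finite \emph{precisely because} $\alpha\in(1,2)$ (this controls the $\|u\|^2$ contribution near $0$ and the $\|u\|$ contribution near infinity) and whose expectation in $X$ is finite because $\bbe|X_j|<+\infty$. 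Dominated convergence gives $\mathcal{E}_\alpha(g_{R,j},f)\to\bbe\int_{\bbr^d}u_j(f(X+u)-f(X))\nu_\alpha(du)$; and since $|g_{R,j}(x)|\le|x_j|$ with $\bbe|X_j|<+\infty$ and $g_{R,j}\to(x\mapsto x_j)$ pointwise, $\operatorname{Cov}(g_{R,j}(X),f(X))\to\operatorname{Cov}(X_j,f(X))=\bbe[X_jf(X)]$ (using $\bbe X=0$). Stacking over $j$, for every $f\in\mathcal{C}_c^\infty(\bbr^d)$,
\begin{align*}
\bbe\big[Xf(X)\big]=\bbe X\,\bbe f(X)+\bbe\int_{\bbr^d}\big(f(X+u)-f(X)\big)u\,\nu_\alpha(du).
\end{align*}

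Finally I would upgrade this identity to all bounded Lipschitz $f$: applying it to $f_n(x)=\chi(x/n)e^{it\langle x;\xi\rangle}$ (real and imaginary parts), with $\chi\in\mathcal{C}_c^\infty$, $\chi\equiv1$ near $0$ and $\sup_n M_1(f_n)<+\infty$, both sides converge as $n\to+\infty$ by the same domination, so the identity holds for the functions $\varphi_t(\cdot,\xi)$ used in the proof of Theorem~\ref{thm:multid}, hence for all bounded Lipschitz functions. Since $\int_{\|u\|\ge1}\|u\|\nu_\alpha(du)<+\infty$ for $\alpha\in(1,2)$, Theorem~\ref{thm:multid} then gives $X\sim ID(b,0,\nu_\alpha)$ with $b=\bbe X-\int_{\|u\|>1}u\,\nu_\alpha(du)$; as $\bbe X=0$ and $\nu_\alpha$ is symmetric (so $\int_{\|u\|>1}u\,\nu_\alpha(du)=0$), $b=0$, and by Remark~\ref{rem:sec2}(ii) this is exactly the rotationally invariant $\alpha$-stable law with characteristic function $\exp(-\|\xi\|^\alpha/2)$, that is $\mu=\mu_\alpha$. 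The step I expect to be the main obstacle is the interchange of the limit $R\to+\infty$ with the expectation and with the singular integral against $\nu_\alpha$ in the middle paragraph: it is here that the restriction $\alpha\in(1,2)$ (integrability of $\|u\|^2\wedge\|u\|$ against $\nu_\alpha$) and the finite-first-moment hypothesis on $X$ are genuinely used, and the uniform-in-$R$ domination must be arranged carefully, since the pointwise limit $x\mapsto x_j$ is not itself an admissible test function for $\nu_\alpha$.
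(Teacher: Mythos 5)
Your proof is correct and follows essentially the same route as the paper's: perturb along the optimizing sequence $g_{R,j}$, pass to the limit $R\to+\infty$ using \eqref{eq:assrigidity2} to cancel the quadratic terms, and conclude via the characterizing covariance identity of Theorem~\ref{thm:multid}. The only departures are cosmetic: you make the discriminant inequality $B(g_{R,j},f)^2\le Q(g_{R,j})Q(f)$ explicit before letting $R\to+\infty$, and you test with $\mathcal{C}_c^\infty$ functions and then upgrade to bounded Lipschitz, whereas the paper perturbs directly with bounded Lipschitz $f$ (which also lies in $\mathcal{H}_\alpha$ once added to $g_{R,j}$), avoiding the final approximation step.
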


\begin{proof}
Let $R\geq 1$, let $f$ be a bounded Lipschitz function on $\bbr^d$ and let $\varepsilon \in \bbr$. Since $U_\alpha(\mu)=1$, $\alpha \in (1,2)$ and since $g_{R,j}+\varepsilon f\in \mathcal{H}_\alpha$, for $j\in \{1,\dots, d\}$,
\begin{align*}
\operatorname{Var}(g_{R,j}(X)) +2\varepsilon \operatorname{Cov}(g_{R,j}(X), f(X))&+\varepsilon^2 \operatorname{Var}(f(X))\leq \bbe \int_{\bbr^d} |g_{R,j}(X+u)-g_{R,j}(X)|^2  \nu_\alpha(du)\\
&\quad+2\varepsilon \bbe \int_{\bbr^d} (g_{R,j}(X+u)-g_{R,j}(X))(f(X+u)-f(X))  \nu_\alpha(du)\\
&\quad + \varepsilon^2 \bbe \int_{\bbr^d} |f(X+u)-f(X)|^2  \nu_\alpha(du).
\end{align*}
Now, observe that, for all $j\in \{1,\dots, d\}$,
\begin{align*}
\underset{R\longrightarrow+\infty}{\lim} \bbe \int_{\bbr^d} (g_{R,j}(X+u)-g_{R,j}(X))&(f(X+u)-f(X))  \nu_\alpha(du)= \bbe \int_{\bbr^d} u_j (f(X+u)-f(X))  \nu_\alpha(du),\\
&\quad \underset{R\longrightarrow+\infty}{\lim}\operatorname{Cov}(g_{R,j}(X), f(X)) =\operatorname{Cov}(X_j, f(X)).
\end{align*}
Then, \eqref{eq:assrigidity2} implies that,
\begin{align*}
2\varepsilon \operatorname{Cov}(X_j, f(X))+\varepsilon^2 \operatorname{Var}(f(X))&\leq 2\varepsilon \bbe \int_{\bbr^d} u_j(f(X+u)-f(X)) \nu_\alpha(du)\\
&\quad + \varepsilon^2 \bbe \int_{\bbr^d} |f(X+u)-f(X)|^2  \nu_\alpha(du).
\end{align*}
Since the above inequality is true for all $\varepsilon \in \bbr$, the following covariance representation holds,
\begin{align*}
 \operatorname{Cov}(X_j, f(X))= \bbe \int_{\bbr^d} u_j(f(X+u)-f(X)) \nu_\alpha(du),\quad  j\in \{1,\dots, d\}.
\end{align*}
Theorem \ref{thm:multid} concludes the proof of the corollary.
\end{proof}
\noindent
To end this section, let us investigate stability results for rotationally invariant $\alpha$-stable laws. A natural strategy to reach stability put forward in \cite{CFP19,AH18_2} is to use Stein kernels. This strategy relies on the Lax-Milgram theorem to ensure the existence of Stein kernels under appropriate assumptions. More precisely, the Stein kernel is seen as the solution to a variational problem linked to the covariance identity characterizing the target probability measure. In the sequel, we develop an approach based on Dirichlet forms to obtain the existence of Stein kernels. Adopting the notations, the definitions and the terminology of \cite[Chapter $1$]{FOT10}, let us start with an abstract result which then leads to the existence of Stein kernels in known and in new situations. Note that this result as well as its geometric generalizations and consequences will be further analyzed in the ongoing work \cite{AG19_1}.

\begin{thm}\label{thm:abstract}
Let $H$ be a real Hilbert space with inner product $\langle \cdot ;\cdot \rangle_H$ and induced norm $\| \cdot \|_H$. Let $\mathcal{E}$ be a closed symmetric non-negative definite bilinear form in the sense of \cite{FOT10} with dense linear domain $\mathcal{D}(\mathcal{E})$. Let $\{G_\alpha:\, \alpha>0\}$ and $\{P_t:\, t>0\}$ be, respectively, the strongly continuous resolvent and the strongly continuous semigroup on $H$ associated with $\mathcal{E}$. Moreover, assume that, there exists a closed linear subspace $H_0 \subset H$ such that, for all $t>0$ and all $u \in H_0$,
\begin{align}\label{ineq:Poinc}
\|P_t(u)\|_H \leq e^{-\frac{t}{C_P}}\|u\|_H,
\end{align}
for some $C_P>0$ independent of $u$ and of $t$. Let $G_{0^+}$ be the operator defined by
\begin{align}\label{eq:Gzero}
G_{0^+}(u):=\int_{0}^{+\infty} P_t(u) dt,\quad u \in H_0,
\end{align}
where the above integral is to be understood in the Bochner sense. Then, for all $u\in H_0$, $G_{0^+}(u)$ belongs to $\mathcal{D}(\mathcal{E})$ and, for all $v\in \mathcal{D}(\mathcal{E})$,
\begin{align}\label{eq:SteinKernel}
\mathcal{E} \left(G_{0^+}(u),v\right)=\langle u ; v \rangle_H .
\end{align}
Moreover, for all $u \in H_0$,
\begin{align}\label{ineq:energy}
\mathcal{E}\left(G_{0^+}(u),G_{0^+}(u)\right) \leq \|u\|^2_H C_P. 
\end{align}
\end{thm}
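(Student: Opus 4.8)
The strategy is the standard semigroup argument for producing a solution to $\mathcal{E}(w,v)=\langle u;v\rangle_H$: differentiate $P_t$ along the flow, integrate in time, and use the Poincaré-type decay \eqref{ineq:Poinc} to control the improper integral. First I would verify that $G_{0^+}(u)$ in \eqref{eq:Gzero} is well defined as a Bochner integral: for $u\in H_0$ one has $\|P_t(u)\|_H\le e^{-t/C_P}\|u\|_H$ by \eqref{ineq:Poinc}, and $t\mapsto P_t(u)$ is strongly continuous, hence strongly measurable, so $\int_0^{+\infty}\|P_t(u)\|_H\,dt\le C_P\|u\|_H<+\infty$ and the integral converges in $H$. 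I would also note that $H_0$ is $P_t$-invariant (or at least that $P_t(u)$ stays in the relevant space): this is implicit in the hypothesis that the bound \eqref{ineq:Poinc} holds for all $t>0$ and all $u\in H_0$; if $H_0$ is not assumed invariant one still only needs the quantitative bound $\|P_t(u)\|_H\le e^{-t/C_P}\|u\|_H$ for the fixed initial datum $u$, which is exactly what is stated.

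Next I would show $G_{0^+}(u)\in\mathcal{D}(\mathcal{E})$ together with the identity \eqref{eq:SteinKernel}. Recall that $\mathcal{D}(\mathcal{E})$ coincides with the domain of $\sqrt{-L}$ (where $L$ is the generator), with $\mathcal{E}(w,w)=\|\sqrt{-L}\,w\|_H^2$, and that for $w\in\mathcal{D}(L)$, $\mathcal{E}(w,v)=\langle -Lw;v\rangle_H$ for all $v\in\mathcal{D}(\mathcal{E})$. The key computation is that for $0<\varepsilon<T<+\infty$,
\begin{align*}
\int_\varepsilon^T P_t(u)\,dt \in \mathcal{D}(L), \qquad L\left(\int_\varepsilon^T P_t(u)\,dt\right)=P_T(u)-P_\varepsilon(u),
\end{align*}
which follows from the fundamental theorem of calculus for strongly continuous semigroups together with the closedness of $L$. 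Consequently, for all $v\in\mathcal{D}(\mathcal{E})$,
\begin{align*}
\mathcal{E}\left(\int_\varepsilon^T P_t(u)\,dt,\, v\right)=\langle P_\varepsilon(u)-P_T(u);v\rangle_H.
\end{align*}
Letting $\varepsilon\to 0^+$ one has $P_\varepsilon(u)\to u$ in $H$ by strong continuity, and letting $T\to+\infty$ one has $\|P_T(u)\|_H\le e^{-T/C_P}\|u\|_H\to 0$; moreover $\int_\varepsilon^T P_t(u)\,dt\to G_{0^+}(u)$ in $H$. Since $\mathcal{E}$ is closed, to conclude $G_{0^+}(u)\in\mathcal{D}(\mathcal{E})$ and pass to the limit on the left-hand side I would show the net $\int_\varepsilon^T P_t(u)\,dt$ is $\mathcal{E}$-Cauchy (equivalently bounded in $\mathcal{E}_1$-norm together with convergence in $H$), which is where the decay estimate is used a second time.

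**The main obstacle and the final estimate.** The delicate point is controlling $\mathcal{E}\big(\int_\varepsilon^T P_t(u)\,dt,\int_\varepsilon^T P_t(u)\,dt\big)$ uniformly. Using the spectral representation via $L=-\int_0^\infty \lambda\,dE_\lambda$ on the orthogonal complement of the kernel and the fact that $\|P_t(u)\|_H\le e^{-t/C_P}\|u\|_H$ forces the spectral measure of $u$ (projected appropriately) to be supported in $[1/C_P,\infty)$, I would bound
\begin{align*}
\mathcal{E}\left(\int_0^{+\infty}P_t(u)\,dt,\int_0^{+\infty}P_t(u)\,dt\right)=\int_0^\infty \lambda\left(\int_0^{+\infty}e^{-\lambda t}\,dt\right)^2 d\langle E_\lambda u;u\rangle =\int_0^\infty \frac{1}{\lambda}\,d\langle E_\lambda u;u\rangle_H,
\end{align*}
and since $\lambda\ge 1/C_P$ on the support this is $\le C_P\int_0^\infty d\langle E_\lambda u;u\rangle_H = C_P\|u\|_H^2$, giving \eqref{ineq:energy}. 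The same computation with the cutoff integral shows the net is $\mathcal{E}$-bounded, hence (with $H$-convergence) $\mathcal{E}$-convergent to $G_{0^+}(u)$, closing the argument for \eqref{eq:SteinKernel}. Alternatively, and more in the spirit of \cite{FOT10} avoiding explicit spectral calculus, one can write $\mathcal{E}\big(\int_0^T P_t u\,dt,\int_0^T P_s u\,ds\big)=\int_0^T\!\!\int_0^T \mathcal{E}(P_t u,P_s u)\,ds\,dt$ and use $\mathcal{E}(P_t u,P_s u)=-\tfrac{d}{ds}\langle P_t u;P_s u\rangle_H$... but the cleanest route, and the one I would write up, is the spectral one above; the only real work is justifying the interchange of $\mathcal{E}$ with the Bochner integral, which is routine once closedness and the resolvent identity are invoked.
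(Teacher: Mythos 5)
Your argument is correct, but it runs along a genuinely different track from the paper's. The paper parameterizes by the resolvent parameter $\alpha>0$: it starts from $G_\alpha(u)=\int_0^{+\infty}e^{-\alpha t}P_t(u)\,dt$, invokes the FOT identity $\mathcal{E}(G_\alpha(u),v)+\alpha\langle G_\alpha(u);v\rangle_H=\langle u;v\rangle_H$ directly, and then shows the net $\{G_\alpha(u)\}_{\alpha>0}$ is $H$-convergent and $\mathcal{E}$-Cauchy (the latter by expanding $\mathcal{E}(G_\alpha u-G_\beta u,G_\alpha u-G_\beta u)$ and feeding each term back through the resolvent identity), before appealing to closedness — no generator, no spectral calculus. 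You instead parameterize by the integration window $[\varepsilon,T]$, observe that $\int_\varepsilon^T P_t(u)\,dt\in\mathcal{D}(L)$ with $L\big(\int_\varepsilon^T P_t(u)\,dt\big)=P_T(u)-P_\varepsilon(u)$, and then pass to the limit; for the $\mathcal{E}$-energy control you invoke the spectral theorem for the self-adjoint generator, noting that the decay $\|P_t(u)\|_H\le e^{-t/C_P}\|u\|_H$ for all $t>0$ forces the spectral measure of $u$ to live in $[1/C_P,+\infty)$, whence the bound \eqref{ineq:energy} drops out as $\int \lambda^{-1}\,d\langle E_\lambda u;u\rangle_H\le C_P\|u\|_H^2$ and the truncated net is seen to be $\mathcal{E}$-Cauchy by the same computation. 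Your spectral-support step is valid (a mass-at-$\lambda_0<1/C_P$ argument with $t\to\infty$ shows $\mu([0,1/C_P))=0$), and the truncation-in-time route is clean; two small remarks: the lower cutoff $\varepsilon>0$ is unnecessary since $\int_0^T P_t(u)\,dt$ already lies in $\mathcal{D}(L)$ with $L\int_0^T P_t(u)\,dt=P_T(u)-u$, and your parenthetical claim that "$\mathcal{E}$-Cauchy" is equivalent to "$\mathcal{E}_1$-bounded together with $H$-convergence" is not quite right as stated (boundedness gives only weak $\mathcal{E}_1$-convergence along a subsequence), though this is harmless here because your spectral computation produces the Cauchy property outright, and weak convergence would in any case suffice to pass to the limit in $\mathcal{E}(\cdot,v)$ for fixed $v$. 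What the paper's route buys is that it stays entirely within the FOT resolvent toolkit and never needs to name $L$ or its spectral resolution; what your route buys is a transparent picture of the spectral gap, with the energy bound and the Cauchy estimate falling out of a single explicit integral rather than a polarization trick. Finally, for \eqref{ineq:energy} the paper has the shorter derivation — apply \eqref{eq:SteinKernel} with $v=G_{0^+}(u)$, then Cauchy--Schwarz and $\|G_{0^+}(u)\|_H\le C_P\|u\|_H$ — which sidesteps the spectral computation; both give the same constant.
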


\begin{proof}
First, from \cite[Theorem $1.3.1$]{FOT10}, there is a one to one correspondence between the family of closed symmetric forms on $H$ and the family of non-positive definite self-adjoint operators on $H$. Then, let $\mathcal{A}$, $\{G_\alpha:\, \alpha>0\}$ and $\{P_t:\, t>0\}$ be, respectively, the generator, the strongly continuous resolvent and the strongly continuous semigroup on $H$ associated with $\mathcal{E}$ such that, for all $\alpha >0$ and all $u\in H$,
\begin{align*}
G_\alpha(u)= \int_0^{+\infty}e^{-\alpha t} P_t(u)dt.
\end{align*}
(Again the above integral is to be understood in the Bochner sense.) Then, from \cite[Lemma $1.3.3$]{FOT10}, for all $\alpha >0$, all $u\in H$ and all $v\in \mathcal{D}\left(\mathcal{E}\right)$,
\begin{align}\label{eq:resolvent}
\mathcal{E}(G_\alpha(u),v)+\alpha \langle G_\alpha(u);v\rangle_H=\langle u;v\rangle_H.
\end{align}
Then, in order to establish \eqref{eq:SteinKernel} from \eqref{eq:resolvent}, one needs to pass to the limit in \eqref{eq:resolvent} as $\alpha \longrightarrow 0^+$. First, since \eqref{ineq:Poinc} holds, $G_{0^+}$ given by \eqref{eq:Gzero} is well defined on $H_0$. Moreover, for all $\alpha>0$ and all $u \in H_0$,
\begin{align*}
\|G_\alpha(u) -G_{0^+}(u)\|_H \leq \|u\|_H \int_{0}^{+\infty} (1-e^{-\alpha t})e^{-\frac{t}{C_P}} dt = \|u\|_H \dfrac{\alpha C_P^2}{1+\alpha C_P}.
\end{align*}
Then, $G_\alpha(u)$ converges strongly in $H$ to $G_{0^+}(u)$, as $\alpha$ tends to $0^+$. It therefore follows that, for all $u \in H_0$ and all $v\in H$,
\begin{align*}
\langle G_\alpha(u);v\rangle_H \underset{\alpha \rightarrow 0^+}{\longrightarrow} \langle G_{0^+}(u);v\rangle_H.
\end{align*}
Next, let us prove that, for all $u \in H_0$,
\begin{align}\label{eq:closed}
\mathcal{E}(G_\alpha(u)-G_\beta(u),G_\alpha(u)-G_\beta(u)) \underset{\alpha, \beta \rightarrow 0^+}{\longrightarrow} 0.
\end{align}
First, note that, for all $\alpha, \beta>0$,
\begin{align*}
\mathcal{E}(G_\alpha(u)-G_\beta(u),G_\alpha(u)-G_\beta(u))=\mathcal{E}(G_\alpha(u),G_\alpha(u))+\mathcal{E}(G_\beta(u),G_\beta(u))-2\mathcal{E}(G_\alpha(u),G_\beta(u)).
\end{align*}
Then, from \eqref{eq:resolvent},
\begin{align*}
\mathcal{E}(G_\alpha(u),G_\alpha(u))=\langle u ; G_\alpha(u) \rangle_H -\alpha \langle G_\alpha(u); G_\alpha(u)\rangle_H \underset{\alpha \rightarrow 0^{+}}{\longrightarrow} \langle u ; G_{0^+}(u) \rangle_H,
\end{align*}
and similarly for $\mathcal{E}(G_\beta(u),G_\beta(u))$, as $\beta$ tends to $0^+$. Now, for the crossed term, 
\begin{align*}
\mathcal{E}(G_\alpha(u),G_\beta(u))= \langle u ; G_\beta(u) \rangle_H -\alpha \langle G_\alpha(u); G_\beta(u)\rangle_H \underset{\alpha, \beta \rightarrow 0^+}{\longrightarrow}  \langle u ; G_{0^+}(u) \rangle_H.
\end{align*}
The closedness of $\mathcal{E}$ then ensures that $G_{0^+}(u)$ belongs to $\mathcal{D}(\mathcal{E})$ and that 
\begin{align*}
\mathcal{E}(G_\alpha(u)-G_{0^+}(u),G_\alpha(u)-G_{0^+}(u)) \underset{\alpha \rightarrow 0^+}{\longrightarrow} 0.
\end{align*}
This gives \eqref{eq:SteinKernel}, while the inequality \eqref{ineq:energy} follows from \eqref{eq:SteinKernel}, the Cauchy-Schwarz inequality, the triangle inequality and  \eqref{ineq:Poinc}, concluding the proof of the theorem.
\end{proof}
\noindent
The next remark explores how the absract Theorem \ref{thm:abstract} recovers various known results and provides new ones.

\begin{rem}\label{rem:SteinKernel}
(i) First, let $\gamma$ be the centered Gaussian probability measure on $\bbr^d$ with the identity matrix as its covariance matrix. Let $H$ be the space of $\bbr^d$-valued square-integrable functions on $\bbr^d$ with respect to $\gamma$, let $H_0$ be the functions in $H$ with mean $0$ with respect to $\gamma$ and let $\mathcal{E}$ be the symmetric non-negative definite bilinear form defined, for all $f,g \in \mathcal{C}_c^{\infty}(\bbr^d, \bbr^d)$, by
\begin{align*}
\mathcal{E}(f,g):=\int_{\bbr^d} \langle \nabla(f)(x), \nabla(g)(x) \rangle_{HS} \gamma(dx),
\end{align*}
where $\langle \cdot ;\cdot \rangle_{HS}$ is the Hilbert-Schmidt product for real matrices of size $d\times d$. It is a standard fact of Gaussian analysis that the above form is closable and its closed extension gives rise to the Ornstein-Uhlenbeck generator and its semigroup. Moreover, note that the function, $h(x)=x$, $x\in \bbr^d$, belongs to $H_0$ and that $\gamma$ satisfies the following Poincar\'e inequality: for all smooth $f: \bbr^d\rightarrow \bbr^d$ with $\int_{\bbr^d} f(x) \gamma(dx)=0$,
\begin{align*}
\int_{\bbr^d} \|f(x)\|^2 \gamma(dx) \leq \int_{\bbr^d} \|\nabla(f)(x)\|_{HS}^2 \gamma(dx).
\end{align*}
Then, by Theorem \ref{thm:abstract}, for all $f\in \mathcal{D}(\mathcal{E})$
\begin{align}\label{eq:intbypart}
\mathcal{E}(G_{0^+}(h),f)=\int_{\bbr^d} \langle x; f(x) \rangle \gamma(dx),
\end{align}
where $G_{0^+}(h)$ is given, for all $x\in \bbr^d$, by
\begin{align*}
G_{0^+}(h)(x):=\int_{0}^{+\infty} P_t(h)(x)dt,
\end{align*}
with $(P_t)_{t>0}$ being the Ornstein-Uhlenbeck semigroup. Noting that $P_t(h)(x)=e^{-t} h(x)$, 
\begin{align*}
\mathcal{E}(G_{0^+}(h),f)=\int_{\bbr^d} \langle \nabla(h)(x), \nabla(f)(x) \rangle_{HS} \gamma(dx)= \int_{\bbr^d} \operatorname{div}(f)(x) \gamma(dx)
\end{align*}
where $\operatorname{div}$ is the standard divergence operator. Thus, \eqref{eq:intbypart} is the integration by parts formula associated with $\gamma$.\\
(ii) Let $\mu$ be a centered probability measure on $\bbr^d$ with finite second moment such that, for all smooth $f: \bbr^d\rightarrow \bbr^d$ with $\int_{\bbr^d} f(x) \mu(dx)=0$,
\begin{align*}
\int_{\bbr^d} \|f(x)\|^2 \mu(dx) \leq C_P \int_{\bbr^d} \|\nabla(f)(x)\|_{HS}^2 \mu(dx),
\end{align*}
for some $C_P>0$ independent of $f$. Moreover, assume that the bilinear symmetric non-negative definite form $\mathcal{E}$ defined, for all $f,g \in \mathcal{C}_c^{\infty}(\bbr^d, \bbr^d)$, by
\begin{align*}
\mathcal{E}(f,g)=\int_{\bbr^d} \langle \nabla(f)(x), \nabla(g)(x) \rangle_{HS} \mu(dx),
\end{align*}
is closable (sufficient conditions for the closability of the above form have been addressed in \cite[Chapter $3.1$]{FOT10} and in \cite[Chapter $2.6$]{Bog10}).~Note that the function $h$ defined by, $h(x)=x$, $x\in \bbr^d$, belongs to $H$, the space of square integrable functions on $\bbr^d$ with respect to $\mu$, and that $\int_{\bbr^d} h(x) \mu(dx)=0$. Then, by Theorem \ref{thm:abstract}, for all $f\in \mathcal{D}(\mathcal{E})$,
\begin{align*}
\mathcal{E}(G_{0^+}(h),f)=\int_{\bbr^d} \langle x; f(x) \rangle \mu(dx),
\end{align*}
so that a Gaussian Stein kernel of $\mu$ exists (in the sense of \cite[Definition $2.1$]{CFP19}) and is given by
\begin{align*}
\tau_\mu = \nabla \left(\int_0^{+\infty} P_t(h)dt \right).
\end{align*}
Moreover, with $X\sim \mu$, \eqref{ineq:energy} reads
\begin{align*}
\bbe \|\tau_\mu(X)\|^2_{HS} \leq C_P \bbe\|X\|^2.
\end{align*}
Thus, one retrieves the results of \cite{CFP19}.\\
(iii) Let $\alpha \in (1,2)$ and let $\mu_\alpha$ be a rotationally invariant $\alpha$-stable probability measure on $\bbr^d$ with L\'evy measure defined by
\begin{align*}
\nu_\alpha(du)= \dfrac{c_{\alpha,d}}{\|u\|^{d+\alpha}}du,
\end{align*}
where $c_{\alpha,d}$ is given by \eqref{eq:cstenorm}. Let $H$ be the space of square-integrable functions on $\bbr^d$ with respect to $\mu_\alpha$. Let $\mathcal{E}$ be the symmetric non-negative definite bilinear form defined, for all $f,g \in \mathcal{C}_c^{\infty}(\bbr^d)$, by
\begin{align}\label{eq:canonical}
\mathcal{E}(f,g):=\int_{\bbr^d} \int_{\bbr^d} (f(x+u)-f(x))(g(x+u)-g(x))\nu_\alpha(du)\mu_\alpha(dx).
\end{align}
Since $\nu_\alpha \ast \mu_\alpha$ is absolutely continuous with respect to $\mu_\alpha$, it is standard to check that the above form is closable and its smallest closed extension gives rise (see \cite[Theorem $1.3.1$]{FOT10}) to a non-positive definite self-adjoint operator $\mathcal{A}$ on $H$ with corresponding symmetric contractive semigroup $(P_t)_{t> 0}$ on $H$. Moreover, from Theorem \ref{prop:poincSD}, for all smooth $f: \bbr^d \longrightarrow \bbr$ with $\int_{\bbr^d} f(x) \mu_\alpha(dx)=0$,
\begin{align*}
\int_{\bbr^d} |f(x)|^2 \mu_\alpha(dx) \leq \int_{\bbr^d}\int_{\bbr^d} |f(x+u)-f(x)|^2  \nu_{\alpha}(du) \mu_\alpha(dx).
\end{align*}
Now, \cite[Corollary 1.3.1]{FOT10} together with the above inequality yields
\begin{align*}
\|P_t(f)\|_{H}\leq \exp\left(-t\right)\|f\|_H,\quad t>0,\, f\in H_0,
\end{align*}
where $H_0$ is the space of square-integrable functions on $\bbr^d$ with respect to $\mu_\alpha$ having mean zero. Then, by Theorem \ref{thm:abstract}, for all $f\in \mathcal{D}\left(\mathcal{E}\right)$ and all $h\in H_0$
\begin{align*}
\mathcal{E} \left(G_{0^+}(h),f\right)=\int_{\bbr^d} h(x)f(x)\mu_\alpha(dx).
\end{align*}
Next, observe that the function $h(x)=x$, for $x\in \bbr^d$, does not belong to $L^2(\mu_\alpha)$.
\end{rem}
\noindent
The next technical lemma describes the link between the semigroup of operators obtained from the form $\mathcal{E}$ given by \eqref{eq:canonical} and the semigroup of operators $(P^\nu_t)_{t\geq 0}$ given by \eqref{eq:semSD} with $\nu=\nu_\alpha$ as in \eqref{eq:nustable} and with $\alpha \in (1,2)$. With the help of this lemma, it is then possible to obtain the spectral properties of this semigroup of symmetric operators based on those of $(P^\nu_t)_{t\geq 0}$.

\begin{lem}\label{lem:link}
Let $\alpha \in (1,2)$, let $\nu_\alpha$ be the L\'evy measure given by \eqref{eq:nustable} and let $\mu_\alpha$ be the corresponding rotationally invariant $\alpha$-stable probability measure on $\bbr^d$. Let $\mathcal{E}$ be the smallest closed extension of the symmetric non-negative definite bilinear form given by $\eqref{eq:canonical}$. Let $(P_t)_{t> 0}$ be the strongly continuous semigroup of symmetric contractions on $L^2(\mu_\alpha)$ associated with $\mathcal{E}$. Let $(P^{\nu_\alpha}_t)_{t\geq 0}$ be the semigroup of operators defined by \eqref{eq:semSD} and let $((P^{\nu_\alpha}_t)^*)_{t\geq 0}$ be its dual semigroup in $L^2(\mu_\alpha)$. Then, for all $f\in L^2(\mu_\alpha)$ and all $t>0$,
\begin{align}\label{eq:squareroot}
P_t(f)=\underset{n\rightarrow+\infty}{\lim}\left(P^{\nu_\alpha}_{\frac{t}{n\alpha}} \circ (P^{\nu_\alpha}_{\frac{t}{n\alpha}})^*\right)^n(f)= \underset{n\rightarrow+\infty}{\lim}\left((P^{\nu_\alpha}_{\frac{t}{n\alpha}})^* \circ P^{\nu_\alpha}_{\frac{t}{n\alpha}}\right)^n(f).
\end{align}
Moreover, for all $x\in \bbr^d$ and all $t>0$,
\begin{align*}
P_t(g)(x)= e^{-t} g(x),
\end{align*}
where $g(x)=x$, $x\in \bbr^d$.
\end{lem}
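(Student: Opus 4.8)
The plan is to establish the two assertions of Lemma~\ref{lem:link} by connecting the \emph{non-symmetric} semigroup $(P^{\nu_\alpha}_t)_{t\geq 0}$ of \eqref{eq:semSD} to the \emph{symmetric} semigroup $(P_t)_{t>0}$ arising from the Dirichlet form $\mathcal{E}$ in \eqref{eq:canonical}, via a Lie--Trotter-type product formula. First I would compute the generator $\mathcal{B}$ of $(P_t)_{t>0}$ on $\mathcal{C}_c^\infty(\bbr^d)$: integrating by parts in the Dirichlet form \eqref{eq:canonical} and using $\tilde{\nu}_\alpha(du)=\alpha c_{\alpha,d}\|u\|^{-d-\alpha}du$, one finds (as in Remark~\ref{rem:PoincStable}(ii) and the proof of Proposition~\ref{prop:poincSD}) that $\mathcal{E}(f,g)=\int_{\bbr^d} f(x)(-\mathcal{B}(g))(x)\mu_\alpha(dx)$ with $\mathcal{B}$ the symmetric part of $\mathcal{A}$; explicitly, $\mathcal{B}(f)(x)=\tfrac12(\mathcal{A}(f)(x)+\mathcal{A}^*(f)(x))$. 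On the other hand the generator $\mathcal{A}$ of $(P^{\nu_\alpha}_t)_{t\geq 0}$ was identified in Lemma~\ref{lem:genSDLaws} (here with $\tilde{b}$ reducing, by symmetry of $\nu_\alpha$, to $0$), and its $L^2(\mu_\alpha)$-adjoint $\mathcal{A}^*$ generates $((P^{\nu_\alpha}_t)^*)_{t\geq 0}$. The Fourier-symbol computation of Remark~\ref{rem:PoincStable}(i)/(ii), namely $\langle\sigma_\alpha(\xi);i\xi\rangle=-\tfrac{\alpha}{2}\|\xi\|^\alpha$, shows that the drift and jump parts combine so that $\mathcal{A}+\mathcal{A}^*=\tfrac{2}{\alpha}\,\mathcal{B}$ on a common core; equivalently, $\tfrac12(\mathcal{A}+\mathcal{A}^*)=\tfrac1\alpha\mathcal{B}$.

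Next I would invoke the Trotter--Kato product formula for strongly continuous contraction semigroups: if $A$ generates $(S_t)$ and $A^*$ generates $(S_t^*)$ on a Hilbert space, and the closure of $\tfrac12(A+A^*)$ (suitably interpreted) generates $(T_t)$, then $T_t f=\lim_{n\to\infty}(S_{t/2n}S^*_{t/2n})^n f$ and also $\lim_{n\to\infty}(S^*_{t/2n}S_{t/2n})^n f$ for all $f\in H$. Applying this with $S_t=P^{\nu_\alpha}_t$, $S_t^*=(P^{\nu_\alpha}_t)^*$, and noting that $\tfrac12(\mathcal{A}+\mathcal{A}^*)$ generates the rescaled-in-time semigroup $t\mapsto P_{t/\alpha}$ (since its generator equals $\tfrac1\alpha\mathcal{B}$), one substitutes $t\mapsto \alpha t$ and arrives at precisely \eqref{eq:squareroot} with the factor $t/(n\alpha)$. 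The symmetry and contractivity on $L^2(\mu_\alpha)$ needed to run the product formula are guaranteed because $\mathcal{E}$ is a Dirichlet form (hence $(P_t)$ is symmetric and contractive) and because $(P^{\nu_\alpha}_t)_{t\geq 0}$ is measure-preserving for $\mu_\alpha$ and extends to an $L^2(\mu_\alpha)$-contraction by Jensen's inequality, as recorded after \eqref{eq:semSD}.

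For the second assertion I would argue directly: by \eqref{eq:semSD} one computes $P^{\nu_\alpha}_t(g)(x)=\int_{\bbr^d}(xe^{-t}+y)\mu_t(dy)=e^{-t}x+\bbe X_t$, and the centering of $X$ (equivalently $b=\bbe X$, which holds for the rotationally invariant stable law) together with the explicit form of $\varphi_t$ gives $\bbe X_t=0$, so $P^{\nu_\alpha}_t(g)(x)=e^{-t}x$; similarly $(P^{\nu_\alpha}_t)^*(g)(x)=e^{-t}x$ since $g$ is, up to rotation, an eigenfunction with real eigenvalue and the dual acts the same way on linear functions that are in the $\mu_\alpha$-orthogonal complement's closure — more carefully, one checks on the Fourier side that $\mathcal{A}^*$ also satisfies $\mathcal{A}^*(g)=-g$ because the symbol is real. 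Then each factor $P^{\nu_\alpha}_{t/n\alpha}\circ(P^{\nu_\alpha}_{t/n\alpha})^*$ multiplies $g$ by $e^{-2t/n\alpha}$, and the $n$-fold composition gives $e^{-2t/\alpha}$... which, after the $t\mapsto\alpha t$ rescaling built into the formula, yields $P_t(g)(x)=e^{-t}x$; alternatively, and more cleanly, $g$ lies in $\mathcal{D}(\mathcal{E})$ only formally (it is not in $L^2(\mu_\alpha)$ since $\alpha<2$), so I would instead verify $\mathcal{B}(g)=-\alpha g$ at the level of symbols and conclude $P_t(g)(x)=e^{-t}x$ by the standard eigenfunction identity $\tfrac{d}{dt}P_t(g)=\mathcal{B}(P_t(g))=\tfrac1\alpha\cdot\alpha\cdot(-P_t(g))$... but since $g\notin L^2(\mu_\alpha)$ this last step must be read as an identity of the action of $P_t$ on the linear function $g$ componentwise, justified by the explicit formula for $P^{\nu_\alpha}_t$ and the product representation \eqref{eq:squareroot} applied to truncations $g_{R,j}$ and passing to the limit.

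\textbf{Main obstacle.} The delicate point is the rigorous justification of the Trotter--Kato product formula in this setting: one must identify a common core for $\mathcal{A}$, $\mathcal{A}^*$ and the closure of $\tfrac12(\mathcal{A}+\mathcal{A}^*)$ (the Schwartz space $\mathcal{S}(\bbr^d)$, or $\mathcal{C}_c^\infty(\bbr^d)$, via the Fourier representations already used in Lemma~\ref{lem:genSDLaws} and Proposition~\ref{prop:poincSD}) and verify that $\tfrac12(\mathcal{A}+\mathcal{A}^*)$ is essentially self-adjoint there with closure $\tfrac1\alpha\mathcal{B}$; the convergence of the product then follows from the classical Chernoff/Trotter--Kato theorem. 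A secondary subtlety is that $g(x)=x\notin L^2(\mu_\alpha)$ for $\alpha\in(1,2)$, so the identity $P_t(g)(x)=e^{-t}x$ has to be understood pointwise (or through the truncations $g_{R,j}$ of \eqref{eq:gRj}) rather than as an $L^2$-eigenfunction statement; this is handled by passing the explicit contraction-in-$t$ identity for $P^{\nu_\alpha}_t$ through \eqref{eq:squareroot} and using the dominated-convergence-type bounds already established for the truncations in Lemma~\ref{lem:rate1} and Lemma~\ref{lem:rate2}.
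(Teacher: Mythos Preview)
Your overall architecture---identify the generator of $(P_t)_{t>0}$ as a constant multiple of the additive symmetrization of the generator of $(P^{\nu_\alpha}_t)_{t\geq 0}$, then invoke a Trotter--Kato product formula---is exactly what the paper does (it cites \cite[Theorem~X.51]{RS2}). However, two points need correcting.

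First, the constant is off: from $\mathcal{E}(f,f)=\tfrac{2}{\alpha}\int f(-\mathcal{A}_\alpha)f\,d\mu_\alpha$ (which follows since $\tilde\nu_\alpha=\alpha\nu_\alpha$ and $\int\Gamma(f,f)\,d\mu_\alpha=\tfrac{\alpha}{2}\mathcal{E}(f,f)$) one gets, in your notation, $\mathcal{B}=\tfrac{1}{\alpha}(\mathcal{A}+\mathcal{A}^*)$, not $\tfrac{1}{2}(\mathcal{A}+\mathcal{A}^*)=\tfrac{1}{\alpha}\mathcal{B}$. With the correct relation the Trotter formula applied to $e^{t\mathcal{B}}=e^{\frac{t}{\alpha}(\mathcal{A}+\mathcal{A}^*)}$ gives the $t/(n\alpha)$ time step directly, with no artificial $t\mapsto\alpha t$ substitution needed.

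Second, and this is the real gap: your claim that $(P^{\nu_\alpha}_t)^*(g)=e^{-t}g$ because ``the symbol is real'' is incorrect. The adjoint here is taken in $L^2(\mu_\alpha)$, not $L^2(dx)$; conjugation by the stable density does not preserve the symbol, and $\mathcal{A}^*(g)\neq -g$. The paper computes explicitly (via the Fourier representation of $P^{\nu_\alpha}_s$ acting under the integral $\int x_j\,\cdot\,d\mu_\alpha$) that $(P^{\nu_\alpha}_s)^*(g_j)=e^{-(\alpha-1)s}g_j$. With $s=t/(n\alpha)$ this gives the dual eigenvalue $e^{-(\alpha-1)t/(n\alpha)}$, and combined with $P^{\nu_\alpha}_{t/(n\alpha)}(g_j)=e^{-t/(n\alpha)}g_j$ the product of one pair of factors is $e^{-t/n}g_j$, so the $n$-fold composition yields exactly $e^{-t}g_j$. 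Your arithmetic $e^{-2t/(n\alpha)}$ per factor would give $e^{-2t/\alpha}$ in the limit, which is wrong, and the ad hoc rescaling you invoke to repair it is not present in \eqref{eq:squareroot}. The fix is to carry out the $L^2(\mu_\alpha)$-dual computation properly, as the paper does.
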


\begin{proof}
Since the form $\mathcal{E}$ is the smallest closed extension of the bilinear symmetric non-negative definite form, given by \eqref{eq:canonical}, on $L^2(\mu_\alpha)$, \cite[Theorem $1.3.1$]{FOT10} ensures the existence of a unique non-positive definite self-adjoint operator $\mathcal{A}$, which moreover, from \cite[Corollary $1.3.1$]{FOT10} is characterized by $\mathcal{D}(\mathcal{A}) \subset  \mathcal{D}(\mathcal{E})$ and by
\begin{align*}
\mathcal{E}(f,g)=\int_{\bbr^d} f(x) (-\mathcal{A})(g)(x) \mu_\alpha(dx),\quad g\in \mathcal{D}(\mathcal{A}), f\in \mathcal{D}(\mathcal{E}),
\end{align*}
where $\mathcal{D}(\mathcal{A})$ is the domain of the operator $\mathcal{A}$. Let us denote by $(P_t)_{t>0}$ the corresponding strongly continuous semigroup on $L^2(\mu_\alpha)$ whose existence and uniqueness is ensured by \cite[Lemma $1.3.2$]{FOT10}. Now, recall that the semigroup of operators $(P^{\nu_\alpha}_t)_{t\geq 0}$ extends to every $L^p\left(\mu_\alpha\right)$, $p \geq 1$, as seen using the representation \eqref{eq:semSD} and the bound,
\begin{align*}
\int_{\bbr^d} |P^{\nu_\alpha}_t(f)(x)|^p \mu_\alpha(dx) \leq \int_{\bbr^d} |f(x)|^p \mu_\alpha (dx),\quad f\in\mathcal{S}(\bbr^d), p\geq 1.
\end{align*}
Moreover, it is a $C_0$-semigroup on $L^p\left(\mu_\alpha\right)$ and its $L^p\left(\mu_\alpha\right)$-generator $\mathcal{A}_{\alpha,p}$ coincides with $\mathcal{A}_{\alpha}$ on $\mathcal{S}(\bbr^d)$ which is now defined, for all $f\in\mathcal{S}(\bbr^d)$ and all $x\in \bbr^d$, by
\begin{align*}
\mathcal{A}_{\alpha}(f)(x)= -\langle x; \nabla(f)(x)\rangle + \int_{\bbr^d} \langle \nabla(f)(x+u)-\nabla(f)(x) ; u\rangle\nu_\alpha(du),
\end{align*}
and for which the following integration by parts formula holds,
\begin{align*}
\mathcal{E}(f,f)= \frac{2}{\alpha} \int_{\bbr^d} f(x)\left(-\mathcal{A}_\alpha \right)(f)(x) \mu_\alpha(dx), \quad f\in\mathcal{S}(\bbr^d).
\end{align*}
Then, by polarization, for all $f,g\in\mathcal{S}(\bbr^d)$,
\begin{align*}
\mathcal{E}(f,g)&=\frac{1}{2}\left(\mathcal{E}(f+g,f+g)-\mathcal{E}(f,f)-\mathcal{E}(g,g)\right)\\
&=\frac{1}{\alpha} \left(\int_{\bbr^d} f(x) \left(-\mathcal{A}_\alpha \right)(g)(x) \mu_\alpha(dx)+ \int_{\bbr^d} g(x) \left(-\mathcal{A}_\alpha \right)(f)(x) \mu_\alpha(dx)\right).
\end{align*}
Moreover, since $\mathcal{S}(\bbr^d)$ is dense in $L^2(\mu_\alpha)$, the adjoint of $\mathcal{A}_{\alpha,2}$ is uniquely defined so that, for all $f \in \mathcal{S}(\bbr^d)$ and all $g \in \mathcal{S}(\bbr^d) \cap \mathcal{D}\left(\mathcal{A}_{\alpha,2}^*\right)$,
\begin{align*}
\mathcal{E}(f,g)=\frac{1}{\alpha} \left(\int_{\bbr^d} f(x) \left(-\mathcal{A}_\alpha \right)(g)(x) \mu_\alpha(dx)+ \int_{\bbr^d} f(x) \left(-\mathcal{A}_{\alpha,2}^* \right)(g)(x) \mu_\alpha(dx)\right),
\end{align*}
where $\mathcal{D}(\mathcal{A}_{\alpha,2}^*)$ is the domain of the operator $\mathcal{A}_{\alpha,2}^*$. Then, $\mathcal{S}(\bbr^d) \cap \mathcal{D}\left(\mathcal{A}_{\alpha,2}^*\right) \subset \mathcal{D}(\mathcal{A})$ and, for all $f\in \mathcal{S}(\bbr^d) \cap \mathcal{D}\left(\mathcal{A}_{\alpha,2}^*\right)$ and all $x\in \bbr^d$,
\begin{align*}
\mathcal{A}(f)(x)= \frac{1}{\alpha}\left(\mathcal{A}_\alpha(f)(x) + \mathcal{A}_{\alpha,2}^*(f)(x)\right).
\end{align*}
Thus, 
\begin{align*}
\mathcal{A}=\frac{1}{\alpha} \left(\mathcal{A}_{\alpha,2} + \mathcal{A}_{\alpha,2}^*\right),
\end{align*}
which implies (thanks to \cite[Theorem $X.51$]{RS2}), for all $t>0$ and all $f\in L^2(\mu_\alpha)$,
\begin{align*}
P_t(f)=\underset{n\rightarrow+\infty}{\lim} \left(P^{\nu_\alpha}_{\frac{t}{n\alpha}} \circ (P^{\nu_\alpha}_{\frac{t}{n\alpha}})^*\right)^n(f)=\underset{n\rightarrow+\infty}{\lim} \left((P^{\nu_\alpha}_{\frac{t}{n\alpha}})^* \circ P^{\nu_\alpha}_{\frac{t}{n\alpha}}\right)^n(f),
\end{align*}
where $(P^{\nu_\alpha}_{\frac{t}{n\alpha}})_{t\geq 0}$ is the extension to $L^2(\mu_\alpha)$ of the semigroup of operators given by \eqref{eq:semSD}, after the time change $t\rightarrow t/(n\alpha)$, while $((P^{\nu_\alpha}_{\frac{t}{n\alpha}})^*)_{t\geq 0}$ is its dual semigroup in $L^2(\mu_\alpha)$ (see, e.g., \cite[Chapter $1.10$]{Pa83}).
Next, by Fourier duality and since $\alpha \in (1,2)$, for all $f\in \mathcal{S}(\bbr^d)$ and all $j\in \{1,\dots, d\}$,
\begin{align*}
\int_{\bbr^d} x_j P_{\frac{t}{n\alpha}}^{\nu_\alpha}(f)(x) \mu_\alpha(dx)&=\int_{\bbr^d} x_j\left( \int_{\bbr^d } \mathcal{F}(f)(\xi) \varphi_{\frac{t}{n\alpha}}(\xi) e^{i \langle x; \xi e^{-\frac{t}{n\alpha}} \rangle} \dfrac{d\xi}{(2\pi)^d}\right) \mu_\alpha(dx)\\
&= \int_{\bbr^d} \mathcal{F}(f)(\xi) \varphi_{\frac{t}{n\alpha}}(\xi) \left( \int_{\bbr^d} x_j e^{i \langle x; \xi e^{-\frac{t}{n\alpha}}\rangle}\mu_\alpha(dx)\right) \frac{d\xi}{(2\pi)^d}\\
&= \int_{\bbr^d} \mathcal{F}(f)(\xi) \varphi_{\frac{t}{n\alpha}}(\xi) \frac{e^{\frac{t}{n\alpha}}}{i} \dfrac{d}{d\xi_j} \left(\varphi(e^{-\frac{t}{n\alpha}}\xi)\right) \frac{d\xi}{(2\pi)^d}\\
&=  \int_{\bbr^d} \mathcal{F}(f)(\xi) \varphi_{\frac{t}{n\alpha}}(\xi) \frac{e^{\frac{t}{n\alpha}}}{i} \dfrac{d}{d\xi_j} \left(\exp\left(-e^{-t/n}\frac{\|\xi\|^\alpha}{2}\right)\right) \frac{d\xi}{(2\pi)^d}\\
&= \int_{\bbr^d} \mathcal{F}(f)(\xi) \varphi_{\frac{t}{n\alpha}}(\xi) \frac{e^{\frac{t}{n\alpha}}}{i} \left(-\frac{\alpha}{2}e^{-t/n} \| \xi \|^{\alpha-2} \xi_j \right) \exp\left(-e^{-t/n}\frac{\|\xi\|^\alpha}{2}\right) \frac{d\xi}{(2\pi)^d}\\
&= \int_{\bbr^d} \mathcal{F}(f)(\xi)  \frac{e^{\frac{t}{n\alpha}}}{i} \left(-\frac{\alpha}{2}e^{-t/n} \| \xi \|^{\alpha-2} \xi_j \right) \exp\left(-\frac{\|\xi\|^\alpha}{2}\right) \frac{d\xi}{(2\pi)^d}\\
&=\int_{\bbr^d} \mathcal{F}(f)(\xi)  \frac{e^{\frac{t}{n\alpha}-\frac{t}{n}}}{i} \dfrac{d}{d\xi_j}\left(\exp\left(-\frac{\|\xi\|^\alpha}{2}\right)\right) \frac{d\xi}{(2\pi)^d}\\
&=\int_{\bbr^d} \mathcal{F}(f)(\xi) e^{\frac{t}{n\alpha}-\frac{t}{n}} \left(\int_{\bbr^d} x_j e^{i \langle x ; \xi\rangle}\mu_\alpha(dx)\right) \frac{d\xi}{(2\pi)^d}\\
&= e^{ -\frac{\alpha-1}{n\alpha}t} \int_{\bbr^d} x_j f(x) \mu_\alpha(dx).
\end{align*}
This implies that, for all $j\in \{1,\dots, d\}$ and for all $t\geq 0$,
\begin{align*}
(P^{\nu_\alpha}_{\frac{t}{n\alpha}})^*(g_j)(x) = e^{ -\frac{\alpha-1}{n\alpha}t} g_j(x),
\end{align*}
where $g_j(x)=x_j$, for $x\in \bbr^d$. This last observation concludes the proof of the lemma.
\end{proof}
\noindent
The following long remark summarizes some basic properties of the semigroups.

\begin{rem}\label{rem:steinkernstable}
(i) First, the measure $\mu_\alpha$ is invariant for $(P_t)_{t>0}$, namely, for all $f\in L^2(\mu_\alpha)$ and all $t>0$, 
\begin{align*}
\int_{\bbr^d} P_t(f)(x) \mu_\alpha(dx) = \int_{\bbr^d} f(x) \mu_\alpha(dx).
\end{align*}
This is a direct consequence of the decomposition \eqref{eq:squareroot} and of the facts that $P^{\nu_\alpha}_{t}(1)=1$ and that $\mu_\alpha$ is invariant for $(P^{\nu_\alpha}_{t})_{t\geq 0}$. Moreover, by a duality argument, $P_t(1)=1$. Finally, $(P_t)_{t>0}$ is positivity preserving since the normal contractions operate on the smallest closed extension of the form $\mathcal{E}$ given by \eqref{eq:canonical} (see, e.g., \cite[Theorem $1.4.1$]{FOT10}).\\
(ii) As mentioned in the above proof, the semigroup $(P^{\nu_\alpha}_t)_{t\geq 0}$ extends to a $C_0$-semigroup on $L^p\left(\mu_\alpha\right)$, for $p \geq 1$. From \cite[Corollary $10.6$]{Pa83}, it follows that the dual semigroup $((P^{\nu_\alpha}_t)^*)_{t\geq 0}$ is then a $C_0$-semigroup on $L^{p^*}\left(\mu_\alpha\right)$, for $p^*=p/(p-1)$ and $1<p<+\infty$. By duality, since $P^{\nu_\alpha}_t$ is a contraction on $L^p\left(\mu_\alpha\right)$, $(P^{\nu_\alpha}_t)^*$ is a contraction on $L^{p^*}\left(\mu_\alpha\right)$. Moreover, the different extensions of $(P^{\nu_\alpha}_t)_{t\geq 0}$ (as well as those of $((P^{\nu_\alpha}_t)^*)_{t\geq 0}$) are compatible in the sense that, for all $t\geq 0$ and all $p\geq q > 1$,
\begin{align*}
(P^{\nu_\alpha}_t)_q \circ i_{p,q}=i_{p,q}\circ (P^{\nu_\alpha}_t)_p,
\end{align*}
where $i_{p,q}$ is the continuous embedding of $L^p(\mu_\alpha)$ in $L^q(\mu_\alpha)$ and where $((P^{\nu_\alpha}_t)_q)_{t\geq 0}$ and $((P^{\nu_\alpha}_t)_p)_{t\geq 0}$ are, respectively, the $L^q(\mu_\alpha)$ and the $L^p(\mu_\alpha)$ extensions of the semigroup $(P^{\nu_\alpha}_t)_{t\geq 0}$.~Moreover, \cite[Theorem $X.55$]{RS2} allows to extend $(P_t)_{t>0}$ as a contraction semigroup on $L^p(\mu_\alpha)$, for $p \in (1,+\infty)$. Moreover, for all $t>0$ and all $f\in L^p(\mu_\alpha)$, $p \in (1,+\infty)$,
\begin{align*}
P_t(f)=\underset{n\rightarrow+\infty}{\lim} \left((P^{\nu_\alpha}_{\frac{t}{n\alpha}})_p \circ ((P^{\nu_\alpha}_{\frac{t}{n\alpha}})^{*})_p\right)^n(f)=\underset{n\rightarrow+\infty}{\lim} \left(((P^{\nu_\alpha}_{\frac{t}{n\alpha}})^{*})_p \circ (P^{\nu_\alpha}_{\frac{t}{n\alpha}})_p\right)^n(f).
\end{align*}
Then, one can consider $P_t(g_j)$ since $g_j \in L^p(\mu_\alpha)$, $p \in (1, \alpha)$, but $g_j \notin L^2(\mu_\alpha)$, for $j \in \{1, \dots, d\}$. To ease the presentation, these extensions are all denoted by $(P^{\nu_\alpha}_t)_{t\geq 0}$ (and similarly for $((P^{\nu_\alpha}_t)^*)_{t\geq 0}$).\\
(iii) Recall that, by Remark \ref{rem:SteinKernel} (iii), for all $t>0$,
\begin{align*}
\underset{\underset{\int_{\bbr^d}f(x) \mu_\alpha(dx)=0}{f \in L^{2}(\mu_\alpha),\, \|f\|_{L^2(\mu_\alpha)}=1}}{\sup} \|P_t(f)\|_{L^2(\mu_\alpha)} \leq e^{-t}.
\end{align*}
Moreover, by (ii) above, for all $p\in (1,+\infty)$ and all $t>0$,
\begin{align*}
\|P_t\|_{L^p(\mu_\alpha)\rightarrow L^p(\mu_\alpha)} \leq 1.
\end{align*}
Now, let $T$ be the operator from $L^p(\mu_\alpha)$ to $L^p(\mu_\alpha)$, $p\geq 1$, such that, for all $f\in L^p(\mu_\alpha)$,
\begin{align*}
T(f)=f-\int_{\bbr^d} f(x) \mu_\alpha(dx).
\end{align*}
Clearly, $\|T\|_{L^p(\mu_\alpha) \rightarrow L^p(\mu_\alpha)}\leq 2$, for $p\geq 1$, and $T(f)=f$ for $f\in L^p(\mu_\alpha)$ with mean $0$. Let $\beta\in (1, \alpha)$. Then, by \cite[Theorem $1.3.4$]{G08}, for any $\theta \in (0,1)$ and all $t>0$,
\begin{align}\label{ineq:inter}
\underset{f \in L^{p_\theta}(\mu_\alpha),\, \|f\|_{L^{p_\theta}(\mu_\alpha)}=1}{\sup} \|P_t(T(f))\|_{L^{p_\theta}(\mu_\alpha)} \leq 2e^{-(1-\theta) t},
\end{align}
where $p_\theta$ belongs to $(\beta, 2)$ with $1/p_\theta=(1-\theta)/2+\theta/\beta$. Now, choosing $\theta \in (0,1)$ in such a way that $p_\theta \in (\beta, \alpha)$, it then follows that $g_j \in L^{p_\theta}(\mu_\alpha)$, for $j \in \{1, \dots, d\}$. Moreover, let $g_{R,j}$, for $R\geq 1$ and $j \in \{1,\dots, d\}$, be a smooth truncation of $g_j$, defined, for all $x\in \bbr^d$, by
\begin{align*}
g_{R,j}(x)=x_j \psi(x/R),
\end{align*} 
where $\psi(x)=\exp\left(-\|x\|^2\right)$, $x\in \bbr^d$. Note that $\int_{\bbr^d}g_{R,j}(x) \mu_\alpha(dx)=0$, for $R\geq 1$ and $j\in\{1, \dots, d\}$. Then, the following crucial estimate holds, for all $j\in \{1,\dots, d\}$,
\begin{align*}
\|G_{0^+}(g_j)-G_{0^+}(g_{R,j})\|_{L^{p_\theta}(\mu_\alpha)} \leq \int_{0}^{+\infty} \| P_t( g_j -g_{R,j})\| _{L^{p_\theta}(\mu_\alpha)}dt \leq \frac{2}{1-\theta} \|g_j - g_{R,j}\|_{L^{p_\theta}(\mu_\alpha)} \underset{R\rightarrow +\infty}{\longrightarrow} 0,
\end{align*}
where the limit follows from Lebesgue's dominated convergence theorem.\\
(iv) As noticed above, for $j \in \{1,\dots, d\}$, the functions $g_j(x)=x_j$, $x\in \bbr^d$, do not belong to $L^2(\mu_\alpha)$ so that Theorem \ref{thm:abstract} does not directly apply with $u=g_j$. To circumvent this fact, one can apply a smooth truncation procedure as in (iii). Thus, by Theorem \ref{thm:abstract}, for all $R\geq 1$, all $j\in \{1,\dots, d\}$ and all $f\in \mathcal{D}(\mathcal{E})$,
\begin{align}\label{eq:covstable}
\mathcal{E} \left(G_{0^+}(g_{R,j}),f\right)=\int_{\bbr^d} g_{R,j}(x)f(x)\mu_\alpha(dx),
\end{align}
and, as $R\longrightarrow +\infty$,
\begin{align*}
\int_{\bbr^d} g_{R,j}(x)f(x)\mu_\alpha(dx) \longrightarrow \int_{\bbr^d} x_j f(x)\mu_\alpha(dx),
\end{align*}
for all $f$ bounded on $\bbr^d$. Moreover, from Lemma \ref{lem:link}, for all $j\in \{1,\dots, d\}$ and all $x\in \bbr^d$, $G_{0^+}(g_{j})(x)= x_j$. Then, since $\mu_\alpha \ast \nu_\alpha <<\mu_\alpha$, as $R\longrightarrow+\infty$, for all $f$ bounded and Lipschitz on $\bbr^d$,
\begin{align*}
\mathcal{E} \left(G_{0^+}(g_{R,j}),f\right) \longrightarrow \int_{\bbr^d}\int_{\bbr^d} u_j (f(x+u)-f(x)) \nu_\alpha(du) \mu_\alpha(dx).
\end{align*}
Putting together these last two facts into \eqref{eq:covstable} gives, for all $f$ bounded and Lipschitz on $\bbr^d$,
\begin{align*}
\int_{\bbr^d}\int_{\bbr^d} u (f(x+u)-f(x)) \nu_\alpha(du)\mu_\alpha(dx) = \int_{\bbr^d} x f(x)\mu_\alpha(dx).
\end{align*}
\end{rem}
\noindent
Next, let us state a result ensuring the existence of a Stein kernel with respect to the rotationally invariant $\alpha$-stable distributions, $\alpha \in (1,2)$, for appropriate probability measures on $\bbr^d$.~Before doing so, recall that a closed, symmetric, bilinear, non-negative definite form on $L^2(\mu)$ is said to be Markovian if \cite[$(\mathcal{E}.4)$]{FOT10} holds. Now, from \cite[Theorem $1.4.1$]{FOT10}, this is equivalent to the fact that the corresponding semigroup $P_t$ is Markovian for all $t>0$, namely, for all $0\leq f\leq 1$, $\mu$-a.e., then $0\leq P_t(f)\leq 1$, $\mu$-a.e.

\begin{thm}\label{thm:ExistStable}
Let $\alpha \in (1,2)$ and let $\nu_\alpha$ be the L\'evy measure given by \eqref{eq:nustable}. Let $\beta \in (1,\alpha)$ and let $\mu$ be a centered probability measure on $\bbr^d$ with $ \int_{\bbr^d} \|x\|^\beta \mu(dx)<+\infty$ and with $\mu\ast \nu_\alpha<< \mu$. Let $\mathcal{E}_\mu$ be the closable, Markovian, symmetric, bilinear, non-negative definite form defined, for all $f,g \in \mathcal{S}(\bbr^d)$, by
\begin{align*}
\mathcal{E}_\mu (f,g) := \int_{\bbr^d} \int_{\bbr^d} (f(x+u)-f(x))(g(x+u)-g(x))\nu_\alpha(du) \mu(dx).
\end{align*}
Let $(P_t)_{t>0}$ be the strongly continuous Markovian semigroup on $L^2(\mu)$ associated with the smallest closed extension of $\mathcal{E}_\mu$ with dense linear domain $\mathcal{D}(\mathcal{E}_\mu)$. Moreover, let there exists $U_\mu>0$ such that, for all $f\in \mathcal{D}(\mathcal{E}_\mu)$ with $\int_{\bbr^d} f(x) \mu(dx)=0$,
\begin{align}\label{ineq:poincmu}
\int_{\bbr^d} |f(x)|^2 \mu(dx) \leq U_\mu \int_{\bbr^d} \int_{\bbr^d} |f(x+u)-f(x)|^2 \nu_{\alpha}(du)\mu(dx).
\end{align}
Let $p \in (1,\beta)$ and let $\theta \in (0,1)$ be such that $p_\theta$, given by $1/p_\theta=(1-\theta)/2+\theta/p$, belongs to $(p,\beta)$. Then, there exists $\tau_\mu \in \mathcal{D}(\mathcal{A}_{\mu})$ such that, for all $f\in \mathcal{D}(\mathcal{A}^*_{\mu})\cap L^{\infty}(\mu)$,
\begin{align*}
\int_{\bbr^d} \tau_\mu(x) (-\mathcal{A}_\mu^*)(f)(x) \mu(dx) = \int_{\bbr^d} x f(x) \mu(dx),
\end{align*}
where $\mathcal{A}_\mu^*$ is the adjoint of $\mathcal{A}_\mu$, the generator of the $L^{p_\theta}(\mu)$-extension of the semigroup $(P_t)_{t>0}$, with respective domains $\mathcal{D}(\mathcal{A}^*_{\mu})$ and $\mathcal{D}(\mathcal{A}_{\mu})$.
\end{thm}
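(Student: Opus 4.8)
The plan is to mimic the Stein-kernel construction of Theorem~\ref{thm:abstract}, but with one essential modification: the coordinate functions $g_j(x)=x_j$, $j\in\{1,\dots,d\}$, do \emph{not} belong to $L^2(\mu)$ (only to $L^p(\mu)$ for $p<\beta$), so Theorem~\ref{thm:abstract} cannot be applied directly with $u=g_j$. The naive fix of truncating $g_j$ to $g_{R,j}(x)=x_j\psi(x/R)$ and passing to the limit also fails, because by \eqref{ineq:energy} the energy $\mathcal{E}_\mu\big(G_{0^+}(g_{R,j}-\bar g_{R,j}),G_{0^+}(g_{R,j}-\bar g_{R,j})\big)$ is controlled by $\|g_{R,j}-\bar g_{R,j}\|_{L^2(\mu)}^2$, which diverges as $R\to+\infty$. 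The whole construction must therefore be carried out in $L^{p_\theta}(\mu)$, the space in which $g_j$ does live, at the cost of first upgrading the $L^2$-Poincar\'e decay to an $L^{p_\theta}$-decay by interpolation.

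First I would record the $L^2$ exponential decay: since $\mathcal{E}_\mu$ is a closed symmetric Markovian form and \eqref{ineq:poincmu} holds, \cite[Corollary~1.3.1]{FOT10} (equivalently, the spectral theorem) yields
\begin{align*}
\|P_t(f)\|_{L^2(\mu)}\leq e^{-t/U_\mu}\|f\|_{L^2(\mu)},\quad t>0,\ f\in H_0,
\end{align*}
where $H_0\subset L^2(\mu)$ is the subspace of $\mu$-mean-zero functions. Next, the Markovianity of $(P_t)_{t>0}$ produces contractions on $L^1(\mu)$ and $L^\infty(\mu)$, hence, by standard $L^p$-theory for symmetric Markovian semigroups (as recalled in Remark~\ref{rem:steinkernstable}~(ii)), compatible $C_0$-semigroups of contractions on $L^p(\mu)$ for every $p\in(1,+\infty)$. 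Letting $T(f)=f-\int_{\bbr^d}f\,d\mu$, which is bounded on every $L^p(\mu)$ and acts as the identity on $H_0$, I would apply the interpolation theorem \cite[Theorem~1.3.4]{G08} to the operators $P_t\circ T$ — bounded by $e^{-t/U_\mu}$ on $L^2(\mu)$ and by $2$ on $L^p(\mu)$ — to obtain, for $p_\theta$ as in the statement,
\begin{align*}
\|P_t(T(f))\|_{L^{p_\theta}(\mu)}\leq 2\,e^{-(1-\theta)t/U_\mu}\|f\|_{L^{p_\theta}(\mu)},\quad t>0.
\end{align*}
Since $p_\theta<\beta$ and $\int_{\bbr^d}\|x\|^\beta\mu(dx)<+\infty$, each $g_j$ lies in $L^{p_\theta}(\mu)$ and has $\mu$-mean zero (as $\mu$ is centered), so $\int_0^{+\infty}\|P_t(g_j)\|_{L^{p_\theta}(\mu)}\,dt\leq \frac{2U_\mu}{1-\theta}\|g_j\|_{L^{p_\theta}(\mu)}<+\infty$, and the Bochner integral $\tau_{\mu,j}:=\int_0^{+\infty}P_t(g_j)\,dt$ is a well-defined element of $L^{p_\theta}(\mu)$.

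It then remains to identify $\tau_{\mu,j}$ with the claimed object. By the standard semigroup computation, using $P_h\tau_{\mu,j}=\int_h^{+\infty}P_s(g_j)\,ds$,
\begin{align*}
\frac{1}{h}\big(P_h(\tau_{\mu,j})-\tau_{\mu,j}\big)=-\frac{1}{h}\int_0^{h}P_s(g_j)\,ds\underset{h\rightarrow 0^+}{\longrightarrow}-g_j\quad\text{in }L^{p_\theta}(\mu),
\end{align*}
by strong continuity of the $L^{p_\theta}(\mu)$-extension; hence $\tau_{\mu,j}\in\mathcal{D}(\mathcal{A}_\mu)$ and $\mathcal{A}_\mu(\tau_{\mu,j})=-g_j$. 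Finally, for every $f\in\mathcal{D}(\mathcal{A}_\mu^*)\cap L^\infty(\mu)$ (so that $f\in L^{p_\theta^*}(\mu)$, $1/p_\theta+1/p_\theta^*=1$), the definition of the adjoint gives
\begin{align*}
\int_{\bbr^d}\tau_{\mu,j}(x)(-\mathcal{A}_\mu^*)(f)(x)\,\mu(dx)=\int_{\bbr^d}(-\mathcal{A}_\mu)(\tau_{\mu,j})(x)f(x)\,\mu(dx)=\int_{\bbr^d}x_j f(x)\,\mu(dx),
\end{align*}
and setting $\tau_\mu=(\tau_{\mu,1},\dots,\tau_{\mu,d})$ yields the assertion. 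The main obstacle is the interpolation step: one must check that the $L^p(\mu)$-extensions of $(P_t)_{t>0}$ are genuine $C_0$-semigroups with compatible generators, and apply the complex interpolation bound to $P_t\circ T$ (not to $P_t$ alone) with the correct endpoint constants, so that the resulting $L^{p_\theta}$-decay is integrable in $t$ and the Bochner integral defining $\tau_\mu$ converges; the remaining steps are routine bookkeeping of which $L^p(\mu)$-space each object lives in.
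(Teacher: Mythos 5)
Your proposal is correct and follows essentially the same route as the paper's proof: establish the $L^2$ Poincar\'e decay $\|P_t(f)\|_{L^2(\mu)}\leq e^{-t/U_\mu}\|f\|_{L^2(\mu)}$ on mean-zero functions, use Markovianity to extend $(P_t)_{t>0}$ to compatible contraction $C_0$-semigroups on every $L^p(\mu)$, interpolate $P_t\circ T$ between $L^2(\mu)$ and $L^p(\mu)$ to get the integrable $L^{p_\theta}(\mu)$ decay, define $\tau_\mu=\int_0^{+\infty}P_t(g)\,dt$ as a Bochner integral in $L^{p_\theta}(\mu)$, identify $\mathcal{A}_{\mu,p_\theta}(\tau_\mu)=-g$, and conclude by the adjoint pairing. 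The one place you diverge is organizational rather than substantive: the paper first carries out the construction for the truncations $\tilde g_{R,j}$ (so that Theorem~\ref{thm:abstract} can be invoked literally in $L^2(\mu)$ and the variational identity $\mathcal{E}_\mu(G_{0^+}(\tilde g_{R,j}),h)=\langle \tilde g_{R,j};h\rangle_{L^2(\mu)}$ can be recorded), then passes to the limit $R\to+\infty$ in $L^{p_\theta}(\mu)$ using closedness of $\mathcal{A}_{\mu,p_\theta}$, and finally says the identity for $G_{0^+}(g_j)$ follows ``by a similar argument.'' Your direct computation $\frac{1}{h}(P_h\tau_{\mu,j}-\tau_{\mu,j})=-\frac{1}{h}\int_0^h P_s(g_j)\,ds\to -g_j$ is precisely that similar argument, applied once to $g_j$ itself instead of going through the truncated approximants. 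Your opening remark that the ``naive fix'' via \eqref{ineq:energy} breaks down is a fair diagnosis of why Theorem~\ref{thm:abstract} cannot be applied with $u=g_j$ in $L^2(\mu)$, though it slightly overstates the case against truncation: the paper does truncate, just without ever passing the $L^2$ energy bound to the limit, so there is no actual failure there.
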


\begin{proof}
Since the form $\mathcal{E}_\mu$ is closable, let us consider its smallest closed extension with dense linear domain $D(\mathcal{E}_\mu)$. Then, let $\mathcal{A}_{\mu,2}$, $(P_t)_{t>0}$ and $(G_\delta)_{\delta>0}$ be, respectively, the corresponding generator, strongly continuous semigroup and strongly continuous resolvent on $L^2(\mu)$ such that, for all $\delta>0$ and all $f \in L^2(\mu)$,
\begin{align*}
G_\delta(f) = \int_0^{+\infty} e^{- \delta t} P_t(f)dt.
\end{align*}
Next, \cite[Corollary 1.3.1]{FOT10} together with the inequality \eqref{ineq:poincmu} yield
\begin{align*}
\|P_t(f)\|_{L^2(\mu)}\leq \exp\left(-\frac{t}{U_\mu}\right)\|f\|_{L^2(\mu)},\quad t>0,\, f\in L^2(\mu), \int_{\bbr^d} f(x) \mu(dx)=0.
\end{align*}
Then, by Theorem \ref{thm:abstract}, for all $f \in L^2(\mu)$ with $\int_{\bbr^d} f(x) \mu(dx)=0$ and all $h \in D(\mathcal{E}_\mu)$,
\begin{align*}
\mathcal{E}_\mu (G_{0^+}(f),h) = \int_{\bbr^d} h(x)f(x) \mu(dx).
\end{align*}
Now, for $x\in \bbr^d$, $j \in \{1,\dots, d\}$ and $R\geq 1$, set $g_j(x)=x_j$ and $g_{R,j}(x)=x_j \psi(x/R)$, with $\psi(x)=\exp(-\|x\|^2)$. Observe that $g_j \notin L^2(\mu)$ but that $g_{R,j} \in L^2(\mu)$. Then, for all $j\in \{1, \dots, d\}$, all $R\geq 1$ and all $h \in D(\mathcal{E}_\mu)$,
\begin{align}\label{eq:approxsteinkernel}
\mathcal{E}_\mu (G_{0^+}(\tilde{g}_{R,j}),h) = \int_{\bbr^d} \tilde{g}_{R,j}(x)h(x) \mu(dx),
\end{align}
where $\tilde{g}_{R,j}(x)=g_{R,j}(x)-\int_{\bbr^d}g_{R,j}(x)\mu(dx)$, $x\in \bbr^d$. A straightforward application of Lebesgue's dominated convergence theorem implies that, as $R\rightarrow +\infty$,
\begin{align*}
\int_{\bbr^d} \tilde{g}_{R,j}(x)h(x) \mu(dx) \longrightarrow \int_{\bbr^d} x_j h(x) \mu(dx),
\end{align*}
for $h$ bounded on $\bbr^d$. Let us study the semigroup $(P_t)_{t>0}$ associated with the form $\mathcal{E}_{\mu}$. First, since it is a strongly continuous semigroup on $L^2(\mu)$, it follows from \cite[Theorem 2.4]{Pa83} that, for all $f\in L^2(\mu)$ and all $t>0$, $\int_0^t P_s(f)ds$ belongs to the domain of $\mathcal{A}_{\mu,2}$ and that
\begin{align*}
\mathcal{A}_{\mu,2} \left(\int_0^t P_s(f)ds\right) =P_t(f)-f.
\end{align*}
Then, for all $f \in L^2(\mu)$ and all $g\in \mathcal{D}(\mathcal{E}_\mu)$,
\begin{align*}
\mathcal{E}_\mu \left(\int_0^t P_s(f)ds, g\right)=-\int_{\bbr^d}(P_t(f)(x)-f(x))g(x)\mu(dx).
\end{align*}
Now, choosing $g=1$, which clearly belongs to $\mathcal{D}(\mathcal{E}_\mu)$, implies
\begin{align*}
\int_{\bbr^d} P_t(f)(x) \mu(dx)= \int_{\bbr^d} f(x) \mu(dx),
\end{align*}
namely, the probability measure $\mu$ is invariant for the semigroup $(P_t)_{t>0}$.Then, for all $g\in L^2(\mu)$,
$\langle P_t(1);g\rangle_{L^2(\mu)}=\langle 1;P_t(g)\rangle_{L^2(\mu)}=\langle 1;g\rangle_{L^2(\mu)}$, so that $P_t(1)=1$. Moreover, since, for $t>0$, $P_t$ is also positivity preserving, it can be extended to a contraction on $L^p(\mu)$, for all $1\leq p <+\infty$. Finally, $(P_t)_{t>0}$ extends to a $C_0$-semigroup on $L^p(\mu)$, $1 \leq p <+\infty$. Then, there exists $p \in (1, \beta)$ such that, for all $t>0$, $\|P_t\|_{L^p(\mu)\rightarrow L^p(\mu)}\leq 1$. 
By an interpolation argument as in Remark \ref{rem:steinkernstable} (iii), for any $\theta \in (0,1)$ and all $t>0$,
\begin{align}\label{ineq:inter}
\underset{f \in L^{p_\theta}(\mu),\, \|f\|_{L^{p_\theta}(\mu)}=1}{\sup} \|P_t(T(f))\|_{L^{p_\theta}(\mu)} \leq 2e^{- \frac{(1-\theta) t}{U_\mu}},
\end{align}
where $p_\theta \in (p, 2)$ is such that $1/p_\theta=(1-\theta)/2+\theta/p$. Now, choose $\theta \in (0,1)$ such that $p_\theta \in (p, \beta)$. Then, for all $R \geq 1$ and all $j \in\{1, \dots, d\}$,
\begin{align*}
\|G_{0^+}(g_j)-G_{0^+}(\tilde{g}_{R,j})\|_{L^{p_\theta}( \mu)} \leq \int_{0}^{+\infty} \| P_t(g_j-\tilde{g}_{R,j})\|_{L^{p_\theta}( \mu)} dt \leq \dfrac{2 U_\mu}{1-\theta} \|g_j-\tilde{g}_{R,j}\|_{L^{p_\theta}( \mu)}.
\end{align*}
Thus, by the Lebesgue dominated convergence theorem, $G_{0^+}(\tilde{g}_{R,j})$ converges strongly in $L^{p_\theta}( \mu)$ to $G_{0^+}(g_{j})$, as $R \rightarrow +\infty$. For $1<p<+\infty$, let us denote by $\mathcal{A}_{\mu,p}$ the generator of the $C_0$-semigroup $(P_t)_{t>0}$ with domain $\mathcal{D}(\mathcal{A}_{\mu,p}) \subset L^p(\mu)$. Recall that, by \cite[Corollary 10.6]{Pa83}, the dual semigroup $(P_t)^*_{t>0}$ is a $C_0$-semigroup on $L^{p^*}(\mu)$ with generator $\mathcal{A}_{\mu,p}^*$, the adjoint of $\mathcal{A}_{\mu,p}$. Moreover, observe that 
\begin{align*}
\mathcal{A}_{\mu,2}^*|_{\mathcal{D}(\mathcal{A}_{\mu,p_\theta}^*)}=\mathcal{A}_{\mu,p_{\theta}}^*,
\end{align*}
since $p_\theta \in (p,\beta)$ with $1<p<\beta<\alpha<2$.~Then, \cite[Corollary $1.3.1$]{FOT10} implies that, for all $h \in \mathcal{D}(\mathcal{A}_{\mu,2}) \subset \mathcal{D}(\mathcal{E}_\mu)$,
\begin{align*}
\mathcal{E}_\mu (G_{0^+}(\tilde{g}_{R,j}),h) &= \int_{\bbr^d} G_{0^+}(\tilde{g}_{R,j})(x) (-\mathcal{A}_{\mu,2})(h)(x) \mu(dx)\\
&= \int_{\bbr^d} G_{0^+}(\tilde{g}_{R,j})(x) (-\mathcal{A}^*_{\mu,2})(h)(x) \mu(dx).
\end{align*} 
Now, taking $h \in \mathcal{D}(\mathcal{A}_{\mu,p_\theta}^*)\subset \mathcal{D}(\mathcal{A}_{\mu,2}) \subset \mathcal{D}(\mathcal{E}_\mu)$,
\begin{align*}
\mathcal{E}_\mu (G_{0^+}(\tilde{g}_{R,j}),h) &=  \int_{\bbr^d} G_{0^+}(\tilde{g}_{R,j})(x) (-\mathcal{A}_{\mu, p_\theta}^*)(h)(x) \mu(dx).
\end{align*}
Let $T>0$. Recall that $\int_0^T P_t(\tilde{g}_{R,j})dt$ belongs to $\mathcal{D}(\mathcal{A}_{\mu,p_\theta})$, for $R\geq 1$ and for $j \in \{1, \dots, d \}$. Moreover, $\int_0^T P_t(\tilde{g}_{R,j})dt$ converges strongly in $L^{p_\theta}( \mu)$ to $G_{0^+}(\tilde{g}_{R,j})$, as $T\rightarrow +\infty$. Finally, for all $R\geq 1$ and all $j \in \{1, \dots, d \}$,
\begin{align*}
\mathcal{A}_{\mu,p_\theta}\left(\int_0^T P_t(\tilde{g}_{R,j})dt\right) = P_T(\tilde{g}_{R,j})-\tilde{g}_{R,j} \underset{T\rightarrow +\infty}{\longrightarrow}-\tilde{g}_{R,j},
\end{align*}
in $L^{p_\theta}(\mu)$. Since $\mathcal{A}_{\mu,p_\theta}$ is closed, $G_{0^+}(\tilde{g}_{R,j}) \in \mathcal{D}(\mathcal{A}_{\mu,p_\theta})$ and $(-\mathcal{A}_{\mu,p_\theta})(G_{0^+}(\tilde{g}_{R,j}))=\tilde{g}_{R,j}$. Finally, a similar argument ensures that $G_{0^+}(g_{j})$ belongs to $\mathcal{D}(\mathcal{A}_{\mu,p_\theta})$ and that $(-\mathcal{A}_{\mu,p_\theta})(G_{0^+}(g_{j}))=g_{j}$. Setting $G_{0^+}(g)=\tau_\mu$ with $g(x)=x$, $x\in \bbr^d$, concludes the proof of the theorem.
\end{proof}
\noindent
To finish this section, a stability result for probability measures on $\bbr^d$ close to the rotationally invariant $\alpha$-stable ones, $\alpha \in (1,2)$, is presented.

\begin{thm}\label{thm:stability}
Let $\psi(x)=\exp(-\|x\|^2)$, $x\in \bbr^d$. Let $\alpha \in (1,2)$, let $\nu_\alpha$ be the L\'evy measure given by \eqref{eq:nustable} and let $\mu_\alpha$ be the associated rotationally invariant $\alpha$-stable distribution. Let $\beta \in (1,\alpha)$ and let $\mu$ be a centered probability measure on $\bbr^d$ with $\int_{\bbr^d} \|x\|^\beta \mu(dx)<+\infty$ and with $\mu\ast \nu_\alpha<< \mu$. Let $\mathcal{E}_\mu$ be the closable, Markovian, symmetric, bilinear, non-negative definite form defined, for all $f,g \in \mathcal{S}(\bbr^d)$, by
\begin{align*}
\mathcal{E}_\mu (f,g) := \int_{\bbr^d} \int_{\bbr^d} (f(x+u)-f(x))(g(x+u)-g(x))\nu_\alpha(du) \mu(dx).
\end{align*}
Moreover, assume that,
\begin{itemize}
\item there exists $U_\mu>0$ such that, for all $f\in \mathcal{D}(\mathcal{E}_\mu)$ with $\int_{\bbr^d} f(x) \mu(dx)=0$,
\begin{align*}
\int_{\bbr^d} |f(x)|^2 \mu(dx) \leq U_\mu \int_{\bbr^d} \int_{\bbr^d} |f(x+u)-f(x)|^2 \nu_{\alpha}(du)\mu(dx),
\end{align*}
\item and, for $\tilde{g}_{R,j}(x)=x_j \psi(x/R)-\int_{\bbr^d}x_j \psi(x/R)\mu(dx)$, $x \in \bbr^d$, $R\geq 1$ and $j\in \{1,\dots, d\}$, 
\begin{align}\label{Weyltype}
\underset{R \rightarrow +\infty}{\lim} \left(\mathcal{E}_{\mu}(U_\mu \tilde{g}_{R,j},\tilde{g}_{R,j})-\langle \tilde{g}_{R,j} ; \tilde{g}_{R,j} \rangle_{L^2(\mu)}\right) = 0.
\end{align}
\end{itemize}
Then, 
\begin{align*}
d_{W_1}(\mu, \mu_\alpha) \leq C_{\alpha,d} \left(\int_{\|u\|\leq 1} \|u\|^2 \nu_\alpha(du)+\int_{\|u\|\geq 1} \|u\| \nu_\alpha(du)\right) |U_\mu-1|,
\end{align*}
for some $C_{\alpha,d}>0$ only depending on $\alpha$ and on $d$.
\end{thm}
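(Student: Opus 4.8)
\noindent\textit{Proof plan.} The idea is to run Stein's method for the target $\mu_\alpha$ via the Stein equation \eqref{eq:SteinEq3} and the Stein kernel $\tau_\mu=G_{0^+}(g)$, $g(x)=x$, provided by Theorem~\ref{thm:ExistStable}, and to show that the resulting discrepancy equals exactly $(1-U_\mu)$ times an integral controlled by the jump moments of $\nu_\alpha$; the Weyl-type condition \eqref{Weyltype} is precisely what forces the $R$-divergent parts of the intermediate quantities to cancel. So fix $h\in\mathcal H_2\cap\mathcal C_c^\infty(\bbr^d)$ (it suffices to bound $|\bbe h(X)-\bbe h(X_\alpha)|$ for such $h$, with $X\sim\mu$, $X_\alpha\sim\mu_\alpha$). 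Since $\alpha\in(1,2)$, $\int_{\|u\|\ge1}\|u\|\nu_\alpha(du)<+\infty$ and the hypotheses of Theorem~\ref{thm:Stein} hold for $\nu_\alpha$ as in \eqref{eq:nustable} (with $\gamma_3=0$ by rotational invariance of $\sigma$); hence the function $f_h$ of \eqref{eq:SolSD} is twice continuously differentiable, obeys $M_1(f_h)\le1$, $M_2(f_h)\le1/2$, and solves \eqref{eq:SteinEq3}, which, as $\bbe X_\alpha=0$, reads $-\langle x;\nabla f_h(x)\rangle+\int_{\bbr^d}\langle\nabla f_h(x+u)-\nabla f_h(x);u\rangle\nu_\alpha(du)=h(x)-\bbe h(X_\alpha)$. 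Evaluating at $X$ and taking expectations (all integrands being integrable thanks to $\int\|x\|^\beta\mu(dx)<+\infty$, $\int_{\|u\|\le1}\|u\|^2\nu_\alpha(du)<+\infty$, $\int_{\|u\|\ge1}\|u\|\nu_\alpha(du)<+\infty$, $M_1(f_h)\le1$, $M_2(f_h)\le1/2$),
\begin{align*}
\bbe h(X)-\bbe h(X_\alpha)=-\int_{\bbr^d}\langle x;\nabla f_h(x)\rangle\mu(dx)+\int_{\bbr^d}\int_{\bbr^d}\langle\nabla f_h(x+u)-\nabla f_h(x);u\rangle\nu_\alpha(du)\mu(dx).
\end{align*}

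Next, introduce the truncations $g_{R,i}(x)=x_i\psi(x/R)$ and $\tilde g_{R,i}=g_{R,i}-\int_{\bbr^d}g_{R,i}d\mu$, which lie in $\mathcal D(\mathcal E_\mu)\cap H_0$, where $H_0\subset L^2(\mu)$ is the mean-zero subspace. By \eqref{ineq:poincmu} and \cite[Corollary $1.3.1$]{FOT10}, $\|P_t(f)\|_{L^2(\mu)}\le e^{-t/U_\mu}\|f\|_{L^2(\mu)}$ for $f\in H_0$, so Theorems~\ref{thm:abstract} and \ref{thm:ExistStable} apply with $C_P=U_\mu$. The identity \eqref{eq:SteinKernel} gives $\mathcal E_\mu(G_{0^+}(\tilde g_{R,i}),\partial_i f_h)=\langle\tilde g_{R,i};\partial_i f_h\rangle_{L^2(\mu)}\underset{R\rightarrow+\infty}{\longrightarrow}\int_{\bbr^d}x_i\partial_i f_h\,d\mu$ (dominated convergence, $\partial_i f_h$ bounded, $x_i\in L^1(\mu)$), while, since $g_{R,i}(x+u)-g_{R,i}(x)\to u_i$ with a $\nu_\alpha\otimes\mu$-integrable majorant, $\mathcal E_\mu(g_{R,i},\partial_i f_h)=\mathcal E_\mu(\tilde g_{R,i},\partial_i f_h)\underset{R\rightarrow+\infty}{\longrightarrow}\int_{\bbr^d}\int_{\bbr^d}u_i(\partial_i f_h(x+u)-\partial_i f_h(x))\nu_\alpha(du)\mu(dx)$. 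Setting $\hat\rho_{R,i}:=G_{0^+}(\tilde g_{R,i})-\tilde g_{R,i}\in\mathcal D(\mathcal E_\mu)$ and summing over $i$, the displayed discrepancy becomes $-\lim_{R\to+\infty}\sum_{i=1}^d\mathcal E_\mu(\hat\rho_{R,i},\partial_i f_h)$.

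Now comes the cancellation. Write $\hat\rho_{R,i}=(U_\mu-1)\tilde g_{R,i}+\delta_{R,i}$ with $\delta_{R,i}:=G_{0^+}(\tilde g_{R,i})-U_\mu\tilde g_{R,i}\in\mathcal D(\mathcal E_\mu)$. Expanding $\mathcal E_\mu(\delta_{R,i},\delta_{R,i})$ and using $\mathcal E_\mu(G_{0^+}(\tilde g_{R,i}),\tilde g_{R,i})=\|\tilde g_{R,i}\|^2_{L^2(\mu)}$ (from \eqref{eq:SteinKernel}) together with $\mathcal E_\mu(G_{0^+}(\tilde g_{R,i}),G_{0^+}(\tilde g_{R,i}))\le U_\mu\|\tilde g_{R,i}\|^2_{L^2(\mu)}$ (from \eqref{ineq:energy}), the $R^{2-\alpha}$-divergences cancel and
\begin{align*}
0\le\mathcal E_\mu(\delta_{R,i},\delta_{R,i})\le U_\mu^2\,\mathcal E_\mu(\tilde g_{R,i},\tilde g_{R,i})-U_\mu\|\tilde g_{R,i}\|^2_{L^2(\mu)}=U_\mu\left(\mathcal E_\mu(U_\mu\tilde g_{R,i},\tilde g_{R,i})-\langle\tilde g_{R,i};\tilde g_{R,i}\rangle_{L^2(\mu)}\right)\underset{R\rightarrow+\infty}{\longrightarrow}0
\end{align*}
by \eqref{Weyltype} (the bracket being $\ge0$ by \eqref{ineq:poincmu}). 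Since $\sum_i\mathcal E_\mu(\partial_i f_h,\partial_i f_h)=\int_{\bbr^d}\int_{\bbr^d}\|\nabla f_h(x+u)-\nabla f_h(x)\|^2\nu_\alpha(du)\mu(dx)\le M_2(f_h)^2\int_{\|u\|\le1}\|u\|^2\nu_\alpha(du)+4M_1(f_h)^2\int_{\|u\|\ge1}\|u\|\nu_\alpha(du)<+\infty$, Cauchy--Schwarz for $\mathcal E_\mu$ gives $\sum_i|\mathcal E_\mu(\delta_{R,i},\partial_i f_h)|\to0$, whence
\begin{align*}
\bbe h(X)-\bbe h(X_\alpha)=(1-U_\mu)\int_{\bbr^d}\int_{\bbr^d}\langle u;\nabla f_h(x+u)-\nabla f_h(x)\rangle\nu_\alpha(du)\mu(dx).
\end{align*}
Bounding $|\langle u;\nabla f_h(x+u)-\nabla f_h(x)\rangle|\le\|u\|\min\{M_2(f_h)\|u\|,2M_1(f_h)\}$ and integrating ($\le M_2(f_h)\int_{\|u\|\le1}\|u\|^2\nu_\alpha(du)+2M_1(f_h)\int_{\|u\|\ge1}\|u\|\nu_\alpha(du)$), then using $M_1(f_h)\le1$, $M_2(f_h)\le1/2$ and taking the supremum over $h$, yields the claimed estimate with $C_{\alpha,d}$ an absolute constant times $1$.

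The main obstacle is not the algebra above but the functional-analytic bookkeeping: making rigorous the passages between $\mathcal E_\mu$, its semigroup $(P_t)_{t>0}$, the $L^p(\mu)$-generators and their adjoints, and in particular the limiting arguments forced by $g_i\notin L^2(\mu)$ --- essentially re-running, for the pairings $\mathcal E_\mu(\,\cdot\,,\partial_i f_h)$, the truncation machinery of the proofs of Theorem~\ref{thm:ExistStable} and of Remark~\ref{rem:steinkernstable}(iv), including the justification that $\partial_i f_h\in\mathcal D(\mathcal E_\mu)$ and that $\mathcal E_\mu(G_{0^+}(\tilde g_{R,i}),\cdot)$ converges appropriately as $R\to+\infty$. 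The rate computations of Lemmas~\ref{lem:rate1}--\ref{lem:rate2} are not invoked directly; they explain why \eqref{Weyltype}, which demands exactly the cancellation of the $R^{2-\alpha}$-divergences of $\mathcal E_\mu(\tilde g_{R,i},\tilde g_{R,i})$ and $\|\tilde g_{R,i}\|^2_{L^2(\mu)}$, is the natural hypothesis, and that it is automatically satisfied by $\mu_\alpha$ itself, for which $U_{\mu_\alpha}=1$ by Corollary~\ref{cor:rigidity1}.
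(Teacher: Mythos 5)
Your proposal reproduces the paper's argument faithfully at the structural level: Stein equation via $f_h$, truncations $\tilde g_{R,j}$, the decomposition $G_{0^+}(\tilde g_{R,j})-\tilde g_{R,j}=(U_\mu-1)\tilde g_{R,j}+\bigl(G_{0^+}(\tilde g_{R,j})-U_\mu\tilde g_{R,j}\bigr)$, and Cauchy--Schwarz combined with \eqref{Weyltype} to kill the remainder term. Your version even tightens the presentation by passing to the limit first and recording the exact identity $\bbe h(X)-\bbe h(X_\alpha)=(1-U_\mu)\int\int\langle u;\nabla f_h(x+u)-\nabla f_h(x)\rangle\,\nu_\alpha(du)\mu(dx)$, from which the estimate is immediate; the paper keeps everything as a chain of inequalities at finite $R$. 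That difference is purely stylistic.

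There is, however, a genuine gap at the very first step. You fix $h\in\mathcal H_2\cap\mathcal C_c^\infty(\bbr^d)$ and claim this suffices, invoking Theorem~\ref{thm:Stein} to get $M_2(f_h)\le1/2$. But the quantity the theorem asserts a bound for is $d_{W_1}(\mu,\mu_\alpha)=\sup_{h\in\mathcal H_1}|\bbe h(X)-\bbe h(X_\alpha)|$, and since $\mathcal H_2\subsetneq\mathcal H_1$ one has only $d_{W_2}\le d_{W_1}$ (cf.\ \eqref{ineq:wasser}); bounding the supremum over $\mathcal H_2$ does \emph{not} control the supremum over $\mathcal H_1$. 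The paper's proof takes $h\in\mathcal H_1\cap\mathcal C_c^\infty(\bbr^d)$ and relies on \cite[Proposition~3.4]{AH18_2} for the regularization statement that $f_h$ is twice continuously differentiable with $M_2(f_h)\le C_{\alpha,d}$ \emph{already for} $h\in\mathcal H_1$ (a gain of one derivative coming from the smoothing of the $\alpha$-stable semigroup). That ingredient is precisely what lets the argument close at the $d_{W_1}$ level, and it is absent from your proposal: as written, you prove a $d_{W_2}$ bound, not the $d_{W_1}$ bound stated. Everything else -- including $\gamma_3=0$ by rotational symmetry, the use of \eqref{eq:SteinKernel} and \eqref{ineq:energy} from Theorem~\ref{thm:abstract}, and the role of the Weyl-type hypothesis -- is correct and matches the paper's proof.
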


\begin{proof}
The proof partly relies on the methodological results contained in \cite{AH18_2}. First, as in \cite[Proposition $3.4$]{AH18_2}, for all $h \in \mathcal{H}_1 \cap \mathcal{C}_c^\infty(\bbr^d)$, let $f_h$, be defined, for all $x\in \bbr^d$, by
\begin{align*}
f_h(x)=-\int_0^{+\infty} (P^{\nu_\alpha}_t(h)(x)- \bbe h(X_\alpha))dt,
\end{align*}
with $(P^{\nu_\alpha}_t)_{t\geq 0}$ given in \eqref{eq:semSD} with $\nu=\nu_\alpha$ and $X_\alpha \sim \mu_\alpha$. Next, let $X\sim \mu$. Then, for all $h\in \mathcal{H}_1 \cap \mathcal{C}_c^\infty(\bbr^d)$,
\begin{align*}
\left |\bbe h(X)-\bbe h(X_\alpha)\right | &= \left| \bbe  \left(-\langle X; \nabla(f_h)(X)\rangle + \int_{\bbr^d} \langle \nabla (f_h)(X+u)- \nabla (f_h)(X); u\rangle \nu_{\alpha}(du)\right) \right|\\
&\leq \sum_{j=1}^d \left|- \langle g_j , \partial_j(f_h) \rangle_{L^2(\mu)}+ \mathcal{E}_\mu(g_j, \partial_j(f_h))\right|,
\end{align*}
where $g_j(x)=x_j$, $x\in \bbr^d$ and $j \in \{1,\dots, d\}$. Let $g_{R,j}$ be the smooth truncation of $g_j$ as defined by \eqref{eq:gRj} with $\psi(x)=\exp(-\|x\|^2)$, $x\in \bbr^d$. Moreover, $\|g_j-g_{R,j}\|_{L^p(\mu)}\rightarrow 0$, as $R$ tends to $+\infty$, for all $p \leq \beta$. Since, (see \cite[Proposition $3.4$]{AH18_2}) $M_1(f_h)\leq 1$, 
\begin{align*}
\left |\bbe h(X)-\bbe h(X_\alpha)\right | & \leq \sum_{j=1}^d \|g_j-g_{R,j}\|_{L^1(\mu)} + \sum_{j=1}^d \left|- \langle g_{R,j} , \partial_j(f_h) \rangle_{L^2(\mu)}+ \mathcal{E}_\mu(g_j, \partial_j(f_h))\right|\\
&\leq \sum_{j=1}^d \|g_j-g_{R,j}\|_{L^1(\mu)} + \sum_{j=1}^d \left|- \langle g_{R,j} , \partial_j(f_h) \rangle_{L^2(\mu)}+ \mathcal{E}_\mu(g_{R,j}, \partial_j(f_h))\right| \\ 
&\quad\quad+ \sum_{j=1}^d \left| \mathcal{E}_\mu(g_j-g_{R,j}, \partial_j(f_h))\right|.
\end{align*}
Now, for all $j \in \{1,\dots, d \}$ and all $R\geq 1$, 
\begin{align*}
 \left| \mathcal{E}_\mu(g_j-g_{R,j}, \partial_j(f_h))\right| &\leq  \int_{\bbr^d}\int_{\bbr^d} |g_j(x+u)-g_{R,j}(x+u)-g_j(x)+g_{R,j}(x)| \\
 &\quad\quad \times\left|\partial_{j}(f_h)(x+u)-\partial_j(f_h)(x)\right| \nu_{\alpha}(du) \mu(dx).
\end{align*} 
Cutting the integral on $u$ into a small jumps part and a big jumps part and using $M_1(f_h)\leq 1$ and $M_2(f_h)\leq C_{\alpha,d}$, for some $C_{\alpha,d}>0$ depending only on $\alpha$ and on $d$, imply
\begin{align*}
 \left| \mathcal{E}_\mu(g_j-g_{R,j}, \partial_j(f_h))\right| &\leq C_{\alpha, d} \int_{\bbr^d} \int_{\{\|u\|\leq 1\}} \|u\| \left|(x_j+u_j)(1-\psi((x+u)/R))-x_j(1-\psi(x/R))\right| \nu_\alpha(du) \mu(dx)\\
& \quad\quad+2  \int_{\bbr^d} \int_{\{\|u\|\geq 1\}} \left|(x_j+u_j)(1-\psi((x+u)/R))-x_j(1-\psi(x/R))\right| \nu_\alpha(du) \mu(dx).
\end{align*}
Since $\|g_j-g_{R,j}\|_{L^p(\mu)}\rightarrow 0$, as $R$ tends to $+\infty$, and since $\mu \ast \nu_\alpha<<\mu$, along a subsequence,
\begin{align*}
g_j(x+u)-g_{R,j}(x+u)-g_j(x)+g_{R,j}(x)\underset{R\rightarrow+\infty}{\longrightarrow} 0, \quad \mu \otimes \nu_\alpha-a.e.
\end{align*}
Now, for all $x\in \bbr^d$, all $u\in \bbr^d$, all $R\geq 1$ and all $j\in \{1, \dots, d\}$,
\begin{align*}
\left|(x_j+u_j)(1-\psi((x+u)/R))-x_j(1-\psi(x/R))\right| \leq C_{j,d}\|u\|, 
\end{align*}
for some constant $C_{j,d}>0$ depending only on $j$ and on $d$. Thus, Lebesgue's dominated convergence theorem implies that, for all $j \in \{1,\dots, d\}$, along a subsequence
\begin{align*}
\underset{h\in \mathcal{H}_1 \cap \mathcal{C}_c^\infty(\bbr^d)}{\sup}\left| \mathcal{E}_\mu(g_j-g_{R,j}, \partial_j(f_h))\right| \underset{R\rightarrow+\infty}{\longrightarrow} 0.
\end{align*}
Finally, for all $R\geq 1$ and all $j \in \{1,\dots, d\}$,
\begin{align}\label{ineq:last}
\left|- \langle \tilde{g}_{R,j} , \partial_j(f_h) \rangle_{L^2(\mu)}+ \mathcal{E}_\mu(\tilde{g}_{R,j}, \partial_j(f_h))\right| &= \left| \mathcal{E}_\mu(\tilde{g}_{R,j}-G_{0^+}(\tilde{g}_{R,j}), \partial_j(f_h)) \right|\nonumber\\
&\leq \left| \mathcal{E}_\mu(\tilde{g}_{R,j}-U_\mu \tilde{g}_{R,j}, \partial_j(f_h)) \right| + \left| \mathcal{E}_\mu(U_\mu\tilde{g}_{R,j}-G_{0^+}(\tilde{g}_{R,j}), \partial_j(f_h)) \right|.
\end{align}
The first term on the right hand-side of \eqref{ineq:last} is bounded, for all $R\geq 1$ and all $j \in \{1, \dots, d\}$, by
\begin{align*}
\left| \mathcal{E}_\mu(\tilde{g}_{R,j}-U_\mu \tilde{g}_{R,j}, \partial_j(f_h)) \right| &\leq |U_\mu-1| \left| \mathcal{E}_\mu(g_{R,j}, \partial_j(f_h)) \right| \\
&\leq C_{\alpha,j,d} \left(\int_{\|u\|\leq 1} \|u\|^2 \nu_\alpha(du)+\int_{\|u\|\geq 1} \|u\| \nu_\alpha(du)\right) |U_\mu-1|.
\end{align*}
To conclude the proof, let us deal with the second term on the right-hand side of \eqref{ineq:last}. Then, by the Cauchy-Schwarz inequality, for all $j \in \{1, \dots, d\}$ and all $R \geq 1$,
\begin{align*}
\left| \mathcal{E}_\mu(U_\mu\tilde{g}_{R,j}-G_{0^+}(\tilde{g}_{R,j}), \partial_j(f_h)) \right| \leq \mathcal{E}_{\mu}(\partial_j(f_h),\partial_j(f_h))^{1/2} \mathcal{E}_{\mu}(U_\mu\tilde{g}_{R,j}-G_{0^+}(\tilde{g}_{R,j}),U_\mu\tilde{g}_{R,j}-G_{0^+}(\tilde{g}_{R,j}))^{1/2}.
\end{align*}
Now, for all $j \in \{1,\dots, d\}$,
\begin{align*}
\mathcal{E}_{\mu}(U_\mu\tilde{g}_{R,j}-G_{0^+}(\tilde{g}_{R,j}),U_\mu\tilde{g}_{R,j}-G_{0^+}(\tilde{g}_{R,j}))&= U_\mu^2 \mathcal{E}_{\mu}(\tilde{g}_{R,j},\tilde{g}_{R,j}) + \mathcal{E}_{\mu}(G_{0^+}(\tilde{g}_{R,j}),G_{0^+}(\tilde{g}_{R,j}))\\
&\quad\quad -2 U_\mu \mathcal{E}_{\mu}(\tilde{g}_{R,j},G_{0^+}(\tilde{g}_{R,j}))\\
&= U_\mu \left( U_\mu\mathcal{E}_{\mu}(\tilde{g}_{R,j},\tilde{g}_{R,j})-\langle \tilde{g}_{R,j} ; \tilde{g}_{R,j}\rangle_{L^2(\mu)}\right)\\
&\quad\quad + \mathcal{E}_{\mu}(G_{0^+}(\tilde{g}_{R,j}),G_{0^+}(\tilde{g}_{R,j})) - U_\mu \langle \tilde{g}_{R,j} ; \tilde{g}_{R,j}\rangle_{L^2(\mu)}\\
&\leq U_\mu \left( U_\mu\mathcal{E}_{\mu}(\tilde{g}_{R,j},\tilde{g}_{R,j})-\langle \tilde{g}_{R,j} ; \tilde{g}_{R,j}\rangle_{L^2(\mu)}\right).
\end{align*}
The condition \eqref{Weyltype} concludes the proof of the theorem.
\end{proof}

\appendix
\section{Appendix}
\label{sec:appendix}
\noindent

\begin{lem}\label{lem:diffallpoints}
Let $\nu$ be a L\'evy measure with polar decomposition given by \eqref{eq:PolarSD2} where the function $k_x(r)$ is continuous in $r\in (0,+\infty)$, is continuous in $x\in \mathbb{S}^{d-1}$ and satisfies \eqref{eq:condlimk}. 
Then, for all $\xi \in \bbr^d$, the function $t\rightarrow \psi_t(\xi)$ where,
\begin{align*}
\psi_t(\xi)&=\exp \bigg(\int_{(0,+\infty)\times \mathbb{S}^{d-1}} \left(e^{i r\langle x;\xi \rangle}-1-i\langle rx;\xi \rangle\bbone_{r\leq 1}\right) \dfrac{k_x(r)}{r}dr \sigma(dx)\\
&\quad\quad-\int_{(0,+\infty)\times \mathbb{S}^{d-1}} \left(e^{i r\langle x;\xi \rangle}-1-i\langle rx;\xi \rangle\bbone_{r\leq e^{-t}}\right) \dfrac{k_x(e^t r)}{r}dr \sigma(dx)\bigg),
\end{align*}
is continuously differentiable on $[0,+\infty)$ and for all $\xi\in \bbr^d$ and all $t \geq 0$,
\begin{align*}
\frac{d}{dt}\left(\psi_t(\xi)\right)=\left( -i e^{-t} \langle \int_{\mathbb{S}^{d-1}}k_x(1) x\sigma(dx);\xi \rangle+ \int_{\mathbb{R}^d} \left(e^{i \langle u;\xi e^{-t}\rangle}-1-i \langle u;\xi e^{-t} \rangle\bbone_{\|u\|\leq 1}\right)\tilde{\nu}(du)\right)\psi_t(\xi),
\end{align*}
where $\tilde{\nu}$ is given by \eqref{eq:tilde}.
\end{lem}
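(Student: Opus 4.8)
The plan is to peel off the $t$-independent part of the exponent by a radial change of variables, reducing the statement to the $C^1$-differentiability of a single L\'evy-type exponent composed with the scaling $\xi\mapsto e^{-t}\xi$, and then to differentiate after a preliminary radial integration by parts, so that the radial measure governing everything becomes finite away from the origin. Concretely, in the second integral defining $\psi_t(\xi)$ I would substitute $r=e^{-t}\rho$ for each fixed $x\in\mathbb{S}^{d-1}$; since $k_x(e^tr)=k_x(\rho)$, $\bbone_{r\le e^{-t}}=\bbone_{\rho\le1}$, $dr/r=d\rho/\rho$ and $r\langle x;\xi\rangle=\rho\langle x;e^{-t}\xi\rangle$, this gives $\log\psi_t(\xi)=A(\xi)-A(e^{-t}\xi)$, where
\[
A(\eta):=\int_{(0,+\infty)\times\mathbb{S}^{d-1}}\left(e^{i\rho\langle x;\eta\rangle}-1-i\rho\langle x;\eta\rangle\bbone_{\rho\le1}\right)\frac{k_x(\rho)}{\rho}\,d\rho\,\sigma(dx),
\]
the double integral being absolutely convergent by $|e^{i\theta}-1-i\theta|\le\theta^2/2$, $|e^{i\theta}-1|\le2$, $\int_0^1\rho k_x(\rho)\,d\rho<+\infty$, $\int_1^{+\infty}k_x(\rho)\,d\rho/\rho<+\infty$ and $\sigma(\mathbb{S}^{d-1})<+\infty$ (all consequences of $\nu$ and $\tilde\nu$ being L\'evy measures). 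Since $\tfrac{d}{dt}\psi_t(\xi)=\psi_t(\xi)\tfrac{d}{dt}\log\psi_t(\xi)$, it then suffices to prove $t\mapsto A(e^{-t}\xi)$ is $C^1$ on $[0,+\infty)$ with $\tfrac{d}{dt}A(e^{-t}\xi)=ie^{-t}\langle\int_{\mathbb{S}^{d-1}}x k_x(1)\sigma(dx);\xi\rangle-\int_{\bbr^d}(e^{i\langle u;\xi e^{-t}\rangle}-1-i\langle u;\xi e^{-t}\rangle\bbone_{\|u\|\le1})\tilde\nu(du)$.

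To differentiate $A(e^{-t}\xi)$, for $x\in\mathbb{S}^{d-1}$ and $s\in\bbr$ I would introduce the antiderivative $H_s(\rho):=\int_0^\rho\tfrac{e^{i\tau s}-1-i\tau s\bbone_{\tau\le1}}{\tau}\,d\tau$, which satisfies $H_s(\rho)=O(\rho^2)$ as $\rho\to0^+$ and $|H_s(\rho)|\le\log\rho+C_s$ for $\rho\ge1$. Because $\rho^2k_x(\rho)\to0$ and $k_x(\rho)\log\rho\to0$ (the latter from $k_x(\rho)\log\sqrt{\rho}\le\int_{\sqrt{\rho}}^{\rho}k_x(\tau)\,d\tau/\tau\to0$, using the monotonicity of $k_x$ and the convergence of $\int_1^{+\infty}k_x(\tau)\,d\tau/\tau$), all boundary terms vanish, and the radial integration by parts \eqref{eq:IPPradial}, applied on $(0,1)$ and on $(1,+\infty)$ separately (the contributions at $\rho=1$ cancelling since $k_x$ is continuous), gives
\[
\int_{(0,+\infty)}\left(e^{i\rho\langle x;e^{-t}\xi\rangle}-1-i\rho\langle x;e^{-t}\xi\rangle\bbone_{\rho\le1}\right)\frac{k_x(\rho)}{\rho}\,d\rho=\int_{(0,+\infty)}H_{e^{-t}\langle x;\xi\rangle}(\rho)\,(-dk_x(\rho)).
\]
On the right-hand side $-dk_x$ has total mass $k_x(1)$ on $(1,+\infty)$ and $\int_0^1\rho^2(-dk_x(\rho))<+\infty$, while $\partial_t H_{e^{-t}\langle x;\xi\rangle}(\rho)=-(e^{i\rho e^{-t}\langle x;\xi\rangle}-1)+ie^{-t}\langle x;\xi\rangle\min(\rho,1)$ is dominated, uniformly for $t\ge0$, by $\tfrac12\|\xi\|^2\rho^2\bbone_{\rho\le1}+(2+\|\xi\|)\bbone_{\rho>1}$, an $(-dk_x)$-integrable function; differentiating under the integral and using $\min(\rho,1)=\rho\bbone_{\rho\le1}+\bbone_{\rho>1}$ together with $\int_{(1,+\infty)}(-dk_x)=k_x(1)$ then gives $\tfrac{d}{dt}$ of this expression as $-\int_{(0,+\infty)}(e^{i\rho e^{-t}\langle x;\xi\rangle}-1-i\rho e^{-t}\langle x;\xi\rangle\bbone_{\rho\le1})(-dk_x(\rho))+ie^{-t}\langle x;\xi\rangle k_x(1)$.

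Finally I would integrate this identity against $\sigma(dx)$: its $t$-derivative is bounded, uniformly in $t\ge0$, by $\|\xi\|^2\int_0^1\rho k_x(\rho)\,d\rho+(2+\|\xi\|)k_x(1)$, which is $\sigma$-integrable because $\int_{\mathbb{S}^{d-1}}k_x(1)\sigma(dx)=\tilde\nu(\{\|u\|>1\})<+\infty$ and $\int_{\mathbb{S}^{d-1}}\sigma(dx)\int_0^1\rho k_x(\rho)\,d\rho<+\infty$ (the latter via $\int_0^1\rho^2(-dk_x)=2\int_0^1\rho k_x\,d\rho-k_x(1)$ and $\tilde\nu$ being a L\'evy measure). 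Interchanging $\tfrac{d}{dt}$ with $\int\sigma(dx)$ and writing $\tilde\nu$ in polar coordinates as $\bbone_{(0,+\infty)}(r)\bbone_{\mathbb{S}^{d-1}}(x)(-dk_x(r))\sigma(dx)$ yields exactly the asserted formula for $\tfrac{d}{dt}A(e^{-t}\xi)$, hence for $\tfrac{d}{dt}\psi_t(\xi)$; continuity of this expression in $t$, and so the $C^1$ property, follows from dominated convergence (using the same dominating functions and the representation $A(e^{-t}\xi)=\int\sigma(dx)\int H_{e^{-t}\langle x;\xi\rangle}(\rho)(-dk_x(\rho))$ for the continuity of $\psi_\cdot(\xi)$ itself).

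The hard part is precisely that $A(e^{-t}\xi)$ cannot be differentiated directly under its $\nu$-form integral sign: the formal $t$-derivative of the integrand is $-ie^{-t}\langle x;\xi\rangle(e^{i\rho e^{-t}\langle x;\xi\rangle}-\bbone_{\rho\le1})k_x(\rho)$, whose large-$\rho$ part $e^{i\rho e^{-t}\langle x;\xi\rangle}k_x(\rho)$ need not be absolutely integrable — only $\int_1^{+\infty}k_x(\rho)\,d\rho/\rho<+\infty$ is available, not $\int_1^{+\infty}k_x(\rho)\,d\rho<+\infty$ — so no dominating function exists, and the naive interchange in fact misses exactly the boundary term $ie^{-t}\langle x;\xi\rangle k_x(1)$. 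Passing first to the $dk_x$-representation, where the radial measure is finite off the origin, is what legitimises the differentiation; the remaining steps (the two radial integrations by parts and the uniform bounds) are routine given $\lim_{\rho\to0^+}\rho^2k_x(\rho)=0$, $\lim_{\rho\to+\infty}k_x(\rho)=0$ and the L\'evy property of $\tilde\nu$.
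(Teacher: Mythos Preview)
Your proof is correct and somewhat cleaner in organisation than the paper's. The paper does not make the change of variables $r=e^{-t}\rho$; instead it rewrites the exponent as a difference term $II=\int\bigl(e^{ir\langle x;\xi\rangle}-1-i\langle rx;\xi\rangle\bbone_{r\le1}\bigr)\frac{k_x(r)-k_x(e^tr)}{r}\,dr\,\sigma(dx)$ plus a correction $I$ coming from the mismatched indicators, writes $k_x(r)-k_x(e^tr)=\int_r^{re^t}(-dk_x(s))$, swaps the order of integration by Fubini to obtain $\int(-dk_x(s))\int_{se^{-t}}^s(\ldots)\frac{dr}{r}$, and then differentiates via Leibniz's rule (the $t$-dependence now sits in the inner limits of integration); an integration by parts in $s$ over $(1,e^t)$ is then needed to cancel the contribution of $dI/dt$. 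Your route first isolates all the $t$-dependence into the single scaling $\xi\mapsto e^{-t}\xi$ of a fixed L\'evy exponent $A$, and the radial integration by parts to the $\tilde\nu$-form legitimises a single differentiation under the integral sign; the boundary term $ie^{-t}\langle x;\xi\rangle k_x(1)$ drops out directly from $\int_1^\infty(-dk_x)=k_x(1)$, without any separate bookkeeping of correction terms. Both arguments rest on the same analytic inputs (the limits \eqref{eq:condlimk} and the L\'evy property of $\tilde\nu$), but yours exposes more transparently why the naive differentiation under $\nu$ fails and how passing to $\tilde\nu$ repairs it.
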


\begin{proof}
First, for all $\xi \in \bbr^d$ and all $t\geq 0$,
\begin{align*}
\psi_t(\xi)&=\exp \bigg(\int_{(0,+\infty)\times \mathbb{S}^{d-1}} \left(e^{i r\langle x;\xi \rangle}-1-i\langle rx;\xi \rangle\bbone_{r\leq 1}\right) \dfrac{k_x(r)-k_x(e^t r)}{r}dr \sigma(dx)\\
&\quad\quad -\int_{(0,+\infty)\times \mathbb{S}^{d-1}}i \langle rx;\xi \rangle\bbone_{e^{-t}<r\leq 1}\dfrac{k_x(e^t r)}{r}dr \sigma(dx)\bigg)
\end{align*}
Now, observe that, for all $\xi \in \bbr^d$ and all $t\geq 0$
\begin{align*}
I&=\int_{(0,+\infty)\times \mathbb{S}^{d-1}}i \langle rx;\xi \rangle\bbone_{e^{-t}<r\leq 1}\dfrac{k_x(e^t r)}{r}dr \sigma(dx)\\
&=i \langle \xi ;\int_{\mathbb{S}^{d-1}}x \left(\int_{e^{-t}}^1 k_x(e^{t}r) dr\right) \sigma(dx) \rangle\\
&=i \langle \xi ;\int_{\mathbb{S}^{d-1}}x e^{-t}\left(\int_{1}^{e^t} k_x(r) dr\right) \sigma(dx) \rangle.
\end{align*}
Then, by Leibniz's integral rule, for all $\xi \in \bbr^d$ and all $t\geq 0$
\begin{align*}
\dfrac{d}{dt} \left(I\right)=-i\langle \xi; \int_{\mathbb{S}^{d-1}}x e^{-t}\left(\int_{1}^{e^t} k_x(r) dr\right) \sigma(dx) \rangle+i \langle \xi ; \int_{\mathbb{S}^{d-1}}x k_x(e^t) \sigma(dx)\rangle.
\end{align*}
Moreover, by Fubini's theorem, for all $\xi \in \bbr^d$ and all $t\geq 0$
\begin{align*}
II&=\int_{(0,+\infty)\times \mathbb{S}^{d-1}} \left(e^{i r\langle x;\xi \rangle}-1-i\langle rx;\xi \rangle\bbone_{r\leq 1}\right) \dfrac{k_x(r)-k_x(e^t r)}{r}dr \sigma(dx)\\
&=\int_{\mathbb{S}^{d-1}} \sigma(dx) \int_0^{+\infty}  \left(e^{i r\langle x;\xi \rangle}-1-i\langle rx;\xi \rangle\bbone_{r\leq 1}\right) \left(\int_{r}^{re^t} (-dk_x(s))\right) \frac{dr}{r}\\
&=\int_{\mathbb{S}^{d-1}} \sigma(dx) \int_0^{+\infty} (-dk_x(s)) \int_{s e^{-t}}^s  \left(e^{i r\langle x;\xi \rangle}-1-i\langle rx;\xi \rangle\bbone_{r\leq 1}\right)\frac{dr}{r}.
\end{align*}
Then, by Leibniz's integral rule, for all $\xi \in \bbr^d$ and all $t\geq 0$
\begin{align*}
\dfrac{d}{dt} \left(II\right)&=\int_{\mathbb{S}^{d-1}} \sigma(dx) \int_0^{+\infty} (-dk_x(s)) \left(e^{i s e^{-t}\langle x;\xi \rangle}-1-i\langle s e^{-t} x;\xi \rangle\bbone_{s\leq e^t}\right)\\
&=\int_{\mathbb{S}^{d-1}} \sigma(dx) \int_0^{+\infty} (-dk_x(s)) \left(e^{i s e^{-t}\langle x;\xi \rangle}-1-i\langle s e^{-t} x;\xi \rangle\bbone_{s\leq 1}\right)\\
&\quad\quad -i\int_{\mathbb{S}^{d-1}} \sigma(dx) \int_0^{+\infty} (-dk_x(s)) \langle s e^{-t} x;\xi \rangle\bbone_{1<s\leq e^t}.
\end{align*}
Now, by integration by parts, for all $\xi \in \bbr^d$ and all $t\geq 0$
\begin{align*}
e^{-t}\int_{\mathbb{S}^{d-1}}\langle x;\xi \rangle \int_1^{e^t} (-dk_x(s))s\sigma(dx)&=e^{-t}\int_{\mathbb{S}^{d-1}}\langle x;\xi \rangle  \left(\int_1^{e^t} k_x(s) ds \right)\sigma(dx)\\
&\quad \quad - e^{-t}\int_{\mathbb{S}^{d-1}}\langle x;\xi \rangle (e^t k_x(e^t)-k_x(1)) \sigma(dx).
\end{align*}
Thus, for all $\xi \in \bbr^d$ and all $t \geq 0$
\begin{align*}
\dfrac{d}{dt}\left(II-I\right)&=\int_{\mathbb{S}^{d-1}} \sigma(dx) \int_0^{+\infty} (-dk_x(s)) \left(e^{i s e^{-t}\langle x;\xi \rangle}-1-i\langle s e^{-t} x;\xi \rangle\bbone_{s\leq 1}\right)\\
&\quad\quad -ie^{-t}\int_{\mathbb{S}^{d-1}}\langle x;\xi \rangle  \left(\int_1^{e^t} k_x(s) ds\right)\sigma(dx)+i e^{-t}\int_{\mathbb{S}^{d-1}}\langle x;\xi \rangle (e^t k_x(e^t)-k_x(1)) \sigma(dx)\\
&\quad\quad+i\langle \xi; \int_{\mathbb{S}^{d-1}}x e^{-t}\left(\int_{1}^{e^t} k_x(r) dr\right) \sigma(dx) \rangle-i \langle \xi ; \int_{\mathbb{S}^{d-1}}x k_x(e^t) \sigma(dx)\rangle\\
&=\int_{\mathbb{S}^{d-1}} \sigma(dx) \int_0^{+\infty} (-dk_x(s)) \left(e^{i s e^{-t}\langle x;\xi \rangle}-1-i\langle s e^{-t} x;\xi \rangle\bbone_{s\leq 1}\right)\\
&\quad\quad-i e^{-t} \langle\xi; \int_{\mathbb{S}^{d-1}} xk_x(1)\sigma(dx) \rangle.
\end{align*}
Finally, straightforward computations conclude the proof of the lemma.
\end{proof}

\begin{lem}\label{lem:GenCau}
Let $X$ be a non-degenerate SD random vector in $\bbr^d$, without Gaussian component, with characteristic function $\varphi$, L\'evy measure $\nu$ having polar decomposition given by \eqref{eq:PolarSD2} where the function $k_x(r)$ is continuous in $r\in(0,+\infty)$, continuous in $x\in \mathbb{S}^{d-1}$ and satisfies \eqref{eq:condlimk}. 
Then,\\
(i) for all $\xi \in \bbr^d$ and all $x\in \bbr^d$,
\begin{align*}
\underset{t\rightarrow 0^+}{\lim}\frac{1}{t}\left(e^{i\langle x;\xi (e^{-t}-1) \rangle}\dfrac{\varphi(\xi)}{\varphi(e^{-t}\xi)}-1\right)&=i\langle b-\int_{\mathbb{S}^{d-1}} k_y(1)y\sigma(dy)-x;\xi\rangle\\
&\quad\quad +\int_{\bbr^d} \left(e^{i\langle u;\xi \rangle}-1-i\langle u;\xi \rangle\bbone_{\|u\|\leq 1}\right) \tilde{\nu}(du),
\end{align*}
where $\tilde{\nu}$ is given by \eqref{eq:tilde}.\\
(ii) For all $\xi \in \bbr^d$ and all $x\in \bbr^d$,
\begin{align*}
\underset{t\in (0,1)}{\sup}\frac{1}{t}\left|e^{i\langle x;\xi (e^{-t}-1) \rangle}\dfrac{\varphi(\xi)}{\varphi(e^{-t}\xi)}-1\right|&\leq \left(\|x\|+\|b\|\right)\|\xi\|+\|\xi\|^2 \int_{\mathbb{S}^{d-1}} \sigma(dx) \int_0^1s^2 (-dk_x(s)) +4  \int_{\mathbb{S}^{d-1}}k_x(1)\sigma(dx)\\
&\quad\quad+2 \|\xi\|^2  \int_{\mathbb{S}^{d-1}}k_x(1)\sigma(dx)+2\int_{\mathbb{S}^{d-1}} \sigma(dx) \int_{1}^{+\infty} (-dk_x(s))\\
&\quad\quad  +\|\xi\| \int_{\mathbb{S}^{d-1}} k_x(1)\sigma(dx).
\end{align*}
\end{lem}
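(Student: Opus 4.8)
The plan is to reduce both assertions to Lemma~\ref{lem:diffallpoints} by a single algebraic identity. For $x,\xi\in\bbr^d$, set $g(t):=e^{i\langle x;\xi (e^{-t}-1) \rangle}\varphi(\xi)/\varphi(e^{-t}\xi)=e^{i\langle x;\xi (e^{-t}-1) \rangle}\varphi_t(\xi)$, with $\varphi_t$ the characteristic function in \eqref{eq:characXt}. The first step is to observe, by comparing \eqref{eq:characXt} with the rewritten form of $\psi_t$ appearing in the proof of Lemma~\ref{lem:diffallpoints}, that $\varphi_t(\xi)=e^{i\langle b;\xi\rangle(1-e^{-t})}\psi_t(\xi)$, so that
\[
g(t)=e^{i\langle b-x;\xi\rangle(1-e^{-t})}\,\psi_t(\xi),\qquad g(0)=1 ,
\]
the value at $0$ since $\psi_0\equiv 1$. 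As $t\mapsto\psi_t(\xi)$ is $\mathcal{C}^1$ on $[0,+\infty)$ by Lemma~\ref{lem:diffallpoints} and the prefactor is smooth, $g\in\mathcal{C}^1([0,+\infty))$, and both parts of the lemma become statements about $g$ near $t=0$.

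For (i), I would use right-differentiability of $g$ at $0$, so that $\tfrac1t(g(t)-1)=\tfrac1t(g(t)-g(0))\to g'(0)$ as $t\to 0^+$, and then compute $g'(0)$ by the product rule: the prefactor contributes $i\langle b-x;\xi\rangle$, while $\psi_0'(\xi)$ is read off from the derivative formula of Lemma~\ref{lem:diffallpoints} at $t=0$ (using $\psi_0(\xi)=1$), namely $-i\langle\int_{\mathbb{S}^{d-1}}k_y(1)y\sigma(dy);\xi\rangle+\int_{\bbr^d}(e^{i\langle u;\xi\rangle}-1-i\langle u;\xi\rangle\bbone_{\|u\|\le 1})\tilde{\nu}(du)$. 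Adding the two contributions gives exactly the asserted limit.

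For (ii), I would use $g(t)-1=\int_0^t g'(s)\,ds$, which reduces the claim to the uniform estimate $\sup_{s\in(0,1)}|g'(s)|\le(\text{stated RHS})$. Differentiating once more via the product rule and Lemma~\ref{lem:diffallpoints}, one writes $g'(s)$ as a bracketed symbol times $g(s)$, and uses $|g(s)|=|\psi_s(\xi)|\le 1$, which holds because the radial weight $\frac{k_x(r)-k_x(e^sr)}{r}$ is nonnegative and $\operatorname{Re}(e^{ir\langle x;\xi\rangle}-1)\le 0$, whence $\operatorname{Re}\log\psi_s(\xi)\le 0$. The bracket is then bounded termwise by elementary inequalities: $|\langle b-x;\xi\rangle|\le(\|x\|+\|b\|)\|\xi\|$, $\|\int_{\mathbb{S}^{d-1}}k_y(1)y\sigma(dy)\|\le\int_{\mathbb{S}^{d-1}}k_y(1)\sigma(dy)$, $|e^{i\theta}-1-i\theta|\le\theta^2/2$ and $|e^{i\theta}-1|\le 2$ applied to the $\tilde{\nu}$-integral split at $\|u\|=1$, together with $e^{-s}\le 1$ and the polar decomposition $\tilde{\nu}(du)=\bbone_{(0,+\infty)}(r)\bbone_{\mathbb{S}^{d-1}}(x)(-dk_x(r))\sigma(dx)$; this yields the stated bound, in fact with room to spare (a factor $\tfrac12$ in place of $1$ in front of $\|\xi\|^2\int_{\mathbb{S}^{d-1}}\sigma(dx)\int_0^1 s^2(-dk_x(s))$, and without the $\int_{\mathbb{S}^{d-1}}k_x(1)\sigma(dx)$ terms), so the inequality follows a fortiori. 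The finiteness of every $\sigma$-integral occurring is a consequence of $\tilde{\nu}$ being a L\'evy measure under \eqref{eq:condlimk}, via $\int_{\mathbb{S}^{d-1}}k_x(1)\sigma(dx)=\tilde{\nu}(\{\|u\|\ge 1\})$ and $\int_{\mathbb{S}^{d-1}}\sigma(dx)\int_0^1 s^2(-dk_x(s))=\int_{\|u\|\le 1}\|u\|^2\tilde{\nu}(du)$.

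I expect the only genuinely delicate step to be the identity $g(t)=e^{i\langle b-x;\xi\rangle(1-e^{-t})}\psi_t(\xi)$: it amounts to carefully matching the two exponential representations, i.e.\ recognising that the change of variables already carried out in passing from \eqref{eq:characID} to \eqref{eq:characXt} is precisely what makes the non-drift part of $\varphi_t$ coincide with the $\psi_t$ of Lemma~\ref{lem:diffallpoints}. Everything after that is either the evaluation of a derivative (part (i)) or a short list of elementary bounds (part (ii)).
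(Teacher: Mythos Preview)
Your proposal is correct. Part~(i) is essentially the paper's argument: both factor $g(t)=e^{i\langle b-x;\xi\rangle(1-e^{-t})}\psi_t(\xi)$ and read off the derivative at $0$ from Lemma~\ref{lem:diffallpoints}.

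For part~(ii), however, your route differs from the paper's and is in fact cleaner. The paper first uses the product splitting $|g(t)-1|\le|e^{i\langle x-b;\xi(e^{-t}-1)\rangle}-1|+|\psi_t(\xi)-1|$, then bounds $|\psi_t(\xi)-1|\le|\eta_t(\xi)|$ via the fact that $e^{s\eta_t}$ is a characteristic function for every $s>0$, and finally estimates $|\eta_t(\xi)|$ by expanding the Fubini representation of $\eta_t$ from the proof of Lemma~\ref{lem:diffallpoints}, splitting the $(-dk_x(s))$-integral into the three ranges $(0,1)$, $(1,e^t)$, $(e^t,+\infty)$, and doing an integration by parts on the middle range. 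It is this middle range that produces the extra $4\int k_x(1)\sigma(dx)$ and $2\|\xi\|^2\int k_x(1)\sigma(dx)$ terms in the stated bound. Your approach bypasses all of this: once $g\in\mathcal{C}^1$ is known, $\tfrac{1}{t}|g(t)-1|\le\sup_{s\in(0,1)}|g'(s)|$ is immediate, and the derivative formula of Lemma~\ref{lem:diffallpoints} plus $|g(s)|\le 1$ give the bound directly with the sharper constants you mention. (One small inaccuracy in your summary: you do still pick up the terms $\|\xi\|\int k_x(1)\sigma(dx)$ and $2\int\sigma(dx)\int_1^{+\infty}(-dk_x(s))$; what you avoid are the \emph{additional} $k_x(1)$ contributions coming from the paper's integration by parts on $(1,e^t)$.) The paper's route has the minor conceptual advantage of making the infinite divisibility of $\psi_t$ explicit, but yours is shorter and tighter.
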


\begin{proof}
Let us start with $(i)$. First, for all $\xi\in \bbr^d$ and all $t\in (0,1)$,
\begin{align*}
\dfrac{\varphi(\xi)}{\varphi(e^{-t}\xi)}&=\exp\bigg(i\langle b;\xi \rangle(1-e^{-t})+\int_{(0,+\infty)\times \mathbb{S}^{d-1}} \left(e^{i r\langle y;\xi \rangle}-1-i\langle ry;\xi \rangle\bbone_{r\leq 1}\right) \dfrac{k_y(r)-k_y(e^t r)}{r}dr \sigma(dy)\\
&\quad\quad -\int_{(0,+\infty)\times \mathbb{S}^{d-1}}i \langle ry;\xi \rangle\bbone_{e^{-t}<r\leq 1}\dfrac{k_y(e^t r)}{r}dr \sigma(dy)\bigg)
\end{align*}
Next, for all $\xi\in \bbr^d$ and all $x\in\bbr^d$,
\begin{align}\label{eq:drift}
\underset{t\rightarrow 0^+}{\lim} \frac{1}{t}\left(e^{i\langle x-b;\xi (e^{-t}-1)\rangle}-1\right)=i\langle b;\xi \rangle-i\langle x;\xi \rangle.
\end{align}
Moreover, by Lemma \ref{lem:diffallpoints}, for all $\xi \in \bbr^d$
\begin{align*}
\underset{t\rightarrow 0^+}{\lim} \frac{1}{t} \left(\psi_t(\xi)-1\right)&=\dfrac{d}{dt} \left(\psi_t(\xi)\right)(0^+)\\
&=\left( -i \langle \int_{\mathbb{S}^{d-1}}k_y(1) y\sigma(dy);\xi \rangle+ \int_{\mathbb{R}^d} \left(e^{i \langle u;\xi \rangle}-1-i \langle u;\xi \rangle\bbone_{\|u\|\leq 1}\right)\tilde{\nu}(du)\right).
\end{align*}
Then, (i) follows. Let us prove (ii). For all $\xi \in \bbr^d$ and all $t\in (0,1)$
\begin{align*}
\frac{1}{t}\left|e^{i\langle x;\xi (e^{-t}-1) \rangle}\dfrac{\varphi(\xi)}{\varphi(e^{-t}\xi)}-1\right| &\leq \frac{1}{t}\left|e^{i\langle x-b;\xi (e^{-t}-1)\rangle}-1\right| +\frac{1}{t}\left|\psi_t(\xi)-1\right|\\
&\leq \left(\|x\|+\|b\|\right) \|\xi\|++\frac{1}{t}\left|\psi_t(\xi)-1\right|.
\end{align*}
Now, observe that $\psi_t$ is the characteristic function of an infinitely divisible distribution since $X$ is SD. Let us denote by $\eta_t(\xi)$ its L\'evy-Khintchine exponent. Thus, \cite[Lemma $7.9$]{S}, for all $t\in (0,1)$ and all $\xi \in \bbr^d$
\begin{align*}
\left|\psi_t(\xi)-1\right|=\left| e^{\eta_t(\xi)}-1\right|=\left|\int_{0}^1 \dfrac{d}{ds}\left(\exp\left(s\eta_t(\xi)\right)\right)ds\right|\leq \left| \eta_t(\xi)\right|.
\end{align*}
Moreover, by Lemma \ref{lem:diffallpoints}, for all $\xi \in \bbr^d$ and all $t>0$
\begin{align*}
\eta_t(\xi)&=\int_{\mathbb{S}^{d-1}} \sigma(dx) \int_0^{+\infty} (-dk_x(s)) \int_{s e^{-t}}^s  \left(e^{i r\langle x;\xi \rangle}-1-i\langle rx;\xi \rangle\bbone_{r\leq 1}\right)\frac{dr}{r}\\
&\quad\quad-i \langle \xi ;\int_{\mathbb{S}^{d-1}}x e^{-t}\left(\int_{1}^{e^t} k_x(r) dr\right) \sigma(dx) \rangle.
\end{align*}
Hence, for all $\xi \in \bbr^d$ and all $t\in (0,1)$
\begin{align}\label{ineq:eta}
\left| \eta_t(\xi)\right| &\leq \frac{\|\xi\|^2}{2} \int_{\mathbb{S}^{d-1}} \sigma(dx) \int_0^1s^2 (-dk_x(s)) \left(1-e^{-2t}\right)\nonumber\\
&\quad\quad+\int_{\mathbb{S}^{d-1}} \sigma(dx)\left| \int_1^{e^t} (-dk_x(s)) \int_{s e^{-t}}^s \left(e^{i r\langle x;\xi \rangle}-1-i\langle rx;\xi \rangle\bbone_{r\leq 1}\right)\frac{dr}{r}\right|\nonumber\\
&\quad\quad+2t\int_{\mathbb{S}^{d-1}} \sigma(dx) \int_{1}^{+\infty} (-dk_x(s)) +\|\xi\| \int_{\mathbb{S}^{d-1}} k_x(1)\sigma(dx) \left(1-e^{-t}\right).
\end{align}
Now, via an integration by parts, for all $t\in (0,1)$, all $\xi \in \bbr^d$ and all $x\in \mathbb{S}^{d-1}$
\begin{align*}
\int_1^{e^t} (-dk_x(s)) \int_{s e^{-t}}^s \left(e^{i r\langle x;\xi \rangle}-1-i\langle rx;\xi \rangle\bbone_{r\leq 1}\right)\frac{dr}{r}&=\int_1^{e^t} \frac{k_x(s)}{s}\bigg(e^{i s\langle x;\xi \rangle}-e^{i s e^{-t}\langle x;\xi \rangle}+i\langle s e^{-t}x;\xi \rangle\bigg)ds\\
&\quad\quad+\bigg(k_x(1) \int_{e^{-t}}^1\left(e^{i r\langle x;\xi \rangle}-1-i\langle rx;\xi \rangle\right)\frac{dr}{r}\\
&\quad\quad -k_x(e^t)\int_{1}^{e^t}\left(e^{i r\langle x;\xi \rangle}-1\right)\frac{dr}{r}\bigg).
\end{align*}
Therefore, for all $t\in (0,1)$, all $\xi \in \bbr^d$ and all $x\in \mathbb{S}^{d-1}$
\begin{align}\label{ineq:bound}
\left|\int_1^{e^t} (-dk_x(s)) \int_{s e^{-t}}^s \left(e^{i r\langle x;\xi \rangle}-1-i\langle rx;\xi \rangle\bbone_{r\leq 1}\right)\frac{dr}{r}\right|&\leq 4 k_x(1) t + \|\xi\|^2 k_x(1) \left(1-e^{-2 t}\right).
\end{align}
Combining \eqref{ineq:eta} with \eqref{ineq:bound}, for all $t\in (0,1)$ and all $\xi \in \bbr^d$
\begin{align*}
\left| \eta_t(\xi)\right| &\leq \frac{\|\xi\|^2}{2} \int_{\mathbb{S}^{d-1}} \sigma(dx) \int_0^1s^2 (-dk_x(s)) \left(1-e^{-2t}\right)+4 \int_{\mathbb{S}^{d-1}}k_x(1)\sigma(dx) t + \|\xi\|^2  \int_{\mathbb{S}^{d-1}}k_x(1)\sigma(dx)\\
&\quad\quad\times \left(1-e^{-2 t}\right)+2t\int_{\mathbb{S}^{d-1}} \sigma(dx) \int_{1}^{+\infty} (-dk_x(s)) +\|\xi\| \int_{\mathbb{S}^{d-1}} k_x(1)\sigma(dx) \left(1-e^{-t}\right).
\end{align*}
This concludes the proof of the lemma.
\end{proof}

\begin{prop}\label{prop:ExpConv}
Let $X$ be a non-degenerate SD random vector in $\bbr^d$ without Gaussian component, with law $\mu_X$, characteristic function $\varphi$ and L\'evy measure $\nu$ with polar decomposition given by \eqref{eq:PolarSD2}. Let there exist $\varepsilon\in (0,1)$ such that $\bbe \|X\|^\varepsilon <+\infty$, and $\beta_1>0,\beta_2>0$, $\beta_3\in (0,1)$ such that
\begin{align}\label{eq:polyexpo}
&\gamma_1=\underset{t\geq 0}{\sup}\, \left(e^{\beta_1 t}\int_{(1,+\infty)\times\mathbb{S}^{d-1}}\frac{k_x(e^t r)}{r}dr\sigma(dx)\right)<+\infty,\nonumber\\
&\gamma_2=\underset{t\geq 0}{\sup}\, \left(e^{\beta_2 t}\int_{(0,1)\times\mathbb{S}^{d-1}} rk_x(e^tr)dr\sigma(dx)\right)<+\infty,
\end{align}
and that
\begin{align}\label{eq:correclog}
\gamma_3=\underset{t\geq 0}{\sup} \left(e^{-(1-\beta_3)t} \left\|\int_{\mathbb{S}^{d-1}}x\left(\int_{1}^{e^t} k_x(r)dr\right)\sigma(dx)\right\|\right)<+\infty,
\end{align}
where $k_x(r)$ is given by \eqref{eq:PolarSD2}.
Let $X_t$, $t>0$, be the ID random vector with law $\mu_t$ defined via its characteristic function $\varphi_t$, by
\begin{align*}
\varphi_t(\xi)=\dfrac{\varphi(\xi)}{\varphi(e^{-t}\xi)},\quad \xi \in \bbr^d.
\end{align*}
Moreover, let
\begin{align*}
\underset{t>0}{\sup}\, \bbe\, \|X_t\|^\varepsilon <+\infty.
\end{align*}
Then, for all $t>0$
\begin{align*}
d_{W_1}(\mu_t,\mu_X)\leq C e^{-c t},
\end{align*}
for some $C>0,c>0$ depending on $d$, $\varepsilon$, $\beta_1$, $\beta_2$, $\beta_3$, $\gamma_1$, $\gamma_2$ and $\gamma_3$.
\end{prop}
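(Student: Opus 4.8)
The plan is to exploit the defining property of self-decomposability, which furnishes an exact coupling between $\mu_t$ and $\mu_X$, and to compensate for the absence of a first moment by interpolating between the trivial sup-norm bound and the Lipschitz bound on the test functions.

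First I would observe that, since $X$ is self-decomposable, the function $\varphi_t$ of \eqref{eq:characXt} is a genuine characteristic function and satisfies $\varphi(\xi)=\varphi_t(\xi)\,\varphi(e^{-t}\xi)$ for all $\xi\in\bbr^d$. Realizing $X_t\sim\mu_t$ and an independent copy $\tilde X\sim\mu_X$ of $X$ on a common probability space, the sum $X_t+e^{-t}\tilde X$ then has characteristic function $\varphi_t(\xi)\varphi(e^{-t}\xi)=\varphi(\xi)$, so that
\[
X\stackrel{d}{=}X_t+e^{-t}\tilde X,\qquad X_t\perp\tilde X,\qquad \tilde X\stackrel{d}{=}X .
\]
Consequently, for every $h\in\mathcal H_1$ one has $\bbe h(X)=\bbe\, h\big(X_t+e^{-t}\tilde X\big)$.

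Next I would use that any $h\in\mathcal H_1$ satisfies $\|h\|_\infty=M_0(h)\le1$ and $M_1(h)\le1$, whence, for all $a,z\in\bbr^d$,
\[
|h(a+z)-h(a)|\le\min\big(2,\|z\|\big)\le 2^{\,1-\varepsilon}\,\|z\|^{\varepsilon},
\]
the last step being elementary for $\varepsilon\in(0,1)$. Taking $a=X_t$, $z=e^{-t}\tilde X$ and integrating yields
\[
\big|\bbe h(X_t)-\bbe h(X)\big|=\big|\bbe\big(h(X_t)-h(X_t+e^{-t}\tilde X)\big)\big|\le 2^{\,1-\varepsilon}\,e^{-\varepsilon t}\,\bbe\|X\|^{\varepsilon},
\]
and taking the supremum over $h\in\mathcal H_1$ gives $d_{W_1}(\mu_t,\mu_X)\le 2^{\,1-\varepsilon}\bbe\|X\|^{\varepsilon}\,e^{-\varepsilon t}$, i.e.\ the assertion with $C=2^{\,1-\varepsilon}\bbe\|X\|^{\varepsilon}$ and $c=\varepsilon$.

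I should note that this route uses only the self-decomposability of $X$ together with $\bbe\|X\|^{\varepsilon}<+\infty$; the quantitative hypotheses $\gamma_1,\gamma_2,\gamma_3<+\infty$ and $\sup_{t>0}\bbe\|X_t\|^{\varepsilon}<+\infty$ are inherited from Theorem~\ref{thm:Stein} and are not actually needed for this particular bound. A more computational alternative would be to write $\bbe h(X_t)-\bbe h(X)=-\int_t^{\infty}\tfrac{d}{ds}\bbe h(X_s)\,ds$, differentiate the Fourier representation of $s\mapsto\bbe h(X_s)$ through \eqref{eq:characXt}, and control the resulting integrand via $\gamma_1,\gamma_2,\gamma_3$; this is feasible but considerably heavier. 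The only genuine difficulty is the lack of a finite first moment, and the interpolation inequality above is exactly what removes it, so I do not expect a real obstacle here.
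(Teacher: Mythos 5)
Your proof is correct, and it takes a genuinely different and substantially more elegant route than the paper's. The paper proves this proposition via a two-step Fourier-analytic argument: first it bounds $|\varphi(\xi)-\varphi_t(\xi)|=|\varphi(e^{-t}\xi)-1|$ using \cite[Lemma 7.9]{S} and the polar/integration-by-parts structure of the L\'evy measure, extracting the decay rates $e^{-t}$, $e^{-\beta_1 t}$, $e^{-\beta_2 t}$, $e^{-\beta_3 t}$ from the hypotheses on $\gamma_1,\gamma_2,\gamma_3$; second, since this only controls smooth test functions on a ball of radius $R$, it performs a truncation using $\sup_{t}\bbe\|X_t\|^{\varepsilon}<\infty$ and an optimization in $R$, finally passing from $d_{\widetilde W_r}$ back to $d_{W_1}$ via the smoothing inequality \eqref{ineq:W1Wr}. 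Your proof bypasses all of this by using the self-decomposable coupling $X\stackrel{d}{=}X_t+e^{-t}\tilde X$ (which is exactly the defining property of SD, realized via $\varphi=\varphi_t\cdot\varphi(e^{-t}\,\cdot)$) and then interpolating the elementary bounds $|h(a+z)-h(a)|\le 2$ and $|h(a+z)-h(a)|\le\|z\|$ to obtain $|h(a+z)-h(a)|\le 2^{1-\varepsilon}\|z\|^{\varepsilon}$, yielding $d_{W_1}(\mu_t,\mu_X)\le 2^{1-\varepsilon}\bbe\|X\|^{\varepsilon}\,e^{-\varepsilon t}$. You correctly observe that the hypotheses involving $\gamma_1,\gamma_2,\gamma_3,\beta_1,\beta_2,\beta_3$ and $\sup_{t}\bbe\|X_t\|^{\varepsilon}<\infty$ are not needed for this bound; they are used in Theorem~\ref{thm:Stein} for other estimates, and their presence in the statement of Proposition~\ref{prop:ExpConv} is simply inherited context. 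The one thing the paper's heavier argument buys is that its exponential rate is governed by $\min(1,\beta_1,\beta_2,\beta_3)$ rather than by $\varepsilon$; in cases where the $\beta_i$ are close to one but $\bbe\|X\|^{\varepsilon}<\infty$ only holds for small $\varepsilon$ (e.g.\ $\alpha$-stable with $\alpha$ near $0$), the paper's rate is strictly better, even though both give the same qualitative conclusion. Since the statement only asks for some $C,c>0$, your argument is entirely sufficient and is the one I would use.
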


\begin{proof}
The strategy of the proof is similar to the one of \cite[Theorem $A.1$]{AH18_2} but without the first moment assumption. The proof of \cite[Theorem $A.1$]{AH18_2} is divided into $3$ steps; the last two depending on the finiteness of the first moment. First of all, from Step $1$ of the proof of \cite[Theorem $A.1$]{AH18_2}, for $Z$ and $Y$ two random vectors of $\bbr^d$ and for all $r\geq 1$,
\begin{align}\label{ineq:W1Wr}
d_{\widetilde{W}_1}(Z,Y)\leq \overline{C}_r \left(d_{\widetilde{W}_r}(Z,Y)\right)^{\frac{1}{2^{r-1}}},
\end{align}
for some $\overline{C}_r>0$ only depending on $r$ and on $d$, where
\begin{align*}
d_{\widetilde{W}_r}(Z,Y):=\underset{h\in \widetilde{\mathcal{H}}_r\cap \mathcal{C}^{\infty}_c(\mathbb{R}^d)}{\sup}\left|\bbe h(Z)-\bbe h(Y)\right|,
\end{align*}
while $\widetilde{\mathcal{H}}_r$ is the set of functions which are $r$-times continuously differentiable on $\mathbb{R}^d$ such that $\|D^\alpha(f)\|_{\infty}\leq 1$, for all $\alpha\in \mathbb{N}^d$ with $0\leq |\alpha|\leq r$.\\
\textit{Step 1}: Let $g$ be an infinitely differentiable function with compact support contained in the closed Euclidean ball centered at the origin and of radius $R+1$, for some $R>0$. Then by Fourier inversion and Fubini theorem, for all $t>0$,
\begin{align}\label{ineq:ball}
\left|\bbe g(X)-\bbe g(X_t)\right|&= \left| \int_{\bbr^d} \mathcal{F}(g)(\xi) \left(\varphi(\xi)-\varphi_t(\xi)\right)\dfrac{d\xi}{(2\pi)^d}\right|\nonumber\\
&\leq \int_{\bbr^d}\left|\mathcal{F}(g)(\xi) \right| \left|\varphi(\xi)-\varphi_t(\xi)\right|\dfrac{d\xi}{(2\pi)^d}.
\end{align}
Next, let us estimate precisely the difference between the two characteristic functions $\varphi$ and $\varphi_t$. For all $t>0$ and all $\xi\in \bbr^d$,
\begin{align*}
\left|\varphi(\xi)-\varphi_t(\xi)\right| =\left|\varphi(e^{-t}\xi)-1 \right|=\left|e^{\omega_t(\xi)}-1\right|,
\end{align*}
where $\omega_t(\xi)=i\langle b;\xi\rangle e^{-t}+\int_{(0,+\infty)\times \mathbb{S}^{d-1}} \left(e^{i\langle\xi; ry \rangle}-1-i\langle ry;\xi \rangle\bbone_{r\leq e^{-t}}\right)\frac{k_y(e^t r)}{r}dr\sigma(dy)$. Now, from \cite[Lemma $7.9$]{S}, the function $\xi \rightarrow e^{s \omega_t(\xi)}$ is a characteristic function for all $s\in(0,+\infty)$. Hence, for all $\xi\in \bbr^d$ and all $t>0$,
\begin{align}\label{ineq:diffchar}
\left|\varphi(\xi)-\varphi_t(\xi)\right| &=\left| \int_0^1 \frac{d}{ds}\left(\exp\left(s \omega_t(\xi)\right)\right) ds\right|\nonumber\\
&\leq \left| \omega_t(\xi)\right|\nonumber\\
&\leq \|b\|\|\xi\| e^{-t}+\left|\int_{\bbr^d} \left( e^{i \langle u;\xi e^{-t} \rangle}-1-i\langle u;\xi e^{-t} \rangle\bbone_{\|u\|\leq 1}\right)\nu(du)\right|\nonumber\\
&\leq \|b\|\|\xi\| e^{-t}+ \left|\int_{\bbr^d} \left( e^{i \langle u;\xi e^{-t} \rangle}-1-i\langle u;\xi e^{-t} \rangle\bbone_{\|u\|\leq e^t}\right)\nu(du)\right|\nonumber\\
&\quad\quad +\left| \int_{\bbr^d}\langle u;\xi e^{-t} \rangle \bbone_{1\leq \|u\|\leq e^{t}} \nu(du)\right|\nonumber\\
&\leq \|b\|\|\xi\| e^{-t}+ \left|\int_{(0,+\infty)\times \mathbb{S}^{d-1}} \left( e^{i \langle r y;\xi \rangle}-1-i\langle r y;\xi \rangle\bbone_{r\leq 1}\right)\dfrac{k_y(e^t r)}{r}dr\sigma(dy)\right|\nonumber\\
&\quad\quad +e^{-t} \left|\langle \xi; \int_{\mathbb{S}^{d-1}}y\left(\int_{1}^{e^t} k_y(r)dr\right)\sigma(dy)\rangle \right|\nonumber\\
&\leq \|b\|\|\xi\| e^{-t}+2\left(\int_{(1,+\infty)\times \mathbb{S}^{d-1}}\frac{k_y(e^t r)}{r}dr\sigma(dy)\right)+\|\xi\|^2  \left(\int_{(0,1)\times \mathbb{S}^{d-1}} rk_y(e^tr)dr\sigma(dy)\right)\nonumber\\
&\quad\quad+e^{-t} \| \xi\| \left\|\int_{\mathbb{S}^{d-1}}y\left(\int_{1}^{e^t} k_y(r)dr\right)\sigma(dy)\right\|.
\end{align}
Plugging \eqref{ineq:diffchar} in \eqref{ineq:ball} implies that, for all $t>0$,
\begin{align*}
\left|\bbe g(X)-\bbe g(X_t)\right|&\leq \int_{\bbr^d}\left|\mathcal{F}(g)(\xi) \right| \Bigg(\|b\|\|\xi\| e^{-t}+2\left(\int_{(1,+\infty)\times \mathbb{S}^{d-1}}\frac{k_y(e^t r)}{r}dr\sigma(dy)\right)\\
&\quad\quad+\|\xi\|^2 \left(\int_{(0,1)\times \mathbb{S}^{d-1}} rk_y(e^tr)dr\sigma(dy)\right)+e^{-t} \| \xi\| \left\|\int_{\mathbb{S}^{d-1}}y\left(\int_{1}^{e^t} k_y(r)dr\right)\sigma(dy)\right\|\Bigg)\dfrac{d\xi}{(2\pi)^d}.
\end{align*}
Now, observe that, for all $p\geq 0$,
\begin{align*}
\int_{\bbr^d}\left|\mathcal{F}(g)(\xi) \right| \|\xi\|^p \frac{d\xi}{(2\pi)^d}&\leq \underset{\xi \in \bbr^d}{\sup}\left(\left|\mathcal{F}(g)(\xi) \right| \left(1+\|\xi\|\right)^{d+p+1}\right) \int_{\bbr^d}\dfrac{\|\xi\|^p}{\left(1+\|\xi\|\right)^{d+p+1}}\frac{d\xi}{(2\pi)^d},\\
&\leq C_{d,p} (R+1)^d \left(\|g\|_\infty+\underset{1\leq j\leq d}{\max} \|\partial^{d+p+1}_j(g)\|_\infty\right),
\end{align*}
for some $C_{d,p}>0$ depending on $d$ and on $p$. Thus,
\begin{align}\label{ineq:Step1}
\left|\bbe g(X)-\bbe g(X_t)\right|&\leq C_{d,1}\|b\|e^{-t}(R+1)^d \left(\|g\|_\infty+\underset{1\leq j\leq d}{\max} \|\partial^{\,d+2}_j(g)\|_\infty\right)\nonumber\\
&\quad+2C_{d,0}(R+1)^d\left(\int_{(1,+\infty)\times \mathbb{S}^{d-1}}\frac{k_y(e^t r)}{r}dr\sigma(dy)\right)\left(\|g\|_\infty+\underset{1\leq j\leq d}{\max} \|\partial^{\,d+1}_j(g)\|_\infty\right)\nonumber\\
&\quad+C_{d,2} (R+1)^d\left(\int_{(0,1)\times \mathbb{S}^{d-1}} rk_y(e^tr)dr\sigma(dy)\right)\left(\|g\|_\infty+\underset{1\leq j\leq d}{\max} \|\partial^{\,d+3}_j(g)\|_\infty\right)\nonumber\\
&\quad+ C_{d,1} e^{-t}  (R+1)^d  \left\|\int_{\mathbb{S}^{d-1}}y\left(\int_{1}^{e^t} k_y(r)dr\right)\sigma(dy)\right\|\nonumber\\
&\quad\times \left(\|g\|_\infty+\underset{1\leq j\leq d}{\max} \|\partial^{\,d+2}_j(g)\|_\infty\right).
\end{align}
The inequality \eqref{ineq:Step1} concludes Step $1$.\\
\textit{Step 2}: This last step also follows the lines of the proof of Step $3$ of \cite[Theorem $A.1$]{AH18_2} so that only the main differences are highlighted. Let $h\in\mathcal{C}_c^\infty(\mathbb{R}^d)\cap \widetilde{\mathcal{H}}_{d+3}$. Let $\Psi_R$ be a compactly supported infinitely differentiable function on $\mathbb{R}^d$, with support contained in the closed Euclidean ball centered at the origin and of radius $R+1$, with values in [0,1], and such that $\Psi_R(x)=1$, for all $x$ such that $\|x\|\leq R$. First, for all $t>0$
\begin{align*}
|\bbe h(X_t)(1-\Psi_R(X_t))|&\leq \int_{\mathbb{R}^d}(1-\Psi_R(x))d\mu_t(x)\\
&\leq \mathbb{P}\left(\|X_t\|\geq R\right)\\
&\leq \dfrac{1}{R^\varepsilon}\underset{t>0}{\sup}\, \bbe\, \|X_t\|^\varepsilon.
\end{align*}
A similar bound holds true for $|\bbe h(X)(1-\Psi_R(X))|$ since $\bbe \|X\|^\varepsilon<+\infty$. Then, combining \eqref{ineq:Step1} together with the previous bounds implies\small{
\begin{align}\label{eq:intermezzo}
\left |\bbe h(X)- \bbe h(X_t)\right| &\leq \frac{\tilde{C}_{d,\varepsilon}}{R^{\varepsilon}}+C_{d,1}\|b\|e^{-t}(R+1)^d \left(\| h\Psi_R \|_\infty+\underset{1\leq j\leq d}{\max} \|\partial^{\,d+2}_j(h\Psi_R)\|_\infty\right)\nonumber\\
&\quad+2C_{d,0}(R+1)^d\left(\int_{(1,+\infty)\times \mathbb{S}^{d-1}}\frac{k_y(e^t r)}{r}dr\sigma(dy)\right)\left(\| h\Psi_R\|_\infty+\underset{1\leq j\leq d}{\max} \|\partial^{\,d+1}_j(h\Psi_R)\|_\infty\right)\nonumber\\
&\quad+C_{d,2} (R+1)^d\left( \int_{(0,1)\times \mathbb{S}^{d-1}} rk_y(e^tr)dr\sigma(dy)\right)\left(\|h\Psi_R\|_\infty+\underset{1\leq j\leq d}{\max} \|\partial^{\,d+3}_j(h\Psi_R)\|_\infty\right)\nonumber\\
&\quad+ C_{d,1} e^{-t}  (R+1)^d  \left\|\int_{\mathbb{S}^{d-1}}y\left(\int_{1}^{e^t} k_y(r)dr\right)\sigma(dy)\right\|\left(\|h\Psi_R\|_\infty+\underset{1\leq j\leq d}{\max} \|\partial^{\,d+2}_j(h\Psi_R)\|_\infty\right),
\end{align}
}for some $\tilde{C}_{d,\varepsilon}>0$ depending only on $d$ and on $\varepsilon$. Next, as in Step $3$ of the proof of \cite[Theorem $A.1$]{AH18_2}, observe that $\| h\Psi_R \|_\infty$, $ \|\partial^{\,d+1}_j(h\Psi_R)\|_\infty$ ,$ \|\partial^{\,d+2}_j(h\Psi_R)\|_\infty$ and $ \|\partial^{\,d+3}_j(h\Psi_R)\|_\infty$ are uniformly bounded in $R$ and in $h$ for $R\geq 1$ and since $h\in \mathcal{C}_c^\infty(\mathbb{R}^d)\cap \widetilde{\mathcal{H}}_{d+3}$ (for an appropriate choice of $\Psi_R$). The last step is an optimization in $R$ which depends on the behavior of 
\begin{align*}
\int_{(1,+\infty)\times \mathbb{S}^{d-1}}\frac{k_y(e^t r)}{r}dr\sigma(dy),\quad \int_{(0,1)\times \mathbb{S}^{d-1}} rk_y(e^tr)dr\sigma(dy),\quad e^{-t} \left\|\int_{\mathbb{S}^{d-1}}y\left(\int_{1}^{e^t} k_y(r)dr\right)\sigma(dy)\right\|,
\end{align*}
with respect to $t$. Using \eqref{eq:polyexpo} and \eqref{eq:correclog}, \eqref{eq:intermezzo} becomes
\begin{align*}
\left |\bbe h(X)- \bbe h(X_t)\right| &\leq \frac{\tilde{C}_{d,\varepsilon}}{R^{\varepsilon}}+\tilde{C}_{d,1}\|b\|e^{-t}(R+1)^d 
+2\tilde{C}_{d,2}(R+1)^d \gamma_1 e^{-\beta_1 t}\\
&+\tilde{C}_{d,3} (R+1)^d \gamma_2 e^{-\beta_2 t}+ \tilde{C}_{d,4}  (R+1)^d   \gamma_3 e^{-\beta_3 t},
\end{align*}
for some $\tilde{C}_{d,\varepsilon}>0, \tilde{C}_{d,1}>0, \tilde{C}_{d,2}>0, \tilde{C}_{d,3}>0$ and $\tilde{C}_{d,4}>0$. Set $\beta=\min \left( 1,\beta_1,\beta_2,\beta_3\right)$. Choosing $R=e^{\beta t/(d+1)}$ and reasoning as in the last lines of \cite[Theorem $A.1$]{AH18_2} concludes the proof of the proposition.
\end{proof}

\end{document}